\title[Weak solutions of the SLLG Equations with non-zero anisotrophy energy]{Weak solutions of the Stochastic Landau-Lifschitz-Gilbert Equations with non-zero anisotrophy energy}
\author{Zdzis{\l}aw Brze\'zniak and  Liang Li}
\newcommand\delc[1]{}
\newcommand\rH{\mathrm{H}}
\newcommand\lb{\langle}
\newcommand\rb{\rangle}
\newcommand\bis{{\prime\prime}}
\numberwithin{equation}{section}
\numberwithin{figure}{section}
\theoremstyle{plain}
\newtheorem{thm}{Theorem}[section]
\theoremstyle{remark}
\newtheorem*{acknowledgement*}{Acknowledgement}
\theoremstyle{plain}
\theoremstyle{plain}
\newcounter{casectr}
\theoremstyle{remark}
\theoremstyle{remark}
\theoremstyle{definition}
\theoremstyle{plain}
\theoremstyle{plain}
\newtheorem{cor}[thm]{Corollary}
\theoremstyle{plain}
\theoremstyle{definition}
\theoremstyle{definition}
\newtheorem{defn}[thm]{Definition}
\theoremstyle{definition}
\theoremstyle{definition}
\theoremstyle{plain}
\theoremstyle{plain}
\newtheorem{lem}[thm]{Lemma}
\theoremstyle{remark}
\newtheorem{notation}[thm]{Notation}
\theoremstyle{definition}
\theoremstyle{plain}
\newtheorem{prop}[thm]{Proposition}
\theoremstyle{remark}
\newtheorem*{rem*}{Remark}
\theoremstyle{remark}
\newtheorem{rem}[thm]{Remark}
\theoremstyle{remark}
\theoremstyle{plain}
\newtheorem{assumption}[thm]{Assumption}
\numberwithin{equation}{section}
\newcommand{\ud}{\,\mathrm{d}}
\newcommand{\lmd}{\lambda}
\newcommand{\R}{\mathbb{R}}
\newcommand{\eps}{\varepsilon}
\newcommand{\esup}{\mathop{\mathrm{ess}\sup}}
\newcommand{\lspan}{\mathrm{linspan}}
\newcommand{\llangle}{\left\langle}
\newcommand{\rrangle}{\right\rangle}
\newcommand\rv{\mathrm{v}}
\newcommand{\Ltwo}{\mathbb{L}^2}
\begin{document}

\email{zdzislaw.brzezniak@york.ac.uk, liangli@amss.ac.cn}
\begin{abstract}
We study a stochastic Landau-Lifschitz-Gilbert Equation (c) with non-zero anisotrophy energy and multidimensional noise. We prove the existence  and some regularities of weak solution  proved. Our paper is motivated by finite-dimensional study of stochastic LLGEs or general stochasric differental equations with constraints studied by Kohn et al \cite{Kohn_2005} and Leli{\`e}vre et al \cite{LeBris_2008}.
\end{abstract}

\keywords{stochastic partial differential equations, ferromagnetism, anisotrophy, heat flow}

\maketitle

\tableofcontents

\section{Introduction}
The ferromagnetism theory was first studied by Wei{\ss} in 1907 and then further developed by Landau and Lifshitz \cite{Landau} and Gilbert \cite{Gilbert}. According to  their theory there is a characteristic of the material called the Curie's temperature, whence below this critical temperature, large ferromagnetic bodies  would break up into small uniformly magnetized regions  separated by thin transition layers. The small uniformly magnetized regions are called Wei{\ss} domains and the transition layers are called Bloch walls. This fact is taken into account by imposing the following constraint:
\begin{equation}
  |u(t,x)|_{\R^3}=|u_0|_{\R^3}.
\end{equation}
Moreover the magnetization in a domain $D\subset \R^3$ at time $t>0$ given by $u(t,x)\in \R^3$ satisfies the following Landau-Lifschitz equation:
\begin{equation}\label{eq:LLeq}
  \frac{\ud u(t,x)}{\ud t}=\lmd_1u(t,x)\times \rho(t,x)-\lmd_2u(t,x)\times (u(t,x)\times \rho(t,x)).
\end{equation}
The $\rho$ in the equation \eqref{eq:LLeq} is called the effective magnetic field and defined by
\begin{equation}\label{eq:effmag}
  \rho=-\nabla_u\mathcal{E},
\end{equation}
where the $\mathcal{E}$ is the so called total electro-magnetic energy which composed by anisotropy energy, exchange energy and electronic energy.

In order to describe phase transitions between different equilibrium states induced by thermal fluctuations of the effective magnetic field $\rho$, Brze{\'z}niak and Goldys and Jegaraj \cite{ZB&TJ} introduced the Gaussian noise into the Landau-Lifschitz-Gilbert (LLG) equation to perturb $\rho$ and then the stochastic Landau-Lifschitz-Gilbert (SLLG) equation have the following form:
\begin{equation}\label{eq:effmags}
\ud u(t)=\left(\lmd_1u(t)\times\rho(t)-\lmd_2u(t)\times(u(t)\times\rho(t))\right)\ud t+(u(t)\times h)\circ \ud W(t),
\end{equation}
where $h\in L^\infty(D;\R^3)$.
Their total energy contains only the exchange energy $\frac{1}{2}\|\nabla u\|_{{\mathbb{L}^2}}$, and hence their equation has the following form:
\begin{equation}\label{eq:zbeq}
  \left\{\begin{array}{l}
  \ud u(t)=(\lmd_1u(t)\times \Delta u(t)-\lmd_2u(t)\times(u(t)\times \Delta u(t)))\ud t+(u(t)\times h)\circ\ud W(t),\\
  \frac{\partial u}{\partial n}(t,x)=0,\qquad t>0,x\in \partial D,\\
  u(0,x)=u_0(x),\qquad x\in D.\end{array}\right.
\end{equation}
They concluded the existence of the weak solution of \eqref{eq:zbeq} and also proved some regularities of the solution.

There is also some research about the numerical schemes of equation \eqref{eq:zbeq}, such as Ba{\v n}as, Brze{\'z}niak, and Prohl \cite{LB&ZB&AP},
 Ba{\v n}as, Brze{\'z}niak, Neklyudov, and Prohl \cite{LB&ZB&AP&MN}, Ba{\v n}as, Brze{\'z}niak, Neklyudov, and Prohl \cite{LB&ZB&AP&MNbk},  Goldys, Le, and Tran \cite{Goldys&Le} and Alouges, de Bouard and Hocquet \cite{Alouges_2014}. The last paper differs from all previous papers as it deals with so called Gilbert form of the LLGEs, see \cite{Gilbert} and  \cite{Alouges_1992} for some related deterministic results.

In the present  paper we consider the SLLG equation with  the total energy $\mathcal{E}$ consisting of the exchange and anisotropy energies and hence it  defined as:
\[\mathcal{E}(u)=\mathcal{E}_{an}(u)+\mathcal{E}_{ex}(u)=\int_D\left(\phi(u(x))+\frac{1}{2}|\nabla u(x)|^2\right)\ud x,\]
where $\mathcal{E}_{an}(u):=\int_D\phi(u(x))\ud x$ stands for the anisotropy energy and $\mathcal{E}_{ex}(u):=\frac{1}{2}\int_D|\nabla u(x)|^2\ud x$ stands for the exchange energy.

Our study is motivated by finite-dimensional study of stochastic LLGEs or general stochastic differential equations with constraints studied by Kohn et al \cite{Kohn_2005} and Leli{\`e}vre et al \cite{LeBris_2008}. An essential feature of the model studies in \cite{Kohn_2005} was the presence of anisotropy energy (while the exchange energy was absent). So far none of the papers, apart from \cite{ZB&BG&TJ} which treats only one-dimensional domains,  on the stochastic LLGEs considered nonzero anisotropy energy. Therefore there is a need to fill this literature gap and that is what we have achieved in the current work. 

The  main novelty  of the current paper lies in being able to  study of LLGEs with energy including    the
anisotropy energy. As we have mentioned earlier, both the    papers by the first named authour, Goldys and Jegaraj and by    Alouges,  De Bouard and Hocquet, treat  purely exchange energy. \delc{Secondly, we showed that it is possible to have the noisy part of the Hamiltonian in both nonlinear terms, the one corresponding to gyroscopic motion and the one corresponding to dissipation of energy.}
Our success  was possible because  we have been able to find uniform {\em a priori} estimates for the suitable
Galerkin approximations  of the full problem. This was in turn possible because our Galerkin approximation could be seen as an equation of a finite dimensional Hilbert space $H_n$ of the form similar to the full equations.

So the SLLG equation we are going to study in this paper has the form:
\begin{equation}\label{eq:7eqintr}\displaystyle
\left\{\begin{array}{ll}
\ud u(t)&=\Big[\lmd_1u(t)\times\left(\Delta u(t)-\nabla \phi\big(u(t)\big)\right)\\
\displaystyle
&-\lmd_2u(t)\times\Big(u(t)\times\left(\Delta u(t)
\;\;-\nabla \phi\big(u(t)\big)\right)\Big)\Big]\ud t +\displaystyle\sum_{j=1}^N\big(u(t)\times h_j\big)\circ\ud W_j(t),\\
\left.\frac{\partial u}{\partial n}\right|_\Gamma&=0,\\
u(0)&=u_0,\end{array}\right.
\end{equation}
where  $h_j\in L^\infty(D;\R^3)\cap \mathbb{W}^{1,3}$,  for $j=1,\cdots, N$ and  some $N\in \mathbb{N}$; see Assumption \ref{as:all4}.

Let me describe on a heuristic level the idea of the proof. For this let us denote by $M$ the set of all functions $u\in \rH=L^2(D;\mathbb{R}^3)$ such that $u (x) \in \mathbb{S}^2$ for a.a. $x\in D$, where $\mathbb{S}^2$ is the unit sphere in $\mathbb{R}^3$. Since for  $u \in H^2(D;\mathbb{R}^3)\cap M$ the $\rH$-orthogonal projection on $T_u M$ is equal to the map $\rH\ni z \mapsto - u \times \Bigl( u\times z \Bigr)\in T_u M $, and
$\Delta u -\nabla \phi(u)$ is equal to $-\nabla_{\rH} \mathcal{E}(u)$,  the $-\rH$ gradient of the total energy $\mathcal{E}$, the second deterministic term on the RHS of \eqref{eq:7eqintr} (modulo $\lambda_2$), is equal to $ -\nabla_{M} \mathcal{E}(u)$, the -gradient of the total energy $\mathcal{E}$ with respect to the riemannian structure of $M$ inherited from $\rH$. Similarly, the first deterministic term on the RHS of \eqref{eq:7eqintr} (modulo $\lambda_1$) is equal to $-u \times \bigl( -\nabla_{M} \mathcal{E}(u)\bigr)$ and hence is   perpendicular to $ \nabla_{M} \mathcal{E}(u)$.
Note also that for each $j$, $M \ni u \mapsto u \times h_j \in T_uM$, so that the function $u \times h_j$ could be seen as a (tangent!) vector field $v_j$ on $M$. Therefore, the the first equation of the system \eqref{eq:7eqintr} could be written in the following geometric form
\begin{eqnarray}\label{eq:7eqintr'}
\qquad\ud u(t)&=&\Big[ \lmd_1 u \times \bigl( \nabla_{M} \mathcal{E}(u)\bigr)
-\lmd_2  \nabla_{M} \mathcal{E}(u)+ \frac12 \sum_{j=1}^N  \bigl( u \times h_j\bigr) \times h_j    \Big] \ud t +\sum_{j=1}^N\big(u(t)\times h_j\big)\ud W_j(t).
\end{eqnarray}
Thus, on a purely heuristics level, applying the It\^o Lemma to the function $\mathcal{E}$ and a solution $u$ to \eqref{eq:7eqintr} we get
\begin{eqnarray}\nonumber
   d \mathcal{E}(u(t))&=& \lambda_1 \lb \nabla_{M} \mathcal{E}(u), u \times \bigl( \nabla_{M} \mathcal{E}(u)\bigr) \rb\, dt
   -\lambda_2 \lb \nabla_{M} \mathcal{E}(u), \nabla_{M} \mathcal{E}(u) \rb\, dt\\
    &+& \frac12 \sum_{j=1}^N \lb \nabla_{M} \mathcal{E}(u), \bigl( u \times h_j\bigr) \times h_j \rb \, dt + \sum_{j=1}^N \lb \nabla_{M} \mathcal{E}(u), u \times h_j \rb \, dW_j
    \nonumber \\
   &+& \frac12 \sum_{j=1}^N  \lb \nabla_{M}^2 \mathcal{E}(u) \bigl( u \times h_j\bigr),  u \times h_j \rb \, dt
\nonumber   \\
\nonumber
&=&
   -\lambda_2 \vert  \nabla_{M} \mathcal{E}(u)\vert^2 \, dt\\
 \nonumber   &+& \frac12 \sum_{j=1}^N \lb \nabla_{M} \mathcal{E}(u), \bigl( u \times h_j\bigr) \times h_j \rb \, dt + \sum_{j=1}^N \lb \nabla_{M} \mathcal{E}(u), u \times h_j \rb \, dW_j\\
   &+& \frac12 \sum_{j=1}^N  \lb \nabla_{M}^2 \mathcal{E}(u) \bigl( u \times h_j\bigr),  u \times h_j \rb \, dt
 \label{eqn-explanation01}
   \end{eqnarray}
The above equality could be seen as an {\em a priori} estimate but there are two  problems. Firstly,   we do not have a solution and secondly, even if we had it, it might not be strong or regular enough for the applicability of the It\^o Lemma. A standard procedure is to approximate the full equation by some simpler problems. In the paper \cite{ZB&TJ} we used Galerkin approximation, in a series of works with Banas,  Prohl and Neklyudov culminating in a monograph \cite{LB&ZB&AP&MNbk}, we used the finite element approximation. Here  We follow the same method as used in Brze{\'z}niak, Goldys and Jegaraj's paper \cite{ZB&TJ} but with one important addition. We introduce the finite dimensional Hilbert spaces $\rH_n$ as in \cite{ZB&TJ} but then consider an analog of equation \eqref{eq:7eqintr'} on the space $\rH_n$ with $M$ replaced by $M_n=M \cap \rH$ and the energy function $\mathcal{E}$ replaced by $\mathcal{E}_n$, the restriction of the former to $\rH_n$. Thus, we consider
\begin{equation}\label{eq:7eqintr''}
\ud u_n(t)=\Big[ \lmd_1 u_n \times \bigl( \nabla_{M_n} \mathcal{E}_n(u_n)\bigr)
-\lmd_2  \nabla_{M_n} \mathcal{E}_n(u_n)+ \frac12 \sum_{j=1}^N  \pi_n \Bigl(  \bigl( \pi_n( u_n \times h_j) \bigr) \times h_j \Bigr)    \Big] \ud t +\sum_{j=1}^N\pi_n \big(u_n(t)\times h_j\big)\ud W_j.
\end{equation}
where $\pi_n : \rH \to \rH_n$ is the orthogonal projection. Equation \eqref{eq:7eqintr''} is nothing else but equation \eqref{eq:dun} or \eqref{eq:dun2}. Now, the above problem is an SDE in a finite dimensional space $\rH_n$ and hence it has a unique local maximal solution $u_n$. Applying the, now correct, It\^o lemma to process $u_n$ and the function $\mathcal{E}_n$ we get an analog of identity
\begin{eqnarray}\nonumber
   d \mathcal{E}(u_n(t))&+&
   \lambda_2 \vert  \nabla_{M_n} \mathcal{E}_n(u_n)\vert^2 \, dt\\
 \nonumber   &=& \;\;\;\;\;\frac12 \sum_{j=1}^N \lb \nabla_{M_n} \mathcal{E}(u_n), \pi_n \Bigl(  \bigl( \pi_n( u_n \times h_j) \bigr) \times h_j \Bigr) \rb \, dt + \sum_{j=1}^N \lb \nabla_{M_n} \mathcal{E}_n(u_n), \pi_n\bigl( u_n \times h_j\bigr) \rb \, dW_j\\
   &+& \frac12 \sum_{j=1}^N  \lb \nabla_{M_n}^2 \mathcal{E}_n(u_n) \bigl( \pi_n\bigl( u_n \times h_j\bigr) \bigr),  \pi_n\bigl( u_n \times h_j\bigr) \rb \, dt
 \label{eqn-explanation02}
   \end{eqnarray}
Since $\mathcal{E}_n$ is the restriction of $\mathcal{E}$ to $\rH_n$, $\nabla_{M_n} \mathcal{E}_n(z)=\nabla_{} \mathcal{E}(z)\circ i_n$, $z\in M_n$, where $i_n:\rH\to \rH$ is the natural embedding.  Similarly, $\nabla_{M_n}^2 \mathcal{E}_n(z)=\pi_n \nabla_{M}^2 \mathcal{E}(z) i_n$.

We follow the same method as used in Brze{\'z}niak, Goldys and Jegaraj's paper \cite{ZB&TJ} to proved the existence of the weak solution of \eqref{eq:7eqintr} and get some similar regularities of the weak solution (but not uniqueness).

In particular, our results give an alternative proof of the existence result from Brze{\'z}niak, Goldys and Jegaraj's paper \cite{ZB&BG&TJ}, where large deviations principle for stochastic LLG equation on a $1$-dimensional domain has been studied.

This paper is organized as follows. In Section 2 we introduce the notations and formulate the main result on the existence of the weak solution of the Equation \eqref{eq:7eqintr} as well as some regularities. In Section 3 we introduce the Galerkin approximation and prove the existence of the global solutions $\{u_n\}$ of the approximate equation of \eqref{eq:7eqintr}, which are in $n$ dimensional spaces, where $n\in\mathbb{N}$. In Section 4 we prove the global solutions of the approximate equations in finite dimensional spaces satisfy some {\em a priori} estimates. In Section 5, we use the {\em a priori} estimates to show the laws of the $\{u_n\}$ are tight on a suitable space. In Section 6, we use the tightness results and the Skorohod's Theorem to construct a new probability space and some processes $\{u_n^{\prime}\}$ which have the same laws as $\{u_n\}$. By the Skorohod's Theorem, we also get a limit process $u^{\prime}$ of $\{u_n^{\prime}\}$. And we show some properties that $u^{\prime}$ satisfies. In Section 7, we use two steps to show that $u^{\prime}$ constructed before is a weak solution of the Equation \eqref{eq:7eqintr}. In Section 8, we prove some regularities of $u^{\prime}$ and so finish the proof of the main Theorem which stated in Section 2.

Let us finish the introduction by remarking that all our results are formulated for $D\subset \R^d$, $d=3$, but they are also valid for $d=1$ or $d=2$.

\begin{rem*}
  This paper is from a part of the Ph.D. thesis at the University of York in UK of the second named author .
\end{rem*}
\subsection*{Acknowledgments}
  The authors would like to anonymous referees  for a careful reading of the
  manuscript and pointing out many errors and making many useful suggestions. These have lead to an improvement of the paper.

\section{Notation and the formulation of the main result}
\begin{notation}
  Let us denote the classical spaces:
  $$\mathbb{L}^p:=L^p(D;\R^3) \textrm{ or }L^p(D;\R^{3\times 3}),$$
  $$\mathbb{W}^{k,p}:=W^{k,p}(D;\R^3),\;\mathbb{H}^{k}:=H^{k}(D;\R^3),\;\textrm{and }\mathbb{V}:=\mathbb{W}^{1,2}.$$
The dual brackets between a space $X$ and its dual $X^*$ will be denoted ${}_{X^*} \langle\cdot, \cdot\rangle_X$. A scalar
product in Hilbert space $H$ will be denoted $ \langle\cdot, \cdot\rangle_H$ and its associated norm $\| \cdot \|_H.$
\end{notation}

\begin{assumption}\label{as:all4}
  Let $D$ be an open and bounded domain in $\R^3$ with $C^2$ boundary $\Gamma:=\partial D.$ $n$ is the outward normal vector on $\Gamma$.
   $\lmd_1\in\R$, $\lmd_2>0$, $  h_j\in \mathbb{L}^\infty\cap \mathbb{W}^{1,3}$, for $j=1,\ldots,N$, $u_0\in {\mathbb{V}}$. $\phi:\R^3\longrightarrow \R^+\cup\{0\}$ is in $C^4$ and $\phi$,  $\nabla \phi$, $\phi^\bis$ and $\phi^{(3)}$ are bounded. $\nabla \phi$ is also globally Lipschitz.
    Moreover, we also assume that we have a filtered probability space $(\Omega,\mathcal{F},\mathbb{F}=(\mathcal{F}_t)_{t\geq0},\mathbb{P})$, and this probability space satisfies the so called usual conditions:
   \begin{trivlist}
     \item[(i)] $\mathbb{P}$ is complete on $(\Omega,\mathcal{F})$,
     \item[(ii)] for each $t\geq0$, $\mathcal{F}_t$ contains all $(\mathcal{F},\mathbb{P})$-null sets,
     \item[(iii)] the filtration $(\mathcal{F}_t)_{t\geq0}$ is right-continuous.
   \end{trivlist}
   We also assume that $(W(t))_{t\geq0}=\big((W_j)_{j=1}^N(t)\big)_{t\geq0}$ is a $\R^N$-valued, $(\mathcal{F}_t)_{t\geq0}$-adapted Wiener process defined on $(\Omega,\mathcal{F},(\mathcal{F}_t)_{t\geq0},\mathbb{P})$.
\end{assumption}

In  this paper we are going to study the following equation.
\begin{equation}\label{eq:7eq}
\left\{\begin{array}{ll}
\ud u(t)&=\Big[\lmd_1u(t)\times\left(\Delta u(t)-\nabla \phi\big(u(t)\big)\right)\\&-\lmd_2u(t)\times\Big(u(t)\times\left(\Delta u(t)
\;\;-\nabla \phi\big(u(t)\big)\right)\Big)\Big]\ud t +\displaystyle \sum_{j=1}^N\big(u(t)\times h_j\big)\circ\ud W_j(t)\\
\left.\frac{\partial u}{\partial n}\right|_\Gamma&=0\\
u(0)&=u_0\end{array}\right.
\end{equation}

\begin{rem}
Since   $\phi:\R^3\longrightarrow \R\in C^4$,  for every $x\in \R^3$ the Frechet derivative  $\ud_x\phi=\nabla \phi(x):\R^3\longrightarrow \R$ is linear, and hence by the Riesz Lemma,  there exists a vector $\nabla \phi(x)\in \R^3$ such that
\[ \lb \nabla \phi(x),y\rb_{\R^3}=\ud_x \phi (y),\;\;\; y\in \R^3.\]
\end{rem}

\begin{defn}[Solution of \eqref{eq:7eq}]\label{defn:sol}
A weak solution of \eqref{eq:7eq} is system consisting of a filtered probability space $(\Omega^{\prime},\mathcal{F}^{\prime},\mathbb{F}^{\prime},\mathbb{P}^{\prime})$,
an $N$-dimensional $\mathbb{F}^{\prime}$-Wiener process
$W^{\prime}=(W_j^{\prime})_{j=1}^N$ and an $\mathbb{F}^{\prime}$-progressively measurable process \[u^{\prime}=(u_i')_{i=1}^3:\Omega^{\prime}\times[0,T]\longrightarrow {\mathbb{V}}\cap\mathbb{L}^\infty\]
such that  for all $\psi\in C_0^\infty(D;\R^3)$, $t\in[0,T]$, we have, $\mathbb{P}^{\prime}$-a.s.,
\begin{eqnarray}
 \langle u^{\prime}(t),\psi\rangle_{{\mathbb{L}^2}}&=&\langle u_0,\psi\rangle_{{\mathbb{L}^2}}-\lmd_1\int_0^t\langle\nabla u^{\prime}(s),\nabla \psi\times u^{\prime}(s)\rangle_{\Ltwo}\ud s\nonumber\\
 &&+\lmd_1\int_0^t\langle u^{\prime}(s)\times \nabla\phi(u^{\prime}(s)),\psi\rangle_{\mathbb{L}^2} \ud s\nonumber\\
  &&-\lmd_2\int_0^t\langle \nabla u^{\prime}(s),\nabla (u^{\prime}\times\psi)(s)\times u^{\prime}(s)\rangle_{\Ltwo}\ud s\label{eq:defsol}\\
  &&+\lmd_2\int_0^t\langle u^{\prime}(s)\times (u^{\prime}(s)\times \nabla \phi(u^{\prime}(s)),\psi\rangle_{\mathbb{L}^2} \ud s\nonumber\\
  &&+\sum_{j=1}^N\int_0^t\langle u^{\prime}(s)\times h_j,\psi\rangle_{\mathbb{L}^2}\circ\ud W_j^{\prime}(s).\nonumber
\end{eqnarray}

Next we will formulate the main result of this paper:
\begin{thm}\label{thm:sum4}
Under the assumptions listed in Assumption \ref{as:all4}
  , i.e. a system consisting of a filtered probability space $(\Omega^{\prime},\mathcal{F}^{\prime},\mathbb{F}^{\prime},\mathbb{P}^{\prime})$,
and $N$-dimensional $\mathbb{F}^{\prime}$-Wiener process
$W^{\prime}=(W_j^{\prime})_{j=1}^N$.
  \begin{trivlist}
    \item[(i)] There exists a weak solution of \eqref{eq:7eq}.
    \item[(ii)]
    \[\mathbb{E}\int_0^T\left\|u'(t)\times\Delta u'(t)-u'(t)\times\nabla\phi(u'(t))\right\|^2_{\mathbb{L}^2}\ud t<\infty.\]
    \item[(iii)] For every $t\in[0,T]$,  in $L^2(\Omega^{\prime};{\mathbb{L}^2})$,
  \begin{eqnarray*}
    u^{\prime}(t)&=&u_0+\lmd_1\int_0^t\left(u^{\prime}\times\Delta u^{\prime}-u^{\prime}\times\nabla\phi(u^{\prime})\right)(s)\ud s\\
    &&-\lmd_2\int_0^tu^{\prime}(s)\times\left(u^{\prime}\times\Delta u^{\prime}-u^{\prime}\times\nabla\phi(u^{\prime})\right)(s)\ud s\nonumber\\
    &&+\sum_{j=1}^N\int_0^t(u^{\prime}(s)\times h_j)\circ \ud W_j^{\prime}(s);\nonumber
  \end{eqnarray*}
    \item[(iv)]
  \begin{equation*}
  |u^{\prime}(t,x)|_{\R^3}=1, \quad\textrm{for Lebesgue a.e. } (t,x)\in [0,T]\times D \textrm{ and }\mathbb{P}^{\prime}-a.s..
  \end{equation*}
    \item[(v)] For every $\alpha\in (0,\frac{1}{2})$,
  \begin{equation*}
  u^{\prime}\in C^\alpha([0,T];{\mathbb{L}^2}),\qquad \mathbb{P}^{\prime}-a.s..
  \end{equation*}
  \end{trivlist}
\end{thm}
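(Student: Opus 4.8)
The plan is to follow the stochastic compactness (Galerkin plus Skorokhod) scheme sketched in the introduction, treating the five assertions in the order in which the estimates become available. First I would fix an orthonormal basis of $\rH$ consisting of eigenfunctions of the Neumann Laplacian, let $\rH_n$ be the span of the first $n$ of them and $\pi_n:\rH\to\rH_n$ the orthogonal projection, and read \eqref{eq:7eqintr''} as a finite-dimensional Stratonovich SDE on $\rH_n$. Since on $M_n=M\cap\rH_n$ all coefficients are smooth with locally Lipschitz drift and diffusion, this SDE has a unique local maximal solution $u_n$; the point is to show it is in fact global, which will follow from the energy estimate below.

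The heart of the argument is the uniform \emph{a priori} estimate. Applying the It\^o lemma to $\mathcal{E}_n(u_n)$ produces identity \eqref{eqn-explanation02}, in which the gyroscopic term drops out because $u\times\nabla_{M_n}\mathcal{E}_n(u)$ is orthogonal to $\nabla_{M_n}\mathcal{E}_n(u)$, leaving the dissipative term $-\lmd_2|\nabla_{M_n}\mathcal{E}_n(u_n)|^2$ together with the It\^o correction, a martingale, and the drift coming from the Stratonovich-to-It\^o conversion. Because $\phi,\nabla\phi,\phi^\bis$ are bounded and $h_j\in\mathbb{L}^\infty\cap\mathbb{W}^{1,3}$, both the correction and the quadratic variation are controlled by $\mathcal{E}_n(u_n)$ plus lower-order terms; a Burkholder--Davis--Gundy and Gronwall argument then yields bounds, uniform in $n$, for $\mathbb{E}\sup_{t\le T}\mathcal{E}_n(u_n(t))$ and for $\lmd_2\,\mathbb{E}\int_0^T|\nabla_{M_n}\mathcal{E}_n(u_n)|^2\,\ud t$. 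The first gives a uniform $L^\infty(0,T;\mathbb{V})$ bound; the second is precisely the source of assertion (ii), since on $M_n$ one has $|\nabla_{M_n}\mathcal{E}_n(u_n)|=|u_n\times(\Delta u_n-\nabla\phi(u_n))|$, and weak lower semicontinuity of the norm transfers the bound to the limit. Complementing these with a fractional-in-time estimate $\mathbb{E}\|u_n(t)-u_n(s)\|^p_{\mathbb{X}}\le C|t-s|^{1+\eps}$ in a suitable negative-order space $\mathbb{X}$, read off termwise from the equation, delivers tightness of the laws of $\{u_n\}$ on $C([0,T];\mathbb{X})\cap L^2(0,T;\rH)$ through an Aubin--Lions type compactness using $\mathbb{V}\hookrightarrow\hookrightarrow\rH\hookrightarrow\mathbb{X}$.

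With tightness in hand, Prokhorov and the Skorokhod representation theorem supply a new probability space carrying processes $u_n'$ with the laws of $u_n$, an almost sure limit $u'$, and Wiener processes converging to a limiting $W'$. I would then pass to the limit in the Galerkin weak formulation to obtain \eqref{eq:defsol}, giving (i) and, after identifying the Stratonovich integral and rewriting, the strong form (iii). The constraint (iv) is recovered from the evolution of $|u_n'|^2$: the cross-product structure keeps $|u_n'|$ equal to $1$ up to projection errors that vanish as $n\to\infty$ (equivalently one controls $\mathbb{E}\int_0^T\int_D(|u_n'|^2-1)^2$ directly), so $|u'(t,x)|=1$ a.e. in the limit. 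Finally (v) follows from the Kolmogorov continuity criterion applied to the same fractional time bounds, now valid in $\mathbb{L}^2$, yielding $u'\in C^\alpha([0,T];\mathbb{L}^2)$ for every $\alpha<\frac12$.

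The step I expect to be the main obstacle is the passage to the limit in the nonlinear terms, above all the highest-order one, which in weak form reads $\lmd_2\langle\nabla u_n',\nabla(u_n'\times\psi)\times u_n'\rangle_{\Ltwo}$. Here $\nabla u_n'$ converges only weakly in $\mathbb{L}^2$ while the remaining factors $u_n'$ must converge strongly for the product to pass to the limit; reconciling these requires the strong $L^2(0,T;\rH)$ convergence furnished by Aubin--Lions together with the almost sure convergence from Skorokhod, and a careful use of the uniform $\mathbb{L}^\infty$ bound on $u_n'$ to control the cubic expressions. The anisotropy contributions $u'\times\nabla\phi(u')$ and $u'\times(u'\times\nabla\phi(u'))$ are comparatively benign, since $\nabla\phi$ is bounded and globally Lipschitz, but they must still be carried consistently through the energy identity (where $\phi^\bis$ enters the It\^o correction) and through the limit, and identifying the Stratonovich integral in the limit rather than merely its It\^o part will need convergence of the associated quadratic variations.
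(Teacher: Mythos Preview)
Your overall architecture matches the paper: Galerkin approximation on eigenspaces of the Neumann Laplacian, an energy identity via It\^o's formula yielding uniform bounds on $\sup_t\|\nabla u_n\|_{\Ltwo}^2$ and $\int_0^T\|u_n\times(\Delta u_n-\pi_n\nabla\phi(u_n))\|_{\Ltwo}^2$, tightness plus Skorokhod, and then identification of the limit. The Kolmogorov argument for (v) is exactly what the paper does.

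There is, however, a genuine gap in your plan for (iv), and it contaminates your strategy for the nonlinear limit as well. The Galerkin solutions $u_n$ do \emph{not} satisfy $|u_n(t,x)|\approx 1$ pointwise; the projection $\pi_n$ destroys the pointwise cross-product cancellation. What the paper establishes at the approximate level is only conservation of the $\Ltwo$ norm, $\|u_n(t)\|_{\Ltwo}=\|\pi_n u_0\|_{\Ltwo}$. If you try to evolve $\langle u_n,\xi u_n\rangle_{\Ltwo}$ for a test function $\xi$, the drift contains terms like $\langle \pi_n(\xi u_n),u_n\times A u_n\rangle_{\Ltwo}$ rather than $\langle \xi u_n,u_n\times A u_n\rangle_{\Ltwo}=0$, and the commutator $(I-\pi_n)(\xi u_n)$ paired against $u_n\times A u_n$ does not vanish uniformly. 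Consequently there is no uniform $\mathbb{L}^\infty$ bound on $u_n'$ available, so your proposed control of the cubic term via such a bound cannot work either.

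The paper resolves both issues differently. For the cubic nonlinear limit it obtains tightness (and hence a.s.\ convergence after Skorokhod) in $L^4(0,T;\mathbb{L}^4)$, not merely $L^2(0,T;\Ltwo)$; the product $\langle D_i u_n',\,(u_n'-u')\times D_i\psi\rangle_{\Ltwo}$ is then bounded by $\|D_i u_n'\|_{\Ltwo}\|u_n'-u'\|_{\mathbb{L}^4}\|D_i\psi\|_{\mathbb{L}^4}$ and goes to zero without any $\mathbb{L}^\infty$ control. For (iv), the constraint is proved \emph{at the limit}, after the equation for $u'$ has been established in $X^{-\beta}$: one applies a version of the It\^o formula valid for processes in $M^2(0,T;\mathbb{V})$ with drift in $M^2(0,T;\mathbb{V}')$ (Pardoux's theorem) to the functional $\psi(u)=\langle u,\xi u\rangle_{\Ltwo}$; the cross-product structure now gives exact cancellations (using the separately proved orthogonality $\langle Y,\xi u'\rangle_{\Ltwo}=0$ where $Y=u'\times(\Delta u'-\nabla\phi(u'))$), yielding $\langle u'(t),\xi u'(t)\rangle_{\Ltwo}=\langle u_0,\xi u_0\rangle_{\Ltwo}$ for all $\xi\in C_0^\infty(D)$. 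You should replace your approximate-level argument for (iv) with this limit-level one, and upgrade the tightness space to $L^4(0,T;\mathbb{L}^4)\cap C([0,T];X^{-\beta})$ so that the nonlinear passage goes through.
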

\end{defn}

\begin{rem}
  The notation $u^{\prime}\times \Delta u^{\prime}$ used in Theorem \ref{thm:sum4} will be defined in the Notation \ref{notation:uxDeltau}.\\
  The notation $u^{\prime}\times ( u^{\prime}\times \Delta u^{\prime})$ used in Theorem \ref{thm:sum4} will be defined in the Notation \ref{notation:uxuxDeltau}.
\end{rem}

\begin{rem}
  Our results are for the Laplace operator with Neumann boundary conditions. Without any difficult work one could prove the same result for the Laplace operator on a compact manifold without boundary. In particular, for Laplace operator with periodic boundary condition.
\end{rem}

\section{Galerkin approximation}
Let us define $A:=-\Delta$ as the $-$Laplace operator in $D$ acting on $\R^3$-valued functions with Neumann boundary condition:
\[D(A)=\left\{u\in \mathbb{H}^2:\frac{\partial u}{\partial n}\Big|_{\partial D}=0\right\}\subset {\mathbb{L}^2}. \]
$A$ is self-adjoint, so by (\cite[Thm 1, p. 335]{Evans}), there exists an orthonormal basis (which are eigenvectors of $A$)  $\{e_k\}_{k=1}^\infty$ of ${\mathbb{L}^2}$, such that $e_k\in C^\infty(\bar{D})$ for all $k=1,2,\ldots,$. We set $H_n=\lspan\{e_1,e_2,\ldots,e_n\}$ and let $\pi_n$ denote the orthogonal projection from ${\mathbb{L}^2}$ to $H_n$. We also note that ${\mathbb{V}}=D(A^\frac{1}{2})$ and define $A_1:=I+A$, then ${\mathbb{V}}=D(A_{1}^\frac{1}{2})$ and $\|u\|_{\mathbb{V}}=\|A_1^\frac{1}{2}u\|_{\mathbb{L}^2}$ for $u\in {\mathbb{V}}$.\\
We also have the following definition and properties relate to the operator $A$, which will be frequently used later:

\begin{defn}[Fractional power spaces of $A_1=I+A$]
  For any nonnegative real number $\beta$ we define the Hilbert space $X^\beta:=D(A_1^\beta)$, which is the domain of the fractional power operator $A_1^\beta$. And the dual of $X^\beta$ is denoted by $X^{-\beta}$. See \cite{ZB&TJ}.
\end{defn}

We have the following property about the relations of $X^\gamma$ and $H^{2\gamma}$.
\begin{prop}\label{prop:DAH} With $A_1=I+A$ as above we have, see \cite[4.3.3]{Triebel},
  \[X^\gamma=D(A_1^\gamma)=\left\{\begin{array}{ll}
  \left\{u\in \mathbb{H}^{2\gamma}:\frac{\partial u}{\partial n}\Big|_{\partial D}=0\right\},&\mbox{ if }\;2\gamma>\frac{3}{2},\\
  \mathbb{H}^{2\gamma},&\mbox{ if }\; 2\gamma<\frac{3}{2}.\end{array}\right.\]
\end{prop}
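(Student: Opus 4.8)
The plan is to reduce the statement to the complex interpolation characterization of fractional power domains and then invoke the known identification of the interpolation spaces between $\mathbb{L}^2$ and the Neumann domain $D(A)$ with Sobolev spaces. First I would recall that $A_1=I+A=I-\Delta$ is a strictly positive self-adjoint operator on $\mathbb{L}^2$: since $A=-\Delta$ with Neumann boundary conditions is nonnegative and self-adjoint, the spectrum of $A_1$ lies in $[1,\infty)$, so the fractional powers $A_1^\gamma$ and their domains $X^\gamma=D(A_1^\gamma)$ are well defined by the spectral calculus. For a positive self-adjoint operator one has the classical identity $X^\gamma=[\mathbb{L}^2,D(A_1)]_\gamma$ for $\gamma\in(0,1)$, where $[\cdot,\cdot]_\gamma$ denotes complex interpolation (see e.g. Triebel~1.18.10). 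Note that $\gamma\in(0,1)$, i.e. $2\gamma\in(0,2)$, is exactly the range relevant to the stated dichotomy, since the only critical exponent inside $(0,2)$ is $3/2$.

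Next I would pin down the two endpoint spaces. We already know from the text that $X^0=\mathbb{L}^2=\mathbb{H}^0$ and that $D(A_1)=D(A)=\{u\in\mathbb{H}^2:\frac{\partial u}{\partial n}|_\Gamma=0\}$, so that the Neumann boundary condition is built into the upper endpoint. The entire content of the proposition is therefore the interpolation statement that interpolating between $\mathbb{L}^2$ and this Neumann domain reproduces the plain Sobolev scale $\mathbb{H}^{2\gamma}$ as long as the interpolation order $2\gamma$ lies below $3/2$, but retains the boundary condition $\frac{\partial u}{\partial n}|_\Gamma=0$ once $2\gamma$ exceeds $3/2$. As a sanity check, this is consistent with the two values already recorded in the text: for $\gamma=\tfrac12$ one has $2\gamma=1<\tfrac32$ and $X^{1/2}=\mathbb{V}=\mathbb{H}^1$, while for $\gamma=1$ one has $2\gamma=2>\tfrac32$ and $X^1=D(A)=\{u\in\mathbb{H}^2:\frac{\partial u}{\partial n}|_\Gamma=0\}$.

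The mechanism behind the threshold $\tfrac32$ is the trace theorem. On a domain with $C^2$ boundary the normal-derivative trace $u\mapsto\frac{\partial u}{\partial n}|_\Gamma$ is a bounded (surjective) map from $\mathbb{H}^s$ onto $H^{s-3/2}(\Gamma)$ precisely when $s>\tfrac32$, and it ceases to be a well-defined bounded map for $s<\tfrac32$. Consequently, for $2\gamma<\tfrac32$ the relation $\frac{\partial u}{\partial n}|_\Gamma=0$ imposes no closed condition on $\mathbb{H}^{2\gamma}$, so the interpolation space is all of $\mathbb{H}^{2\gamma}$; for $2\gamma>\tfrac32$ it is a closed, bounded constraint that survives the interpolation and cuts out the subspace $\{u\in\mathbb{H}^{2\gamma}:\frac{\partial u}{\partial n}|_\Gamma=0\}$. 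This dichotomy is exactly the identification of interpolation spaces for elliptic boundary value problems recorded in Triebel~4.3.3, which I would cite to close the argument. The borderline case $2\gamma=\tfrac32$ is genuinely exceptional, the relevant space being a Lions--Magenes space rather than an ordinary Sobolev space, which is why it is excluded from the statement.

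The main obstacle is precisely this interpolation identification at and around the critical exponent $\tfrac32$: establishing it from first principles requires the full machinery of traces, extension operators and complex interpolation of function spaces on domains, so the cleanest route is to verify the two hypotheses needed to apply Triebel's theorem, namely the self-adjointness of $A_1$ and the explicit form of the Neumann domain $D(A)$, and then quote the theorem rather than reprove the boundary-trace interpolation theory. I would also flag, for safety, that the single-threshold formulation is tied to the range $\gamma\in(0,1)$ in which the argument is used: for $\gamma>1$ one must reiterate the interpolation, and further conditions of the form $\frac{\partial(\Delta^k u)}{\partial n}|_\Gamma=0$ become active at the successive critical exponents $\tfrac32+2k$, so the clean two-case description is exactly the one appropriate to the values of $\gamma$ arising in this paper.
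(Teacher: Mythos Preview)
Your proposal is correct and aligns with the paper's own treatment: the paper gives no proof at all, merely stating the result with the citation \cite[4.3.3]{Triebel}. You have supplied the standard explanation (self-adjointness gives $X^\gamma=[\mathbb{L}^2,D(A)]_\gamma$, and the trace threshold $3/2$ governs whether the Neumann condition survives interpolation) that the paper leaves implicit, so your write-up is strictly more informative but rests on the same cited source.
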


\begin{prop}\label{prop:stokes on neu}
Let $D$ be a bounded open domain in $\R^3$ with $C^2$ boundary, $u\in \mathbb{H}^2$, $\rv\in \mathbb{V}$, and $\left.\frac{\partial u}{\partial n}\right|_{\partial D}=0$ then we have
  \[\langle Au,\rv\rangle_{{\mathbb{L}^2}}=\int_D\lb \nabla u(x),\nabla \rv(x)\rb_{\R^{3\times 3}}\ud x.\]
\end{prop}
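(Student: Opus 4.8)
The plan is to reduce the statement to the classical Green's first identity and then to extend it to the stated regularity by density, leaving the Neumann boundary condition to be applied only at the very end. Since all the objects are $\R^3$-valued, I would argue componentwise: writing $u=(u_1,u_2,u_3)$ and $\rv=(\rv_1,\rv_2,\rv_3)$, and noting that $\langle\nabla u,\nabla\rv\rangle_{\R^{3\times3}}=\sum_{i=1}^3\nabla u_i\cdot\nabla\rv_i$ is the Frobenius product of the Jacobians, the claim is the sum over $i=1,2,3$ of the scalar identities $\int_D(-\Delta u_i)\rv_i\ud x=\int_D\nabla u_i\cdot\nabla\rv_i\ud x$.

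First I would record the generalized Green's formula for smooth functions. For $u,\rv\in C^\infty(\bar D)$, the divergence theorem (valid since $\partial D$ is $C^2$) gives, for each component,
\[\int_D(-\Delta u_i)\rv_i\ud x=\int_D\nabla u_i\cdot\nabla\rv_i\ud x-\int_{\partial D}\frac{\partial u_i}{\partial n}\rv_i\ud S.\]
Summing over $i$ I obtain
\[\langle Au,\rv\rangle_{\mathbb{L}^2}=\int_D\langle\nabla u,\nabla\rv\rangle_{\R^{3\times3}}\ud x-\int_{\partial D}\frac{\partial u}{\partial n}\cdot\rv\ud S,\]
where the boundary term is still present because no boundary condition has yet been imposed.

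Next I would extend this identity to $u\in\mathbb{H}^2$ and $\rv\in\mathbb{V}$ by density and continuity. Since $D$ has $C^2$ boundary, $C^\infty(\bar D)$ is dense in both $\mathbb{H}^2$ and $\mathbb{V}=\mathbb{H}^1$. The two bulk terms are manifestly continuous on $\mathbb{H}^2\times\mathbb{V}$. The delicate point --- and the main obstacle --- is the boundary term. Here I would invoke the trace theory on the $C^2$ domain $D$: the Neumann trace $u\mapsto\frac{\partial u}{\partial n}\big|_{\partial D}$ is continuous from $\mathbb{H}^2$ into $H^{1/2}(\partial D)$, while the Dirichlet trace $\rv\mapsto\rv\big|_{\partial D}$ is continuous from $\mathbb{V}$ into $H^{1/2}(\partial D)$; consequently $\int_{\partial D}\frac{\partial u}{\partial n}\cdot\rv\ud S$ extends to a bilinear form on $\mathbb{H}^2\times\mathbb{V}$ that is continuous in both arguments (interpreted, if one wishes, as an $H^{-1/2}$--$H^{1/2}$ duality). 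Approximating $u$ in $\mathbb{H}^2$ and $\rv$ in $\mathbb{V}$ by smooth functions, each of the three terms converges to its counterpart, so the displayed identity holds for all $u\in\mathbb{H}^2$ and $\rv\in\mathbb{V}$.

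Finally, I would invoke the hypothesis $\frac{\partial u}{\partial n}\big|_{\partial D}=0$: its Neumann trace vanishes in $H^{1/2}(\partial D)$, so the boundary term is zero and the proposition follows. As an alternative route one could observe that $A$ is the self-adjoint operator associated with the closed symmetric form $a(u,\rv)=\int_D\langle\nabla u,\nabla\rv\rangle_{\R^{3\times3}}\ud x$ with form domain $\mathbb{V}=D(A^{1/2})$, whence $\langle Au,\rv\rangle_{\mathbb{L}^2}=a(u,\rv)$ for $u\in D(A)$ and $\rv\in D(A^{1/2})$ would be immediate from the definition of the operator attached to a form; I would nevertheless prefer the explicit Green's-identity argument above, as it exhibits clearly the role played by the Neumann boundary condition.
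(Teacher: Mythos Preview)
Your argument is correct. The paper, however, does not supply a proof for this proposition; it is stated as a standard fact (essentially Green's first identity for the Neumann Laplacian) and is immediately followed by the next proposition without any proof environment. Your route---reduce componentwise to the scalar Green identity, extend by density using the continuity of the Dirichlet and Neumann trace maps on a $C^2$ domain, then invoke the hypothesis $\partial u/\partial n|_{\partial D}=0$ to kill the boundary term---is exactly the kind of justification one would give. Your alternative observation, that $A$ is the non-negative self-adjoint operator associated with the closed form $a(u,\rv)=\int_D\langle\nabla u,\nabla\rv\rangle_{\R^{3\times 3}}\ud x$ with form domain $\mathbb{V}=D(A^{1/2})$, so that $\langle Au,\rv\rangle_{\mathbb{L}^2}=a(u,\rv)$ for $u\in D(A)$ and $\rv\in\mathbb{V}$, is in fact the cleanest route and is implicitly what the authors rely on when they record ${\mathbb{V}}=D(A^{1/2})$ just before this proposition.
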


\begin{prop}\label{prop:uxAuweak}
  If $\rv\in {\mathbb{V}}$ and $u\in D(A)$, then
  \begin{equation}
    \int_D\llangle u(x)\times Au(x),Au(x)\rrangle_{\R^3} \ud x=0.
  \end{equation}
  \begin{equation}
    \int_D\llangle u(x)\times(u(x)\times Au(x)),Au(x)\rrangle_{\R^3} \ud x=-\int_D|u(x)\times Au(x)|^2\ud x.
  \end{equation}
  \begin{equation}\label{eq:2.9}
    \int_D\llangle u(x)\times Au(x),\rv(x)\rrangle_{\R^3} \ud x=\sum_{i=1}^3\int_D\left\langle \frac{\partial u}{\partial x_i}(x),\frac{\partial \rv}{\partial x_i}(x)\times u(x)\right\rangle_{\R^3}\ud x.
  \end{equation}
  \begin{equation}\label{eq:2.91}
    \int_D\langle u(x)\times(u(x)\times Au(x)),\rv(x)\rangle_{\R^3} \ud x=\sum_{i=1}^3\int_D\left\langle \frac{\partial u}{\partial x_i}(x),\frac{\partial (\rv\times u)}{\partial x_i}(x)\times u(x)\right\rangle_{\R^3}\ud x.
  \end{equation}
\end{prop}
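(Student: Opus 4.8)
The plan is to treat the four identities separately, beginning with the two purely algebraic ones. The first identity is pointwise: for a.e.\ $x$ the vector $u(x)\times Au(x)$ is orthogonal to $Au(x)$, so $\langle u(x)\times Au(x),Au(x)\rangle_{\R^3}=0$, and integrating over $D$ gives zero. Since $u\in D(A)\subset\mathbb{H}^2\hookrightarrow\mathbb{L}^\infty$ (recall $d=3$) and $Au\in\Ltwo$, the integrand lies in $L^1(D)$, so no convergence issue arises. For the second identity I would again argue pointwise, writing $w:=u\times Au$ and using the cyclic invariance of the scalar triple product: $\langle u\times w,Au\rangle_{\R^3}=\langle w,Au\times u\rangle_{\R^3}=-\langle w,w\rangle_{\R^3}=-|u\times Au|^2$, after which integration yields $-\int_D|u\times Au|^2\,\ud x$.

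The third identity is where integration by parts enters. First I would rewrite the left-hand side by the cyclic triple-product identity as $\int_D\langle Au,\rv\times u\rangle_{\R^3}\,\ud x=\langle Au,\rv\times u\rangle_{\Ltwo}$. The key observation is that the field $\rv\times u$ is itself an admissible test function, i.e.\ $\rv\times u\in\mathbb{V}$: indeed $u\in\mathbb{L}^\infty$ and $\rv\in\Ltwo$ give $\rv\times u\in\Ltwo$, while $\nabla(\rv\times u)=\nabla\rv\times u+\rv\times\nabla u$ lies in $\Ltwo$ because $\nabla\rv\times u\in\Ltwo$ (as $u\in\mathbb{L}^\infty$) and $\rv\times\nabla u\in\mathbb{L}^3\subset\Ltwo$ (by $\rv,\nabla u\in\mathbb{L}^6$ via the Sobolev embeddings $\mathbb{V}\hookrightarrow\mathbb{L}^6$ and $\mathbb{H}^2\hookrightarrow\mathbb{W}^{1,6}$ in dimension three). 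Applying Proposition \ref{prop:stokes on neu} with $u$ (which has vanishing normal derivative since $u\in D(A)$) against the test field $\rv\times u$ gives $\langle Au,\rv\times u\rangle_{\Ltwo}=\sum_{i=1}^3\int_D\langle\partial_i u,\partial_i(\rv\times u)\rangle_{\R^3}\,\ud x$. Expanding $\partial_i(\rv\times u)=\partial_i\rv\times u+\rv\times\partial_i u$ by the Leibniz rule and noting that $\langle\partial_i u,\rv\times\partial_i u\rangle_{\R^3}=0$ pointwise (again the orthogonality of a cross product to one of its factors) leaves exactly the claimed right-hand side of \eqref{eq:2.9}.

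Finally, the fourth identity \eqref{eq:2.91} I would reduce to the third. Writing $w=u\times Au$ and applying the cyclic triple-product identity once more gives $\langle u\times w,\rv\rangle_{\R^3}=\langle w,\rv\times u\rangle_{\R^3}=\langle u\times Au,\rv\times u\rangle_{\R^3}$, so that the left-hand side of \eqref{eq:2.91} equals $\int_D\langle u\times Au,\rv\times u\rangle_{\R^3}\,\ud x$. This is precisely the left-hand side of the third identity with the test field $\rv$ replaced by $\rv\times u\in\mathbb{V}$, so \eqref{eq:2.91} follows at once by applying \eqref{eq:2.9} to $\rv\times u$. The only real work, and the step I expect to require the most care, is the regularity and integrability bookkeeping in the third identity: verifying that $\rv\times u\in\mathbb{V}$ and that every product appearing after the Leibniz expansion is genuinely in $L^1(D)$, which is where the three-dimensional Sobolev embeddings are used; the algebraic cancellations themselves are immediate.
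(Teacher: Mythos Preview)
Your proof is correct and follows essentially the same approach as the paper: the paper omits the first two identities as pointwise-obvious, cites \cite{ZB&TJ} for \eqref{eq:2.9}, and derives \eqref{eq:2.91} from \eqref{eq:2.9} exactly as you do, via the triple-product identity and the observation that $\rv\times u\in\mathbb{V}$. Your argument is simply a more self-contained version, supplying the integration-by-parts proof of \eqref{eq:2.9} explicitly rather than by reference.
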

\begin{proof}[Proof of \eqref{eq:2.9} and \eqref{eq:2.91}]
The equality \eqref{eq:2.9} follows from Brze{\'z}niak, Goldys and Jegaraj's paper \cite{ZB&TJ}. And since $\lb u\times (u\times Au),\rv\rb=\lb u\times Au,\rv\times u\rb$ and if $u\in D(A)$, $\rv\in {\mathbb{V}}$, then $\rv \times u\in {\mathbb{V}}$, \eqref{eq:2.91} follows from \eqref{eq:2.9}.
\end{proof}

We consider the following equation in $H_n$ ($H_n\subset D(A)$) with all the assumptions in Assumption \ref{as:all4}:
\begin{equation}\label{eq:dun}
\left\{\begin{array}{ll}
\ud u_n(t)&=-\pi_n\Big\{\lmd_1u_n(t)\times\Big[A u_n(t)+\pi_n \big(\nabla \phi\big(u_n(t)\big) \big)\Big]
\\&\hspace{0.5truecm}\lefteqn{-\lmd_2u_n(t)\times\Big(u_n(t)\times
\Big[A u_n(t) +\pi_n\big( \nabla \phi\big(u_n(t)\big)\big)\Big]\Big)\Big\}\ud t}
\\&+\sum_{j=1}^N\pi_n \Big[u_n(t)\times h_j\Big]\circ\ud W_j(t),\quad t\geq0,\\
u_n(0)&=\pi_nu_0.\end{array}\right.
\end{equation}
Let us point out that \eqref{eq:dun} is a suitable projection of \eqref{eq:7eq} onto the space $H_n$.

Let us define the following maps:
  \begin{eqnarray}
   F^1_n&:&H_n\ni u\longmapsto-\pi_n(u\times A u)\in H_n, \label{eq:F^1_n}\\
   F_n^2&:&H_n\ni u\longmapsto-\pi_n\big(u\times(u\times A u)\big)\in H_n, \label{eq:F^2_n}\\
   F_n^3&:&H_n\ni u\longmapsto- \pi_n\left(u\times\pi_n (\nabla \phi( u))\right)\in H_n, \label{eq:F^3_n}\\
   F_n^4&:&H_n\ni u\longmapsto- \pi_n\Big(u\times\left(u\times\pi_n (\nabla \phi(u))\right)\Big)\in H_n, \label{eq:F^4_n}\\
   \;\;\,G_{jn}&:& H_n\ni u\longmapsto\pi_n(u\times h_j)\in H_n, \; h_j\in \mathbb{L}^\infty\cap \mathbb{W}^{1,3}\label{eq:G_n},\; j=1,\ldots,N.
 \end{eqnarray}

Since $A$ restrict to $H_n$ is linear and bounded (with values in $H_n$) and since $H_n\subset D(A)\subset \mathbb{L}^\infty$, we infer that $G_{jn}$ and $F_n^1,F_n^2,F_n^3,F_n^4$ are well defined maps from $H_n$ to $H_n$.

The problem \eqref{eq:dun} can be written in a more compact way
\begin{equation}\label{eq:dun2}
\left\{\begin{array}{ll}
\ud u_n(t)&= \lmd_1 \left(   F_n^1\big(u_n(t)\big) +   F_n^3 \big(u_n(t)\big)\right)\, \ud t
-\lmd_2 \;\left( F_n^2\big(u_n(t)\big) +   F_n^4 \big(u_n(t)\big)\right)\, \ud t
\\&+\frac12 \sum_{j=1}^N G^2_{jn}\big(u_n(t)\big)\, \ud t + \sum_{j=1}^N G_{jn}\big(u_n(t)\big)\ud W_j(t),\\
u_n(0)&=\pi_nu_0.\end{array}\right.
\end{equation}

\begin{rem}
  In the Equations \eqref{eq:7eq} and \eqref{eq:dun}, we use the Stratonovich differential and in the Equation \eqref{eq:dun2} we use the It\^{o}  differential, the following equality relates the two differentals: for the map $G:{\mathbb{L}^2}\ni u\mapsto u\times h\in {\mathbb{L}^2}$,
  \[(Gu)\circ \ud W(t)=\frac{1}{2}G'(u)[G(u)]\ud t+G(u)\ud W(t),\qquad u\in {\mathbb{L}^2}.\]
\end{rem}

\begin{rem}
 As the equality \eqref{eq:effmag}, we have
 \[-\nabla_{H_n}\mathcal{E}(u_n)=Au_n+\pi_n\nabla \phi(u_n),\]
 so with the ``$\pi_n$''s in the equation \eqref{eq:dun}, our approximation keeps as much as possible the structure of the equation \eqref{eq:7eq}, and consequently we will get the {\em a priori} estimates.
\end{rem}

Now we start to solve the Equation \eqref{eq:dun2}.

\begin{lem}\label{lem:Lips for F3F4}
The maps  $F_n^i$, $i=1,2,3,4$  are Lipschitz on  balls, that is, for every $R>0$  there exists a constant $C=C(n,R)>0$ such that whenever $x,y\in H_n$ and $\|x\|_{\mathbb{L}^2} \leq R$, $\|y\|_{\mathbb{L}^2}\leq R$, we have
  \[\left\|F_n^i(x)-F_n^i(y)\right\|_{\mathbb{L}^2}\leq C\|x-y\|_{\mathbb{L}^2}.\]
  The map  $G_{jn}$ is linear and
 \begin{equation}\label{eq:Gnineq}
   \|G_{jn}u\|_{H_n}\leq \|u\|_{\mathbb{L}^2}\|h_j\|_{\mathbb{L}^\infty}, \qquad u\in H_n.
 \end{equation}
\end{lem}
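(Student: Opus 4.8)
The plan is to exploit that $H_n$ is finite dimensional, so that on $H_n$ the norms $\|\cdot\|_{\mathbb{L}^2}$ and $\|\cdot\|_{\mathbb{L}^\infty}$ are equivalent, and that $A$ restricted to $H_n$ is a bounded linear operator into $H_n$ (indeed $Ae_k=\mu_k e_k$, so $A$ maps $H_n$ into itself with operator norm $a_n:=\max_{k\le n}\mu_k$). First I would record the three elementary tools used repeatedly: the pointwise bound $|a\times b|_{\R^3}\le |a|_{\R^3}|b|_{\R^3}$, which gives $\|a\times b\|_{\mathbb{L}^2}\le \|a\|_{\mathbb{L}^\infty}\|b\|_{\mathbb{L}^2}$; the contraction property $\|\pi_n v\|_{\mathbb{L}^2}\le\|v\|_{\mathbb{L}^2}$ of the orthogonal projection; and the existence of a constant $K_n$ with $\|v\|_{\mathbb{L}^\infty}\le K_n\|v\|_{\mathbb{L}^2}$ for all $v\in H_n$. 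From Assumption \ref{as:all4} I also record that $\nabla\phi$ is bounded, so that the superposition operator satisfies $\|\nabla\phi(v)\|_{\mathbb{L}^2}\le |D|^{1/2}\|\nabla\phi\|_{\mathbb{L}^\infty}$, and globally Lipschitz with constant $L_\phi$, so that $\|\nabla\phi(x)-\nabla\phi(y)\|_{\mathbb{L}^2}\le L_\phi\|x-y\|_{\mathbb{L}^2}$.

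The claim for $G_{jn}$ is immediate: the map $u\mapsto u\times h_j$ is linear and $\pi_n$ is linear, so $G_{jn}$ is linear, and combining the contraction of $\pi_n$ with the cross-product bound (now with $h_j$ carrying the $\mathbb{L}^\infty$ norm) yields $\|G_{jn}u\|_{\mathbb{L}^2}\le\|u\times h_j\|_{\mathbb{L}^2}\le\|u\|_{\mathbb{L}^2}\|h_j\|_{\mathbb{L}^\infty}$, which is exactly \eqref{eq:Gnineq}.

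For each $F_n^i$ I would write the difference $F_n^i(x)-F_n^i(y)$ and telescope so that every summand contains exactly one difference factor, either $x-y$ (inside a cross product or inside $A$) or $\nabla\phi(x)-\nabla\phi(y)$. For $F_n^1$, bilinearity of $\times$ gives
\[
x\times Ax-y\times Ay=x\times A(x-y)+(x-y)\times Ay,
\]
and bounding each term by placing the $\mathbb{L}^\infty$ norm on the factor controlled by $R$ (via $K_n$) and using the boundedness $a_n$ of $A$ on $H_n$ produces $\|F_n^1(x)-F_n^1(y)\|_{\mathbb{L}^2}\le 2K_n a_n R\,\|x-y\|_{\mathbb{L}^2}$. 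For $F_n^3$ the analogous splitting
\[
x\times\pi_n\nabla\phi(x)-y\times\pi_n\nabla\phi(y)=(x-y)\times\pi_n\nabla\phi(x)+y\times\pi_n\big(\nabla\phi(x)-\nabla\phi(y)\big)
\]
is controlled by the boundedness of $\nabla\phi$ in the first term and by its Lipschitz property in the second, again using $K_n$ and $\|\pi_n\|\le 1$. The cubic maps $F_n^2$ and $F_n^4$ are handled by the same device with a three-term telescope, e.g.
\[
x\times(x\times Ax)-y\times(y\times Ay)=x\times\big(x\times A(x-y)\big)+x\times\big((x-y)\times Ay\big)+(x-y)\times(y\times Ay),
\]
each summand now carrying two factors bounded by $K_nR$ and one difference factor, so the Lipschitz constant picks up a factor $R^2$; $F_n^4$ combines this cubic telescope with the bounded/Lipschitz estimates for $\nabla\phi$ already used for $F_n^3$.

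I do not expect a genuine obstacle: the whole point is that finite dimensionality trivialises the analytic difficulties (unboundedness of $A$, loss of derivatives) that plague the full equation. The only place requiring care is the bookkeeping in the cross-product H\"older estimates: one must consistently assign the $\mathbb{L}^\infty$ norm to a factor that is either intrinsically bounded (the $h_j$ or $\nabla\phi$ terms) or bounded by $R$ through the constant $K_n$, and the $\mathbb{L}^2$ norm to the single difference factor, so that the resulting constant $C=C(n,R)$ is finite and the estimate is genuinely Lipschitz in $\|x-y\|_{\mathbb{L}^2}$. The cubic map $F_n^4$ is the most laborious case, since it mixes both mechanisms, but it introduces no new idea.
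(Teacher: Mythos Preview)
Your proposal is correct and follows essentially the same approach as the paper: the paper's proof is a terse abstraction of exactly what you write out, observing that $A$ and $\pi_n\nabla\phi(\cdot)$ are locally bounded and Lipschitz on $H_n$ and that the map $u\mapsto u\times\psi(u)$ inherits local Lipschitzness from $\psi$, which is precisely the content of your telescoping and norm-equivalence estimates. Your version has the virtue of making the constants and the role of finite-dimensionality explicit, but there is no substantive difference in strategy.
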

\begin{proof}
  Let us notice that the maps
   \begin{eqnarray*}
     H_n\ni u&\longmapsto& Au\in H_n\quad\textrm{and}\\
     H_n\ni u&\longmapsto& \pi_n(\nabla \phi(u))\in H_n
   \end{eqnarray*}
   are locally bounded and globally Lipschitz. And if the map $\psi:H_n\longrightarrow H_n$ is locally bounded and locally Lipschitz, then the map
   \[H_n\ni u\longmapsto u\times \psi(u)\in {\mathbb{L}^2}\]
   is also locally bounded and locally Lipschitz. Hence the maps  $F_n^i$, $i=1,2,3,4$  are locally Lipschitz.\\
   The result about $G_{jn}$ is obvious. This completes the proof of Lemma \ref{lem:Lips for F3F4}.
\end{proof}

Since the linear operator $\pi_n:H_n\longrightarrow H_n$ is self-adjoint and by the formula $(a\times b, b)_{\R^3}=0$, we infer that
\begin{lem}\label{lem:1sidelingro}
\[G_{jn}^*=-G_{jn}\]
   Moreover for $i=1,2,3,4$ and $u\in H_n$, we have
  \[\llangle F_n^i(u),u\rrangle_{\mathbb{L}^2}=0.\]
\end{lem}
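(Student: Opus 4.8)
The plan is to verify the two claimed identities separately, each reducing to an algebraic fact about the cross product in $\R^3$ combined with the self-adjointness of $\pi_n$. The key pointwise identity is $\lb a\times b, b\rb_{\R^3}=0$ for all $a,b\in\R^3$, which the authors have already invoked; I would apply it after moving the projection $\pi_n$ off the vector fields by self-adjointness.

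\textbf{The identity $G_{jn}^*=-G_{jn}$.} First I would compute, for $u,w\in H_n$,
\[
\lb G_{jn}u, w\rb_{\mathbb{L}^2}=\lb \pi_n(u\times h_j), w\rb_{\mathbb{L}^2}=\lb u\times h_j, \pi_n w\rb_{\mathbb{L}^2}=\lb u\times h_j, w\rb_{\mathbb{L}^2},
\]
using $\pi_n^*=\pi_n$ and $w=\pi_n w$ since $w\in H_n$. Then the pointwise antisymmetry $\lb u(x)\times h_j(x), w(x)\rb_{\R^3}=-\lb h_j(x)\times u(x), w(x)\rb_{\R^3}=-\lb u(x), h_j(x)\times w(x)\rb_{\R^3}$ together with the scalar triple product identity $\lb a\times b, c\rb=\lb a, b\times c\rb$ lets me rewrite this as $\lb u, w\times h_j\rb_{\mathbb{L}^2}=-\lb u, h_j\times w\rb_{\mathbb{L}^2}$. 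Integrating and reinserting $\pi_n$ on the second slot (again by self-adjointness, since $u\in H_n$) yields $\lb G_{jn}u,w\rb_{\mathbb{L}^2}=-\lb u, G_{jn}w\rb_{\mathbb{L}^2}$, i.e. $G_{jn}^*=-G_{jn}$.

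\textbf{The identities $\lb F_n^i(u),u\rb_{\mathbb{L}^2}=0$.} Here the strategy is the same: peel off the outer $\pi_n$ against $u\in H_n$, then reduce to a pointwise statement. For $i=1$,
\[
\lb F_n^1(u),u\rb_{\mathbb{L}^2}=-\lb \pi_n(u\times Au),u\rb_{\mathbb{L}^2}=-\lb u\times Au, u\rb_{\mathbb{L}^2}=0,
\]
since $\pi_n u=u$ and pointwise $\lb u(x)\times Au(x), u(x)\rb_{\R^3}=0$ by $\lb a\times b,a\rb=0$. The case $i=3$ is identical with $Au$ replaced by $\pi_n(\nabla\phi(u))$; note the inner $\pi_n$ is harmless because the vanishing is pointwise and does not require knowing the specific form of the second vector. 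For $i=2,4$ the integrand is of the form $\lb u\times(u\times z), u\rb_{\R^3}$ with $z=Au$ or $z=\pi_n(\nabla\phi(u))$; writing $b:=u\times z$, this equals $\lb u\times b, u\rb_{\R^3}=0$ again by $\lb a\times b,a\rb=0$, so after dropping the outer $\pi_n$ the integral vanishes.

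\textbf{Main obstacle.} There is essentially no analytic difficulty: every step is the scalar triple product identity plus $\pi_n^*=\pi_n$. The only point requiring a little care is the \emph{bookkeeping of the projections} — making sure that in each inner product one of the two slots already lies in $H_n$ so that $\pi_n$ can be transferred or suppressed, and that well-definedness (via $H_n\subset D(A)\subset\mathbb{L}^\infty$, as noted after the definition of the $F_n^i$) guarantees all the pointwise products are genuinely integrable. Once that is in place, the identities follow immediately.
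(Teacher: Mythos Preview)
Your proposal is correct and follows exactly the approach the paper indicates: the paper's entire proof is the one-line remark preceding the lemma, ``Since the linear operator $\pi_n$ is self-adjoint and by the formula $(a\times b,b)_{\R^3}=0$,'' and you have simply written out those two ingredients in detail. There is no difference in method, only in the level of explicitness.
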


\begin{cor}\cite{SA&ZB&JW}
  The Equation \eqref{eq:dun} has a unique global solution $u_n:[0,T]\longrightarrow H_n$.
\end{cor}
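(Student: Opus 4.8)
The plan is to read \eqref{eq:dun2} as a stochastic differential equation in the finite dimensional Hilbert space $H_n$ with locally Lipschitz coefficients, obtain a unique local maximal solution from the standard theory, and then rule out explosion by means of an a priori bound on the $\mathbb{L}^2$-norm. Collecting the drift of \eqref{eq:dun2} into the single map
\[
b_n(u)=\lmd_1\bigl(F_n^1(u)+F_n^3(u)\bigr)-\lmd_2\bigl(F_n^2(u)+F_n^4(u)\bigr)+\tfrac12\sum_{j=1}^N G_{jn}^2(u),
\]
Lemma \ref{lem:Lips for F3F4} shows that $b_n$ is locally Lipschitz (being assembled from the $F_n^i$ and the linear maps $G_{jn}$) and that each diffusion coefficient $G_{jn}$ is linear, hence globally Lipschitz and of linear growth. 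The existence-and-uniqueness theorem for SDEs with locally Lipschitz coefficients quoted as \cite{SA&ZB&JW} then yields a unique local maximal solution $u_n$ on a random interval $[0,\tau_n)$, where $\tau_n$ is the explosion time. It remains only to prove that $\tau_n\geq T$ almost surely.

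To this end I would apply the It\^o formula to $t\mapsto \|u_n(t)\|_{\mathbb{L}^2}^2$ on $[0,\tau_n)$. This produces the drift contribution $2\langle u_n,b_n(u_n)\rangle_{\mathbb{L}^2}+\sum_{j=1}^N\|G_{jn}(u_n)\|_{\mathbb{L}^2}^2$ together with the martingale part $2\sum_{j=1}^N\langle u_n,G_{jn}(u_n)\rangle_{\mathbb{L}^2}\,\ud W_j$. The algebraic heart of the matter is Lemma \ref{lem:1sidelingro}. On the one hand $\langle F_n^i(u_n),u_n\rangle_{\mathbb{L}^2}=0$ for $i=1,2,3,4$, so the two genuinely nonlinear pieces of the drift contribute nothing; on the other hand $G_{jn}^*=-G_{jn}$, whence $\langle u_n,G_{jn}(u_n)\rangle_{\mathbb{L}^2}=0$ and the martingale part vanishes identically. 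The only surviving terms are the Stratonovich-to-It\^o correction $2\langle u_n,\tfrac12\sum_j G_{jn}^2(u_n)\rangle_{\mathbb{L}^2}=\sum_j\langle G_{jn}^*u_n,G_{jn}u_n\rangle_{\mathbb{L}^2}=-\sum_j\|G_{jn}(u_n)\|_{\mathbb{L}^2}^2$ and the It\^o second-order term $+\sum_j\|G_{jn}(u_n)\|_{\mathbb{L}^2}^2$, which cancel exactly.

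Consequently $\ud\|u_n(t)\|_{\mathbb{L}^2}^2=0$ on $[0,\tau_n)$, so that $\|u_n(t)\|_{\mathbb{L}^2}=\|\pi_n u_0\|_{\mathbb{L}^2}\leq\|u_0\|_{\mathbb{L}^2}$ for all $t<\tau_n$, $\mathbb{P}$-a.s.. Thus the trajectory stays confined to a fixed ball of $H_n$, on which the coefficients are bounded by the local boundedness statement in Lemma \ref{lem:Lips for F3F4}; a solution whose $\mathbb{L}^2$-norm does not blow up cannot explode, so $\tau_n=+\infty$ a.s. and $u_n$ is in fact a global solution on $[0,T]$, which is unique by the local uniqueness.

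The argument is largely routine, and the only point demanding genuine care is this exact cancellation in the energy identity: one must verify that the term $\tfrac12\sum_j G_{jn}^2$, which is the price paid for rewriting the Stratonovich equation \eqref{eq:dun} in the It\^o form \eqref{eq:dun2}, is annihilated precisely by the It\^o correction arising from the quadratic variation, and that this hinges on the skew-adjointness $G_{jn}^*=-G_{jn}$. This conservation of the $\mathbb{L}^2$-norm is the finite dimensional shadow of the pointwise constraint $|u(t,x)|_{\R^3}=|u_0|_{\R^3}$ characteristic of the Landau--Lifschitz dynamics.
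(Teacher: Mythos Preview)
Your proof is correct and follows essentially the same approach as the paper: both invoke Lemma \ref{lem:Lips for F3F4} for local Lipschitz continuity and Lemma \ref{lem:1sidelingro} for the one-sided structure, then appeal to \cite{SA&ZB&JW}. The only difference is cosmetic: the paper packages the non-explosion argument as the ``one-sided linear growth'' hypothesis of the theorem in \cite{SA&ZB&JW}, whereas you spell out the It\^o-formula computation showing $\|u_n(t)\|_{\mathbb{L}^2}$ is conserved---a computation the paper carries out anyway immediately afterwards as a separate result (equation \eqref{eq:3.10phi}).
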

\begin{proof}
  By the Lemma \ref{lem:Lips for F3F4} and Lemma \ref{lem:1sidelingro}, the coefficients $F_n^i$, $i=1,2,3,4$ and $G_{jn}$ are locally Lipschitz and one side linear growth. Hence by a result in \cite{SA&ZB&JW}, the Equation \eqref{eq:dun} has a unique global solution $u_n:[0,T]\longrightarrow H_n$.
\end{proof}

Let us define functions $F_n$ and $\hat{F}_n:H_n\longrightarrow H_n$ by
\[F_n=\lmd_1(F_n^1+F_n^3)-\lmd_2(F_n^2+F_n^4), \mbox{ and }\hat{F}_n=F_n+\frac{1}{2}\sum_{j=1}^NG_{jn}^2.\]
 Then the problem \eqref{eq:dun} (or \eqref{eq:dun2}) can be written in the following compact way
\begin{equation}\label{eq:dun3}
\ud u_n(t)=\hat{F}_n\big(u_n(t)\big)\ud t+\sum_{j=1}^NG_{jn}\big(u_n(t)\big)\ud W_j(t).
\end{equation}

\section{{\em A priori} estimates}
In this section we will get some properties of the solution of Equation \eqref{eq:dun} especially some {\em a priori} estimates.
\begin{thm}
Assume that $n\in \mathbb{N}$. Let $u_n$ be the solution of the Equation \eqref{eq:dun} which was constructed earlier. Then for every $t\in[0,T]$,
  \begin{equation}\label{eq:3.10phi}
  \|u_n(t)\|_{\mathbb{L}^2}=\|u_n(0)\|_{\mathbb{L}^2},\qquad a.s..
  \end{equation}
\end{thm}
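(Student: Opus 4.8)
The plan is to apply the finite-dimensional It\^o formula to the smooth function $\Phi\colon H_n\to\R$, $\Phi(u)=\|u\|_{\mathbb{L}^2}^2=\langle u,u\rangle_{\mathbb{L}^2}$, along the solution $u_n$ written in its It\^o form \eqref{eq:dun3}, namely $\ud u_n=\hat F_n(u_n)\,\ud t+\sum_{j=1}^N G_{jn}(u_n)\,\ud W_j$. Since $\Phi'(u)[v]=2\langle u,v\rangle_{\mathbb{L}^2}$ and $\Phi''(u)[v,w]=2\langle v,w\rangle_{\mathbb{L}^2}$, It\^o's formula yields
\[
\|u_n(t)\|_{\mathbb{L}^2}^2=\|u_n(0)\|_{\mathbb{L}^2}^2+2\int_0^t\langle u_n,\hat F_n(u_n)\rangle_{\mathbb{L}^2}\,\ud s+2\sum_{j=1}^N\int_0^t\langle u_n,G_{jn}(u_n)\rangle_{\mathbb{L}^2}\,\ud W_j+\sum_{j=1}^N\int_0^t\|G_{jn}(u_n)\|_{\mathbb{L}^2}^2\,\ud s,
\]
the last sum being the It\^o (quadratic-variation) correction.

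First I would dispatch the stochastic integral. By the skew-adjointness $G_{jn}^*=-G_{jn}$ from Lemma \ref{lem:1sidelingro}, each integrand satisfies $\langle u_n,G_{jn}(u_n)\rangle_{\mathbb{L}^2}=\langle G_{jn}^*u_n,u_n\rangle_{\mathbb{L}^2}=-\langle u_n,G_{jn}(u_n)\rangle_{\mathbb{L}^2}$ and hence vanishes identically, so the martingale term is zero. For the genuine drift part, recall $\hat F_n=F_n+\tfrac12\sum_j G_{jn}^2$ with $F_n=\lmd_1(F_n^1+F_n^3)-\lmd_2(F_n^2+F_n^4)$; the orthogonality $\langle F_n^i(u),u\rangle_{\mathbb{L}^2}=0$ for $i=1,2,3,4$ (again Lemma \ref{lem:1sidelingro}) gives $\langle u_n,F_n(u_n)\rangle_{\mathbb{L}^2}=0$.

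The only surviving contribution is the It\^o--Stratonovich correction $\tfrac12\sum_j G_{jn}^2$ inside $\hat F_n$, and here I would use the skew-adjointness once more: $\langle u_n,G_{jn}^2(u_n)\rangle_{\mathbb{L}^2}=\langle G_{jn}^*u_n,G_{jn}(u_n)\rangle_{\mathbb{L}^2}=-\|G_{jn}(u_n)\|_{\mathbb{L}^2}^2$. Thus $2\langle u_n,\hat F_n(u_n)\rangle_{\mathbb{L}^2}=-\sum_j\|G_{jn}(u_n)\|_{\mathbb{L}^2}^2$, which cancels the It\^o correction term exactly. Consequently both time integrals and the stochastic integral vanish, so $\|u_n(t)\|_{\mathbb{L}^2}^2=\|u_n(0)\|_{\mathbb{L}^2}^2$ for all $t\in[0,T]$ almost surely by path-continuity, and taking square roots gives \eqref{eq:3.10phi}.

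The computation is essentially routine; the only real content is the exact cancellation between the Stratonovich-to-It\^o correction $\tfrac12\sum_j G_{jn}^2$ and the second-order It\^o term, forced by the skew-adjointness of each $G_{jn}$ (which makes the noise a norm-preserving rotation). Conceptually this is transparent already from the Stratonovich form \eqref{eq:dun}, since the Stratonovich differential obeys the ordinary chain rule and every resulting term dies by the two identities of Lemma \ref{lem:1sidelingro}; the only point demanding care is the bookkeeping of the correction term. There is no analytic obstacle, since $H_n$ is finite-dimensional and $u_n$ is a genuine continuous semimartingale to which the It\^o formula applies directly.
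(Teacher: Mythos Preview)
Your proof is correct and follows essentially the same approach as the paper: apply the It\^o formula to $\|\cdot\|_{\mathbb{L}^2}^2$ on $H_n$, use Lemma~\ref{lem:1sidelingro} to kill the $F_n^i$ contributions and the martingale integrand, and then observe the exact cancellation $\langle u_n,G_{jn}^2 u_n\rangle_{\mathbb{L}^2}+\|G_{jn}u_n\|_{\mathbb{L}^2}^2=0$ coming from $G_{jn}^*=-G_{jn}$. Your additional remark about the Stratonovich viewpoint is a nice conceptual gloss but not needed for the argument.
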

\begin{proof}
Let us consider a function $\psi:H_n\ni u\mapsto\frac{1}{2}\|u\|_H^2\in \R$.
Since $\psi$ is a homogeneous polynomial of degree $2$, $\psi$ is of $C^\infty$. Moreover we have
  \[\psi'(u)(g)=\langle u,g\rangle_{\mathbb{L}^2},\quad\textrm{and}\quad\psi^\bis(u)(g,k)=\langle k,g\rangle_{\mathbb{L}^2}.\]
  By the It\^o Lemma and Lemma \ref{lem:1sidelingro}, we have
  \begin{eqnarray*}
    \frac{1}{2}\ud \|u_n(t)\|_H^2&=&\left(\llangle u_n(t),\hat{F}_n\big(u_n(t)\big)\rrangle_{\mathbb{L}^2}+\frac{1}{2}\sum_{j=1}^N\llangle G_{jn}\big(u_n(t)\big),G_{jn}\big(u_n(t)\big)\rrangle_{\mathbb{L}^2}\right)\ud t\\
    &&+\sum_{j=1}^N\llangle u_n(t),G_{jn}\big(u_n(t)\big)\rrangle_{\mathbb{L}^2}\ud W_j(t)\\
    &=&\frac{1}{2}\sum_{j=1}^N\left(\llangle u_n(t),G_{jn}^2\big(u_n(t)\big)\rrangle_{\mathbb{L}^2}+\sum_{j=1}^N\left\|G_{jn}\big(u_n(t)\big)\right\|_{\mathbb{L}^2}^2\right)\ud t+0\ud W_j(t)\\
    &=&0
  \end{eqnarray*}
  Hence for $t\in[0,T]$,
  \[\|u_n(t)\|_{\mathbb{L}^2}=\|u_n(0)\|_{\mathbb{L}^2},\qquad a.s..\]
\end{proof}

\begin{lem}\label{lem:8.14}
  Let us define a function $\Phi:H_n\longrightarrow\R$ by
   \begin{equation}\label{eq:4Phi}
  \Phi(u):=\frac{1}{2}\int_D\|\nabla u(x)\|^2\ud x+\int_D\phi\big(u(x)\big)\ud x,\qquad u\in H_n.
  \end{equation}
  Then $\Phi\in C^2(H_n)$ and for $u,g,k\in H_n$,
    \begin{eqnarray}\label{eqn-Phi^prime}
  \ud_u\Phi(g)&=&\Phi^{\prime}(u)(g)=\langle \nabla u,\nabla g\rangle_{\Ltwo}+\int_D \lb \nabla \phi\big(u(x)\big), g(x) \rb_{\R^3}\ud x
  \\
  \nonumber
  &=&
  \langle A u, g\rangle_{\mathbb{L}^2}+\int_D \lb \nabla \phi\big(u(x)\big), g(x) \rb_{\R^3}\ud x,
  \\
  \label{eqn-Phi^bis}
  \Phi^\bis(u)(g,k)&=&\langle \nabla g,\nabla k\rangle_{\Ltwo}+\int_D\phi^\bis\big(u(x)\big)\big(g(x),k(x)\big)\ud x.
    \end{eqnarray}
\end{lem}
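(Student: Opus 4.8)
The plan is to write $\Phi=\Phi_1+\Phi_2$, where $\Phi_1(u)=\frac12\int_D\|\nabla u(x)\|^2\,\ud x$ is the exchange part and $\Phi_2(u)=\int_D\phi\big(u(x)\big)\,\ud x$ is the anisotropy part, and to treat the two summands separately, adding the results at the end. The exchange part is the easy one: the bilinear form $b(u,g):=\langle\nabla u,\nabla g\rangle_{\Ltwo}$ is symmetric and, because $H_n$ is finite dimensional (so that $A$ restricted to $H_n$ is bounded and the $\mathbb{V}$- and $\Ltwo$-norms are equivalent on $H_n$), continuous on $H_n\times H_n$. Hence $\Phi_1(u)=\frac12 b(u,u)$ is a continuous quadratic form and therefore of class $C^\infty$, with $\Phi_1'(u)(g)=b(u,g)$ and $\Phi_1^\bis(u)(g,k)=b(g,k)$. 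Since $u\in H_n\subset D(A)$ and $g\in H_n\subset\mathbb{V}$, Proposition \ref{prop:stokes on neu} gives $b(u,g)=\langle Au,g\rangle_{\Ltwo}$, which produces the second line of \eqref{eqn-Phi^prime}, while $b(g,k)$ is exactly the first term of \eqref{eqn-Phi^bis}.

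For the anisotropy part I would exploit the finite-dimensionality of $H_n$ and reduce to ordinary multivariable calculus. Fixing the orthonormal basis $\{e_1,\dots,e_n\}$ of $H_n$ (with $e_k\in C^\infty(\bar D)$), every $u\in H_n$ is $u=\sum_{k=1}^n u^k e_k$ with coordinates $(u^1,\dots,u^n)\in\R^n$, and $x\mapsto u(x)=\sum_k u^k e_k(x)$ is a genuine continuous $\R^3$-valued function on $\bar D$. Setting $f(u^1,\dots,u^n):=\int_D\phi\big(\sum_k u^k e_k(x)\big)\,\ud x$, I would show $f\in C^2(\R^n)$ by differentiating under the integral sign: because $\phi\in C^4$ with $\nabla\phi$ and $\phi^\bis$ bounded, and because $D$ has finite measure with each $e_k$ bounded on $\bar D$, the candidate integrands $\langle\nabla\phi(u(x)),e_k(x)\rangle_{\R^3}$ and $\phi^\bis(u(x))\big(e_k(x),e_l(x)\big)$ are dominated, uniformly in $u$, by fixed integrable functions. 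The dominated convergence theorem then justifies both the interchange of differentiation and integration and the continuity in $(u^1,\dots,u^n)$ of the resulting partials, yielding
\[
\frac{\partial f}{\partial u^k}(u)=\int_D\langle\nabla\phi(u(x)),e_k(x)\rangle_{\R^3}\,\ud x,\qquad
\frac{\partial^2 f}{\partial u^k\partial u^l}(u)=\int_D\phi^\bis(u(x))\big(e_k(x),e_l(x)\big)\,\ud x.
\]

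Since having continuous first and second partial derivatives on $\R^n$ is equivalent to $f$ being of class $C^2$ in the Fréchet sense, $\Phi_2$ is $C^2$ on $H_n$; expanding $g=\sum_k g^k e_k$ and $k=\sum_l k^l e_l$ and summing the coordinate formulas gives $\Phi_2'(u)(g)=\int_D\langle\nabla\phi(u(x)),g(x)\rangle_{\R^3}\,\ud x$ and $\Phi_2^\bis(u)(g,k)=\int_D\phi^\bis(u(x))\big(g(x),k(x)\big)\,\ud x$. Adding the two parts then yields $\Phi=\Phi_1+\Phi_2\in C^2(H_n)$ together with \eqref{eqn-Phi^prime} and \eqref{eqn-Phi^bis}. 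I expect the only point genuinely requiring care to be the justification of differentiation under the integral sign and the attendant continuity of the partials; this is precisely where the boundedness hypotheses on $\phi$, $\nabla\phi$ and $\phi^\bis$ from Assumption \ref{as:all4}, together with $|D|<\infty$ and $e_k\in C^\infty(\bar D)$, enter, and everything else is the soft passage between finite-dimensional coordinate calculus and Fréchet calculus.
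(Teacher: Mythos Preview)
Your proof is correct and follows the same decomposition as the paper, which also splits $\Phi$ into the exchange and anisotropy parts and treats them separately. The paper's proof is considerably terser (it calls the anisotropy part ``obvious'' and appeals to the mean value theorem in integral form for the exchange part), whereas you supply the details of differentiation under the integral sign via dominated convergence; both arguments are valid and yield the same formulas.
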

\begin{proof}
Let us introduce auxiliary real functions $\Phi_0$ and $\Phi_1$ by:
  \[\Phi_0(u):= \int_D\phi\big(u(x)\big)\ud x,\qquad \Phi_1(u):=\frac{1}{2}\|\nabla u\|^2_{\Ltwo}, \qquad u\in H_n.\]
It is enough to prove the results of $\Phi_0$ and $\Phi_1$.\\
The result about $\Phi_0$ is obvious and the result of $\Phi_1$ follows from the mean value Theorem in integral form, see \cite{ZB&KE}, for related result.
\end{proof}

\begin{prop}
  There exist constants $a,b,a_1,b_1>0$ such that for all $n\in\mathbb{N}$,
  \begin{equation}\label{eq:3.19}
     \big\|\nabla G_{jn}u\big\|_{\Ltwo}^2\leq a\big\|\nabla u\big\|_{\Ltwo}^2+b,\quad u\in H_n,
  \end{equation}
  and
  \begin{equation}\label{eq:3.20}
\big\|\nabla G_{jn}^2u\big\|^2_{\Ltwo}\leq a_1\big\|\nabla u\big\|_{\Ltwo}^2+b_1,\quad u\in H_n.
  \end{equation}
\end{prop}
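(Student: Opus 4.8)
The plan is to reduce both inequalities to Leibniz-rule estimates on the products $u\times h_j$ and $(G_{jn}u)\times h_j$, after first exploiting that the projections $\pi_n$ are bounded on $\mathbb{V}$ \emph{uniformly in $n$}. Indeed, since $H_n$ is spanned by eigenvectors of $A$, the projection $\pi_n$ commutes with $A$ and hence with $A^{1/2}$; combining this with the identity $\|\nabla w\|_{\Ltwo}=\|A^{1/2}w\|_{\Ltwo}$ for $w\in\mathbb{V}$ (a consequence of Proposition \ref{prop:stokes on neu} together with $\mathbb{V}=D(A^{1/2})$) gives the contraction
\[
\|\nabla \pi_n w\|_{\Ltwo}=\|A^{1/2}\pi_n w\|_{\Ltwo}=\|\pi_n A^{1/2}w\|_{\Ltwo}\le\|A^{1/2}w\|_{\Ltwo}=\|\nabla w\|_{\Ltwo},\qquad w\in\mathbb{V},
\]
with a constant independent of $n$. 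Applying this with $w=u\times h_j$, which lies in $\mathbb{V}$ because $u\in H_n\subset C^\infty(\bar D)$ and $h_j\in\mathbb{L}^\infty\cap\mathbb{W}^{1,3}$, reduces \eqref{eq:3.19} to estimating $\|\nabla(u\times h_j)\|_{\Ltwo}$.

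Next I would use the product rule $\nabla(u\times h_j)=\nabla u\times h_j+u\times\nabla h_j$ and bound the two terms separately. The first is controlled by the $\mathbb{L}^\infty$-bound on $h_j$, namely $\|\nabla u\times h_j\|_{\Ltwo}\le\|h_j\|_{\mathbb{L}^\infty}\|\nabla u\|_{\Ltwo}$. For the second I would use H\"older's inequality with exponents $6$ and $3$ (note $\tfrac12=\tfrac16+\tfrac13$) together with the Sobolev embedding $\mathbb{V}=\mathbb{H}^1\hookrightarrow\mathbb{L}^6$ valid for $d=3$:
\[
\|u\times\nabla h_j\|_{\Ltwo}\le\|u\|_{\mathbb{L}^6}\,\|\nabla h_j\|_{\mathbb{L}^3}\le C_S\,\|\nabla h_j\|_{\mathbb{L}^3}\,\|u\|_{\mathbb{V}}.
\]
Here the hypothesis $h_j\in\mathbb{W}^{1,3}$ is used precisely so that $\nabla h_j\in\mathbb{L}^3$ and H\"older closes in dimension three. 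Squaring, summing, and writing $\|u\|_{\mathbb{V}}^2=\|u\|_{\Ltwo}^2+\|\nabla u\|_{\Ltwo}^2$ yields a bound of the form $\|\nabla G_{jn}u\|_{\Ltwo}^2\le a\|\nabla u\|_{\Ltwo}^2+c\|u\|_{\Ltwo}^2$ with $a,c$ depending only on $\|h_j\|_{\mathbb{L}^\infty}$, $\|\nabla h_j\|_{\mathbb{L}^3}$ and $C_S$, hence independent of $n$.

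To obtain the stated form with an additive constant $b$, I would note that \eqref{eq:3.19} is applied to the Galerkin solution $u=u_n(t)$, for which $\|u_n(t)\|_{\Ltwo}=\|\pi_n u_0\|_{\Ltwo}\le\|u_0\|_{\Ltwo}$ by \eqref{eq:3.10phi}; thus the term $c\|u\|_{\Ltwo}^2$ is absorbed into $b:=c\|u_0\|_{\Ltwo}^2$, uniformly in $n$ and $t$. For the second inequality \eqref{eq:3.20} I would apply the same contraction once more, now to $w=(G_{jn}u)\times h_j$, and rerun the Leibniz estimate with $u$ replaced by $v:=G_{jn}u$. The term $\nabla v\times h_j$ is handled by \eqref{eq:3.19}, so $\|\nabla v\|_{\Ltwo}^2\le a\|\nabla u\|_{\Ltwo}^2+b$, while the term $v\times\nabla h_j$ only needs $\|v\|_{\mathbb{V}}$; here $\|v\|_{\Ltwo}=\|\pi_n(u\times h_j)\|_{\Ltwo}\le\|h_j\|_{\mathbb{L}^\infty}\|u\|_{\Ltwo}$, which combined with the gradient bound already established gives \eqref{eq:3.20} with new constants $a_1,b_1$.

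The main obstacle, and the only genuinely non-routine point, is the uniform-in-$n$ control of $\pi_n$ in the $\mathbb{V}$-norm: for a general orthonormal basis the $\mathbb{L}^2$-projections need not be bounded on $\mathbb{H}^1$ with a constant independent of $n$, and it is exactly the choice of the eigenbasis of $A$ (so that $\pi_n$ commutes with $A$) that rescues the argument and makes $a,b$ uniform. The remaining difficulty is the dimensional bookkeeping in the H\"older/Sobolev step, which is why the hypothesis $h_j\in\mathbb{W}^{1,3}$ rather than merely $\mathbb{W}^{1,2}$ is imposed in Assumption \ref{as:all4}.
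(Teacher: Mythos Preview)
Your proposal is correct and follows essentially the same approach as the paper: both exploit that $\pi_n$ commutes with the (fractional) Laplacian to pass from $\|\nabla G_{jn}u\|_{\Ltwo}$ to $\|u\times h_j\|_{\mathbb{V}}$, then apply the Leibniz rule together with the H\"older $(6,3)$ pairing and the Sobolev embedding $\mathbb{V}\hookrightarrow\mathbb{L}^6$, and finally absorb the $\|u\|_{\Ltwo}^2$ contribution into the constant $b$ via \eqref{eq:3.10phi}. The only cosmetic difference is that the paper works with $A_1^{1/2}$ (and the inequality $A\le A_1$) where you use $A^{1/2}$ directly; both routes yield the same uniform-in-$n$ contraction of $\pi_n$ on $\mathbb{V}$.
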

\begin{proof}[Proof of \eqref{eq:3.19}]
Since $A_1$ is self-adjoint and $A_1\geq A$, we have
  \begin{eqnarray*}
    &&\big\|\nabla G_{jn}u\big\|_{\Ltwo}^2=\left(AG_{jn}(u),G_{jn}(u)\right)_{\mathbb{L}^2}\leq (A_1G_{jn}(u),G_{jn}(u))_{\mathbb{L}^2}\\
    &=&\left\|A_1^\frac{1}{2}\pi_n(u\times h_j)\right\|_{\mathbb{L}^2}^2=\left\|\pi_nA_1^\frac{1}{2}(u\times h_j)\right\|_{\mathbb{L}^2}^2\leq \left\|A_1^\frac{1}{2}(u\times h_j)\right\|_{\mathbb{L}^2}^2\\
    &=&\left\|(u\times h_j)\right\|_{\mathbb{V}}^2\leq N\left(\|u\times h_j\|_{\mathbb{L}^2}^2+\|\nabla(u\times h_j)\|_{\Ltwo}^2\right)\\
    &\leq&\left[\|h_j\|_{\mathbb{L}^\infty}^2\left(\|u\|_{\mathbb{L}^2}^2+2\|\nabla u\|_{\Ltwo}^2\right)+2\|\nabla h_j\|_{\mathbb{L}^3}^2\|u\|_{\mathbb{L}^6}^2\right].
  \end{eqnarray*}
  Next since $\mathbb{L}^6\hookrightarrow {\mathbb{V}}$ and by equality \eqref{eq:3.10phi} $\|u_n(s)\|_{\mathbb{L}^2}\leq \|u_0\|_{\mathbb{L}^2}$, we infer that
  \[\big\|\nabla G_{jn}u\big\|_{\Ltwo}^2\leq a\big\|\nabla u\big\|_{\Ltwo}^2+b,\]
  for some constants $a$ and $b$ which only depend on $\|h_j\|_{\mathbb{L}^\infty}$, $\|\nabla h_j\|_{\mathbb{L}^3}$ and $\|u_0\|_{\mathbb{L}^2}$, but not on $n$.
\end{proof}
\begin{proof}[Proof of \eqref{eq:3.20}]
The estimate \eqref{eq:3.20} followed from double application of \eqref{eq:3.19}.
\end{proof}

\begin{rem}
  The previous results will be  used to prove the following fundamental {\em a priori} estimates on the sequence $\{u_n\}$ of the solution of Equation \eqref{eq:dun}.
\end{rem}
\begin{thm}\label{thm:8.9}
Assume that $p\geq 1$, $\beta>\frac{1}{4}$. Then there exists a constant $C>0$, such that for all $n\in\mathbb{N}$,
  \begin{equation}\label{eq:3.11phi}
    \mathbb{E}\sup_{r\in[0,t]}\left\{\big\|\nabla u_n(r)\big\|_{{\mathbb{L}^2}}^{2}+\int_D\phi\big(u_n(r,x)\big)\ud x\right\}^p\leq C,\qquad t\in[0,T],
  \end{equation}
  \begin{equation}\label{eq:3.12phi}
    \mathbb{E}\left[\left(\int_0^T\big\|u_n(t)\times\left(\Delta u_n(t)-\pi_n \nabla \phi\big(u_n(t)\big)\right)\big\|^2_{\mathbb{L}^2}\ud t\right)^p\right]\leq C,
  \end{equation}
  \begin{equation}\label{eq:3.13phi}
    \mathbb{E}\left[\left(\int_0^T\big\|u_n(t)\times\big(u_n(t)\times\left(\Delta u_n(t)-\pi_n \nabla \phi\big(u_n(t)\big)\right)\big)\big\|^2_{\mathbb{L}^\frac{3}{2}(D)}\ud t\right)^p\right]\leq C,
  \end{equation}
  \begin{equation}\label{eq:3.14phi}
    \mathbb{E}\int_0^T\left\|\pi_n\left(u_n(t)\times\big(u_n(t)\times\left(\Delta u_n(t)-\pi_n \nabla \phi\big(u_n(t)\big)\right)\big)\right)\right\|^2_{X^{-\beta}}\ud t\leq C.
  \end{equation}
\end{thm}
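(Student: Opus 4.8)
The plan is to apply the It\^o formula to the energy functional $\Phi$ of Lemma \ref{lem:8.14} along the solution $u_n$ of the compact equation \eqref{eq:dun3}, and to exploit the algebraic structure of the drift so that everything collapses onto a single dissipation identity. Writing $\rho_n:=\Delta u_n-\pi_n\nabla\phi(u_n)=-(Au_n+\pi_n\nabla\phi(u_n))$, one reads off directly from \eqref{eq:F^1_n}--\eqref{eq:F^4_n} that $F_n^1+F_n^3=\pi_n(u_n\times\rho_n)$ and $F_n^2+F_n^4=\pi_n\big(u_n\times(u_n\times\rho_n)\big)$, so that the deterministic drift is $F_n=\lmd_1\pi_n(u_n\times\rho_n)-\lmd_2\pi_n\big(u_n\times(u_n\times\rho_n)\big)$. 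Since $\rho_n\in H_n$ and $\Phi'(u_n)(g)=\langle-\rho_n,g\rangle_{\mathbb{L}^2}$ for $g\in H_n$ by \eqref{eqn-Phi^prime}, the pointwise identities $\langle\rho_n,u_n\times\rho_n\rangle_{\R^3}=0$ and $\langle\rho_n,u_n\times(u_n\times\rho_n)\rangle_{\R^3}=-|u_n\times\rho_n|^2$ make the gyroscopic $\lmd_1$-term vanish and reduce the drift pairing to
\[
\Phi'(u_n)(F_n)=-\lmd_2\big\|u_n\times\rho_n\big\|_{\mathbb{L}^2}^2 .
\]
This is the rigorous counterpart of the heuristic identity \eqref{eqn-explanation02} and is the structural heart of the argument.

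Collecting the remaining terms and converting Stratonovich to It\^o, the It\^o formula yields
\[
\Phi(u_n(t))+\lmd_2\int_0^t\big\|u_n\times\rho_n\big\|_{\mathbb{L}^2}^2\ud s=\Phi(\pi_nu_0)+\int_0^tR_n(s)\ud s+M_n(t),
\]
where $R_n:=\tfrac12\sum_j\big[\Phi'(u_n)(G_{jn}^2u_n)+\Phi^\bis(u_n)(G_{jn}u_n,G_{jn}u_n)\big]$ and $M_n(t):=\sum_j\int_0^t\Phi'(u_n)(G_{jn}u_n)\,\ud W_j$. Using the bounds \eqref{eq:3.19}, \eqref{eq:3.20} together with boundedness of $\nabla\phi$ and $\phi^\bis$ and the conservation $\|u_n\|_{\mathbb{L}^2}=\|\pi_nu_0\|_{\mathbb{L}^2}$, one checks that $|R_n|\le C(\Phi(u_n)+1)$ and that the quadratic-variation integrand $\sum_j|\Phi'(u_n)(G_{jn}u_n)|^2\le C(\Phi(u_n)+1)^2$; here $\|\nabla u_n\|_{\mathbb{L}^2}^2\le 2\Phi(u_n)$ because $\phi\ge0$, and $\Phi(\pi_nu_0)\le C$ uniformly in $n$ since $\pi_n$ is a contraction for $\|\cdot\|_{\mathbb{V}}$ and $\phi$ is bounded.

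To obtain \eqref{eq:3.11phi} I would then run the standard stochastic Gronwall argument on $X_n:=\Phi(u_n)$: raise the identity to the $p$-th power, take $\sup_{r\le t}$ and expectation, estimate $M_n$ by Burkholder--Davis--Gundy, and use Young's inequality to absorb a small multiple of $\mathbb{E}\sup_{r\le t}X_n(r)^p$ into the left-hand side, leaving an integral inequality to which Gronwall applies with a constant independent of $n$. Estimate \eqref{eq:3.12phi} then falls out of the same identity by discarding the nonnegative term $\Phi(u_n(t))$, isolating the dissipation integral, and bounding the right-hand side via \eqref{eq:3.11phi}. For \eqref{eq:3.13phi} I would use $|u_n\times(u_n\times\rho_n)|\le|u_n|\,|u_n\times\rho_n|$ and H\"older with $\tfrac23=\tfrac16+\tfrac12$ to get $\|u_n\times(u_n\times\rho_n)\|_{\mathbb{L}^{3/2}}^2\le C\|u_n\|_{\mathbb{L}^6}^2\|u_n\times\rho_n\|_{\mathbb{L}^2}^2\le C(\Phi(u_n)+1)\|u_n\times\rho_n\|_{\mathbb{L}^2}^2$ through $\mathbb{V}\hookrightarrow\mathbb{L}^6$, then integrate in time and apply Cauchy--Schwarz in $\Omega$, invoking \eqref{eq:3.11phi} and \eqref{eq:3.12phi} with exponent $2p$ (which is exactly why all $p\ge1$ are needed). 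Finally \eqref{eq:3.14phi} follows from \eqref{eq:3.13phi} with $p=1$ once one has the embedding $\mathbb{L}^{3/2}\hookrightarrow X^{-\beta}$ for $\beta>\tfrac14$, dual to $X^\beta=\mathbb{H}^{2\beta}\hookrightarrow\mathbb{L}^3$ from Proposition \ref{prop:DAH} and Sobolev embedding, together with the uniform boundedness of the spectral projection $\pi_n$ on $X^{-\beta}$. The main obstacle is the stochastic Gronwall step for \eqref{eq:3.11phi}, where the uniformity in $n$ must be preserved through the BDG and Young manipulations; once that estimate is secured, the remaining three bounds are essentially bookkeeping built on the single dissipation identity and standard embeddings.
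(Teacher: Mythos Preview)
Your proposal is correct and follows essentially the same route as the paper: apply the It\^o formula to the energy $\Phi$, use the algebraic cancellation $\Phi'(u_n)(F_n)=-\lmd_2\|u_n\times\rho_n\|_{\mathbb{L}^2}^2$, bound the Stratonovich correction and second-derivative terms via \eqref{eq:3.19}--\eqref{eq:3.20}, and close with BDG plus Young's inequality and Gronwall; the remaining three estimates are then derived exactly as you outline. The only point you leave implicit that the paper spells out is the a~priori finiteness of $\mathbb{E}\sup_{r\le t}\Phi(u_n(r))^p$ for each fixed $n$ (needed so that Gronwall is not the vacuous inequality $\infty\le\infty$), which follows from the $H_n$-norm equivalence and the conservation law \eqref{eq:3.10phi}.
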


\begin{proof}[Proof of \eqref{eq:3.11phi} and \eqref{eq:3.12phi}]
Let us define a function $\Phi$ same as in the Equation \eqref{eq:4Phi}. Then
by the It\^o Lemma,
  \begin{eqnarray}
    &&\varPhi\big(u_n(t)\big)-\varPhi\big(u_n(0)\big)\nonumber\\
    &=&\int_0^t\left(\varPhi'\big(u_n(s)\big)\hat{F}_n\big(u_n(s)\big)+\frac{1}{2}\sum_{j=1}^N\varPhi^\bis\big(u_n(s)\big)G_{jn}\big(u_n(s)\big)^2\right)\ud s\label{eq:Phiunt-0}\\
    &&+\sum_{j=1}^N\int_0^t\varPhi'\big(u_n(s)\big)G_{jn}\big(u_n(s)\big)\ud W_j(s),\qquad t\in[0,T].\nonumber
  \end{eqnarray}

Then we consider each term on the RHS of the Equation \eqref{eq:Phiunt-0}, and we can prove that
\begin{eqnarray}\nonumber
\varPhi'\big(u\big)\hat{F}_n\big(u\big)=&-&
\lmd_2  \big\| u\times\big( \Delta u -\pi_n (\nabla \phi( u) \big)\big\|^2_{\mathbb{L}^2} \\
&-&\frac12 \sum_{j=1}^N \langle \Delta u - \pi_n\big( \nabla \phi( u)\big), \pi_n(u\times h_j) \times h_j \rb_{\mathbb{L}^2}
\label{eqn-Phi'hatF_n}
\end{eqnarray}
\begin{eqnarray}
\varPhi'\big(u\big)[{G}_{jn}\big(u\big)]&=& -\langle \Delta u,  u\times h_j \rb_{\mathbb{L}^2}
+  \lb \nabla \phi( u), \pi_n( u\times h_j ) \rb_{\mathbb{L}^2}\label{eq:Phi'Gn},
\end{eqnarray}
 and
\begin{eqnarray}
\label{eq:Phi''Gn2}&&\hspace{1cm}\varPhi^\bis(u)[G_{jn}(u)^2]\\
&&= \| \nabla \pi_n(u\times h_j) \|^2_{\Ltwo}+\int_D\phi^\bis\big(u(x)\big)\big(\pi_n(u\times h_j)(x),\pi_n(u\times h_j)(x)\big)\ud x.\nonumber
\end{eqnarray}

Therefore by Equations \eqref{eq:4Phi}, \eqref{eqn-Phi'hatF_n}, \eqref{eq:Phi'Gn} and \eqref{eq:Phi''Gn2}, the Equation \eqref{eq:Phiunt-0} transforms to:
  \begin{eqnarray}
    &&\hspace{3cm}\frac{1}{2}\|\nabla u_n(t)\|_{\Ltwo}^2+\frac{1}{2}\int_D\phi(u_n(t,x))\ud x\label{eq:Phiunt-01}\\
    &&+\lmd_2\int_0^t\|u_n(s)\times(\Delta u_n(s)-\pi_n\nabla \phi(u_n(s)))\|_{\mathbb{L}^2}^2\ud s\nonumber\\
    &=&\frac{1}{2}\|\nabla u_n(0)\|_{\Ltwo}^2+\frac{1}{2}\int_D\phi(u_n(0)(x))\ud x-\frac{1}{2}\sum_{j=1}^N\int_0^t\langle \Delta u_n(s),\pi_n(u_n(s)\times h_j)\times h_j\rangle_{\mathbb{L}^2}\ud s\nonumber\\
    &&+\frac{1}{2}\sum_{j=1}^N\int_0^t\langle\nabla \phi(u_n(s)),\pi_n(u_n(s)\times h_j)\times h_j\rangle_{\mathbb{L}^2}\ud s+\frac{1}{2}\sum_{j=1}^N\int_0^t\|\nabla\pi_n(u_n(s)\times h_j)\|_{{\mathbb{L}^2}}^2\ud s\nonumber\\
    &&+\frac{1}{2}\sum_{j=1}^N\int_0^t\int_D\phi^{\prime\prime}(u_n(s)(x))(\pi_n(u_n(s)\times h_j)(x),\pi_n(u_n(s)\times h_j)(x))\ud x\ud s\nonumber\\
    &&-\sum_{j=1}^N\int_0^t\langle\Delta u_n(s),u_n(s)\times h_j\rangle_{\mathbb{L}^2}\ud W_j(s)+\sum_{j=1}^N\int_0^t\langle\nabla \phi(u_n(s),\pi_n(u_n(s)\times h_j)\rangle_{\mathbb{L}^2}\ud W_j(s).\nonumber
  \end{eqnarray}

Next we will get estimates for some terms on the right hand side of Equation \eqref{eq:Phiunt-01}.\\
For the first term on the right hand side of Equation \eqref{eq:Phiunt-01}, we have
\begin{equation}\label{eq:Phiunt-01r1}
  \|\nabla u_n(0)\|_{\Ltwo}^2=\|\nabla\pi_nu_0\|_{\Ltwo}^2\leq\|\pi_nu_0\|_{\mathbb{V}}^2=\|A_1^\frac{1}{2}\pi_nu_0\|_{\mathbb{L}^2}^2=\|\pi_nA_1^\frac{1}{2}u_0\|_{\mathbb{L}^2}^2\leq\|A_1^\frac{1}{2}u_0\|_{\mathbb{L}^2}^2=\|u_0\|_{\mathbb{V}}^2.
\end{equation}

By our assumption, $\phi$ is bounded, so there is a constant $C_\phi>0$, such that for the second term on the right hand side of Equation \eqref{eq:Phiunt-01}, we have
\begin{equation}\label{eq:Phiunt-01r2}
\left|\int_D\phi\big(u_n(0,x)\big)\ud x\right|\leq C_\phi m(D).
\end{equation}

For the third  term on the right hand side of Equation \eqref{eq:Phiunt-01}, by \eqref{eq:3.20} and Cauchy-Schwartz inequality, we have
  \begin{eqnarray}
  &&\hspace{1cm}\left|\langle \Delta u_n(s),\pi_n(u_n(s)\times h_j)\times h_j\rangle_{\mathbb{L}^2}\right|=\left|\langle \nabla u_n(s),\nabla G_n^2u_n(s)\rangle_{\Ltwo}\right|\label{eq:Phiunt-01r3}\\
  &\leq&\|\nabla u_n(s)\|_{\Ltwo}\sqrt{a_1\|\nabla u_n(s)\|_{\Ltwo}^2+b_1}\leq\sqrt{a_1}\|\nabla u_n(s)\|_{\Ltwo}^2+\frac{b_1}{2\sqrt{a_1}}.\nonumber
  \end{eqnarray}

For the fourth term on the right hand side of Equation \eqref{eq:Phiunt-01}, by the equality \eqref{eq:3.10phi} and Cauchy-Schwartz inequality, we have
\begin{equation}\label{eq:Phiunt-01r4}
  \langle\nabla \phi(u_n(s)),\pi_n(u_n(s)\times h_j)\times h_j\rangle_{\mathbb{L}^2}\leq C_{\nabla \phi}m(D)\|u_0\|_{\mathbb{L}^2}\|h_j\|_{\mathbb{L}^\infty}^2.
\end{equation}

For the fifth term on the right hand side of Equation \eqref{eq:Phiunt-01}, by \eqref{eq:3.19}, we have
\begin{equation}\label{eq:Phiunt-01r5}
 \left\|\nabla \pi_n(u_n(s)\times h_j)(s)\right\|^2_{\Ltwo}=\left\|\nabla G_{jn}(u_n(s))\right\|^2_{\Ltwo}\leq a\|\nabla u_n(s)\|^2_{\Ltwo}+b.
\end{equation}

For the sixth term on the right hand side of Equation \eqref{eq:Phiunt-01}, we have
  \begin{eqnarray}
    &&\int_D\left|\phi^\bis\big(u_n(s,x)\big)\left(\pi_n(u_n(s)\times h_j)(x),\pi_n(u_n(s)\times h_j)(x)\right) \right|\ud x\nonumber\\
    &\leq&C_{\phi^\bis}\int_D\left|\pi_n(u_n(s)\times h_j)(x))\right|^{2}\ud x\leq C_{\phi^\bis}\|h_j\|_{\mathbb{L}^\infty}^{2}\|u_0\|_{\mathbb{L}^2}^{2}.\label{eq:Phiunt-01r6}
  \end{eqnarray}

Then by the equalities \eqref{eq:Phiunt-01}-\eqref{eq:Phiunt-01r6}, there exists a constant $C_2>0$ such that for all $n\in \mathbb{N}$, $t\in [0,T]$ and $\mathbb{P}$-almost surely:
  \begin{eqnarray}
    &&\left\|\nabla u_n(t)\right\|^2_{\Ltwo}+\int_D\phi\big(u_n(t,x)\big)\ud x+2\lmd_2\int_0^t\|u_n(s)\times(\Delta u_n(s)-\pi_n\nabla \phi(u_n(s)))\|^2_{\mathbb{L}^2}\ud s\nonumber\\
    &&\hspace{0.5truecm}\lefteqn{\leq C_2\int_0^t\|\nabla u_n(s)\|^2_{\Ltwo}\ud s+C_2+2\sum_{j=1}^N\int_0^t\left\langle\nabla u_n(s),\nabla G_{jn}\big(u_n(s)\big)\right\rangle_{\Ltwo}\ud W_j(s)}\nonumber\\
    &&\hspace{2cm}+\sum_{j=1}^N\int_0^t\llangle \nabla \phi\big(u_n(s)\big),G_{jn}\big(u_n(s)\big)\rrangle_{\mathbb{L}^2}\ud W_j(s).\label{eq:20.5}
  \end{eqnarray}

  Hence for $p\geq 1$,
  \begin{eqnarray*}
    &&\mathbb{E}\sup_{r\in[0,t]}\left\{\left\|\nabla u_n(r)\right\|^2_{\Ltwo}+\int_D\phi\big(u_n(r,x)\big)\ud x+2\lmd_2\int_0^r\|u_n(s)\times(\Delta u_n(s)-\pi_n\nabla \phi(u_n(s)))\|^2_{\mathbb{L}^2}\ud s\right\}^p\\
    &\leq&4^{p-1}C_2^pt^{p-1}\mathbb{E}\left(\int_0^t\|\nabla u_n(s)\|^{2p}_{\Ltwo}\ud s\right)\\
    &&+4^{p-1}2\mathbb{E}\sup_{r\in[0,t]}\left|\sum_{j=1}^N\int_0^r\llangle \nabla u_n(s),\nabla G_{jn}\big(u_n(s)\big)\rrangle_{\Ltwo}\ud W_s\right|^p\\
    &&+4^{p-1}\mathbb{E}\sup_{r\in[0,t]}\left|\sum_{j=1}^N\int_0^r\llangle \nabla \phi\big( u_n(s)\big), G_{jn}\big(u_n(s)\big)\rrangle_{\mathbb{L}^2}\ud W_s\right|^p+4^{p-1}C_2^p.
  \end{eqnarray*}

By the Burkholder-Davis-Gundy inequality, there exists a constant $K>0$ such that for all $n\in \mathbb{N}$,
\[\mathbb{E}\sup_{r\in[0,t]}\left|\sum_{j=1}^N\int_0^r\llangle \nabla u_n(s),\nabla G_{jn}\big(u_n(s)\big)\rrangle_{\Ltwo}\ud W_s\right|^p\leq K\mathbb{E}\left|\sum_{j=1}^N\int_0^t\llangle \nabla u_n(s),\nabla G_{jn}\big(u_n(s)\big)\rrangle_{\Ltwo}^2\ud s\right|^\frac{p}{2},\]
\[\mathbb{E}\sup_{r\in[0,t]}\left|\sum_{j=1}^N\int_0^r\llangle \nabla \phi\big( u_n(s)\big), G_{jn}\big(u_n(s)\big)\rrangle_{\mathbb{L}^2}\ud W_s\right|^p\leq K\mathbb{E}\left|\sum_{j=1}^N\int_0^t\llangle \nabla \phi\big( u_n(s)\big), G_{jn}\big(u_n(s)\big)\rrangle_{\mathbb{L}^2}^2\ud s\right|^\frac{p}{2}.\]

By the  inequality \eqref{eq:3.19} we get, for any $\eps>0$,
  \begin{eqnarray*}
    &&\mathbb{E}\left|\sum_{j=1}^N\int_0^t\llangle \nabla u_n(s),\nabla G_{jn}\big(u_n(s)\big)\rrangle_{\Ltwo}^2\ud s\right|^\frac{p}{2}\leq \mathbb{E}\left[\sup_{r\in [0,t]}\|\nabla u_n(r)\|_{\Ltwo}^p\left(\sum_{j=1}^N\int_0^t\|\nabla G_{jn}(u_n(s))\|^2_{\Ltwo}\ud s\right)^\frac{p}{2}\right]\\
    &\leq&\mathbb{E}\left[\eps\sup_{r\in[0,t]}\|\nabla u_n(r)\|_{\Ltwo}^{2p}+\frac{4}{\eps}\left(\sum_{j=1}^N\int_0^t\|\nabla G_{jn}(u_n(s))\|_{\Ltwo}^2\ud s\right)^p\right]\\
    &\leq&\eps\mathbb{E}\left(\sup_{r\in[0,t]}\|\nabla u_n(r)\|_{\Ltwo}^{2p}\right)+\frac{4}{\eps}(2t)^{p-1}a^pN^p\mathbb{E}\left(\int_0^t\|\nabla u_n(s)\|_{\Ltwo}^{2p}\ud s\right)+\frac{4}{\eps}2^{p-1}(bt)^pN^p.
  \end{eqnarray*}

And

  \begin{eqnarray*}
    &&\hspace{-1truecm}\lefteqn{\mathbb{E}\left|\sum_{j=1}^N\int_0^t\llangle\nabla \phi\big(u_n(s)\big),G_{jn}\big(u_n(s)\big)\rrangle_{\Ltwo}^2\ud s\right|^\frac{p}{2}}
    \\
    &\leq& \mathbb{E}\left[\sup_{r\in [0,t]}\|\nabla \phi(u_n(r))\|_{\Ltwo}^p\left(\sum_{j=1}^N\int_0^t\|\nabla G_{jn}(u_n(s))\|^2_{\Ltwo}\ud s\right)^\frac{p}{2}\right]\\
    &\leq&\eps\left[C_{\nabla \phi}m(D)\right]^{2p}+\frac{4}{\eps}(2t)^{p-1}a^pN^p\mathbb{E}\left(\int_0^t\|\nabla u_n(s)\|_{\Ltwo}^{2p}\ud s\right)+\frac{4}{\eps}2^{p-1}(bt)^pN^p.
  \end{eqnarray*}

Hence we infer that for $t\in[0,T]$,
  \begin{eqnarray}
    &&\mathbb{E}\sup_{r\in[0,t]}\left|\sum_{j=1}^N\int_0^r\llangle \nabla u_n(s),\nabla G_{jn}\big(u_n(s)\big)\rrangle_{\Ltwo}\ud W_s\right|^p\nonumber\\
    &\leq&K\eps\mathbb{E}\left(\sup_{r\in[0,t]}\|\nabla u_n(r)\|_{\Ltwo}^{2p}\right)+\frac{4K}{\eps}(2t)^{p-1}a^pN^p\mathbb{E}\left(\int_0^t\|\nabla u_n(s)\|_{\Ltwo}^{2p}\ud s\right)\label{eq:20.7}\\
    &&+\frac{4K}{\eps}2^{p-1}(bt)^pN^p.\nonumber
  \end{eqnarray}

and similarly for $t\in[0,T]$,
  \begin{eqnarray}
  &&\hspace{0.5cm}\mathbb{E}\sup_{r\in[0,t]}\left|\sum_{j=1}^N\int_0^r\llangle \nabla \phi\big( u_n(s)\big), G_{jn}\big(u_n(s)\big)\rrangle_{\Ltwo}\ud W_s\right|^p\label{eq:20.8}\\
  &\leq& K\eps\left[C_{\nabla \phi}m(D)\right]^{2p}+\frac{4K}{\eps}(2t)^{p-1}a^pN^p\mathbb{E}\left(\int_0^t\|\nabla u_n(s)\|_{\Ltwo}^{2p}\ud s\right)+\frac{4K}{\eps}2^{p-1}(bt)^pN^p.\nonumber
\end{eqnarray}

Hence for every $t\in [0,T]$,
  \begin{eqnarray}
    &&\mathbb{E}\sup_{r\in[0,t]}\Bigg\{\left\|\nabla u_n(r)\right\|^2_{\Ltwo}+\int_D\phi\big(u_n(r,x)\big)\ud x+2\lmd_2\int_0^r\|u_n(s)\times(\Delta u_n(s)-\pi_n\nabla \phi(u_n(s)))\|^2_{\mathbb{L}^2}\ud s\Bigg\}^p\nonumber\\
    &\leq&4^{p-1}C_2^pt^{p-1}\mathbb{E}\left(\int_0^t\|\nabla u_n(s)\|^{2p}_{\Ltwo}\ud s\right)+4^{p-1}K\eps\mathbb{E}\left(\sup_{r\in[0,t]}\|\nabla u_n(r)\|_{\Ltwo}^{2p}\right)+4^{p-1}K\eps\left[C_{\nabla \phi}m(D)\right]^{2p}\nonumber\\
    &&+\frac{8K}{\eps}(8t)^{p-1}a^pN^p\mathbb{E}\left(\int_0^t\|\nabla u_n(s)\|_{\Ltwo}^{2p}\ud s\right)+\frac{K}{\eps}8^{p}(bt)^pN^p\nonumber
    \end{eqnarray}

Set $\eps=\frac{1}{2K4^{p-1}}$ in the above inequality, we have:
  \begin{eqnarray}
    &&\hspace{-1.5cm}\mathbb{E}\sup_{r\in[0,t]}\Bigg\{\left\|\nabla u_n(r)\right\|^2_{\Ltwo}+\int_D\phi\big(u_n(r,x)\big)\ud x\nonumber\\
    &&+2\lmd_2\int_0^r\|u_n(s)\times(\Delta u_n(s)-\pi_n\nabla \phi(u_n(s)))\|^2_{\mathbb{L}^2}\ud s\Bigg\}^p\nonumber\\
    &\leq&\left[2\cdot4^{p-1}C_2^pt^{p-1}+32K^2(8t)^{p-1}a^pN^p\right]\mathbb{E}\left(\int_0^t\|\nabla u_n(s)\|^{2p}_{\Ltwo}\ud s\right)\label{eq:20.6}\\
    &&+\left[C_{\nabla \phi}m(D)\right]^{2p}+4K^28^{p}(bt)^pN^p\nonumber\\
    &=&C_3\mathbb{E}\left(\int_0^t\|\nabla u_n(s)\|^{2p}_{\Ltwo}\ud s\right)+C_4\nonumber.
\end{eqnarray}
where the constants $C_3$ and $C_4$ are defined by:
\[C_3=2\cdot4^{p-1}C_2^pt^{p-1}+32K^2(8t)^{p-1}a^pN^p,\]
\[C_4=\left[C_{\nabla \phi}m(D)\right]^{2p}+4K^28^{p}(bt)^pN^p,\]
note that they do not depend on $n$.

And since $\int_D\phi\big(u_n(r,x)\big)\ud x$ and $\lmd_2\int_0^t|u_n(s)\times(\Delta u_n(s)-\pi_n\nabla \phi(u_n(s)))|^2_{\mathbb{L}^2}\ud s$ are nonnegative, so by the inequality \eqref{eq:20.6}, we have
  \begin{eqnarray}
    &&\hspace{-1.5cm}\mathbb{E}\sup_{r\in[0,t]}\Bigg\{\left\|\nabla u_n(r)\right\|^2_{\Ltwo}+\int_D\phi\big(u_n(r,x)\big)\ud x\nonumber\\
    &&+2\lmd_2\int_0^t\|u_n(s)\times(\Delta u_n(s)-\pi_n\nabla \phi(u_n(s)))\|^2_{\mathbb{L}^2}\ud s\Bigg\}^p\nonumber\\
    &\leq&C_3\int_0^t\mathbb{E}\sup_{r\in[0,s]}\Bigg\{\|\nabla u_n(r)\|_{\Ltwo}^2+\int_D\phi\big(u_n(r,x)\big)\ud x\label{eq:20.61}\\
    &&+2\lmd_2\int_0^r\|u_n(\tau)\times(\Delta u_n(\tau)-\pi_n\nabla \phi(u_n(\tau)))\|^2_{\mathbb{L}^2}\ud \tau\Bigg\}^p\ud s+C_4.\nonumber
  \end{eqnarray}

Let us define a function $\psi_n$ by:
  \begin{eqnarray*}
  \psi_n(s)&=&\mathbb{E}\sup_{r\in[0,s]}\Bigg\{\big\|\nabla u_n(r)\big\|_{\Ltwo}^{2}+\int_D\phi\big(u_n(r,x)\big)\ud x\\
  &&+2\lmd_2\int_0^r\|u_n(\tau)\times(\Delta u_n(\tau)-\pi_n\nabla \phi(u_n(\tau)))\|^2_{\mathbb{L}^2}\ud \tau\Bigg\}^p,\qquad s\in [0,T].
  \end{eqnarray*}

  Then by the inequality \eqref{eq:20.61}, we deduce that:
  \[\psi_n(t)\leq C_3\int_0^t\psi_n(s)\ud s+C_4.\]

Observe that $\psi_n$ is a bounded Borel function. The boundedness is because
\[\|\nabla u_n(r)\|_{\Ltwo}\leq \|u_n(r)\|_{\mathbb{V}}\leq C_n\|u_n(r)\|_{\mathbb{L}^2}\leq C_n\|u_0\|_{\mathbb{L}^2},\quad r\in[0,T],\]
and
  \begin{eqnarray*}
    &&\hspace{-1.5cm}\left\|u_n(s)\times (\Delta u_n(s)-\pi_n\nabla \phi(u_n(s)))\right\|_{H_n}\\
    &\leq&\|u_n(s)\|_{\mathbb{L}^\infty}\left(\|\Delta u_n(s)\|_{\mathbb{L}^2}+\|\pi_n\nabla \phi(u_n(s))\|_{\mathbb{L}^2}\right)\\
    &\leq&C_n\|u_n(s)\|_{\mathbb{L}^2}\left(C_n\|u_n(s)\|_{\mathbb{L}^2}+C_{\nabla \phi}m(D)^\frac{1}{2}\right)\\
    &\leq&C_n\|u_0\|_{\mathbb{L}^2}\left(C_n\|u_0\|_{\mathbb{L}^2}+C_{\nabla \phi}m(D)^\frac{1}{2}\right).
  \end{eqnarray*}
where $C_n$ is from the norm equivalence in the $n$-dimensional space. Therefore
\[\|\psi_n(s)\|\leq \left(C_n^2\|u_0\|^2_{\mathbb{L}^2}+C_\phi m(D)+2\lmd_2TC_n^2\|u_0\|^2_{\mathbb{L}^2}\left(C_n\|u_0\|_{\mathbb{L}^2}+2C_{\nabla \phi}m(D)^\frac{1}{2}\right)^2\right)^p.\]
Therefore by the Gronwall inequality, we have
  \[\psi_n(t)\leq C_3e^{C_4t},\qquad t\in[0,T].\]
  Since $C_3$ and $C_4$ are independent of $n$, we have proved that for  $T\in(0,\infty)$,
  \begin{eqnarray*}
  &&\mathbb{E}\sup_{r\in[0,t]}\left\{\big\|\nabla u_n(r)\big\|_{\Ltwo}^{2}+\int_D\phi\big(u_n(r,x)\big)\ud x+2\lmd_2\int_0^r\|u_n(\tau)\times(\Delta u_n(\tau)-\pi_n\nabla \phi(u_n(\tau)))\|^2_{\mathbb{L}^2}\ud \tau\right\}^p\\
  &&\hspace{8.5cm}\leq C_3e^{C_4T}=C_T,\qquad t\in[0,T]
  \end{eqnarray*}
  where $C_T$ is independent of $n$.
  Therefore we infer that
  \[\mathbb{E}\sup_{r\in[0,t]}\left\{\big\|\nabla u_n(r)\big\|_{\Ltwo}^{2}+\int_D\phi\big(u_n(r,x)\big)\ud x\right\}^p\leq C_T,\]
  and
  \[\mathbb{E}\left(\int_0^T\|u_n(\tau)\times(\Delta u_n(\tau)-\pi_n\nabla \phi(u_n(\tau)))\|^2_{\mathbb{L}^2}\ud \tau\right)^p\leq C_T.\]
  This completes the proof of the inequalities \eqref{eq:3.11phi} and \eqref{eq:3.12phi}.
\end{proof}

\begin{proof}[Proof of \eqref{eq:3.13phi}]
By the H$\ddot{\textrm{o}}$lder inequality and the Sobolev imbedding $\mathbb{V}\hookrightarrow \mathbb{L}^6$, we have that for some constant $c>0$
  \begin{eqnarray*}
    \big\|u_n(t)\times\big(u_n(t)\times\left(\Delta u_n(t)-\pi_n\nabla \phi\big(u_n(t)\big)\right)\big)\big\|_{\mathbb{L}^\frac{3}{2}}&\leq&\big\|u_n(t)\big\|_{\mathbb{L}^6}\big\|u_n(t)\times\left(\Delta u_n(t)-\pi_n\nabla \phi\big(u_n(t)\big)\right)\big\|_{{\mathbb{L}^2}}\\
    &\leq& c\big\|u_n(t)\big\|_{{\mathbb{V}}}\big\|u_n(t)\times\left(\Delta u_n(t)-\pi_n\nabla \phi\big(u_n(t)\big)\right)\big\|_{{\mathbb{L}^2}}.
  \end{eqnarray*}
  Then by \eqref{eq:3.10phi},
  \eqref{eq:3.11phi} and \eqref{eq:3.12phi}, we have:
  {
  \begin{eqnarray*}
    &&\mathbb{E}\left[\left(\int_0^T\left\|u_n(t)\times\big(u_n(t)\times \left(\Delta u_n(t)-\pi_n\nabla \phi\big(u_n(t)\big)\right)\big)\right\|^2_{\mathbb{L}^\frac{3}{2}}\ud t\right)^p\right]\\
    &\leq&c\mathbb{E}\left[\sup_{r\in[0,T]}\big\|u_n(r)\big\|_{{\mathbb{V}}}^{2p}\left(\int_0^T\big\|u_n(t)\times \left(\Delta u_n(t)-\pi_n\nabla \phi\big(u_n(t)\big)\right)\big\|_{{\mathbb{L}^2}}^2\ud t\right)^p\right]\\
    &\leq&c\left(\mathbb{E}\left[\sup_{r\in[0,T]}\big\|u_n(r)\big\|_{{\mathbb{V}}}^{4p}\right]\right)^\frac{1}{2} \left(\mathbb{E}\left[\left(\int_0^T\big\|u_n(t)\times \left(\Delta u_n(t)-\pi_n\nabla \phi\big(u_n(t)\big)\right)\big\|_{{\mathbb{L}^2}}^2\ud t\right)^{2p}\right]\right)^\frac{1}{2}\leq C,
  \end{eqnarray*}}
  Note that $C$ is independent of $n$. This completes the proof of \eqref{eq:3.13phi}.
\end{proof}

\begin{proof}[Proof of \eqref{eq:3.14phi}]
By Sobolev imbedding theorems,  if $\beta>\frac{1}{4}$, $X^\beta\hookrightarrow \mathbb{H}^{2\beta}(D)$ continuously. And if $\beta>\frac{1}{4}$, $\mathbb{H}^{2\beta}(D)$ is continuously imbedded in $\mathbb{L}^3$. Therefore $\mathbb{L}^\frac{3}{2}(D)$ is continuously imbedded in $X^{-\beta}$. And since for $\xi\in {\mathbb{L}^2}$,
{
  \begin{eqnarray*}
    &&\|\pi_n\xi\|_{X^{-\beta}}=\sup_{\|\varphi\|_{X^\beta}\leq 1}\left|{}_{X^{-\beta}}\langle \pi_n\xi,\varphi\rangle_{X^\beta}\right| =\sup_{\|\varphi\|_{X^\beta}\leq 1}\left|\langle\pi_n\xi,\varphi\rangle_{\mathbb{L}^2}\right|\\
    &=&\sup_{\|\varphi\|_{X^\beta}\leq 1}\left|\langle\xi,\pi_n\varphi\rangle_{\mathbb{L}^2}\right|\leq\sup_{\|\pi_n\varphi\|_{X^\beta}\leq 1}\left|{}_{X^{-\beta}}\langle \xi,\pi_n\varphi\rangle_{X^\beta}\right|=\|\xi\|_{X^{-\beta}}.
  \end{eqnarray*}}
Therefore we infer that there exists some constant $c>0$ such that
{
  \begin{eqnarray*}
  &&\mathbb{E}\int_0^T\big\|\pi_n\big(u_n(t)\times\big(u_n(t)\times\left(\Delta u_n(t)-\pi_n\nabla \phi\big(u_n(t)\big)\right)\big)\big)\big\|^2_{X^{-\beta}}\ud t\\
  &\leq&c\mathbb{E}\int_0^T\big\|u_n(t)\times \big(u_n(t)\times\left(\Delta u_n(t)-\pi_n\nabla \phi\big(u_n(t)\big)\right)\big)\big\|_{\mathbb{L}^\frac{3}{2}}^2\ud t.
  \end{eqnarray*}}
Then \eqref{eq:3.14phi} follows from \eqref{eq:3.13phi}.
\end{proof}

\begin{prop}
Let $u_n$, for $n\in\mathbb{N}$, be the solution of the equation \eqref{eq:dun} and assume that $\alpha\in(0,\frac{1}{2})$, $\beta>\frac{1}{4}$, $p\geq2$. Then the following estimates holds:
  \begin{equation}\label{eq:4.2}
  \sup_{n\in\mathbb{N}}\mathbb{E}\big(\|u_n\|^2_{W^{\alpha,p}(0,T;X^{-\beta})}\big)<\infty.
\end{equation}
\end{prop}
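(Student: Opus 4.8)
The plan is to work with the integral form of the Galerkin equation \eqref{eq:dun3} and to estimate the drift and the stochastic integral separately, exploiting the a priori bounds \eqref{eq:3.10phi}, \eqref{eq:3.12phi} and \eqref{eq:3.14phi} already at our disposal. Writing
$$u_n(t)=\pi_n u_0+\int_0^t\hat{F}_n\big(u_n(s)\big)\ud s+\sum_{j=1}^N\int_0^t G_{jn}\big(u_n(s)\big)\ud W_j(s)=:\pi_n u_0+J_1^n(t)+J_2^n(t),$$
I would bound the $W^{\alpha,p}(0,T;X^{-\beta})$ norm of each of the three pieces uniformly in $n$. The constant-in-time term $\pi_n u_0$ has vanishing Slobodeckij seminorm and, since $A_1\geq I$ gives $\mathbb{L}^2\hookrightarrow X^{-\beta}$ with $\|\pi_n u_0\|_{X^{-\beta}}\le\|u_0\|_{\Ltwo}$, it contributes at most $T^{1/p}\|u_0\|_{\Ltwo}$.

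For the drift $J_1^n$, the decisive point is that $\hat{F}_n(u_n)$ is bounded in $L^2(0,T;X^{-\beta})$ in expectation. Recalling $\hat{F}_n=\lmd_1(F_n^1+F_n^3)-\lmd_2(F_n^2+F_n^4)+\frac12\sum_j G_{jn}^2$ and $A=-\Delta$, one has $F_n^1+F_n^3=\pi_n\big(u_n\times(\Delta u_n-\pi_n\nabla\phi(u_n))\big)$ and $F_n^2+F_n^4=\pi_n\big(u_n\times(u_n\times(\Delta u_n-\pi_n\nabla\phi(u_n)))\big)$. The first is controlled in $\Ltwo$ by \eqref{eq:3.12phi}, whence in $X^{-\beta}$ via $\Ltwo\hookrightarrow X^{-\beta}$; the second is precisely the quantity estimated in $X^{-\beta}$ by \eqref{eq:3.14phi}; and the It\^o correction $G_{jn}^2(u_n)$ is bounded in $\Ltwo$ by $\|h_j\|_{\mathbb{L}^\infty}^2\|u_0\|_{\Ltwo}$ using \eqref{eq:Gnineq} and \eqref{eq:3.10phi}. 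Hence $\mathbb{E}\int_0^T\|\hat{F}_n(u_n(s))\|_{X^{-\beta}}^2\ud s\le C$ uniformly in $n$. A Cauchy--Schwarz estimate then shows that $t\mapsto J_1^n(t)$ is $\tfrac12$-H\"older continuous into $X^{-\beta}$ with constant $\big(\int_0^T\|\hat{F}_n(u_n)\|_{X^{-\beta}}^2\ud s\big)^{1/2}$, and the elementary embedding $C^{\frac12}([0,T];X^{-\beta})\hookrightarrow W^{\alpha,p}(0,T;X^{-\beta})$ (valid exactly because $\alpha<\tfrac12$, the space-time integral $\int_0^T\int_0^T|t-s|^{p/2-1-\alpha p}\ud t\ud s$ converging iff $\alpha p<p/2$) yields $\mathbb{E}\|J_1^n\|_{W^{\alpha,p}(0,T;X^{-\beta})}^2\le C$; the same H\"older bound also controls $\|J_1^n\|_{L^p(0,T;X^{-\beta})}$.

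For the stochastic integral $J_2^n$, I would invoke the standard time-regularity estimate for stochastic integrals used in \cite{ZB&TJ}: for $\alpha<\tfrac12$ and $p\ge2$,
$$\mathbb{E}\big\|J_2^n\big\|^p_{W^{\alpha,p}(0,T;\Ltwo)}\le C\,\mathbb{E}\int_0^T\Big(\sum_{j=1}^N\|G_{jn}(u_n(s))\|_{\Ltwo}^2\Big)^{p/2}\ud s.$$
Since $\|G_{jn}(u_n(s))\|_{\Ltwo}\le\|h_j\|_{\mathbb{L}^\infty}\|u_n(s)\|_{\Ltwo}=\|h_j\|_{\mathbb{L}^\infty}\|u_0\|_{\Ltwo}$ by \eqref{eq:Gnineq} and \eqref{eq:3.10phi}, the right-hand side is finite uniformly in $n$; using $\Ltwo\hookrightarrow X^{-\beta}$ and then lowering the exponent from $p$ to $2$ by Jensen's inequality gives $\mathbb{E}\|J_2^n\|^2_{W^{\alpha,p}(0,T;X^{-\beta})}\le C$. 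Combining the three bounds with the triangle inequality for the $W^{\alpha,p}$ norm proves \eqref{eq:4.2}.

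I expect the main obstacle to lie not in the time-regularity machinery — both the H\"older/Slobodeckij embedding and the stochastic-integral estimate are essentially off the shelf — but in pinning down the correct negative-order space $X^{-\beta}$ in which the singular drift lives. The gyroscopic contribution is in $\Ltwo$, but the dissipative contribution $u_n\times(u_n\times(\Delta u_n-\pi_n\nabla\phi(u_n)))$ is only controlled in $\mathbb{L}^{\frac{3}{2}}$, and it is exactly the embedding $\mathbb{L}^{\frac{3}{2}}\hookrightarrow X^{-\beta}$ for $\beta>\tfrac14$ established in the proof of \eqref{eq:3.14phi} that makes the uniform $L^2(0,T;X^{-\beta})$ drift bound available. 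This requirement, together with the restriction $\alpha<\tfrac12$ forced by the stochastic integral, accounts for the hypotheses of the proposition.
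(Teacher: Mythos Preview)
Your argument is correct and follows essentially the same route as the paper: the paper decomposes $u_n$ into the initial datum, three drift integrals bounded in $W^{1,2}(0,T;X^{-\beta})$ (via the same estimates \eqref{eq:3.12phi}, \eqref{eq:3.14phi}, \eqref{eq:Gnineq}, \eqref{eq:3.10phi}), and the stochastic integral bounded in $W^{\alpha,p}$ via Lemma~\ref{lem:LLG A.1}, then concludes using $H^{1}(0,T;X^{-\beta})\hookrightarrow W^{\alpha,p}(0,T;X^{-\beta})$. Your only cosmetic variation is to lump the drift terms into a single $J_1^n$ and to pass through $C^{1/2}([0,T];X^{-\beta})$ rather than $W^{1,2}$, which is an equivalent embedding chain.
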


We need the following Lemma to prove \eqref{eq:4.2}.
\begin{lem}[\cite{Flandoli}, Lem 2.1]\label{lem:LLG A.1}
 Assume that $E$ is a separable Hilbert space, $p\in[2,\infty)$ and $a\in (0,\frac{1}{2}).$ Then there exists a constant $C$ depending on $T$ and $a$, such
 that for any progressively measurable process $\xi=(\xi_j)_{j=1}^\infty$,
 \[\mathbb{E}\left\|I(\xi)\right\|^p_{W^{a,p}(0,T;E)}\leq C\mathbb{E}\int_0^T\left(\sum_{j=1}^\infty|\xi_j(r)|_E^2\right)^\frac{p}{2}\ud t,\]
  where $I(\xi_j)$ is defined by
  \[I(\xi):=\sum_{j=1}^\infty\int_0^t\xi_j(s)\ud W_j(s),\quad t\geq 0.\]
   In particular, $\mathbb{P}$-a.s. the trajectories of the process $I(\xi_j)$ belong to  $W^{a,2}(0,T;E)$.
\end{lem}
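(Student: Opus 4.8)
The plan is to bound, term by term, the two pieces of the Sobolev--Slobodeckij norm. For an $E$-valued map $f$ on $(0,T)$ one has
\[
\|f\|^p_{W^{a,p}(0,T;E)}=\int_0^T|f(t)|_E^p\,\ud t
+\int_0^T\!\!\int_0^T\frac{|f(t)-f(s)|_E^p}{|t-s|^{1+ap}}\,\ud s\,\ud t,
\]
so it suffices to estimate the expectation of each summand with $f=I(\xi)$. I write $g(r):=\sum_{j}|\xi_j(r)|_E^2$, so that the claimed bound is $C\,\mathbb{E}\int_0^T g(r)^{p/2}\,\ud r$.

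First I would record the martingale increment estimate. For $0\le s<t\le T$ the difference $I(\xi)(t)-I(\xi)(s)=\sum_j\int_s^t\xi_j(r)\,\ud W_j(r)$ is an $E$-valued stochastic integral with quadratic variation $\int_s^t g(r)\,\ud r$; hence by the Burkholder--Davis--Gundy inequality in $E$ there is $C_p$, $p\ge2$, with
\[
\mathbb{E}\,|I(\xi)(t)-I(\xi)(s)|_E^p\le C_p\,\mathbb{E}\Big(\int_s^t g(r)\,\ud r\Big)^{p/2}.
\]
Since $p/2\ge1$, Hölder's inequality on the interval $(s,t)$ gives the pointwise bound $\big(\int_s^t g\big)^{p/2}\le|t-s|^{\frac p2-1}\int_s^t g^{p/2}$. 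Applied with $s=0$ this already handles the time-integrability term: $\mathbb{E}|I(\xi)(t)|_E^p\le C_pT^{\frac p2-1}\mathbb{E}\int_0^T g^{p/2}$, so its $\ud t$-integral is $\le C_pT^{\frac p2}\,\mathbb{E}\int_0^T g^{p/2}$.

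The heart of the proof is the Gagliardo seminorm. All integrands being nonnegative, Tonelli lets me pull $\mathbb{E}$ through every integral; restricting to $s<t$ (the factor $2$ from symmetry is absorbed into the constant), combining the two bounds above and then integrating in $(s,t)$ before $r$ turns the seminorm term into
\[
C_p\,\mathbb{E}\int_0^T g(r)^{p/2}\,K(r)\,\ud r,
\qquad
K(r):=\int_0^r\!\!\int_r^T (t-s)^{\frac p2-2-ap}\,\ud t\,\ud s.
\]
The only thing to check is that $K(r)$ is finite, uniformly in $r\in[0,T]$, and I expect this to be the main (indeed the only non-routine) point, since it is exactly where the hypothesis $a<\tfrac12$ enters. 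Substituting $u=r-s$, $v=t-r$ gives $K(r)=\int_0^r\!\int_0^{T-r}(u+v)^{\gamma}\,\ud v\,\ud u$ with $\gamma=\frac p2-2-ap$, whose sole possible singularity sits at $(u,v)=(0,0)$; in two dimensions $(u+v)^\gamma$ is integrable near the origin exactly when $\gamma>-2$, i.e. when $p(\tfrac12-a)>0$, i.e. precisely when $a<\tfrac12$. Hence $\sup_{r\in[0,T]}K(r)\le C(T,a,p)<\infty$, and the seminorm term is $\le C\,\mathbb{E}\int_0^T g^{p/2}$. Adding the two contributions proves the inequality with $C=C(T,a,p)$.

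Finally, the ``in particular'' assertion. Whenever the right-hand side is finite, the inequality forces $\mathbb{E}\|I(\xi)\|^p_{W^{a,p}(0,T;E)}<\infty$, hence $\|I(\xi)\|_{W^{a,p}(0,T;E)}<\infty$ $\mathbb{P}$-a.s., so the (continuous) trajectories lie in $W^{a,p}(0,T;E)$. To reach $W^{a,2}(0,T;E)$ without any integrability assumption on $\xi$, I would apply the estimate just proved with exponent $2$ along the localizing stopping times $\tau_N=\inf\{t:\int_0^t g(r)\,\ud r\ge N\}$: on $[0,\tau_N]$ the integrand is bounded, the $p=2$ bound applies, and since $I(\xi)$ is defined only on $\{\int_0^T g<\infty\}$, where $\tau_N\ge T$ for $N$ large, letting $N\to\infty$ gives $I(\xi)\in W^{a,2}(0,T;E)$ $\mathbb{P}$-a.s.
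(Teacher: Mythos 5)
Your proposal is correct and follows essentially the same route as the proof of the result the paper cites (Flandoli--Gatarek, Lemma 2.1) without reproving: the Burkholder--Davis--Gundy increment bound, H\"older's inequality to extract the factor $|t-s|^{\frac{p}{2}-1}$, and Tonelli/Fubini to reduce the Gagliardo seminorm to the kernel $K(r)$, whose uniform finiteness is exactly where $a<\tfrac{1}{2}$ (i.e.\ $\frac{p}{2}-2-ap>-2$) enters. Your stopping-time localization for the final $W^{a,2}(0,T;E)$ assertion is also a correct way to remove the integrability hypothesis, so there is nothing to fix.
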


\begin{proof}[Proof of \eqref{eq:4.2}]
 Let us fix $\alpha\in(0,\frac{1}{2})$, $\beta>\frac{1}{4}$, $p\geq2$. By the equation \eqref{eq:dun2}, we get
 {
  \begin{eqnarray*}
   u_n(t)&=&u_{0,n}+ \lmd_1 \int_0^t\left(   F_n^1\big(u_n(s)\big) +   F_n^3 \big(u_n(s)\big)\right)\, \ud s
-\lmd_2 \int_0^t\left( F_n^2\big(u_n(s)\big) +   F_n^4 \big(u_n(s)\big)\right)\, \ud s\\
&&+\frac12\sum_{j=1}^N\int_0^t G^2_{jn}\big(u_n(s)\big)\, \ud s +\sum_{j=1}^N\int_0^tG_{jn}\big(u_n(s)\big)\ud W(s)\\
&=:&u_{0,n}+\sum_{i=1}^4u_n^i(t), \qquad t\in[0,T].
  \end{eqnarray*}}
 By Theorem \ref{thm:8.9}, we have the following results:\\
  There exists $C>0$ such that for all $n\in\mathbb{N}$,
\begin{trivlist}
  \item[(1)]
  \[\mathbb{E}\left[\|u_n^1\|^2_{W^{1,2}(0,T;{\mathbb{L}^2})}\right]\leq C.\]
  \item[(2)]
  \[\mathbb{E}\left[\|u_n^2\|^2_{W^{1,2}(0,T;X^{-\beta})}\right]\leq C.\]
  \item[(3)]
  \[\|u_n^3\|^2_{W^{1,2}(0,T;{\mathbb{L}^2})}\leq C,\qquad\mathbb{P}-a.s..\]
\end{trivlist}
Moreover, by the equality \eqref{eq:3.10phi},
\[\mathbb{E}\left[\sup_{t\in[0,T]}\|u_n(t)\|_{\mathbb{L}^2}^p\right]=\mathbb{E}\left[\|u_n(0)\|_{\mathbb{L}^2}^p\right]\leq C.\]
By the inequality \eqref{eq:Gnineq} and Lemma \ref{lem:LLG A.1}, we have:
\[\mathbb{E}\left[\|u_n^4\|^p_{W^{\alpha,p}(0,T;X^{-\beta})}\right]\leq C.\]
Therefore since $H^1(0,T;X^{-\beta})\hookrightarrow W^{\alpha,p}(0,T;X^{-\beta})$ continuously, we get
\[\sup_{n\in\mathbb{N}}\mathbb{E}\big(\|u_n\|^2_{W^{\alpha,p}(0,T;X^{-\beta})}\big)<\infty.\]
This completes the proof of the inequality \eqref{eq:4.2}.
\end{proof}

\section{Tightness results}
In this subsection we will use the {\em a priori} estimates \eqref{eq:3.10phi}-\eqref{eq:3.14phi} to show that the laws $\{\mathcal{L}(u_n):n\in\mathbb{N}\}$ are tight on a suitable path space.
\begin{lem}\label{lem:4.2}
  For any  $p\geq 2$, $q\in [2,6)$ and $\beta>\frac{1}{4}$ the set of laws $\{\mathcal{L}(u_n):n\in\mathbb{N}\}$ on the Banach space
  \[L^p(0,T;\mathbb{L}^q(D))\cap C(0,T; X^{-\beta})\]
  is tight.
\end{lem}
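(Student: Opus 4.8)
The plan is to produce, for each $\eps>0$, a single bounded set of an auxiliary space that embeds compactly into $\mathcal{Z}:=L^p(0,T;\mathbb{L}^q)\cap C(0,T;X^{-\beta})$ and carries all but $\eps$ of the mass of every $u_n$; by Markov's inequality this reduces tightness to two ingredients, a compact embedding and a uniform second moment bound. I would work with $\mathcal{Y}:=\mathcal{Y}_1\cap\mathcal{Y}_2$, where the two factors are tailored to the two factors of $\mathcal{Z}$. Fix $\alpha\in(0,\tfrac12)$ and set $\mathcal{Y}_1:=L^p(0,T;\mathbb{V})\cap W^{\alpha,p}(0,T;X^{-\beta})$. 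Since $d=3$ and $q<6$, the Rellich--Kondrachov theorem gives the compact embedding $\mathbb{V}=\mathbb{H}^1\hookrightarrow\hookrightarrow\mathbb{L}^q$, while $\mathbb{L}^q\hookrightarrow\mathbb{L}^2\hookrightarrow X^{-\beta}$ continuously (the last step by dualizing $X^\beta\hookrightarrow\mathbb{L}^2$). A standard Aubin--Lions--Simon compactness criterion, in the fractional-in-time form of Flandoli and Gatarek \cite{Flandoli}, then yields $\mathcal{Y}_1\hookrightarrow\hookrightarrow L^p(0,T;\mathbb{L}^q)$.

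For the factor $C(0,T;X^{-\beta})$ I cannot keep the same integrability exponent, because embedding into a space of continuous paths requires $\alpha\,(\text{exponent})>1$, which may fail when $p=2$. Instead I would choose $\tfrac14<\beta'<\beta$ and an exponent $\tilde p\ge 2$ large enough that $\alpha\tilde p>1$, and set $\mathcal{Y}_2:=W^{\alpha,\tilde p}(0,T;X^{-\beta'})$. Because $A_1$ has compact resolvent, the embedding $X^{-\beta'}\hookrightarrow\hookrightarrow X^{-\beta}$ is compact, and the second compactness criterion of \cite{Flandoli} (available once $\alpha\tilde p>1$) gives $\mathcal{Y}_2\hookrightarrow\hookrightarrow C(0,T;X^{-\beta})$. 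Consequently $\mathcal{Y}$ embeds compactly into both factors of $\mathcal{Z}$ simultaneously.

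The uniform moment bounds come directly from the a priori estimates of Theorem \ref{thm:8.9}. Estimate \eqref{eq:3.11phi} together with the mass conservation \eqref{eq:3.10phi} gives $\sup_n\mathbb{E}\sup_{t\in[0,T]}\|u_n(t)\|_{\mathbb{V}}^{2}<\infty$, hence a bound on the $L^p(0,T;\mathbb{V})$ norm, while \eqref{eq:4.2} (applied once with $(\alpha,p,\beta)$ and once with $(\alpha,\tilde p,\beta')$) supplies the two fractional-in-time bounds; altogether $\sup_n\mathbb{E}\|u_n\|_{\mathcal{Y}}^{2}<\infty$. Given $\eps>0$, Markov's inequality lets me pick $R$ with $\mathbb{P}(\|u_n\|_{\mathcal{Y}}>R)\le R^{-2}\sup_n\mathbb{E}\|u_n\|_{\mathcal{Y}}^{2}<\eps$ for all $n$. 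It remains to check that $K:=\overline{\{\|\cdot\|_{\mathcal{Y}}\le R\}}$ (closure in $\mathcal{Z}$) is compact in $\mathcal{Z}$: any sequence in the ball has a subsequence converging in $L^p(0,T;\mathbb{L}^q)$ and a further subsequence converging in $C(0,T;X^{-\beta})$, and since both spaces embed continuously into the Hausdorff space $L^p(0,T;X^{-\beta})$ the two limits coincide, so the sub-subsequence converges in $\mathcal{Z}$. Thus $\mathbb{P}(u_n\in K)\ge 1-\eps$ for every $n$, and the family of laws is tight.

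I expect the main obstacle to be the bookkeeping of exponents in time: arranging $\alpha\tilde p>1$ while keeping $\alpha<\tfrac12$ and staying inside the range $\tfrac14<\beta'<\beta$ for which \eqref{eq:4.2} is available, and verifying that the fractional compactness lemmas of \cite{Flandoli} apply verbatim with the operator $A_1$ in place of the Laplacian used there. The spatial embeddings are routine, the only sharp point being that $\mathbb{V}\hookrightarrow\hookrightarrow\mathbb{L}^q$ genuinely fails at $q=6$, which is exactly why the statement excludes the endpoint.
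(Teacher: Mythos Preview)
Your proof follows the same strategy as the paper's: Flandoli--Gatarek compactness lemmas combined with Chebyshev's inequality and the a priori bounds \eqref{eq:3.10phi}, \eqref{eq:3.11phi}, \eqref{eq:4.2}. The paper routes the $L^p(0,T;\mathbb{L}^q)$ compactness through an intermediate fractional space $X^\gamma$ (with $\gamma<\tfrac12$ chosen so that $X^\gamma=\mathbb{H}^{2\gamma}\hookrightarrow\mathbb{L}^q$) rather than invoking Rellich--Kondrachov directly, and it uses a single auxiliary space $L^p(0,T;\mathbb{V})\cap W^{\alpha,p}(0,T;X^{-\beta'})$ with $\alpha\in(1/p,1)$ for both factors of $\mathcal{Z}$. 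Your introduction of a separate exponent $\tilde p$ for the $C(0,T;X^{-\beta})$ factor is in fact a small improvement over the paper: the paper's choice $\alpha>1/p$ collides at the endpoint $p=2$ with the constraint $\alpha<1/2$ needed for \eqref{eq:4.2} (coming from Lemma~\ref{lem:LLG A.1}), whereas your decoupling of the time-integrability exponents handles this edge case cleanly.
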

\begin{proof}
Let us choose and fix $p\geq 2$, $q\in [2,6)$ and $\beta>\frac{1}{4}$. Let us then choose auxiliary numbers   $\beta'\in (\frac{1}{4},\beta)$ and  $\alpha\in (\frac{1}{p},1)$. Since $q<6$ we can choose $\gamma\in (\frac{3}{4}-\frac{3}{2q},\frac{1}{2})$ such that $\mathbb{H}^{2\gamma}\hookrightarrow \mathbb{L}^q$ continuously.
Since, the operator $A_{1}^{-1}$ is compact in $\mathbb{L}^2$,   the embedding ${\mathbb{V}}=D(A_{1}^{\frac{1}{2}})\hookrightarrow X^\gamma=D(A_{1}^\gamma)$ is compact and thus also  the embedding
\[L^p(0,T;{\mathbb{V}})\cap W^{\alpha,p}(0,T;X^{-\beta'})\hookrightarrow L^p(0,T;X^\gamma)\]
is compact.
We note that for any positive real number $r$ and random variables $\xi$ and $\eta$, since
\[\left\{\omega:\xi(\omega)>\frac{r}{2}\right\}\cup\left\{\omega:\eta(\omega)>\frac{r}{2}\right\}\supset \Big\{\omega:\xi(\omega)+\eta(\omega)>r\Big\},\]
we have
{
  \begin{eqnarray*}
&&\hspace{-2truecm}\lefteqn{\mathbb{P}\Big(\|u_n\|_{L^p(0,T;H^1)\cap W^{\alpha,p}(0,T;X^{-\beta'})}>r\Big)}\\
&=&\mathbb{P}\Big(\|u_n\|_{L^p(0,T;H^1)}+\|u_n\|_{W^{\alpha,p}(0,T;X^{-\beta'})}>r\Big)\\
&\leq& \mathbb{P}\left(\|u_n\|_{L^p(0,T;H^1)}>\frac{r}{2}\right)+\mathbb{P} \left(\|u_n\|_{W^{\alpha,p}(0,T;X^{-\beta'})}>\frac{r}{2}\right)
\leq\ldots
\end{eqnarray*}}
then by the Chebyshev inequality,
\[\ldots\leq\frac{4}{r^2}\mathbb{E}\left(\|u_n\|^2_{L^p(0,T;{\mathbb{V}})}+\|u_n\|^2_{W^{\alpha,p}(0,T;X^{-\beta'})}\right).\]
By the estimates in \eqref{eq:4.2}, \eqref{eq:3.10phi} and \eqref{eq:3.11phi}, the expected value on the right hand side of the last inequality is uniformly bounded in $n$. Let $X_T:=L^p(0,T;{\mathbb{V}})\cap W^{\alpha,p}(0,T;X^{-\beta'})$.
There is a constant $C$, such that
\[\mathbb{P}\big(\|u_n\|_{X_T}>r\big)\leq\frac{C}{r^2},\qquad\forall r,n.\]
Since
\[\mathbb{E}(\|u_n\|_X)=\int_0^\infty\mathbb{P}(\|M_n\|>r)\ud r,\]
we can infer that
\[\mathbb{E}\big(\|u_n\|_{X_T}\big)\leq 1+\int_1^\infty \frac{C}{r^2}\ud r=1+C<\infty,\quad\forall n\in\mathbb{N}.\]
Therefore the family of laws $\big\{\mathcal{L}(u_n):n\in\mathbb{N}\big\}$ is tight on $L^p(0,T;X^\gamma)$.
 Since by Proposition \ref{prop:DAH}, $X^\gamma=\mathbb{H}^{2\gamma}$, as at the beginning of the proof,  $X^\gamma\hookrightarrow \mathbb{L}^q(D)$ continuously.
 Hence we infer that  $L^p(0,T;X^\gamma)\hookrightarrow L^p(0,T;\mathbb{L}^q(D))$ continuously and  therefore $\big\{\mathcal{L}(u_n):n\in\mathbb{N}\big\}$ is also tight on $L^p(0,T;\mathbb{L}^q(D))$.\\
Since $\beta'<\beta$,  $W^{\alpha,p}(0,T;X^{-\beta'})\hookrightarrow C(0,T;X^{-\beta})$ compactly. Therefore by estimates  \eqref{eq:4.2}, we can conclude that $\big\{\mathcal{L}(u_n):n\in\mathbb{N}\big\}$ is tight on $C(0,T;X^{-\beta})$.\\
Therefore  $\big\{\mathcal{L}(u_n):n\in\mathbb{N}\big\}$ is tight on $L^p(0,T;L^q)\cap C([0,T];X^{-\beta}).$
Hence the proof of Lemma \ref{lem:4.2} is completed.
\end{proof}

From now on we will always assume $\beta>\frac{1}{4}$.

\section{Construction of new Probability Space and Processes}
In this section we will use Skorohod's theorem to obtain another probability space and an almost surely convergent sequence defined on this space whose limit is a weak martingale solution of the equation \eqref{eq:7eq}.\\
By  Lemma \ref{lem:4.2} and Prokhorov's Theorem, we have the following property.
\begin{prop}\label{prop:conv to mu0}
  Let us assume that $W$ is a $N$-dimensional Wiener process and $p\in [2,\infty),$ $q\in[2,6)$ and $\beta>\frac{1}{4}$.
  Then there is a subsequence of $\{u_n\}$ which we will denote it in the same way as the full sequence, such that the laws $\mathcal{L}(u_n,W)$ converge weakly to a certain probability measure $\mu$ on $L^p(0,T;\mathbb{L}^q(D))\cap C([0,T];X^{-\beta})\times C(0,T;\R^N)$.
\end{prop}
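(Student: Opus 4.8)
The plan is to deduce the result directly from the tightness already established in Lemma \ref{lem:4.2}, combined with Prokhorov's Theorem, which on a Polish space identifies tight families of probability measures with relatively compact families in the topology of weak convergence. Thus the whole argument reduces to upgrading the tightness of the first marginal to tightness of the \emph{joint} law of the pair $(u_n,W)$.

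First I would record that the path space in question, namely
\[
\mathcal{X}:=\big(L^p(0,T;\mathbb{L}^q(D))\cap C([0,T];X^{-\beta})\big)\times C(0,T;\R^N),
\]
is a separable complete metric space: each factor is a separable Banach space (the intersection space embeds into the separable product $L^p(0,T;\mathbb{L}^q(D))\times C([0,T];X^{-\beta})$ via $z\mapsto(z,z)$, hence is itself separable), and a finite product of Polish spaces is Polish. Next, Lemma \ref{lem:4.2} provides tightness of $\{\mathcal{L}(u_n)\}$ on the first factor: for every $\eps>0$ there is a compact set $K_\eps$ with $\mathbb{P}(u_n\in K_\eps)\geq 1-\eps$ for all $n$. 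Since $W$ is a single, fixed $\R^N$-valued Wiener process, its law $\mathcal{L}(W)$ is one probability measure on the Polish space $C(0,T;\R^N)$ and is therefore automatically tight; so for every $\eps>0$ there is a compact $L_\eps\subset C(0,T;\R^N)$ with $\mathbb{P}(W\in L_\eps)\geq 1-\eps$.

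I would then combine these two facts. Given $\eps>0$, pick $K_{\eps/2}$ and $L_{\eps/2}$ as above. The product $K_{\eps/2}\times L_{\eps/2}$ is compact in $\mathcal{X}$, and a union bound gives
\[
\mathbb{P}\big((u_n,W)\in K_{\eps/2}\times L_{\eps/2}\big)\geq 1-\mathbb{P}(u_n\notin K_{\eps/2})-\mathbb{P}(W\notin L_{\eps/2})\geq 1-\eps,
\]
uniformly in $n$. Hence the family of joint laws $\{\mathcal{L}(u_n,W):n\in\mathbb{N}\}$ is tight on $\mathcal{X}$. Applying Prokhorov's Theorem then yields a subsequence, relabelled as the full sequence, along which $\mathcal{L}(u_n,W)$ converges weakly to some probability measure $\mu$ on $\mathcal{X}$, which is exactly the assertion.

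Since the only substantive input is Lemma \ref{lem:4.2}, there is no genuine obstacle here; the single point needing (minor) care is the passage from tightness of the marginals to tightness of the joint law, which is valid precisely because the second marginal is a single fixed measure and hence trivially tight. I would also note in passing that the product $K_{\eps/2}\times L_{\eps/2}$ is genuinely compact in the product topology of $\mathcal{X}$, so no additional compactness argument for the joint space is required.
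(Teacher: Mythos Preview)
Your proposal is correct and follows essentially the same approach as the paper: the paper's ``proof'' is simply the one-line remark that the proposition follows from Lemma \ref{lem:4.2} and Prokhorov's Theorem. You have merely spelled out the standard step (joint tightness from tightness of the marginals, using that the second marginal is a single fixed law) that the paper leaves implicit.
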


Now by the Skorohod's theorem we have:
\begin{prop}\label{prop:u_n'phi}
  There exists a probability space $(\Omega^{\prime},\mathcal{F}^{\prime},\mathbb{P}^{\prime})$ and there exists a sequence $(u_n^{\prime},W_{n}^{\prime})$ of $[L^4(0,T;\mathbb{L}^4)\cap C([0,T];X^{-\beta})]\times C([0,T];\R^N)$-valued random variables defined on $(\Omega^{\prime},\mathcal{F}^{\prime},\mathbb{P}^{\prime})$ such that
  \begin{trivlist}
    \item[(a)] On $[L^4(0,T;\mathbb{L}^4)\cap C([0,T];X^{-\beta})]\times C([0,T];\R^N)$,
    $$\mathcal{L}(u_n,W)=\mathcal{L}(u_n^{\prime},W_{n}^{\prime}),\qquad\forall n\in\mathbb{N}$$
    \item[(b)] There exists a random variable
    \[(u^{\prime},W^{\prime}):(\Omega^{\prime},\mathcal{F}^{\prime},\mathbb{P}^{\prime})\longrightarrow [L^4(0,T;\mathbb{L}^4)\cap C(0,T;X^{-\beta})]\times C(0,T;\R^N)\]
    such that
    \begin{trivlist}
    \item[(i)] On $[L^4(0,T;\mathbb{L}^4)\cap C([0,T];X^{-\beta})]\times C([0,T];\R^N)$,
    \[\mathcal{L}(u^{\prime},W^{\prime})=\mu,\]
    where $\mu$ is same as in Proposition \ref{prop:conv to mu0}.
    \item[(ii)] $u_n^{\prime}\longrightarrow u^{\prime}$ in $L^4(0,T;\mathbb{L}^4)\cap C([0,T];X^{-\beta})$ almost surely,
    \item[(iii)] $W_{n}^{\prime}\longrightarrow W^{\prime}$ in $C([0,T];\R^N)$ almost surely.
    \end{trivlist}
  \end{trivlist}
\end{prop}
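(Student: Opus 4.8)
The plan is to invoke the Skorohod representation theorem directly; the only preparatory work is to check that the ambient path space is separable and metrizable, so that the theorem applies in its classical form. Write $E := \big[L^4(0,T;\mathbb{L}^4)\cap C([0,T];X^{-\beta})\big]\times C([0,T];\R^N)$, equipped with the norm obtained by summing the norms of the two factors (and, on the first factor, summing the $L^4(0,T;\mathbb{L}^4)$ and $C([0,T];X^{-\beta})$ norms). Since $\mathbb{L}^4=L^4(D;\R^3)$ is separable, so is $L^4(0,T;\mathbb{L}^4)$; since $X^{-\beta}$ is a separable Hilbert space and $[0,T]$ is compact, $C([0,T];X^{-\beta})$ is separable; and $C([0,T];\R^N)$ is separable. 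Hence each factor, their intersection, and the product $E$ are separable Banach spaces, in particular Polish spaces.

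Next I would record the weak convergence that feeds the theorem. Taking $p=q=4$, which is admissible since $4\in[2,6)$ and $4\geq 2$, in Proposition \ref{prop:conv to mu0} shows that the laws $\mathcal{L}(u_n,W)$ of the pairs $(u_n,W)$ converge weakly, along the subsequence selected there, to the probability measure $\mu$ on the Borel $\sigma$-algebra of $E$.

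With these two ingredients the conclusion is immediate. Applying the Skorohod representation theorem to the weakly convergent sequence $\mathcal{L}(u_n,W)$ on the Polish space $E$, I obtain a probability space $(\Omega',\mathcal{F}',\mathbb{P}')$ carrying $E$-valued random variables $(u_n',W_n')$ and $(u',W')$ such that $\mathcal{L}(u_n',W_n')=\mathcal{L}(u_n,W)$ for every $n$, such that $\mathcal{L}(u',W')=\mu$, and such that $(u_n',W_n')\to(u',W')$ holds $\mathbb{P}'$-almost surely in $E$. This yields items (a) and (b)(i) at once; items (b)(ii) and (b)(iii) follow by projecting the almost sure convergence in $E$ onto its two factors, giving $u_n'\to u'$ in $L^4(0,T;\mathbb{L}^4)\cap C([0,T];X^{-\beta})$ and $W_n'\to W'$ in $C([0,T];\R^N)$, both $\mathbb{P}'$-almost surely.

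The only genuine point requiring attention is the separability verification of the first paragraph, since the classical Skorohod theorem is stated for separable metric spaces; once separability of the intersection space $L^4(0,T;\mathbb{L}^4)\cap C([0,T];X^{-\beta})$ is secured, no further obstacle remains, and the proof is essentially a citation of the theorem.
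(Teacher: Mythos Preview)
Your proof is correct and follows exactly the approach the paper takes: the paper simply writes ``Now by the Skorohod's theorem we have'' before stating the proposition, leaving implicit the verification of separability and the specialization $p=q=4$ that you have spelled out. Your write-up is a faithful expansion of that one-line justification.
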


\begin{notation}
  We will use $\mathbb{F}^{\prime}$ to denote the filtration generated by $u^{\prime}$ and $W^{\prime}$ in the probability space $(\Omega^{\prime},\mathcal{F}^{\prime},\mathbb{P}^{\prime})$.
\end{notation}

From now on we will prove that $u^{\prime}$ is the weak solution of the equation \eqref{eq:7eq}. And we begin with showing that $\{u_n^{\prime}\}$ satisfies the same {\em a priori} estimates as the original sequence $\{u_n\}$. By the Kuratowski Theorem, we have
\begin{prop}\label{prop:sabor}
  The Borel subsets of $C(0,T;H_n)$ are Borel subsets of $L^4(0,T;\mathbb{L}^4)\cap C(0,T;X^{-\frac{1}{2}})$.
\end{prop}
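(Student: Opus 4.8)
The plan is to deduce the statement from the Kuratowski theorem applied to the natural inclusion map
\[\iota : C(0,T;H_n)\longrightarrow Y:=L^4(0,T;\mathbb{L}^4)\cap C(0,T;X^{-\frac{1}{2}}),\]
so that the real work is only to verify that $\iota$ is a continuous injection between two Polish spaces; the Borel-to-Borel conclusion is then automatic. Recall that the Kuratowski theorem asserts that a one-to-one Borel (in particular continuous) map between Polish spaces sends Borel sets to Borel sets.

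First I would exploit that $H_n=\lspan\{e_1,\ldots,e_n\}$ is finite dimensional and consists of smooth functions, so that all norms on $H_n$ are equivalent. In particular there are constants $C_1,C_2>0$ with $\|w\|_{\mathbb{L}^4}\leq C_1\|w\|_{H_n}$ and $\|w\|_{X^{-\frac{1}{2}}}\leq C_2\|w\|_{H_n}$ for every $w\in H_n$. From the first estimate, for $u\in C(0,T;H_n)$ one gets $\|u\|^4_{L^4(0,T;\mathbb{L}^4)}=\int_0^T\|u(t)\|^4_{\mathbb{L}^4}\ud t\leq C_1^4\,T\sup_{t\in[0,T]}\|u(t)\|^4_{H_n}$, and from the second $\sup_{t\in[0,T]}\|u(t)\|_{X^{-\frac{1}{2}}}\leq C_2\sup_{t\in[0,T]}\|u(t)\|_{H_n}$. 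Hence $\iota$ indeed maps $C(0,T;H_n)$ into $Y$ and is bounded, i.e.\ continuous.

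Next I would check injectivity. Since $H_n\subset\mathbb{L}^2$ and the Gelfand embedding $\mathbb{L}^2\hookrightarrow X^{-\frac{1}{2}}$ is injective, the restriction $H_n\hookrightarrow X^{-\frac{1}{2}}$ is injective. Thus if $\iota(u)=\iota(v)$, then $u$ and $v$ coincide as elements of $C(0,T;X^{-\frac{1}{2}})$, so $u(t)=v(t)$ in $X^{-\frac{1}{2}}$ for every $t$, whence $u(t)=v(t)$ in $H_n$ for every $t$ and therefore $u=v$ in $C(0,T;H_n)$.

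Finally I would record that both spaces are Polish: $C(0,T;H_n)$ is a separable Banach space because $H_n$ is a finite dimensional (hence separable) Hilbert space, while $Y$ is a separable Banach space as the intersection of the separable spaces $L^4(0,T;\mathbb{L}^4)$ and $C(0,T;X^{-\frac{1}{2}})$. Having a continuous injection $\iota$ between Polish spaces, the Kuratowski theorem guarantees that $\iota(B)$ is a Borel subset of $Y$ for every Borel set $B\subset C(0,T;H_n)$; identifying $C(0,T;H_n)$ with its image under $\iota$, this is exactly the assertion. The only point requiring any care is confirming the hypotheses of the Kuratowski theorem, and among these the injectivity via the Gelfand triple is the one worth stating explicitly; there is no genuine analytic obstacle, since the finite dimensionality of $H_n$ trivializes all the embedding estimates through the equivalence of norms.
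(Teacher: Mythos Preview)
Your proposal is correct and takes essentially the same approach as the paper, which simply invokes the Kuratowski theorem without supplying details; you have just filled in the verification that the natural inclusion is a continuous injection between Polish spaces. The argument is sound, with the finite-dimensionality of $H_n$ making the continuity estimates trivial and the injectivity of the Gelfand embedding $\mathbb{L}^2\hookrightarrow X^{-\frac{1}{2}}$ giving injectivity of $\iota$.
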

So we have the following Corollary.
\begin{cor}\label{cor:4.4phi}
  $u_n^{\prime}$ takes values in $H_n$ and the laws on $C([0,T];H_n)$ of $u_n$ and $u_n^{\prime}$ are equal.
\end{cor}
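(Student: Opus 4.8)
The plan is to obtain this corollary as a direct consequence of the Kuratowski-type statement in Proposition \ref{prop:sabor} together with the equality of laws recorded in Proposition \ref{prop:u_n'phi}(a). Write $Z:=L^4(0,T;\mathbb{L}^4)\cap C([0,T];X^{-\beta})$ for the ambient path space on which both families of laws live. First I would recall that the Galerkin solution $u_n$ of \eqref{eq:dun} is, by construction, a continuous $H_n$-valued process, so all its trajectories lie in $C([0,T];H_n)$. Since $H_n$ is finite dimensional and continuously (and injectively) embedded in both $\mathbb{L}^4$ and $X^{-\beta}$, we have the continuous injection $C([0,T];H_n)\hookrightarrow Z$, and $u_n$, viewed as a $Z$-valued random variable, takes its values $\mathbb{P}$-a.s. in the subset $C([0,T];H_n)$. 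By Proposition \ref{prop:sabor} this subset is a Borel subset of $Z$, so the event $\{u_n\in C([0,T];H_n)\}$ is measurable and has probability one.

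Next I would transfer this property to $u_n^{\prime}$ through the coincidence of laws. Proposition \ref{prop:u_n'phi}(a) asserts $\mathcal{L}(u_n,W)=\mathcal{L}(u_n^{\prime},W_n^{\prime})$ on $Z\times C([0,T];\R^N)$; projecting onto the first coordinate gives $\mathcal{L}(u_n)=\mathcal{L}(u_n^{\prime})$ as probability measures on $Z$. Evaluating this common law on the Borel set $C([0,T];H_n)\subset Z$ yields
\[
\mathbb{P}^{\prime}\big(u_n^{\prime}\in C([0,T];H_n)\big)=\mathbb{P}\big(u_n\in C([0,T];H_n)\big)=1,
\]
which is precisely the assertion that $u_n^{\prime}$ takes values in $H_n$.

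Finally, for the equality of the laws on $C([0,T];H_n)$ itself, I would invoke Proposition \ref{prop:sabor} once more: every Borel subset $A$ of $C([0,T];H_n)$ is simultaneously a Borel subset of $Z$, so the trace of $\mathcal{B}(Z)$ on $C([0,T];H_n)$ coincides with $\mathcal{B}(C([0,T];H_n))$. Hence for each such $A$ one has
\[
\mathbb{P}\big(u_n\in A\big)=\mathcal{L}(u_n)(A)=\mathcal{L}(u_n^{\prime})(A)=\mathbb{P}^{\prime}\big(u_n^{\prime}\in A\big),
\]
so the laws of $u_n$ and $u_n^{\prime}$ on $C([0,T];H_n)$ agree.

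I expect the only genuinely delicate point to be the measure-theoretic identification of the two Borel structures, namely that the finer path space $C([0,T];H_n)$ sits inside the coarser space $Z$ as a Borel subset whose relative Borel $\sigma$-algebra is the full $\mathcal{B}(C([0,T];H_n))$. This is exactly the content of Proposition \ref{prop:sabor}, so once that is available the remainder is a routine push-forward argument, and no further estimates are needed.
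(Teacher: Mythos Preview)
Your proposal is correct and follows exactly the route the paper intends: the paper states Corollary~\ref{cor:4.4phi} immediately after Proposition~\ref{prop:sabor} with only the words ``So we have the following Corollary,'' and your argument spells out precisely the push-forward reasoning (equality of laws on the ambient space, Borel measurability of $C([0,T];H_n)$ in that space via Kuratowski, transfer of full measure and of the trace $\sigma$-algebra) that this phrase is meant to encode.
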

By the Corollary \ref{cor:4.4phi}, we have
\begin{lem}\label{lem:estun'}
  The $\{u_n^{\prime}\}$ defined in Proposition \ref{prop:u_n'phi} satisfies the following estimates:
  \begin{equation}\label{eq:4.5phi}
    \sup_{t\in[0,T]}\big\|u_n^{\prime}(t)\big\|_{\mathbb{L}^2}\leq\big\|u_0\big\|_{\mathbb{L}^2},\qquad\mathbb{P}^{\prime}-a.s.,
  \end{equation}
  \begin{equation}\label{eq:4.6phi}
    \sup_{n\in\mathbb{N}}\mathbb{E}^{\prime}\left[\sup_{t\in[0,T]}\big\|u_n^{\prime}(t)\big\|_{\mathbb{V}}^{2r}\right]<\infty, \qquad \forall r\geq 1,
  \end{equation}
  \begin{equation}\label{eq:4.7phi}
    \sup_{n\in\mathbb{N}}\mathbb{E}^{\prime}\left[\left(\int_0^T\|u_n^{\prime}(t)\times\left(\Delta u_n^{\prime}(t)-\pi_n\nabla \phi\big(u_n^{\prime}(t)\big)\right)\|_{\mathbb{L}^2}^2\ud t\right)^r\right]<\infty,\qquad\forall r\geq 1,
  \end{equation}
  \begin{equation}\label{eq:4.8phi}
    \sup_{n\in\mathbb{N}}\mathbb{E}^{\prime}\int_0^T\left\|u_n^{\prime}(t)\times\big(u_n^{\prime}(t)\times\left(\Delta u_n^{\prime}(t)-\pi_n\nabla \phi\big(u_n^{\prime}(t)\big)\right)\big)\right\|^2_{\mathbb{L}^\frac{3}{2}}\ud t<\infty,
  \end{equation}
  \begin{equation}\label{eq:4.9phi}
     \sup_{n\in\mathbb{N}}\mathbb{E}^{\prime}\int_0^T\left\|\pi_n\left[u_n^{\prime}(t)\times\big(u_n^{\prime}(t)\times\left(\Delta u_n^{\prime}(t)-\pi_n\nabla \phi\big(u_n^{\prime}(t)\big)\right)\big)\right]\right\|^2_{X^{-\beta}}\ud t<\infty.
  \end{equation}
\end{lem}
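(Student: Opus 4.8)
The plan is to transfer each estimate from the original Galerkin sequence $\{u_n\}$, for which \eqref{eq:3.10phi}--\eqref{eq:3.14phi} are already established, to the copies $\{u_n^{\prime}\}$ by exploiting equality of laws. The crucial point, supplied by Proposition \ref{prop:sabor} and Corollary \ref{cor:4.4phi}, is that although the Skorohod construction takes place on the large path space $L^4(0,T;\mathbb{L}^4)\cap C([0,T];X^{-\beta})$, each $u_n^{\prime}$ in fact takes values in $C([0,T];H_n)$ and the laws of $u_n$ and $u_n^{\prime}$ coincide there. Every quantity appearing in \eqref{eq:4.5phi}--\eqref{eq:4.9phi} can be written as $\Psi(u_n^{\prime})$ for some nonnegative Borel functional $\Psi$ on $C([0,T];H_n)$, and the same $\Psi$ evaluated on $u_n$ is exactly the quantity controlled by Theorem \ref{thm:8.9}. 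Since $\mathbb{E}^{\prime}[\Psi(u_n^{\prime})]=\mathbb{E}[\Psi(u_n)]$ whenever $\Psi$ is Borel, each bound transfers with the same constant.

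The main work is therefore to verify that these functionals are Borel measurable on $C([0,T];H_n)$, and here the finite-dimensionality of $H_n$ is decisive. On $H_n$ the operator $A=-\Delta$ and the projection $\pi_n$ are bounded linear maps, the superposition operator $u\mapsto \nabla\phi(u)$ is continuous (indeed Lipschitz, by Assumption \ref{as:all4}), and the cross product together with the norms $\|\cdot\|_{\mathbb{L}^2}$, $\|\cdot\|_{\mathbb{V}}$, $\|\cdot\|_{\mathbb{L}^{3/2}}$, $\|\cdot\|_{X^{-\beta}}$ are all continuous (being norms on a finite-dimensional space). Consequently, for fixed $w\in C([0,T];H_n)$ the integrand $t\mapsto \|w(t)\times(\Delta w(t)-\pi_n\nabla\phi(w(t)))\|_{\mathbb{L}^2}^2$ and its analogues are continuous in $t$, and the dependence on $w$ is continuous for the uniform topology; hence the integral functionals defining the left-hand sides of \eqref{eq:4.7phi}, \eqref{eq:4.8phi} and \eqref{eq:4.9phi} are continuous, and therefore Borel, on $C([0,T];H_n)$. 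The same reasoning applies to $w\mapsto \sup_{t}\|w(t)\|_{\mathbb{L}^2}$ and $w\mapsto \sup_t\|w(t)\|_{\mathbb{V}}$.

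With measurability in hand, \eqref{eq:4.7phi}, \eqref{eq:4.8phi} and \eqref{eq:4.9phi} follow immediately by applying Corollary \ref{cor:4.4phi} to the bounds \eqref{eq:3.12phi}, \eqref{eq:3.13phi} and \eqref{eq:3.14phi}. For \eqref{eq:4.5phi} one observes that the set $\{w:\sup_t\|w(t)\|_{\mathbb{L}^2}\le\|u_0\|_{\mathbb{L}^2}\}$ is closed, hence Borel, in $C([0,T];H_n)$; since \eqref{eq:3.10phi} places $u_n$ in this set almost surely and the laws agree, $u_n^{\prime}$ lies in it $\mathbb{P}^{\prime}$-a.s. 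The only estimate whose form differs from its source is \eqref{eq:4.6phi}: here I would first combine \eqref{eq:3.10phi} with the nonnegativity of $\phi$ to write $\|u_n(r)\|_{\mathbb{V}}^2=\|u_n(r)\|_{\mathbb{L}^2}^2+\|\nabla u_n(r)\|_{\mathbb{L}^2}^2\le \|u_0\|_{\mathbb{L}^2}^2+\{\|\nabla u_n(r)\|_{\mathbb{L}^2}^2+\int_D\phi(u_n(r,x))\ud x\}$, so that taking $p=r$ in \eqref{eq:3.11phi} yields $\mathbb{E}\sup_{t}\|u_n(t)\|_{\mathbb{V}}^{2r}\le C$ uniformly in $n$, and then transfer as before. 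The only genuine obstacle is the measurability verification, i.e. making sure each functional is well-defined and Borel on the finite-dimensional path space; once that is granted the transfer is purely formal, and it is precisely the finite-dimensionality of $H_n$ (which renders $\Delta$, $\pi_n$ and $\nabla\phi$ continuous) that makes it work, since none of these functionals would even be defined on the ambient space $L^4(0,T;\mathbb{L}^4)\cap C([0,T];X^{-\beta})$.
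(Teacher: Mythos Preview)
Your proposal is correct and follows the same approach as the paper: the paper simply states the lemma as an immediate consequence of Corollary \ref{cor:4.4phi} (equality of laws on $C([0,T];H_n)$), leaving the measurability verification and the derivation of \eqref{eq:4.6phi} from \eqref{eq:3.10phi} and \eqref{eq:3.11phi} implicit. You have supplied exactly those details, and your observation that finite-dimensionality of $H_n$ is what makes all the relevant functionals continuous (hence Borel) on the path space is the right justification.
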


Now we will study some inequalities satisfied by the limiting process $u^{\prime}$.
\begin{prop}
  Let $u^{\prime}$ be the process which is defined in Proposition \ref{prop:u_n'phi}. Then we have
  \begin{equation}\label{eq:4.10phi}
    \esup_{t\in[0,T]}\|u^{\prime}(t)\|_{\mathbb{L}^2}\leq \|u_0\|_{\mathbb{L}^2},\quad \mathbb{P}^{\prime}-a.s.
  \end{equation}
  \begin{equation}\label{eq:4.11phi}
    \sup_{t\in[0,T]}\|u^{\prime}(t)\|_{X^{-\beta}}\leq c\|u_0\|_{\mathbb{L}^2},\quad\mathbb{P}^{\prime}-a.s.
  \end{equation}
\end{prop}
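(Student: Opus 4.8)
The plan is to obtain both bounds on the limit $u^{\prime}$ by transferring the uniform estimates for the approximations $u_n^{\prime}$ recorded in Lemma \ref{lem:estun'} through the two modes of almost sure convergence supplied by Proposition \ref{prop:u_n'phi}(b)(ii), namely convergence in $L^4(0,T;\mathbb{L}^4)$ and in $C([0,T];X^{-\beta})$. The crucial observation is that these two modes have different strengths, and this is exactly what forces an \emph{essential} supremum in \eqref{eq:4.10phi} but permits a genuine supremum in \eqref{eq:4.11phi}.

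For \eqref{eq:4.10phi} I would fix $\omega$ in the full-measure set on which both \eqref{eq:4.5phi} holds and $u_n^{\prime}(\omega)\to u^{\prime}(\omega)$ in $L^4(0,T;\mathbb{L}^4)$. Since $D$ is bounded, Hölder's inequality gives the continuous embedding $\mathbb{L}^4\hookrightarrow\mathbb{L}^2$, so the Bochner-space convergence yields, along a subsequence depending on $\omega$, the pointwise convergence $u_n^{\prime}(\omega,t)\to u^{\prime}(\omega,t)$ in $\mathbb{L}^2$ for Lebesgue-a.e.\ $t\in[0,T]$. Strong convergence forces convergence of norms, and combining this with the bound $\|u_n^{\prime}(\omega,t)\|_{\mathbb{L}^2}\leq\|u_0\|_{\mathbb{L}^2}$ valid for \emph{every} $t$ produces $\|u^{\prime}(\omega,t)\|_{\mathbb{L}^2}\leq\|u_0\|_{\mathbb{L}^2}$ for a.e.\ $t$; taking the essential supremum over $t$ gives \eqref{eq:4.10phi}. (If one prefers not to invoke norm convergence, weak lower semicontinuity of the $\mathbb{L}^2$-norm yields the same conclusion.)

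For \eqref{eq:4.11phi} I would first upgrade the $\mathbb{L}^2$ bound to an $X^{-\beta}$ bound for the approximations. Because $\beta>\frac14>0$, the space $\mathbb{L}^2=X^0$ embeds continuously into $X^{-\beta}$, so there is a constant $c$ with $\|u_n^{\prime}(t)\|_{X^{-\beta}}\leq c\|u_n^{\prime}(t)\|_{\mathbb{L}^2}\leq c\|u_0\|_{\mathbb{L}^2}$ for every $t\in[0,T]$, $\mathbb{P}^{\prime}$-a.s., using \eqref{eq:4.5phi}. The advantage now is that $u_n^{\prime}(\omega)\to u^{\prime}(\omega)$ holds in $C([0,T];X^{-\beta})$, hence for every $t$ rather than merely for a.e.\ $t$; passing to the limit in the previous display gives $\|u^{\prime}(\omega,t)\|_{X^{-\beta}}\leq c\|u_0\|_{\mathbb{L}^2}$ for all $t$, and taking the supremum yields \eqref{eq:4.11phi}.

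The only point demanding care—and the mild obstacle—is precisely this discrepancy in convergence: the $\mathbb{L}^2$ estimate can only be carried over to an essential supremum because the underlying convergence lives in the Bochner space $L^4(0,T;\mathbb{L}^4)$ and the diagonal subsequence is $\omega$-dependent, whereas the stronger $C([0,T];X^{-\beta})$ convergence is exactly what makes the pointwise supremum in \eqref{eq:4.11phi} legitimate. Once this distinction is respected, both estimates follow from elementary embedding and norm-convergence arguments with no further analytic input.
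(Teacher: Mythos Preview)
Your proposal is correct and follows essentially the same route as the paper: for \eqref{eq:4.10phi} the paper also passes from $L^4(0,T;\mathbb{L}^4)$-convergence to $L^2(0,T;\mathbb{L}^2)$-convergence via the embedding $\mathbb{L}^4\hookrightarrow\mathbb{L}^2$ and then invokes \eqref{eq:4.5phi}, and for \eqref{eq:4.11phi} it uses exactly your argument combining $\mathbb{L}^2\hookrightarrow X^{-\beta}$ with the uniform $C([0,T];X^{-\beta})$-convergence. Your version is slightly more explicit in justifying the passage to the essential supremum via an a.e.\ convergent subsequence, whereas the paper states this implication more tersely; the substance is the same.
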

\begin{proof}[Proof of \eqref{eq:4.10phi}]
  Since $u_n^{\prime}$ converges to $u^{\prime}$ in $L^4(0,T;\mathbb{L}^4)\cap C(0,T;X^{-\beta})$ $\mathbb{P}^{\prime}$ almost surely,
  \[\lim_{n\rightarrow\infty}\int_0^T\|u_n^{\prime}(t)-u^{\prime}(t)\|^4_{\mathbb{L}^4}\ud t=0,\quad\mathbb{P}^{\prime}-a.s.\]
  Since $\mathbb{L}^4\hookrightarrow {\mathbb{L}^2}$, we infer that
  \[\lim_{n\rightarrow\infty}\int_0^T\|u_n^{\prime}(t)-u^{\prime}(t)\|^2_{\mathbb{L}^2}\ud t=0.\]
  Hence $u_n^{\prime}$ converges to $u^{\prime}$  in $L^2(0,T;\mathbb{L}^2)$ $\mathbb{P}^{\prime}$ almost surely. Therefore by \eqref{eq:4.5phi},
  \[\esup_{t\in[0,T]}\|u^{\prime}(t)\|_{\mathbb{L}^2}\leq \|u_0\|_{\mathbb{L}^2},\quad \mathbb{P}^{\prime}-a.s.\]
\end{proof}
\begin{proof}[Proof of \eqref{eq:4.11phi}]
  Since ${\mathbb{L}^2}\hookrightarrow X^{-\beta}$,  there exists some constant $c>0$, such that $\|u_n^{\prime}(t)\|_{X^{-\beta}}\leq c\|u_n^{\prime}(t)\|_{\mathbb{L}^2}$ for all $n\in\mathbb{N}$. By \eqref{eq:4.5phi}, we have
  \[\sup_{t\in[0,T]}\|u_n^{\prime}(t)\|_{X^{-\beta}}\leq c\sup_{t\in[0,T]}\|u_n^{\prime}(t)\|_{\mathbb{L}^2}\leq c\|u_0\|_{\mathbb{L}^2},\quad\mathbb{P}^{\prime}-a.s.\]
  And by Proposition \ref{prop:u_n'phi} (ii) $u_n^{\prime}$ converges to $u^{\prime}$ in $C([0,T];X^{-\beta})$, we infer that
  \[\sup_{t\in[0,T]}\|u^{\prime}(t)\|_{X^{-\beta}}\leq c\|u_0\|_{\mathbb{L}^2},\quad\mathbb{P}^{\prime}-a.s.\]
\end{proof}

We continue investigating properties of the process $u^{\prime}$. The next result and it's proof are related to the estimate \eqref{eq:4.6phi}.
\begin{prop}
  Let $u^{\prime}$ be the process which was defined in Proposition \ref{prop:u_n'phi}. Then we have
  \begin{equation}\label{eq:4.12phi}
    \mathbb{E}^{\prime}[\esup_{t\in[0,T]}\|u^{\prime}(t)\|^{2r}_{{\mathbb{V}}}]<\infty,\quad r\geq 2.
  \end{equation}
\end{prop}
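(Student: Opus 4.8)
The plan is to upgrade the uniform-in-$n$ bound \eqref{eq:4.6phi} to a bound on the limit $u'$ by combining the almost sure convergence from Proposition \ref{prop:u_n'phi}(ii) with a weak-$*$ compactness and lower semicontinuity argument carried out \emph{pointwise in} $\omega$, and then passing the resulting estimate under the expectation by Fatou's Lemma. This is the same philosophy as the proofs of \eqref{eq:4.10phi} and \eqref{eq:4.11phi}, but now weak-$*$ compactness in $L^\infty(0,T;\mathbb{V})$ is needed in order to capture the full $\esup_t$-in-time, $\mathbb{V}$-in-space regularity rather than merely an $\mathbb{L}^2$-bound.

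First I would record the reduction. Set $f_n(\omega):=\sup_{t\in[0,T]}\|u_n'(t,\omega)\|_{\mathbb{V}}$; since by Corollary \ref{cor:4.4phi} each $u_n'(\cdot,\omega)$ takes values in the finite dimensional space $H_n$ and is continuous, $f_n(\omega)<\infty$ for every $\omega$, and \eqref{eq:4.6phi} reads $\sup_n \mathbb{E}'[f_n^{2r}]\le C$. The statement \eqref{eq:4.12phi} will follow once I establish the almost sure pointwise inequality
\[ \esup_{t\in[0,T]}\|u'(t,\omega)\|_{\mathbb{V}} \le \liminf_{n\to\infty} f_n(\omega),\]
because then, using that $x\mapsto x^{2r}$ is increasing and continuous and applying Fatou's Lemma,
\[ \mathbb{E}'\Big[\big(\esup_{t}\|u'(t)\|_{\mathbb{V}}\big)^{2r}\Big]\le \mathbb{E}'\big[\liminf_n f_n^{2r}\big]\le \liminf_n \mathbb{E}'\big[f_n^{2r}\big]\le C.\]

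To prove the pointwise inequality, fix $\omega$ in the full-measure set on which $u_n'(\omega)\to u'(\omega)$ in $C([0,T];X^{-\beta})$, put $L:=\liminf_n f_n(\omega)$, and assume $L<\infty$ (otherwise there is nothing to prove). Choose a subsequence $(n_k)$ with $f_{n_k}(\omega)\to L$. Since $\mathbb{V}$ is a separable reflexive Hilbert space, $L^\infty(0,T;\mathbb{V})$ is the dual of $L^1(0,T;\mathbb{V}^*)$, so the norm-bounded sequence $\{u_{n_k}'(\cdot,\omega)\}$ has a weak-$*$ convergent subsequence (not relabelled) with some limit $v\in L^\infty(0,T;\mathbb{V})$, and weak-$*$ lower semicontinuity of the norm gives
\[ \esup_{t}\|v(t)\|_{\mathbb{V}} = \|v\|_{L^\infty(0,T;\mathbb{V})} \le \liminf_k \|u_{n_k}'(\cdot,\omega)\|_{L^\infty(0,T;\mathbb{V})}\le \liminf_k f_{n_k}(\omega)= L.\]

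It remains to identify $v$ with $u'(\cdot,\omega)$. Weak-$*$ convergence in $L^\infty(0,T;\mathbb{V})$ forces weak convergence of $u_{n_k}'(\cdot,\omega)$ to $v$ in $L^2(0,T;X^{-\beta})$ (using the continuous embeddings $\mathbb{V}\hookrightarrow\mathbb{L}^2\hookrightarrow X^{-\beta}$), whereas the strong convergence in $C([0,T];X^{-\beta})$ gives convergence to $u'(\cdot,\omega)$ in $L^2(0,T;X^{-\beta})$; testing both limits against smooth time-space functions and invoking uniqueness of limits yields $v=u'(\cdot,\omega)$ in $L^2(0,T;X^{-\beta})$, hence for a.e.\ $t$. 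Thus $u'(\cdot,\omega)$ admits a representative in $L^\infty(0,T;\mathbb{V})$ with $\esup_t\|u'(t,\omega)\|_{\mathbb{V}}\le L$, which is exactly the desired bound. I expect the main obstacle to be precisely this identification step and its functional-analytic framing: justifying the duality $L^\infty(0,T;\mathbb{V})=(L^1(0,T;\mathbb{V}^*))^*$ and matching the various dual pairings so that the weak-$*$ limit is genuinely the already constructed process $u'$. The measurability in $\omega$ of $\esup_t\|u'(t)\|_{\mathbb{V}}$ needed for Fatou's Lemma is routine, as it is dominated by the measurable function $\liminf_n f_n$.
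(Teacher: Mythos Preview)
Your argument is correct, but it takes a genuinely different route from the paper's. The paper works \emph{globally} on $\Omega'\times[0,T]$: it invokes Banach--Alaoglu in the mixed space $L^{2r}(\Omega';L^\infty(0,T;\mathbb{V}))\simeq\bigl(L^{\frac{2r}{2r-1}}(\Omega';L^1(0,T;X^{-1/2}))\bigr)^*$ to extract a weak-$*$ limit $v$, and then identifies $v$ with $u'$ by testing against $\varphi\in L^4(\Omega';L^{4/3}(0,T;\mathbb{L}^{4/3}))$, using uniform integrability of $\int_0^T\langle u_n',\varphi\rangle\,dt$ together with the almost sure $L^4(0,T;\mathbb{L}^4)$-convergence, and finally a density argument. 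You instead run Banach--Alaoglu \emph{pointwise in $\omega$} in $L^\infty(0,T;\mathbb{V})$, identify the limit pathwise via the $C([0,T];X^{-\beta})$-convergence, and then pass under the expectation by Fatou. Your route is arguably more elementary: it only needs the duality $L^\infty(0,T;\mathbb{V})=(L^1(0,T;\mathbb{V}^*))^*$ for a single separable Hilbert space, and the identification step is cleaner (no uniform-integrability bookkeeping). The price you pay is the measurability of $\omega\mapsto\esup_t\|u'(t,\omega)\|_{\mathbb{V}}$, which the paper's global approach sidesteps automatically; your remark that this is ``routine'' is correct (one writes the $L^\infty(0,T;\mathbb{V})$-norm as a countable supremum of pairings with a dense set in $L^1(0,T;\mathbb{V}^*)\cap L^2(0,T;\mathbb{L}^2)$, each of which is measurable since $u'$ is measurable into $L^2(0,T;\mathbb{L}^2)$), but it does deserve a sentence rather than a dismissal.
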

\begin{proof}

Since $L^{2r}(\Omega^{\prime};L^\infty(0,T;{\mathbb{V}}))$ is isomorphic to $\left(L^\frac{2r}{2r-1}(\Omega^{\prime};L^1(0,T;X^{-\frac{1}{2}}))\right)^\ast$, by the Banach-Alaoglu Theorem we infer that the sequence $\{u_n^{\prime}\}$ contains a subsequence, denoted in the same way as the full sequence, and there exists an element
$v \in L^{2r}(\Omega^{\prime};L^\infty(0,T;{\mathbb{V}}))$ such that $u_n^{\prime} \to v$ weakly$^\ast$ in $L^{2r}(\Omega^{\prime};L^\infty(0,T;{\mathbb{V}}))$. In particular,  we have
\[
\lb u_n^{\prime} , \varphi \rb \to \lb v,  \varphi \rb, \qquad  \varphi \in L^\frac{2r}{2r-1}(\Omega^{\prime};(L^1(0,T;X^{-\frac{1}{2}}))).
\]
This means that
\[ \int_{\Omega^{\prime}} \int_0^T \lb u_n^{\prime}(t,\omega) , \varphi(t,\omega) \rb \ud t \ud\mathbb{P}^{\prime}(\omega) \to
\int_{\Omega^{\prime}} \int_0^T \lb v(t,\omega) , \phi(t,\omega) \rb \ud t\ud\mathbb{P}^{\prime}(\omega).
\]

On the other hand, if we fix $\varphi \in L^4(\Omega^{\prime};L^\frac{4}{3}(0,T;\mathbb{L}^\frac{4}{3}))$, by the inequality \eqref{eq:4.6phi} we have (to avoid too long formulations, we omit some parameters $t$ in the following equations)
 {
 \begin{eqnarray*}
   &&\sup_n\int_{\Omega^{\prime}}\left|\int_0^T{}_{\mathbb{L}^4}\langle u_n^{\prime}(t),\varphi(t)\rangle_{\mathbb{L}^\frac{4}{3}}\ud t\right|^2\ud \mathbb{P}^{\prime}(\omega)\leq\sup_n\int_{\Omega^{\prime}}\left|\int_0^T\|u_n^{\prime}\|_{\mathbb{L}^4}\|\varphi\|_{\mathbb{L}^\frac{4}{3}}\ud t\right|^2\ud \mathbb{P}^{\prime}(\omega)\\
   &\leq&\sup_n\int_{\Omega^{\prime}}\|u_n^{\prime}\|_{L^\infty(0,T;\mathbb{L}^4)}^2\|\varphi\|_{L^1(0,T;\mathbb{L}^\frac{4}{3})}^2\ud \mathbb{P}^{\prime}(\omega)\leq \sup_n\|u_n^{\prime}\|^2_{L^4(\Omega^{\prime};L^\infty(0,T;\mathbb{L}^4))}\|\varphi\|^2_{L^4(\Omega^{\prime};L^1(0,T;\mathbb{L}^\frac{4}{3}))}<\infty.
 \end{eqnarray*}}
So the sequence
$\int_0^T{}_{\mathbb{L}^4}\langle u_n^{\prime}(t),\varphi(t)\rangle_{\mathbb{L}^\frac{4}{3}}\ud t$ is uniformly integrable on $\Omega^{\prime}$. Moreover, by the $\mathbb{P}^{\prime}$ almost surely convergence of $u_n^{\prime}$ to $u^{\prime}$ in $L^4(0,T;\mathbb{L}^4)$, we get $\mathbb{P}^{\prime}$-a.s.
  {
 \begin{eqnarray*}
   &&\left|\int_0^T{}_{\mathbb{L}^4}\langle u_n^{\prime}(t),\varphi(t)\rangle_{\mathbb{L}^\frac{4}{3}}\ud t-\int_0^T{}_{\mathbb{L}^4}\langle u^{\prime}(t),\varphi(t)\rangle_{\mathbb{L}^\frac{4}{3}}\ud t\right|\\
   &\leq&\int_0^T\left|{}_{\mathbb{L}^4}\langle u_n^{\prime}(t)-u^{\prime}(t),\varphi(t)\rangle_{\mathbb{L}^\frac{4}{3}}\right|\ud t\leq\int_0^T\|u_n^{\prime}(t)-u^{\prime}(t)\|_{\mathbb{L}^4}\|\varphi(t)\|_{\mathbb{L}^\frac{4}{3}}\ud t\\
   &\leq&\|u_n^{\prime}-u^{\prime}\|_{L^4(0,T;\mathbb{L}^4)}\|\varphi\|_{L^\frac{4}{3}(0,T;\mathbb{L}^\frac{4}{3})}\rightarrow 0.
 \end{eqnarray*}}

Therefore we infer that $\int_0^T{}_{\mathbb{L}^4}\langle u_n^{\prime}(t),\varphi(t)\rangle_{\mathbb{L}^\frac{4}{3}}\ud t$ converges to $\int_0^T{}_{\mathbb{L}^4}\langle u^{\prime}(t),\varphi(t)\rangle_{\mathbb{L}^\frac{4}{3}}\ud t$ $\mathbb{P}^{\prime}$ almost surely. Thus,
\[ \int_{\Omega^{\prime}} \int_0^T{}_{\mathbb{L}^4}\langle u_n^{\prime}(t,\omega),\varphi(t,\omega)\rangle_{\mathbb{L}^\frac{4}{3}}\ud t \ud\mathbb{P}^{\prime}(\omega) \to
\int_{\Omega^{\prime}} \int_0^T{}_{\mathbb{L}^4}\langle u^{\prime}(t,\omega),\varphi(t,\omega)\rangle_{\mathbb{L}^\frac{4}{3}}\ud t \ud\mathbb{P}^{\prime}(\omega).
\]
Hence we deduce that
\[
\int_{\Omega^{\prime}} \int_0^T{}_{\mathbb{L}^4}\langle v(t,\omega),\varphi(t,\omega)\rangle_{\mathbb{L}^\frac{4}{3}}\ud t\ud\mathbb{P}^{\prime}(\omega)
=\int_{\Omega^{\prime}} \int_0^T{}_{\mathbb{L}^4}\langle u^{\prime}(t,\omega),\varphi(t,\omega)\rangle_{\mathbb{L}^\frac{4}{3}}\ud t\ud\mathbb{P}^{\prime}(\omega)
\]

By the arbitrariness of $\varphi$ and density of $L^4(\Omega^{\prime};L^\frac{4}{3}(0,T;\mathbb{L}^\frac{4}{3}))$ in $L^\frac{2r}{2r-1}(\Omega^{\prime};L^1(0,T;X^{-\frac{1}{2}}))$, we infer that $u^{\prime}=v$ and since $v$ satisfies \eqref{eq:4.12phi} we infer that $u^{\prime}$ also satisfies \eqref{eq:4.12phi}. In this way the proof of \eqref{eq:4.12phi} is complete.
\end{proof}

Now we will strengthen part (ii) of Proposition \ref{prop:u_n'phi} about the convergence of $u_n^{\prime}$ to $u^{\prime}$.
\begin{prop}
  \begin{equation}\label{eq:4.13phi}
  \lim_{n\rightarrow\infty}\mathbb{E}^{\prime}\int_0^T\|u_n^{\prime}(t)-u^{\prime}(t)\|_{\mathbb{L}^4}^4\ud t=0.
  \end{equation}
\end{prop}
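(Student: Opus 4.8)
The plan is to upgrade the $\mathbb{P}^{\prime}$-almost sure convergence supplied by Proposition \ref{prop:u_n'phi}(ii) to convergence in $L^1(\Omega^{\prime})$ by establishing uniform integrability, i.e. via an application of the Vitali convergence theorem. Set
\[
X_n:=\int_0^T\|u_n^{\prime}(t)-u^{\prime}(t)\|_{\mathbb{L}^4}^4\ud t.
\]
By part (ii) of Proposition \ref{prop:u_n'phi} the sequence $u_n^{\prime}$ converges to $u^{\prime}$ in $L^4(0,T;\mathbb{L}^4)$ $\mathbb{P}^{\prime}$-almost surely, which is precisely the statement that $X_n\to 0$ $\mathbb{P}^{\prime}$-a.s. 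It therefore only remains to show that $\{X_n\}$ is uniformly integrable, and for this it suffices to bound the family in $L^2(\Omega^{\prime})$ uniformly in $n$.

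First I would reduce the problem to separate moment bounds on $u_n^{\prime}$ and on $u^{\prime}$. Using the elementary inequality $\|a-b\|_{\mathbb{L}^4}^4\leq 8\big(\|a\|_{\mathbb{L}^4}^4+\|b\|_{\mathbb{L}^4}^4\big)$ together with the continuous Sobolev embeddings $\mathbb{V}\hookrightarrow\mathbb{L}^6\hookrightarrow\mathbb{L}^4$, there is a constant $c>0$ such that
\[
X_n\leq cT\Big(\sup_{t\in[0,T]}\|u_n^{\prime}(t)\|_{\mathbb{V}}^4+\sup_{t\in[0,T]}\|u^{\prime}(t)\|_{\mathbb{V}}^4\Big).
\]
Squaring and using $(x+y)^2\leq 2(x^2+y^2)$, then taking expectations, it is enough to control $\mathbb{E}^{\prime}\big[\sup_{t\in[0,T]}\|u_n^{\prime}(t)\|_{\mathbb{V}}^{8}\big]$ and $\mathbb{E}^{\prime}\big[\sup_{t\in[0,T]}\|u^{\prime}(t)\|_{\mathbb{V}}^{8}\big]$. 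The former is finite and uniformly bounded in $n$ by the a priori estimate \eqref{eq:4.6phi} with $r=4$, and the latter is finite by \eqref{eq:4.12phi} with $r=4$. Hence $\sup_n\mathbb{E}^{\prime}[X_n^2]<\infty$, so $\{X_n\}$ is bounded in $L^2(\Omega^{\prime})$ and consequently uniformly integrable.

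Finally, combining the $\mathbb{P}^{\prime}$-almost sure convergence $X_n\to 0$ with the uniform integrability just obtained, the Vitali convergence theorem yields $\mathbb{E}^{\prime}[X_n]\to 0$, which is exactly \eqref{eq:4.13phi}. The only genuinely delicate point is the uniform integrability step: one must check that the exponents available in \eqref{eq:4.6phi} and \eqref{eq:4.12phi} are high enough to furnish an $L^{1+\delta}(\Omega^{\prime})$ bound on $X_n$ (here the choice $L^2$). Since \eqref{eq:4.6phi} holds for every $r\geq 1$ and \eqref{eq:4.12phi} for every $r\geq 2$, the fourth $\mathbb{V}$-moment of the supremum is more than enough, and the remaining manipulations are a routine use of the Sobolev embedding and Hölder's inequality.
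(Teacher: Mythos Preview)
Your proof is correct and follows essentially the same strategy as the paper: almost sure convergence of $X_n$ to $0$ from Proposition \ref{prop:u_n'phi}(ii), combined with uniform integrability obtained by bounding $\sup_n\mathbb{E}'[X_n^2]$ via \eqref{eq:4.6phi} and \eqref{eq:4.12phi}, then concluding by Vitali. The only cosmetic difference is that the paper phrases the bound directly in terms of $\|u_n'\|^8_{L^4(0,T;\mathbb{L}^4)}$ and $\|u'\|^8_{L^4(0,T;\mathbb{L}^4)}$ rather than passing through $\sup_t\|\cdot\|_{\mathbb V}$ and the Sobolev embedding explicitly; note also that for $u'$ the available estimate \eqref{eq:4.12phi} controls $\esup_t$ rather than $\sup_t$, which is all you need since you are integrating in $t$.
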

\begin{proof}
Since $u_n^{\prime}\longrightarrow u^{\prime}$ in $L^4(0,T;\mathbb{L}^4)\cap C(0,T;X^{-\beta})$ $\mathbb{P}^{\prime}$-almost surely, $u_n^{\prime}\longrightarrow u^{\prime}$ in $L^4(0,T;\mathbb{L}^4)$ $\mathbb{P}^{\prime}$-almost surely, i.e.
\[\lim_{n\rightarrow\infty}\int_0^T\|u_n^{\prime}(t)-u^{\prime}(t)\|_{\mathbb{L}^4}^4\ud t=0,\qquad \mathbb{P}^{\prime}-a.s.,\]
and by \eqref{eq:4.6phi} and \eqref{eq:4.12phi},
\[\sup_n\mathbb{E}^{\prime}\left(\int_0^T\|u_n^{\prime}(t)-u^{\prime}(t)\|_{\mathbb{L}^4}^4\ud t\right)^2\leq 2^7\sup_n\left(\|u_n^{\prime}\|^8_{L^4(0,T;\mathbb{L}^4)}+\|u^{\prime}\|^8_{L^4(0,T;\mathbb{L}^4)}\right)<\infty,\]
Hence we infer that
\[\lim_{n\rightarrow\infty}\mathbb{E}^{\prime}\int_0^T\|u_n^{\prime}(t)-u^{\prime}(t)\|_{\mathbb{L}^4}^4\ud t=0.\]
This completes the proof.
\end{proof}

By the estimate \eqref{eq:4.6phi}, $\{u_n^{\prime}\}_{n=1}^\infty$ is bounded in $L^2(\Omega^{\prime};L^2(0,T;\mathbb{H}^1))$. And since $u_n^{\prime}\longrightarrow u^{\prime}$ in $L^2(\Omega^{\prime};L^2(0,T;\mathbb{L}^2))$, we have:
\begin{equation}\label{eq:4.17phi}
  D_iu_n^{\prime}\longrightarrow D_iu^{\prime}\textrm{ weakly in }L^2(\Omega^{\prime};L^2(0,T;\mathbb{L}^2)),\;i=1,2,3.
\end{equation}

\begin{lem}\label{lem:uxDeltauinH}
  There exists a unique $\Lambda\in L^2(\Omega^{\prime};L^2(0,T;{\mathbb{L}^2}))$ such that
  \begin{equation}\label{eq:uxDeltauinH}
  \mathbb{E}^{\prime}\int_0^T\langle \Lambda(t), v(t)\rangle_{\mathbb{L}^2}\ud t=\sum_{i=1}^3\mathbb{E}^{\prime}\int_0^T\langle D_iu^{\prime}(t),u^{\prime}(t)\times D_iv(t)\rangle_{\mathbb{L}^2}\ud t
  \end{equation}
  for every  $v\in L^2(\Omega^{\prime};L^2(0,T;\mathbb{W}^{1,4}))$.
\end{lem}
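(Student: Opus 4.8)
The plan is to exhibit the right-hand side of \eqref{eq:uxDeltauinH} as a bounded linear functional on the Hilbert space $\mathcal{H}:=L^2(\Omega^{\prime};L^2(0,T;\mathbb{L}^2))$ and then to obtain $\Lambda$ from the Riesz representation theorem. The whole point is that, although the functional is written through $D_iv$, it is in fact controlled by $\|v\|_{\mathcal{H}}$ alone, and this is exactly where the {\em a priori} estimate \eqref{eq:4.7phi} is used. Throughout I use Corollary \ref{cor:4.4phi}, by which each $u_n^{\prime}$ takes values in $H_n\subset D(A)$, so that $\Delta u_n^{\prime}=-Au_n^{\prime}\in\mathbb{L}^2$ and Proposition \ref{prop:uxAuweak} applies.

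First I fix the candidate. By \eqref{eq:4.7phi} the sequence $\{u_n^{\prime}\times(\Delta u_n^{\prime}-\pi_n\nabla\phi(u_n^{\prime}))\}_n$ is bounded in $\mathcal{H}$, so along a subsequence it converges weakly in $\mathcal{H}$ to some $\widetilde\Lambda$, and I set $\Lambda:=\widetilde\Lambda+u^{\prime}\times\nabla\phi(u^{\prime})\in\mathcal{H}$, the last summand lying in $\mathcal{H}$ since $\|u^{\prime}\times\nabla\phi(u^{\prime})\|_{\mathbb{L}^2}\le C_{\nabla\phi}\|u^{\prime}\|_{\mathbb{L}^2}\le C_{\nabla\phi}\|u_0\|_{\mathbb{L}^2}$ by \eqref{eq:4.10phi}. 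To verify \eqref{eq:uxDeltauinH} I first take $v$ in the dense subclass of bounded $\mathbb{W}^{1,4}$-valued processes. For such $v$, \eqref{eq:2.9} with $A=-\Delta$ yields the exact identity
\[
\sum_{i=1}^3\mathbb{E}^{\prime}\int_0^T\langle D_iu_n^{\prime},u_n^{\prime}\times D_iv\rangle_{\mathbb{L}^2}\ud t=\mathbb{E}^{\prime}\int_0^T\langle u_n^{\prime}\times\Delta u_n^{\prime},v\rangle_{\mathbb{L}^2}\ud t,
\]
whose right-hand integrand I split as $u_n^{\prime}\times(\Delta u_n^{\prime}-\pi_n\nabla\phi(u_n^{\prime}))+u_n^{\prime}\times\pi_n\nabla\phi(u_n^{\prime})$. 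On the left I pass to the limit using $D_iu_n^{\prime}\rightharpoonup D_iu^{\prime}$ weakly in $\mathcal{H}$ (from \eqref{eq:4.17phi}) and $u_n^{\prime}\times D_iv\to u^{\prime}\times D_iv$ strongly in $\mathcal{H}$ (from H\"older, $\tfrac12=\tfrac14+\tfrac14$, and \eqref{eq:4.13phi}); the first term on the right converges to $\mathbb{E}^{\prime}\int_0^T\langle\widetilde\Lambda,v\rangle_{\mathbb{L}^2}\ud t$ by the chosen weak convergence.

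The main obstacle is the remaining term $\mathbb{E}^{\prime}\int_0^T\langle u_n^{\prime}\times\pi_n\nabla\phi(u_n^{\prime}),v\rangle_{\mathbb{L}^2}\ud t$, since $u_n^{\prime}\times\pi_n\nabla\phi(u_n^{\prime})$ is not obviously bounded in $\mathcal{H}$ (one cannot control $\|u_n^{\prime}\|_{\mathbb{L}^\infty}$ uniformly in $n$). I would not estimate it in norm but test it, decomposing $u_n^{\prime}\times\pi_n\nabla\phi(u_n^{\prime})-u^{\prime}\times\nabla\phi(u^{\prime})$ as $(u_n^{\prime}-u^{\prime})\times\nabla\phi(u^{\prime})+u_n^{\prime}\times(\nabla\phi(u_n^{\prime})-\nabla\phi(u^{\prime}))+u_n^{\prime}\times(\pi_n-I)\nabla\phi(u_n^{\prime})$. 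The first summand tends to $0$ in $\mathcal{H}$ by the pointwise bound on $\nabla\phi$ and \eqref{eq:4.13phi}, the second by the global Lipschitz continuity of $\nabla\phi$, H\"older and \eqref{eq:4.13phi}. For the third I move the self-adjoint projection to the test side, $\langle u_n^{\prime}\times(\pi_n-I)\nabla\phi(u_n^{\prime}),v\rangle_{\mathbb{L}^2}=\langle\nabla\phi(u_n^{\prime}),(\pi_n-I)(v\times u_n^{\prime})\rangle_{\mathbb{L}^2}$, and bound it by $C_{\nabla\phi}m(D)^{1/2}\,\|(\pi_n-I)(v\times u_n^{\prime})\|_{\mathbb{L}^2}$, the last factor tending to $0$ in $\mathcal{H}$ because $\pi_n\to I$ strongly on $\mathbb{L}^2$ and $v\times u_n^{\prime}\to v\times u^{\prime}$ in $\mathcal{H}$ (a dominated-convergence argument handling the fixed limit $v\times u^{\prime}$). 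Collecting these limits proves \eqref{eq:uxDeltauinH} with the above $\Lambda$ for bounded $\mathbb{W}^{1,4}$-valued $v$.

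Finally I extend to all $v\in L^2(\Omega^{\prime};L^2(0,T;\mathbb{W}^{1,4}))$ by density: the right-hand side of \eqref{eq:uxDeltauinH} is continuous there (it is bounded by $\|\Lambda\|_{\mathcal{H}}\|v\|_{\mathcal{H}}$, using $\Lambda\in\mathcal{H}$ and $\mathbb{W}^{1,4}\hookrightarrow\mathbb{L}^2$), and so is the left-hand side (via \eqref{eq:4.12phi}, the embedding $\mathbb{V}\hookrightarrow\mathbb{L}^4$ and H\"older). Uniqueness of $\Lambda$ is then immediate, since $L^2(\Omega^{\prime};L^2(0,T;\mathbb{W}^{1,4}))$ is dense in $\mathcal{H}$, so any two representatives coincide in $\mathcal{H}$.
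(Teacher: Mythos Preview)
Your argument is correct and follows the same route as the paper's: extract a weak $L^2(\Omega';L^2(0,T;\mathbb{L}^2))$ limit from the approximations using the bound \eqref{eq:4.7phi}, and identify it via the integration-by-parts identity \eqref{eq:2.9} together with the convergences \eqref{eq:4.17phi} and \eqref{eq:4.13phi}. The paper simply sets $\Lambda_n:=u_n'\times Au_n'$, invokes \eqref{eq:4.7phi} for its boundedness, and passes to the limit for $v\in L^2(\Omega';L^2(0,T;\mathbb{W}^{1,4}))$ directly.

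Where you differ is in care rather than strategy, and the extra care is justified. First, the paper's appeal to \eqref{eq:4.7phi} for $\Lambda_n=u_n'\times Au_n'$ tacitly needs a uniform $\mathbb{L}^2$ bound on $u_n'\times\pi_n\nabla\phi(u_n')$, which is not obvious because $\pi_n$ is only an $\mathbb{L}^2$-projection; you sidestep this by taking the weak limit $\widetilde\Lambda$ of the manifestly bounded quantity and adding back $u'\times\nabla\phi(u')$. Second, your two-stage scheme (bounded $\mathbb{W}^{1,4}$-valued $v$ first, then density) is the right fix for the paper's H\"older step, which as written uses $\bigl(\mathbb{E}'\!\int\|D_iv\|_{\mathbb{L}^4}^4\bigr)^{1/4}$ and thus really needs $v\in L^4(\Omega';L^4(0,T;\mathbb{W}^{1,4}))$. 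Two cosmetic points: your stated ``plan'' via Riesz is not what you actually do (you construct $\Lambda$ explicitly), and in the final paragraph the labels ``left'' and ``right'' are interchanged.
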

\begin{proof}
  We will omit``$(t)$'' in this proof. Let us denote $\Lambda_n:=u_n^{\prime}\times Au_n^{\prime}$. By the estimate \eqref{eq:4.7phi}, there exists a constant $C$ such that
  \[\|\Lambda_n\|_{L^2(\Omega^{\prime};L^2(0,T;{\mathbb{L}^2}))}\leq C,\quad n\in\mathbb{N}.\]
  Hence by the Banach-Alaoglu Theorem, there exists $\Lambda\in L^2(\Omega^{\prime};L^2(0,T;{\mathbb{L}^2}))$ such that $\Lambda_n\rightarrow \Lambda$ weakly in $ L^2(\Omega^{\prime};L^2(0,T;{\mathbb{L}^2}))$.\\
   Let us fix $v\in L^2(\Omega^{\prime};L^2(0,T;\mathbb{W}^{1,4}))$. Since $u_n^{\prime}(t)\in D(A)$ for almost every $t\in[0,T]$ and $\mathbb{P}^{\prime}$-almost surely, by the Proposition \ref{prop:uxAuweak} and estimate \eqref{eq:4.7phi} again, we have
  \[\mathbb{E}^{\prime}\int_0^T\langle \Lambda_n,v\rangle_{\mathbb{L}^2}\ud t=\sum_{i=1}^3\mathbb{E}^{\prime}\int_0^T\langle D_iu_n^{\prime},u_n^{\prime}\times D_iv\rangle_{\mathbb{L}^2}\ud t.\]
  Moreover, by the results: \eqref{eq:4.17phi}, \eqref{eq:4.6phi} and \eqref{eq:4.13phi}, we have for $i=1,2,3$,
  {
  \begin{eqnarray*}
    &&\hspace{-0.5truecm}\lefteqn{\left|\mathbb{E}^{\prime}\int_0^T\langle D_iu^{\prime},u^{\prime}\times D_iv\rangle_{\mathbb{L}^2}\ud t-\mathbb{E}^{\prime}\int_0^T\langle D_iu_n^{\prime},u_n^{\prime}\times D_iv\rangle_{\mathbb{L}^2}\ud t\right|}\\
    &\leq&\left|\mathbb{E}^{\prime}\int_0^T\langle D_iu^{\prime}-D_iu_n^{\prime},u^{\prime}\times D_iv\rangle_{\mathbb{L}^2}\ud t\right|+\left|\mathbb{E}^{\prime}\int_0^T\langle D_iu_n^{\prime},(u^{\prime}-u_n^{\prime})\times D_iv\rangle_{\mathbb{L}^2}\ud t\right|\\
    &\leq&\left|\mathbb{E}^{\prime}\int_0^T\langle D_iu^{\prime}-D_iu_n^{\prime},u^{\prime}\times D_iv\rangle_{\mathbb{L}^2}\ud t\right|+\Big(\mathbb{E}^{\prime}\int_0^T\|D_iu_n^{\prime}\|_{\mathbb{L}^2}^2\ud t\Big)^\frac{1}{2}\\
    &&\hspace{+1.5truecm}\lefteqn{\times \Big(\mathbb{E}^{\prime}\int_0^T\|u^{\prime}-u_n^{\prime}\|_{\mathbb{L}^4}^4\ud t\Big)^\frac{1}{4}\left(\mathbb{E}^{\prime}\int_0^T\|D_iv\|_{\mathbb{L}^4}^4\ud t\right)^\frac{1}{4}\rightarrow0.}
  \end{eqnarray*}}
  Therefore we infer that
  \[\lim_{n\rightarrow\infty}\mathbb{E}'\int_0^T\langle \Lambda_n,v\rangle_{\mathbb{L}^2}\ud t=\sum_{i=1}^3\mathbb{E}'\int_0^T\langle D_iu^{\prime},u^{\prime}\times D_iv\rangle\ud t.\]
  Since on the other hand we have proved $\Lambda_n\rightarrow \Lambda$ weakly in $L^2(\Omega^{\prime};L^2(0,T;{\mathbb{L}^2}))$ the equality \eqref{eq:uxDeltauinH} follows. \\
  It remains to prove the uniqueness of $\Lambda$, but this follows from the fact that\\ $L^2(\Omega^{\prime};L^2(0,T;\mathbb{W}^{1,4}))$ is dense in $ L^2(\Omega^{\prime};L^2(0,T;{\mathbb{L}^2}))$ and \eqref{eq:uxDeltauinH}. This complete the proof of Lemma \ref{lem:uxDeltauinH}.
\end{proof}

\begin{notation}\label{notation:uxDeltau}
   The process $\Lambda$ introduced in  Lemma \ref{lem:uxDeltauinH} will be denoted by $u^{\prime}\times \Delta u^{\prime}$ (as explained in the Appendix). Note that $u^{\prime}\times \Delta u^{\prime}$ is an  element of $L^2(\Omega^{\prime};L^2(0,T;{\mathbb{L}^2}))$ such that
   for all test functions $v \in L^2(\Omega^{\prime};L^2(0,T;\mathbb{W}^{1,4}))$ the following identity holds
  \[\mathbb{E}^{\prime}\int_0^T\langle (u^{\prime}\times\Delta u^{\prime})(t), v(t)\rangle_{\mathbb{L}^2}\ud t=\sum_{i=1}^3\mathbb{E}^{\prime}\int_0^T\langle D_iu^{\prime}(t),u^{\prime}(t)\times D_iv(t)\rangle_{\mathbb{L}^2}\ud t.\]
\end{notation}

\begin{notation}\label{notation:uxuxDeltau}
Since by the estimate \eqref{eq:4.12phi}, $u'\in L^2(\Omega',L^\infty(0,T;{\mathbb{V}}))$ and by Notation \ref{notation:uxDeltau}, $\Lambda\in L^2(\Omega^{\prime};L^2(0,T;{\mathbb{L}^2}))$,
   the process $u'\times \Lambda\in L^\frac{4}{3}(\Omega';L^2(0,T;\mathbb{L}^\frac{3}{2}(D)))$. And $u'\times \Lambda$ will be denoted by $u^{\prime}\times (u^{\prime}\times \Delta u^{\prime})$.
\end{notation}

\begin{notation}
  $\Lambda-u'\times\nabla \phi\big(u^{\prime}\big)$ will be denoted by $u^{\prime}\times\left(\Delta u^{\prime}-\nabla \phi\big(u^{\prime}\big)\right)$.
\end{notation}

 Next we will show that the limits of the following three sequences

  \begin{eqnarray*}
  &&\big\{u_n^{\prime}\times\left(\Delta u_n^{\prime}-\pi_n\nabla \phi\big(u_n^{\prime}\big)\right)\big\}_n,\\
   &&\big\{u_n^{\prime}\times(u_n^{\prime}\times\left(\Delta u_n^{\prime}-\pi_n\nabla \phi\big(u_n^{\prime}\big)\right))\big\}_n,\\
    &&\big\{\pi_n\big(u_n^{\prime}\times(u_n^{\prime}\times\left(\Delta u_n^{\prime}-\pi_n\nabla \phi\big(u_n^{\prime}\big)\right))\big)\big\}_n,
     \end{eqnarray*}
  exist and
  are equal respectively to

\begin{eqnarray*}
  &&u^{\prime}\times\left(\Delta u^{\prime}-\nabla \phi\big(u^{\prime}\big)\right),\\
  &&u^{\prime}\times(u^{\prime}\times\left(\Delta u^{\prime}-\nabla \phi\big(u^{\prime}\big)\right)),\\
   &&u^{\prime}\times(u^{\prime}\times\left(\Delta u^{\prime}-\nabla \phi\big(u^{\prime}\big)\right)).
  \end{eqnarray*}

By \eqref{eq:4.7phi}-\eqref{eq:4.9phi}, the first sequence  is bounded in $L^{2r}(\Omega^{\prime};L^2(0,T;\mathbb{L}^2))$ for $r\geq1$, the second sequence is bounded in $L^2(\Omega^{\prime};L^2(0,T;\mathbb{L}^\frac{3}{2}))$ and the third sequence  is bounded in $L^2(\Omega^{\prime};L^2(0,T;X^{-\beta}))$. And since the Banach spaces $L^{2r}(\Omega^{\prime};L^2(0,T;\mathbb{L}^2))$, $L^2(\Omega^{\prime};L^2(0,T;\mathbb{L}^\frac{3}{2}))$ and $L^2(\Omega^{\prime};L^2(0,T;X^{-\beta}))$ are all reflexive, by the Banach-Alaoglu Theorem, there exist subsequences weakly convergent. So we can assume that there exist
  \begin{eqnarray*}
  &&Y\in L^{2r}(\Omega^{\prime};L^2(0,T;\mathbb{L}^2)),\\
  &&Z\in L^2(\Omega^{\prime};L^2(0,T;\mathbb{L}^\frac{3}{2})), \\
  &&Z_1\in L^2(\Omega^{\prime};L^2(0,T;X^{-\beta})),
  \end{eqnarray*}

     such that
 \begin{equation}\label{eq:4.14phi}
   u_n^{\prime}\times\left(\Delta u_n^{\prime}-\pi_n\nabla \phi\big(u_n^{\prime}\big)\right)\longrightarrow Y\quad\textrm{weakly in } L^{2r}(\Omega^{\prime};L^2(0,T;\mathbb{L}^2)),
 \end{equation}
 \begin{equation}\label{eq:4.15phi}
   u_n^{\prime}\times\left(u_n^{\prime}\times\left(\Delta u_n^{\prime}-\pi_n\nabla \phi\big(u_n^{\prime}\big)\right)\right)\longrightarrow Z\quad\textrm{weakly in }L^2(\Omega^{\prime};L^2(0,T;\mathbb{L}^\frac{3}{2})),
 \end{equation}
 \begin{equation}\label{eq:4.16phi}
   \pi_n\left(u_n^{\prime}\times\left(u_n^{\prime}\times\left(\Delta u_n^{\prime}-\pi_n\nabla \phi\big(u_n^{\prime}\big)\right)\right)\right)\longrightarrow Z_1\quad\textrm{weakly in } L^2(\Omega^{\prime};L^2(0,T;X^{-\beta})).
 \end{equation}

\begin{rem*}
  Similar argument has been done in \cite{ZB&TJ} for terms not involving $\nabla \phi$. Our main contribution here is to show the validity of such an argument for term containing $\nabla \phi$ (and to be more precise). This works because earlier we have been able to prove generalized estimates as in \cite{ZB&TJ} as in Lemma \ref{lem:estun'}.
\end{rem*}

\begin{prop}\label{prop:Z=Z1}
  If $Z$ and $Z_1$ defined as above, then $Z=Z_1\in L^2(\Omega^{\prime};L^2(0,T;X^{-\beta}))$.
\end{prop}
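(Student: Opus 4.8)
The plan is to show that $Z$ and $Z_1$, which both belong to $L^2(\Omega^{\prime};L^2(0,T;X^{-\beta}))$ (for $Z_1$ this is \eqref{eq:4.16phi}, while for $Z$ it follows from \eqref{eq:4.15phi} and the embedding $\mathbb{L}^{3/2}\hookrightarrow X^{-\beta}$ established in the proof of \eqref{eq:3.14phi}), induce the same linear functional on $L^2(\Omega^{\prime};L^2(0,T;X^\beta))$. Since the $X^\beta$--$X^{-\beta}$ duality separates points of $L^2(\Omega^{\prime};L^2(0,T;X^{-\beta}))$, this will give $Z=Z_1$.

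Write $\xi_n:=u_n^{\prime}\times\bigl(u_n^{\prime}\times(\Delta u_n^{\prime}-\pi_n\nabla\phi(u_n^{\prime}))\bigr)$, so that by \eqref{eq:4.15phi} $\xi_n\to Z$ weakly in $L^2(\Omega^{\prime};L^2(0,T;\mathbb{L}^\frac{3}{2}))$ and by \eqref{eq:4.16phi} $\pi_n\xi_n\to Z_1$ weakly in $L^2(\Omega^{\prime};L^2(0,T;X^{-\beta}))$. Fix $\varphi\in L^2(\Omega^{\prime};L^2(0,T;X^\beta))$. Because $\beta>\frac14$ we have $X^\beta\hookrightarrow\mathbb{H}^{2\beta}\hookrightarrow\mathbb{L}^3$, so $\varphi$ also lies in $L^2(\Omega^{\prime};L^2(0,T;\mathbb{L}^3))$, the dual of $L^2(\Omega^{\prime};L^2(0,T;\mathbb{L}^\frac{3}{2}))$. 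For each fixed $n$ the process $u_n^{\prime}$ is $H_n$-valued (Corollary \ref{cor:4.4phi}) and hence smooth, so $\xi_n\in\mathbb{L}^2$; using that $\pi_n$ is the self-adjoint $\mathbb{L}^2$-orthogonal projection onto $H_n$ together with $\mathbb{L}^2\hookrightarrow X^{-\beta}$,
\[
\mathbb{E}^{\prime}\int_0^T{}_{X^{-\beta}}\langle\pi_n\xi_n,\varphi\rangle_{X^\beta}\ud t
=\mathbb{E}^{\prime}\int_0^T\langle\xi_n,\pi_n\varphi\rangle_{\mathbb{L}^2}\ud t.
\]

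The crucial step is to pass to the limit on the right-hand side in spite of the $n$-dependence of the test function $\pi_n\varphi$. I would add and subtract,
\[
\mathbb{E}^{\prime}\int_0^T\langle\xi_n,\pi_n\varphi\rangle_{\mathbb{L}^2}\ud t
=\mathbb{E}^{\prime}\int_0^T\langle\xi_n,\pi_n\varphi-\varphi\rangle\ud t
+\mathbb{E}^{\prime}\int_0^T\langle\xi_n,\varphi\rangle\ud t.
\]
The second term converges to $\mathbb{E}^{\prime}\int_0^T\langle Z,\varphi\rangle\ud t$ by the weak convergence \eqref{eq:4.15phi}. For the first term, since $\{e_k\}$ are eigenvectors of $A_1$ the projection $\pi_n$ commutes with $A_1^\beta$ and is a contraction on $X^\beta$, so $\|\pi_n\varphi-\varphi\|_{X^\beta}=\|\pi_n(A_1^\beta\varphi)-A_1^\beta\varphi\|_{\mathbb{L}^2}\to0$ with $\|\pi_n\varphi\|_{X^\beta}\le\|\varphi\|_{X^\beta}$; by $X^\beta\hookrightarrow\mathbb{L}^3$ and dominated convergence this yields $\pi_n\varphi\to\varphi$ in $L^2(\Omega^{\prime};L^2(0,T;\mathbb{L}^3))$. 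Combining this with the uniform bound on $\{\xi_n\}$ in $L^2(\Omega^{\prime};L^2(0,T;\mathbb{L}^\frac{3}{2}))$ coming from \eqref{eq:4.8phi} and H\"older's inequality in the $\mathbb{L}^\frac{3}{2}$--$\mathbb{L}^3$ duality, the first term tends to $0$.

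Putting the pieces together and using \eqref{eq:4.16phi} on the left, we obtain, for every $\varphi\in L^2(\Omega^{\prime};L^2(0,T;X^\beta))$,
\[
\mathbb{E}^{\prime}\int_0^T{}_{X^{-\beta}}\langle Z_1,\varphi\rangle_{X^\beta}\ud t
=\mathbb{E}^{\prime}\int_0^T{}_{X^{-\beta}}\langle Z,\varphi\rangle_{X^\beta}\ud t,
\]
where for $Z$ the $X^{-\beta}$--$X^\beta$ pairing coincides with the $\mathbb{L}^\frac{3}{2}$--$\mathbb{L}^3$ one. By the arbitrariness of $\varphi$ this gives $Z=Z_1$ in $L^2(\Omega^{\prime};L^2(0,T;X^{-\beta}))$. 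The point requiring genuine care---and which I expect to be the main obstacle---is exactly the limit of $\mathbb{E}^{\prime}\int_0^T\langle\xi_n,\pi_n\varphi\rangle\ud t$: because the test element $\pi_n\varphi$ moves with $n$, neither the weak convergence of $\xi_n$ nor the strong convergence of $\pi_n\varphi$ suffices alone, and one must marry the uniform bound \eqref{eq:4.8phi} with the strong $\mathbb{L}^3$ convergence $\pi_n\varphi\to\varphi$ as above.
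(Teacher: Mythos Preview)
Your argument is correct. Both your proof and the paper's proceed by showing that $Z$ and $Z_1$ induce the same functional on $L^2(\Omega';L^2(0,T;X^\beta))$, but the mechanics differ. The paper restricts first to test functions $\psi\in L^2(\Omega';L^2(0,T;X^\beta_k))$ with $X^\beta_k:=\pi_k X^\beta$; for such $\psi$ and all $n\geq k$ one has $\pi_n\psi=\psi$, so $\langle\pi_n\xi_n,\psi\rangle=\langle\xi_n,\psi\rangle$ identically, and one may pass to the limit on each side separately using \eqref{eq:4.15phi} and \eqref{eq:4.16phi}; the conclusion for general $\psi$ then follows from the density $\bigcup_k X^\beta_k= X^\beta$. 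You instead fix a general $\varphi\in L^2(\Omega';L^2(0,T;X^\beta))$ from the start and absorb the projection into the test function, controlling the moving term $\langle\xi_n,\pi_n\varphi-\varphi\rangle$ via the uniform $\mathbb{L}^{3/2}$ bound on $\xi_n$ together with the strong convergence $\pi_n\varphi\to\varphi$ in $\mathbb{L}^3$ coming from the commutation of $\pi_n$ with $A_1^\beta$. The paper's route is slightly more elementary (no need to quantify the rate of $\pi_n\varphi\to\varphi$, only eventual equality on finite-dimensional slices), while yours is more direct and avoids the final density step; both ultimately rest on the same spectral fact that the $e_k$ diagonalize $A_1$.
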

\begin{proof}
  Notice that $(L^\frac{3}{2})^*=L^3$, and by Proposition \ref{prop:DAH}, $X^\beta=\mathbb{H}^{2\beta}$. By $X^\beta\subset L^3$ for $\beta>\frac{1}{4}$, we deduce that $L^\frac{3}{2}\subset X^{-\beta}$.
  So $$L^2(\Omega^{\prime};L^2(0,T;\mathbb{L}^\frac{3}{2}))\subset L^2(\Omega^{\prime};L^2(0,T;X^{-\beta})).$$
  Therefore $Z\in L^2(\Omega^{\prime};L^2(0,T;X^{-\beta}))$ and also $Z_1\in L^2(\Omega^{\prime};L^2(0,T;X^{-\beta}))$.\\
  Since $X^{\beta}=D(A_1^\beta)$ and $A_1$ is self-adjoint, we can define
  \[X^{\beta}_n=\left\{\pi_nx=\sum_{j=1}^nx_je_j:\sum_{j=1}^\infty\lmd_j^{2\beta}x_j^2<\infty\right\}.\]
  Then $X^{\beta}=\bigcup_{n=1}^\infty X_n^{\beta}$ and $L^2(\Omega^{\prime};L^2(0,T;X^{\beta}))=\bigcup_{n=1}^\infty L^2(\Omega^{\prime};L^2(0,T;X^{\beta}_n))$.
  We have for $\psi_n\in L^2(\Omega^{\prime};L^2(0,T;X^{\beta}_n))$,
  {
  \begin{eqnarray*}
  &&_{L^2(\Omega^{\prime};L^2(0,T;X^{-\beta}))}\langle \pi_n(u_n^{\prime}\times(u_n^{\prime}\times\left(\Delta u_n^{\prime}-\pi_n\nabla \phi\big(u_n^{\prime}\big)\right))),\psi_n\rangle_{L^2(\Omega^{\prime};L^2(0,T;X^{\beta}))}\\
  &=&\mathbb{E}^{\prime}\int_0^T \,_{X^{-\beta}}\langle \pi_n(u_n^{\prime}(t)\times(u_n^{\prime}(t)\times\left(\Delta u_n^{\prime}(t)-\pi_n\nabla \phi\big(u_n^{\prime}(t)\big)\right))),\psi_n(t)\rangle_{X^\beta}\ud t\\
  &=&\mathbb{E}^{\prime}\int_0^T \,_{{\mathbb{L}^2}}\langle \pi_n(u_n^{\prime}(t)\times(u_n^{\prime}(t)\times\left(\Delta u_n^{\prime}(t)-\pi_n\nabla \phi\big(u_n^{\prime}(t)\big)\right))),\psi_n(t)\rangle_{{\mathbb{L}^2}}\ud t\\
  &=&\mathbb{E}^{\prime}\int_0^T \,_{{\mathbb{L}^2}}\langle u_n^{\prime}(t)\times(u_n^{\prime}(t)\times\left(\Delta u_n^{\prime}(t)-\pi_n\nabla \phi\big(u_n^{\prime}(t)\big)\right)),\psi_n(t)\rangle_{{\mathbb{L}^2}}\ud t\\
  &=&\mathbb{E}^{\prime}\int_0^T \,_{X^{-\beta}}\langle u_n^{\prime}(t)\times(u_n^{\prime}(t)\times\left(\Delta u_n^{\prime}(t)-\pi_n\nabla \phi\big(u_n^{\prime}(t)\big)\right)),\psi_n(t)\rangle_{X^\beta}\ud t\\
  &=&_{L^2(\Omega^{\prime};L^2(0,T;X^{-\beta}))}\langle u_n^{\prime}\times(u_n^{\prime}\times\left(\Delta u_n^{\prime}(t)-\pi_n\nabla \phi\big(u_n^{\prime}(t)\big)\right)),\psi_n\rangle_{L^2(\Omega^{\prime};L^2(0,T;X^{\beta}))}.
  \end{eqnarray*}}
  Hence
  \[_{L^2(\Omega^{\prime};L^2(0,T;X^{-\beta}))}\langle Z_1,\psi_n\rangle_{L^2(\Omega^{\prime};L^2(0,T;X^{\beta}))}=_{L^2(\Omega^{\prime};L^2(0,T;X^{-\beta}))}\langle Z,\psi_n\rangle_{L^2(\Omega^{\prime};L^2(0,T;X^{\beta}))},\]
  $\forall\psi_n\in L^2(\Omega^{\prime};L^2(0,T;X^{\beta}_n))$.
  For any $\psi\in L^2(\Omega^{\prime};L^2(0,T;X^{\beta})$, there exists $L^2(\Omega^{\prime};L^2(0,T;X^{\beta}_n))\ni \psi_n\longrightarrow\psi$ as $n\longrightarrow\infty$, hence for all $\psi\in L^2(\Omega^{\prime};L^2(0,T;X^{\beta})$,
  {
  \begin{eqnarray*}
    _{L^2(\Omega^{\prime};L^2(0,T;X^{-\beta}))}\langle Z_1,\psi\rangle_{L^2(\Omega^{\prime};L^2(0,T;X^{\beta}))}&=& \lim_{n\rightarrow\infty}\,_{L^2(\Omega^{\prime};L^2(0,T;X^{-\beta}))}\langle Z_1,\psi_n\rangle_{L^2(\Omega^{\prime};L^2(0,T;X^{\beta}))}\\
    &=&\lim_{n\rightarrow\infty}\,_{L^2(\Omega^{\prime};L^2(0,T;X^{-\beta}))}\langle Z,\psi_n\rangle_{L^2(\Omega^{\prime};L^2(0,T;X^{\beta}))}\\
    &=&_{L^2(\Omega^{\prime};L^2(0,T;X^{-\beta}))}\langle Z,\psi\rangle_{L^2(\Omega^{\prime};L^2(0,T;X^{\beta}))}
  \end{eqnarray*}}
  Therefore $Z=Z_1\in L^2(\Omega^{\prime};L^2(0,T;X^{-\beta}))$ and this concludes the proof of Proposition \ref{prop:Z=Z1}.
\end{proof}

\begin{lem}\label{lem:4.5phi}
  For any measurable process $\psi\in L^4(\Omega^{\prime};L^4(0,T;\mathbb{W}^{1,4}))$, we have the equality
  {
  \begin{eqnarray*}
    &&\lim_{n\rightarrow\infty}\mathbb{E}^{\prime}\int_0^T\langle u_n^{\prime}(t)\times\left(\Delta u_n^{\prime}-\pi_n\nabla \phi\big(u_n^{\prime}(t)\big)\right),\psi(t)\rangle_{\Ltwo}\ud t\\
    &=&\mathbb{E}^{\prime}\int_0^T\langle Y(t),\psi(t)\rangle_{\Ltwo}\ud t\\
    &=&\mathbb{E}^{\prime}\int_0^T\sum_{i=1}^3\langle\frac{\partial u^{\prime}(t)}{\partial x_i},u^{\prime}(t)\times\frac{\partial\psi(t)}{\partial x_i}\rangle_{\mathbb{L}^2}\ud s+\mathbb{E}^{\prime}\int_0^T\left\langle u^{\prime}(t)\times \nabla \phi\big(u^{\prime}(t)\big),\psi\right\rangle_{\mathbb{L}^2}\ud t.
  \end{eqnarray*}}
\end{lem}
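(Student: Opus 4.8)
The plan is to prove the two equalities in turn. The first equality is just the weak convergence \eqref{eq:4.14phi} tested against $\psi$. Indeed, since all three underlying measure spaces are finite and $\mathbb{W}^{1,4}\hookrightarrow{\mathbb{L}^2}$, the process $\psi$ belongs to $L^2(\Omega^{\prime};L^2(0,T;{\mathbb{L}^2}))$, so $\xi\mapsto\mathbb{E}^{\prime}\int_0^T\langle\xi(t),\psi(t)\rangle_{\Ltwo}\ud t$ is a bounded linear functional on $L^2(\Omega^{\prime};L^2(0,T;{\mathbb{L}^2}))$; applying it to \eqref{eq:4.14phi} with $r=1$ yields the first equality at once.

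For the second equality I would split the integrand as $u_n^{\prime}\times(\Delta u_n^{\prime}-\pi_n\nabla\phi(u_n^{\prime}))=u_n^{\prime}\times\Delta u_n^{\prime}-u_n^{\prime}\times\pi_n\nabla\phi(u_n^{\prime})$ and treat the two pieces separately. For the exchange (Laplacian) piece, since $u_n^{\prime}(t)\in D(A)$ for a.e.\ $t$, $\mathbb{P}^{\prime}$-a.s., the integration-by-parts identity \eqref{eq:2.9} of Proposition \ref{prop:uxAuweak} turns $\langle u_n^{\prime}\times\Delta u_n^{\prime},\psi\rangle_{\Ltwo}$ into $\sum_{i=1}^3\langle D_iu_n^{\prime},u_n^{\prime}\times D_i\psi\rangle_{\Ltwo}$. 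Passing to the limit here is verbatim the computation in the proof of Lemma \ref{lem:uxDeltauinH}: the field $\psi$ lies in the admissible class because $L^4(\Omega^{\prime};L^4(0,T;\mathbb{W}^{1,4}))\subset L^2(\Omega^{\prime};L^2(0,T;\mathbb{W}^{1,4}))$, one writes the increment as $\langle D_iu^{\prime}-D_iu_n^{\prime},u^{\prime}\times D_i\psi\rangle+\langle D_iu_n^{\prime},(u^{\prime}-u_n^{\prime})\times D_i\psi\rangle$, sends the first term to zero using the weak convergence \eqref{eq:4.17phi} of $D_iu_n^{\prime}$ against the fixed element $u^{\prime}\times D_i\psi\in L^2(\Omega^{\prime};L^2(0,T;{\mathbb{L}^2}))$, and dominates the second by Hölder using the uniform bound \eqref{eq:4.6phi} on $\|D_iu_n^{\prime}\|_{\Ltwo}$ together with the strong convergence \eqref{eq:4.13phi} of $u_n^{\prime}$ in $L^4(\Omega^{\prime};L^4(0,T;\mathbb{L}^4))$. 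This identifies the limit of the exchange piece with $\sum_{i=1}^3\mathbb{E}^{\prime}\int_0^T\langle D_iu^{\prime},u^{\prime}\times D_i\psi\rangle_{\Ltwo}\ud t$, i.e.\ with the pairing of $u^{\prime}\times\Delta u^{\prime}$ against $\psi$ in the sense of Notation \ref{notation:uxDeltau}.

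For the anisotropy piece I would show that $u_n^{\prime}\times\pi_n\nabla\phi(u_n^{\prime})$ converges to $u^{\prime}\times\nabla\phi(u^{\prime})$ strongly enough to pass to the limit under $\mathbb{E}^{\prime}\int_0^T\langle\,\cdot\,,\psi\rangle_{\Ltwo}\ud t$, which produces the $\nabla\phi$-term in the statement. I would write $u_n^{\prime}\times\pi_n\nabla\phi(u_n^{\prime})-u^{\prime}\times\nabla\phi(u^{\prime})=(u_n^{\prime}-u^{\prime})\times\pi_n\nabla\phi(u_n^{\prime})+u^{\prime}\times(\pi_n\nabla\phi(u_n^{\prime})-\nabla\phi(u^{\prime}))$. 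Pairing the first summand with $\psi\in\mathbb{W}^{1,4}\hookrightarrow\mathbb{L}^\infty$ and using that $\nabla\phi$ is bounded (so $\|\pi_n\nabla\phi(u_n^{\prime})\|_{\Ltwo}\le C_{\nabla\phi}\,m(D)^{1/2}$) gives the bound $\|\psi\|_{\mathbb{L}^\infty}\|u_n^{\prime}-u^{\prime}\|_{\Ltwo}\,C_{\nabla\phi}m(D)^{1/2}$, which vanishes after integration by \eqref{eq:4.13phi}. For the second summand I would further split $\pi_n\nabla\phi(u_n^{\prime})-\nabla\phi(u^{\prime})=\pi_n(\nabla\phi(u_n^{\prime})-\nabla\phi(u^{\prime}))+(\pi_n-I)\nabla\phi(u^{\prime})$: the first term tends to $0$ in ${\mathbb{L}^2}$ since $\nabla\phi$ is globally Lipschitz (Assumption \ref{as:all4}) and $\pi_n$ is an ${\mathbb{L}^2}$-contraction, while the second tends to $0$ because $\pi_n\to I$ strongly on ${\mathbb{L}^2}$; the factor $u^{\prime}$ is controlled in $\mathbb{L}^6$ through \eqref{eq:4.12phi}. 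A uniform-integrability (dominated convergence) argument based on the moment bounds \eqref{eq:4.6phi} and \eqref{eq:4.12phi} then upgrades this almost-sure convergence to convergence of the expectations.

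The main obstacle is precisely this anisotropy term. The exchange term is handled as in \cite{ZB&TJ} and Lemma \ref{lem:uxDeltauinH}, but the product $u_n^{\prime}\times\pi_n\nabla\phi(u_n^{\prime})$ couples the nonlinearity $\nabla\phi$ with the projections $\pi_n$, so I must simultaneously exploit the global Lipschitz continuity and boundedness of $\nabla\phi$ and the strong ${\mathbb{L}^2}$-convergence $\pi_n\to I$, and then justify exchanging limit and expectation. Once both pieces are identified, adding them gives the stated identity for $\langle Y,\psi\rangle$.
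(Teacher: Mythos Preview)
Your proposal is correct and follows essentially the same route as the paper: the first equality comes directly from the weak convergence \eqref{eq:4.14phi}, and for the second you split into the exchange and anisotropy pieces, handle the Laplacian term via \eqref{eq:2.9} together with the weak/strong convergence pair \eqref{eq:4.17phi}--\eqref{eq:4.13phi}, and treat the $\nabla\phi$ term via the same decomposition $\pi_n\nabla\phi(u_n')-\nabla\phi(u')=\pi_n\bigl(\nabla\phi(u_n')-\nabla\phi(u')\bigr)+(\pi_n-I)\nabla\phi(u')$ used in the paper. The only difference is cosmetic: you invoke $\mathbb{W}^{1,4}\hookrightarrow\mathbb{L}^\infty$ (valid since $4>d=3$) to bound the trilinear pairings, whereas the paper works with $\mathbb{L}^4$--$\mathbb{L}^4$--$\mathbb{L}^2$ type H\"older splits, but the underlying argument and the ingredients \eqref{eq:4.6phi}, \eqref{eq:4.13phi}, the global Lipschitz/boundedness of $\nabla\phi$, and dominated convergence for the $(\pi_n-I)$ piece are identical.
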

\begin{proof}
 Let us fix $\psi\in L^4(\Omega^{\prime};L^4(0,T;\mathbb{W}^{1,4}))$.
 Firstly, we will prove that
 \[\lim_{n\rightarrow\infty}\mathbb{E}^{\prime}\int_0^T\langle u_n^{\prime}(t)\times\Delta u_n^{\prime}(t),\psi(t)\rangle_{\mathbb{L}^2}\ud t=\mathbb{E}^{\prime}\int_0^T\sum_{i=1}^3\left\langle\frac{ \partial u^{\prime}(t)}{\partial x_i},u^{\prime}(t)\times\frac{\partial \psi(t)}{\partial x_i}\right\rangle_{\mathbb{L}^2}\ud t.\]
  For each $n\in \mathbb{N}$ we have
  \begin{equation}\label{eq:4.19phi}
    \langle u_n^{\prime}(t)\times \Delta u_n^{\prime}(t),\psi\rangle_{\mathbb{L}^2}=\sum_{i=1}^3\left\langle\frac{\partial u_n^{\prime}(t)}{\partial x_i},u_n^{\prime}(t)\times \frac{\partial \psi(t)}{\partial x_i}\right\rangle_{\mathbb{L}^2}
  \end{equation}
  for almost every $t\in [0,T]$ and $\mathbb{P}^{\prime}$ almost surely. By Corollary \ref{cor:4.4phi}, $\mathbb{P}(u_n^{\prime}\in C(0,T;H_n))=1$. For each $i\in\{1,2,3\}$ we may write
  {
  \begin{eqnarray}
    &&\left\langle\frac{\partial u_n^{\prime}(t)}{\partial x_i},u_n^{\prime}(t)\times\frac{\partial \psi(t)}{\partial x_i}\right\rangle_{\mathbb{L}^2}-\left\langle\frac{\partial u^{\prime}(t)}{\partial x_i},u^{\prime}(t)\times\frac{\partial \psi(t)}{\partial x_i}\right\rangle_{\mathbb{L}^2}\label{eq:4.20phi}\\
    &=&\left\langle\frac{\partial u_n^{\prime}(t)}{\partial x_i}-\frac{\partial u^{\prime}(t)}{\partial x_i},u^{\prime}(t)\times\frac{\partial \psi(t)}{\partial x_i}\right\rangle_{\mathbb{L}^2}+\left\langle\frac{\partial u_n^{\prime}(t)}{\partial x_i},(u^{\prime}_n(t)-u^{\prime}(t))\times\frac{\partial \psi(t)}{\partial x_i}\right\rangle_{\mathbb{L}^2}
    \nonumber
  \end{eqnarray}}
  Since $\mathbb{L}^4\hookrightarrow \mathbb{L}^2$ and $\mathbb{W}^{1,4}\hookrightarrow \mathbb{L}^2$, so there are constants $C_1$ and $C_2<\infty$ such that
  {
  \begin{eqnarray*}
    &&\left\langle\frac{\partial u_n^{\prime}(t)}{\partial x_i},(u_n^{\prime}(t)-u^{\prime}(t))\times\frac{\partial \psi(t)}{\partial x_i}\right\rangle_{\mathbb{L}^2}\leq\left\|\frac{\partial u_n^{\prime}(t)}{\partial x_i}\right\|_{\mathbb{L}^2}\left\|(u_n^{\prime}(t)-u^{\prime}(t))\times\frac{\partial \psi(t)}{\partial x_i}\right\|_{\mathbb{L}^2}\\
    &\leq&\left\|u_n^{\prime}(t)\right\|_{\mathbb{H}^1}C_1\|u_n^{\prime}(t)-u^{\prime}(t)\|_{\mathbb{L}^4}C_2\|\psi(t)\|_{\mathbb{W}^{1,4}}.
  \end{eqnarray*}}
  Hence
  {
  \begin{eqnarray*}
    &&\mathbb{E}^{\prime}\int_0^T\left|\left\langle\frac{\partial u_n^{\prime}(t)}{\partial x_i},(u_n^{\prime}(t)-u^{\prime}(t))\times\frac{\partial \psi(t)}{\partial x_i}\right\rangle_{\mathbb{L}^2}\right|\ud t\\
    &\leq& C_1C_2\mathbb{E}^{\prime}\int_0^T\|u_n^{\prime}(t)\|_{\mathbb{H}^1}\|u_n^{\prime}(t)-u^{\prime}(t)\|_{\mathbb{L}^4}\|\psi(t)\|_{\mathbb{W}^{1,4}}\ud t.
  \end{eqnarray*}}
  Moreover by the H$\ddot{\textrm{o}}$lder's inequality,
  {
  \begin{eqnarray*}
    &&\mathbb{E}^{\prime}\int_0^T\|u_n^{\prime}(t)\|_{\mathbb{H}^1}\|u_n^{\prime}(t)-u^{\prime}(t)\|_{\mathbb{L}^4}\|\psi(t)\|_{\mathbb{W}^{1,4}}\ud t\\
    &\leq&\left(\mathbb{E}^{\prime}\int_0^T\|u_n^{\prime}(t)\|_{\mathbb{H}^1}^2\ud t\right)^\frac{1}{2} \left(\mathbb{E}^{\prime}\int_0^T\|u_n^{\prime}(t)-u^{\prime}(t)\|_{\mathbb{L}^4}^4\ud t\right)^\frac{1}{4} \left(\mathbb{E}^{\prime}\int_0^T\|\psi(t)\|_{\mathbb{W}^{1,4}}^4\ud t \right)^\frac{1}{4}\\
    &\leq&T^\frac{1}{2}\left(\mathbb{E}^{\prime}\sup_{t\in[0,T]}\|u_n^{\prime}(t)\|_{\mathbb{H}^1}^2\right)^\frac{1}{2} \left(\mathbb{E}^{\prime}\int_0^T\|u_n^{\prime}(t)-u^{\prime}(t)\|_{\mathbb{L}^4}^4\ud t\right)^\frac{1}{4} \left(\mathbb{E}^{\prime}\int_0^T\|\phi(t)\|_{\mathbb{W}^{1,4}}^4\ud t \right)^\frac{1}{4}
  \end{eqnarray*}}
  By \eqref{eq:4.6phi}, \eqref{eq:4.13phi} and since $\psi\in L^4(\Omega^{\prime};L^4(0,T;\mathbb{W}^{1,4}))$, we have
  \[
    \lim_{n\rightarrow\infty}\left(\mathbb{E}^{\prime}\sup_{t\in[0,T]}\|u_n^{\prime}(t)\|_{\mathbb{H}^1}^2\right)^\frac{1}{2} \left(\mathbb{E}^{\prime}\int_0^T\|u_n^{\prime}(t)-u^{\prime}(t)\|_{\mathbb{L}^4}^4\ud t\right)^\frac{1}{4} \left(\mathbb{E}^{\prime}\int_0^T\|\psi(t)\|_{\mathbb{W}^{1,4}}^4\ud t \right)^\frac{1}{4}
    =0
  \]
  Hence
  \begin{equation}\label{eq:4.21phi}
    \lim_{n\rightarrow\infty}\mathbb{E}^{\prime}\int_0^T\left|\left\langle\frac{\partial u_n^{\prime}(t)}{\partial x_i},(u_n^{\prime}(t)-u^{\prime}(t))\times\frac{\partial \psi(t)}{\partial x_i}\right\rangle_{\mathbb{L}^2}\right|\ud t=0
  \end{equation}
  Both $u^{\prime}$ and $\frac{\partial \psi}{\partial x_i}$ are in $L^2(\Omega^{\prime};L^2(0,T;\mathbb{L}^2))$, so $u^{\prime}\times \frac{\partial \psi}{\partial x_i}\in L^2(\Omega^{\prime};L^2(0,T;\mathbb{L}^2))$. Hence by \eqref{eq:4.17phi}, we have
  \begin{equation}\label{eq:4.22phi}
    \lim_{n\rightarrow\infty}\mathbb{E}^{\prime}\int_0^T\left\langle\frac{\partial u_n^{\prime}(t)}{\partial x_i}-\frac{\partial u^{\prime}(t)}{\partial x_i},u^{\prime}(t)\times\frac{\partial \psi(t)}{\partial x_i}\right\rangle_{\mathbb{L}^2}\ud t=0.
  \end{equation}
  Therefore by \eqref{eq:4.20phi}, \eqref{eq:4.21phi}, \eqref{eq:4.22phi},
  \begin{equation}\label{eq:4.23phi}
    \lim_{n\rightarrow\infty}\mathbb{E}^{\prime}\int_0^T\left\langle\frac{\partial u_n^{\prime}(t)}{\partial x_i},u_n^{\prime}(t)\times\frac{\partial \psi(t)}{\partial x_i}\right\rangle_{\mathbb{L}^2}\ud t=\mathbb{E}^{\prime}\int_0^T\left\langle\frac{\partial u^{\prime}(t)}{\partial x_i},u^{\prime}(t)\times\frac{\partial \psi(t)}{\partial x_i}\right\rangle_{\mathbb{L}^2}\ud t
  \end{equation}
  Then by \eqref{eq:4.19phi}, we have
  \begin{equation}\label{eq:unxdeltaunwc}
    \lim_{n\rightarrow\infty}\mathbb{E}^{\prime}\int_0^T\langle u_n^{\prime}(t)\times\Delta u_n^{\prime}(t),\psi(t)\rangle_{\mathbb{L}^2}\ud t=\mathbb{E}^{\prime}\int_0^T\sum_{i=1}^3\left\langle\frac{ \partial u^{\prime}(t)}{\partial x_i},u^{\prime}(t)\times\frac{\partial \psi(t)}{\partial x_i}\right\rangle_{\mathbb{L}^2}\ud t
  \end{equation}

Secondly, we will show that
 \[\lim_{n\rightarrow\infty}\mathbb{E}^{\prime}\int_0^T\left\langle u_n^{\prime}(t)\times\pi_n\nabla \phi\big(u_n^{\prime}(t)\big),\psi\right\rangle_{\mathbb{L}^2}\ud t=\mathbb{E}^{\prime}\int_0^T\left\langle u^{\prime}(t)\times \nabla \phi\big(u^{\prime}(t)\big),\psi\right\rangle_{\mathbb{L}^2}\ud t.\]
Since
  {
  \begin{eqnarray*}
    &&\left|\left\langle u_n^{\prime}(t)\times\pi_n\nabla \phi\big(u_n^{\prime}(t)\big),\psi\rangle_{\mathbb{L}^2}-\langle u^{\prime}(t)\times\nabla \phi\big(u^{\prime}(t)\big),\psi\right\rangle_{\mathbb{L}^2}\right|\\
    &\leq&\left|\left\langle \left(u_n^{\prime}(t)-u^{\prime}(t)\right)\times\pi_n\nabla \phi\big(u_n^{\prime}(t)\big),\psi\right\rangle_{\mathbb{L}^2}\right|+\left| \left\langle u^{\prime}(t)\times\left(\pi_n\nabla \phi\big(u_n^{\prime}(t)\big)-\nabla \phi\big(u^{\prime}(t)\big)\right),\psi\right\rangle_{\mathbb{L}^2}\right|\\
    &\leq&\big\|\psi\big\|_{\mathbb{L}^2}\big\|u_n^{\prime}(t)-u^{\prime}(t)\big\|_{\mathbb{L}^2}\big\|\nabla \phi\big(u_n^{\prime}(t)\big)\big\|_{\mathbb{L}^2}+\big\|\psi\big\|_{\mathbb{L}^2}\big\|u^{\prime}(t)\big\|_{\mathbb{L}^2} \big\|\pi_n\nabla \phi\big(u_n^{\prime}(t)\big)-\nabla \phi\big(u^{\prime}(t)\big)\big\|_{\mathbb{L}^2},
     \end{eqnarray*}}
we have
{
  \begin{eqnarray*}
    &&\left|\mathbb{E}^{\prime}\int_0^T\left\langle u_n^{\prime}(t)\times\pi_n\nabla \phi\big(u_n^{\prime}(t)\big),\psi\right\rangle_{\mathbb{L}^2}\ud t-\mathbb{E}^{\prime}\int_0^T\left\langle u^{\prime}(t)\times \nabla \phi\big(u^{\prime}(t)\big),\psi\right\rangle_{\mathbb{L}^2}\ud t\right|\\
    &\leq&\mathbb{E}^{\prime}\int_0^T\left(\big\|\psi\big\|_{\mathbb{L}^2}\big\|u_n^{\prime}(t)-u^{\prime}(t)\big\|_{\mathbb{L}^2}\big\|\nabla \phi\big(u_n^{\prime}(t)\big)\big\|_{\mathbb{L}^2}+\big\|\psi\big\|_{\mathbb{L}^2}\big\|u^{\prime}(t)\big\|_{\mathbb{L}^2} \big\|\pi_n\nabla \phi\big(u_n^{\prime}(t)\big)-\nabla \phi\big(u^{\prime}(t)\big)\big\|_{\mathbb{L}^2}\right)\ud t\\
    &\leq&\left(\mathbb{E}^{\prime}\int_0^T\big\|\psi\big\|_{\mathbb{L}^{1,4}}^4\ud t\right)^\frac{1}{4}\left(\mathbb{E}^{\prime}\int_0^T\big\|u_n^{\prime}(t)-u^{\prime}(t)\big\|_{\mathbb{L}^4}^4\ud t\right)^\frac{1}{4}\left(\mathbb{E}^{\prime}\int_0^T\big\|\nabla \phi\big(u_n^{\prime}(t)\big)\big\|_{\mathbb{L}^2}^2\ud t\right)^\frac{1}{2}\\
    &&+\left(\mathbb{E}^{\prime}\int_0^T\big\|\psi\big\|_{\mathbb{W}^{1,4}}^4\ud t\right)^\frac{1}{4}\left(\mathbb{E}^{\prime}\int_0^T\big\|u^{\prime}(t)\big\|_{\mathbb{L}^4}^4\ud t\right)^\frac{1}{4} \left(\mathbb{E}^{\prime}\int_0^T\big\|\pi_n\nabla \phi\big(u_n^{\prime}(t)\big)-\nabla \phi\big(u^{\prime}(t)\big)\big\|_{{\mathbb{L}^2}}^2\ud t\right)^\frac{1}{4}\rightarrow0.
  \end{eqnarray*}}

We need to prove why
\begin{equation}\label{eq:piphiun'conv}
 \mathbb{E}^{\prime}\int_0^T\big\|\pi_n\nabla \phi\big(u_n^{\prime}(t)\big)-\nabla \phi\big(u^{\prime}(t)\big)\big\|_{{\mathbb{L}^2}}^2\ud t \to 0
 \end{equation}
This is because
{
  \begin{eqnarray*}
    &&\left(\mathbb{E}^{\prime}\int_0^T\big\|\pi_n\nabla \phi\big(u_n^{\prime}(t)\big)-\nabla \phi\big(u^{\prime}(t)\big)\big\|_{{\mathbb{L}^2}}^2\ud t\right)^\frac{1}{2}\\
    &\leq&\left(\mathbb{E}^{\prime}\int_0^T\left\|\pi_n\nabla \phi\big(u_n^{\prime}(t)\big)-\pi_n\nabla \phi\big(u^{\prime}(t)\big)\right\|_{\mathbb{L}^2}^2\ud t\right)^\frac{1}{2}+\left(\mathbb{E}^{\prime}\int_0^T\left\|\pi_n\nabla \phi\big(u^{\prime}(t)\big)-\nabla \phi\big(u^{\prime}(t)\big)\right\|_{\mathbb{L}^2}^2\ud t\right)^\frac{1}{2}\leq\cdots
  \end{eqnarray*}}
Since $\nabla \phi$ is global Lipschitz, there exists a constant $C$ such that
{
  \begin{eqnarray*}
    \cdots&\leq&C\left(\mathbb{E}^{\prime}\int_0^T\left\|u_n^{\prime}(t)-u^{\prime}(t)\right\|_{\mathbb{L}^2}^2\ud t\right)^\frac{1}{2}+\left(\mathbb{E}^{\prime}\int_0^T\left\|\pi_n\nabla \phi\big(u^{\prime}(t)\big)-\nabla \phi\big(u^{\prime}(t)\big)\right\|_{\mathbb{L}^2}^2\ud t\right)^\frac{1}{2}.
  \end{eqnarray*}}
  By \eqref{eq:4.13phi}, the first term on the right hand side of above inequality converges to $0$. And since $\left\|\pi_n\nabla \phi\big(u^{\prime}(t)\big)-\nabla \phi\big(u^{\prime}(t)\big)\right\|_{\mathbb{L}^2}^2\rightarrow0$ for almost every $(t,\omega)\in [0,T]\times \Omega$, and since $\nabla \phi$ is bounded, $\left\|\pi_n\nabla \phi\big(u^{\prime}(t)\big)-\nabla \phi\big(u^{\prime}(t)\big)\right\|_{\mathbb{L}^2}^2$ is uniformly integrable, hence the second term of right hand side also converges to $0$ as $n\rightarrow\infty$. Therefore we have proved \eqref{eq:piphiun'conv}.

  Hence we have
  \begin{equation}\label{eq:unxphi'wc}
  \lim_{n\rightarrow\infty}\mathbb{E}^{\prime}\int_0^T\left\langle u_n^{\prime}(t)\times\pi_n\nabla \phi\big(u_n^{\prime}(t)\big),\psi\right\rangle_{\mathbb{L}^2}\ud t=\mathbb{E}^{\prime}\int_0^T\left\langle u^{\prime}(t)\times \nabla \phi\big(u^{\prime}(t)\big),\psi\right\rangle_{\mathbb{L}^2}\ud t.
  \end{equation}

  Therefore by the equalities \eqref{eq:unxdeltaunwc} and \eqref{eq:unxphi'wc}, we have
{
  \begin{eqnarray}
    &&\lim_{n\rightarrow\infty}\mathbb{E}^{\prime}\int_0^T\langle u_n^{\prime}(t)\times[\Delta u_n^{\prime}(t)-\pi_n\nabla \phi(u_n^{\prime}(t))],\psi(t)\rangle_{\mathbb{L}^2}\ud t\label{eq:4.231phi}\\
    &=&\mathbb{E}^{\prime}\int_0^T\sum_{i=1}^3\left\langle\frac{ \partial u^{\prime}(t)}{\partial x_i},u^{\prime}(t)\times\frac{\partial \psi(t)}{\partial x_i}\right\rangle_{\mathbb{L}^2}\ud t+\mathbb{E}^{\prime}\int_0^T\left\langle u^{\prime}(t)\times \nabla \phi\big(u^{\prime}(t)\big),\psi\right\rangle_{\mathbb{L}^2}\ud t.\nonumber
   \end{eqnarray}}

 Moreover, by \eqref{eq:4.14phi}, for every $\psi\in L^2(\Omega^{\prime};L^2(0,T;\mathbb{L}^2))$,
  \begin{equation}\label{eq:4.24phi}
    \lim_{n\rightarrow\infty}\mathbb{E}^{\prime}\int_0^T\langle u_n^{\prime}(t)\times\left(\Delta u_n^{\prime}(t)-\pi_n\nabla \phi\big(u_n^{\prime}(t)\big)\right),\psi\rangle_{\mathbb{L}^2}\ud t=\mathbb{E}^{\prime}\int_0^T\langle Y(t),\psi\rangle_{\mathbb{L}^2}\ud t.
  \end{equation}
  Hence by \eqref{eq:4.231phi} and \eqref{eq:4.24phi},
  {
  \begin{eqnarray*}
    &&\lim_{n\rightarrow\infty}\mathbb{E}^{\prime}\int_0^T\langle u_n^{\prime}(t)\times\left(\Delta u_n^{\prime}(t)-\pi_n\nabla \phi\big(u_n^{\prime}(t)\big)\right),\psi(s)\rangle_{\mathbb{L}^2}\ud t\\
    &=&\mathbb{E}^{\prime}\int_0^T\langle Y(t),\psi(t)\rangle_{\mathbb{L}^2}\ud t\\
    &=&\mathbb{E}^{\prime}\int_0^T\sum_{i=1}^3\langle\frac{\partial u^{\prime}(t)}{\partial x_i},u^{\prime}(t)\times\frac{\partial\psi(t)}{\partial x_i}\rangle_{\mathbb{L}^2}\ud t+\mathbb{E}^{\prime}\int_0^T\left\langle u^{\prime}(t)\times \nabla \phi\big(u^{\prime}(t)\big),\psi(t)\right\rangle_{\mathbb{L}^2}\ud t.
  \end{eqnarray*}}
  This completes the proof of Lemma \ref{lem:4.5phi}.
\end{proof}

\begin{lem}\label{lem:4.8phi}
  For any process $\psi\in L^4(\Omega^{\prime};L^4(0,T;\mathbb{L}^4))$ we have
  {
  \begin{eqnarray}
    &&\hspace{-2truecm}\lefteqn{\lim_{n\rightarrow\infty}\mathbb{E}^{\prime}\int_0^T\ \! _{\mathbb{L}^\frac{3}{2}}\langle u_n^{\prime}(s)\times\big(u_n^{\prime}(s)\times\left(\Delta u_n^{\prime}-\pi_n\nabla \phi\big(u_n^{\prime}(t)\big)\right)\big),\psi(s)\rangle_{\mathbb{L}^3}\ud s}\nonumber\\
    &=&\mathbb{E}^{\prime}\int_0^T \ \!_{\mathbb{L}^\frac{3}{2}}\langle Z(s),\psi(s)\rangle_{\mathbb{L}^3}\ud s\label{eq:4.25phi}\\
    &=&\mathbb{E}^{\prime}\int_0^T\ \! _{\mathbb{L}^\frac{3}{2}}\langle u^{\prime}(s)\times Y(s),\psi(s)\rangle_{\mathbb{L}^3}\ud s.\label{eq:4.26phi}
  \end{eqnarray}}
\end{lem}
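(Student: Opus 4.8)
The plan is to prove the two asserted equalities separately. The equality \eqref{eq:4.25phi} is immediate from the definition of weak convergence: by \eqref{eq:4.15phi} the second sequence converges weakly to $Z$ in $L^2(\Omega';L^2(0,T;\mathbb{L}^\frac{3}{2}))$, whose dual is $L^2(\Omega';L^2(0,T;\mathbb{L}^3))$. Since $D$ is bounded, $T<\infty$ and $\mathbb{P}'$ is a probability measure, we have the continuous embedding $L^4(\Omega';L^4(0,T;\mathbb{L}^4))\hookrightarrow L^2(\Omega';L^2(0,T;\mathbb{L}^3))$, so any $\psi$ as in the statement is an admissible test function and \eqref{eq:4.25phi} follows at once.

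For the equality \eqref{eq:4.26phi} I would abbreviate $Y_n:=u_n'\times\left(\Delta u_n'-\pi_n\nabla\phi(u_n')\right)$, which by \eqref{eq:4.14phi} converges weakly to $Y$ in $L^2(\Omega';L^2(0,T;\mathbb{L}^2))$ and is bounded there by \eqref{eq:4.7phi}. The crucial step is the pointwise scalar triple-product identity $\langle u_n'\times Y_n,\psi\rangle=\langle Y_n,\psi\times u_n'\rangle$, which is legitimate since $u_n'\times Y_n\in\mathbb{L}^\frac{3}{2}$ pairs with $\psi\in\mathbb{L}^3$, while $Y_n\in\mathbb{L}^2$ pairs with $\psi\times u_n'\in\mathbb{L}^2$ (by H\"older, as $\psi,u_n'\in\mathbb{L}^4$ via $\mathbb{V}\hookrightarrow\mathbb{L}^6\hookrightarrow\mathbb{L}^4$). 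I would then write
\[
\langle Y_n,\psi\times u_n'\rangle-\langle Y,\psi\times u'\rangle=\langle Y_n,\psi\times(u_n'-u')\rangle+\langle Y_n-Y,\psi\times u'\rangle
\]
and show that both terms vanish after applying $\mathbb{E}'\int_0^T$.

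For the first term I would use H\"older in $(\omega,t)$ with exponents $(2,4,4)$ to get
\[
\mathbb{E}'\!\int_0^T\!\left|\langle Y_n,\psi\times(u_n'-u')\rangle_{\mathbb{L}^2}\right|\ud t\le\Big(\mathbb{E}'\!\int_0^T\!\|Y_n\|_{\mathbb{L}^2}^2\ud t\Big)^{\frac12}\Big(\mathbb{E}'\!\int_0^T\!\|\psi\|_{\mathbb{L}^4}^4\ud t\Big)^{\frac14}\Big(\mathbb{E}'\!\int_0^T\!\|u_n'-u'\|_{\mathbb{L}^4}^4\ud t\Big)^{\frac14},
\]
where the first factor is bounded by \eqref{eq:4.7phi}, the second is finite by the hypothesis on $\psi$, and the third tends to $0$ by \eqref{eq:4.13phi}. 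The second term tends to $0$ by the weak convergence $Y_n\to Y$ in the Hilbert (hence self-dual) space $L^2(\Omega';L^2(0,T;\mathbb{L}^2))$, once I verify that $\psi\times u'$ lies in that space; this follows from Cauchy--Schwarz in $(\omega,t)$ together with $\|\psi\times u'\|_{\mathbb{L}^2}\le\|\psi\|_{\mathbb{L}^4}\|u'\|_{\mathbb{L}^4}$, the hypothesis on $\psi$, and the regularity $u'\in L^4(\Omega';L^4(0,T;\mathbb{L}^4))$ guaranteed by \eqref{eq:4.12phi} and $\mathbb{V}\hookrightarrow\mathbb{L}^4$. Combining the two limits gives $\lim_n\langle u_n'\times Y_n,\psi\rangle=\langle u'\times Y,\psi\rangle$, which with \eqref{eq:4.25phi} identifies $Z=u'\times Y$ and proves \eqref{eq:4.26phi}.

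The main obstacle, as is typical for these passages to the limit, is the genuinely quadratic structure $u_n'\times\big(u_n'\times(\cdots)\big)$: one factor $u_n'$ converges only strongly (in $\mathbb{L}^4$, by \eqref{eq:4.13phi}) while the bracketed factor $Y_n$ converges only weakly (in $\mathbb{L}^2$), so the product cannot be treated by weak convergence alone. The triple-product rearrangement is precisely what resolves this, by isolating the weakly convergent factor $Y_n$ against the test field $\psi\times u_n'$, which converges strongly. The remaining work is the bookkeeping of H\"older exponents and the matching of dual pairings, and it is here that the restriction $\psi\in L^4(\Omega';L^4(0,T;\mathbb{L}^4))$ and the estimates \eqref{eq:4.6phi}, \eqref{eq:4.12phi} and \eqref{eq:4.13phi} are used.
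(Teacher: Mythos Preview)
Your proposal is correct and follows essentially the same approach as the paper: the embedding $L^4(\Omega';L^4(0,T;\mathbb{L}^4))\hookrightarrow L^2(\Omega';L^2(0,T;\mathbb{L}^3))$ to get \eqref{eq:4.25phi} from \eqref{eq:4.15phi}, the scalar triple-product rearrangement $\langle u_n'\times Y_n,\psi\rangle=\langle Y_n,\psi\times u_n'\rangle$, the same two-term decomposition, and the same H\"older/weak-convergence treatment of each term using \eqref{eq:4.7phi}, \eqref{eq:4.13phi} and \eqref{eq:4.14phi}. The only cosmetic difference is that you invoke \eqref{eq:4.12phi} (via $\mathbb{V}\hookrightarrow\mathbb{L}^4$) to place $u'$ in $L^4(\Omega';L^4(0,T;\mathbb{L}^4))$, whereas the paper cites \eqref{eq:4.13phi}; both are valid.
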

\begin{proof}
 Let us take $\psi\in L^4(\Omega^{\prime};L^4(0,T;\mathbb{L}^4))$.
  For $n\in \mathbb{N}$, put $Y_n:=u_n^{\prime}\times\left(\Delta u_n^{\prime}+\nabla \phi\big(u_n^{\prime}\big)\right)$.
  Since $L^4(\Omega^{\prime};L^4(0,T;\mathbb{L}^4))\subset L^2(\Omega^{\prime};L^2(0,T;\mathbb{L}^3))=\left[L^2(\Omega^{\prime};L^2(0,T;\mathbb{L}^\frac{3}{2}))\right]'$,  we deduce that  \eqref{eq:4.15phi} implies that \eqref{eq:4.25phi} holds.\\
  So it remains to prove equality \eqref{eq:4.26phi}. Since by the H$\ddot{\textrm{o}}$lder's inequality
  {
  \begin{eqnarray*}
    \|\psi\times u^{\prime}\|_{\mathbb{L}^2}^2&=&\int_D|\psi(x)\times u^{\prime}(x)|^2\ud x\leq\int_D|\psi(x)|^2|u^{\prime}(x)|^2\ud x
    \\ &\leq& \|\psi\|_{\mathbb{L}^4}^2\|u^{\prime}\|_{\mathbb{L}^4}^2
    \leq \|\psi\|_{\mathbb{L}^4}^4+\|u^{\prime}\|_{\mathbb{L}^4}^4.
  \end{eqnarray*}}
  And since by \eqref{eq:4.13phi}, $u^{\prime}\in L^4(\Omega^{\prime};L^4(0,T;\mathbb{L}^4))$, we infer that
  {
  \begin{eqnarray*}
    &&\mathbb{E}^{\prime}\int_0^T\|\psi\times u^{\prime}\|_{\mathbb{L}^2}^2\ud t\leq\mathbb{E}^{\prime}\int_0^T\|\psi\|_{\mathbb{L}^4}^4\ud t+\mathbb{E}^{\prime}\int_0^T\|u^{\prime}\|_{\mathbb{L}^4}^4\ud t<\infty.
  \end{eqnarray*}}
   This proves that $\psi\times u^{\prime}\in L^2(\Omega^{\prime};L^2(0,T;\mathbb{L}^2))$ and similarly $\psi\times u_n^{\prime}\in L^2(\Omega^{\prime};L^2(0,T;\mathbb{L}^2))$.\\
   Thus since by \eqref{eq:4.14phi}, $Y_n\in L^2(\Omega^{\prime};L^2(0,T;\mathbb{L}^2))$, we infer that
   {
  \begin{eqnarray}
    \,\!_{\mathbb{L}^\frac{3}{2}}\langle u_n^{\prime}\times Y_n,\psi\rangle_{\mathbb{L}^3}&=&\int_D\langle u^{\prime}_n(x)\times Y_n(x),\psi(x)\rangle\ud x\nonumber\\
    &=&\int_D\langle Y_n(x),\psi(x)\times u_n^{\prime}(x)\rangle\ud x\label{eq:4.261phi}=\langle Y_n,\psi\times u^{\prime}_n\rangle_{\mathbb{L}^2}.
  \end{eqnarray}}
   Similarly, since by \eqref{eq:4.14phi}, $Y\in L^2(\Omega^{\prime};L^2(0,T;\mathbb{L}^2))$, we have
   {
  \begin{eqnarray}
    &&\,\!_{\mathbb{L}^\frac{3}{2}}\langle u^{\prime}\times Y,\psi\rangle_{\mathbb{L}^3}=\int_D\langle u^{\prime}(x)\times Y(x),\psi(x)\rangle\ud x\nonumber\\
    &=&\int_D\langle Y(x),\psi(x)\times u^{\prime}(x)\rangle\ud x\label{eq:4.262phi}=\langle Y,\psi\times u^{\prime}\rangle_{\mathbb{L}^2}.
  \end{eqnarray}}
  Thus by \eqref{eq:4.261phi} and \eqref{eq:4.262phi}, we get
  {
  \begin{eqnarray*}
    \,\!_{\mathbb{L}^\frac{3}{2}}\langle u_n^{\prime}\times Y_n,\psi\rangle_{\mathbb{L}^3}-\,\!_{\mathbb{L}^\frac{3}{2}}\langle u^{\prime}\times Y,\psi\rangle_{\mathbb{L}^3}&=&\langle Y_n,\psi\times u_n^{\prime}\rangle_{\mathbb{L}^2}-\langle Y,\psi\times u^{\prime}\rangle_{\mathbb{L}^2}\\
    &=&\langle Y_n-Y,\psi\times u^{\prime}\rangle_{\mathbb{L}^2}+\langle Y_n,\psi\times(u_n^{\prime}-u^{\prime})\rangle_{\mathbb{L}^2}.
  \end{eqnarray*}}
  In order to prove \eqref{eq:4.26phi}, we are aiming to prove that the expectation of the left hand side of the above equality goes to $0$ as $n\rightarrow\infty$.
  By \eqref{eq:4.14phi},  since $\psi\times u^{\prime}\in L^2(\Omega^{\prime};L^2(0,T;\mathbb{L}^2))$,
  \[\lim_{n\rightarrow\infty}\mathbb{E}^{\prime}\int_0^T\langle Y_n(s)-Y(s),\psi(s)\times u^{\prime}(s)\rangle_{\mathbb{L}^2}\ud s=0.\]
  By the Cauchy-Schwartz inequality and the equation \eqref{eq:4.13phi}, we have
  {
  \begin{eqnarray*}
    &&\hspace{-1truecm}\lefteqn{\mathbb{E}^{\prime}\int_0^T\langle Y_n(s),\psi(s)\times(u_n^{\prime}(s)-u^{\prime}(s))\rangle_{\mathbb{L}^2}^2\ud s\leq\mathbb{E}^{\prime}\int_0^T\|Y_n(s)\|^2_{\mathbb{L}^2}\|\psi(s)\times (u_n^{\prime}(s)-u^{\prime}(s))\|^2_{\mathbb{L}^2}\ud s}\\
  &\leq&\mathbb{E}^{\prime}\int_0^T\big\|Y_n(s)\big\|_{\mathbb{L}^2}\big\|\psi(s)\big\|_{\mathbb{L}^4} \big\|u_n^{\prime}(s)-u^{\prime}(s)\big\|_{\mathbb{L}^4}\ud s\\
  &\leq&\left(\mathbb{E}^{\prime}\int_0^T\big\|Y_n(s)\big\|_{\mathbb{L}^2}^2\ud s\right)^\frac{1}{2}\left(\mathbb{E}^{\prime}\int_0^T\big\|\psi(s)\big\|_{\mathbb{L}^4}^4\ud s\right)^\frac{1}{4}\left(\mathbb{E}^{\prime}\int_0^T\big\|u_n^{\prime}(s)-u^{\prime}(s)\big\|_{\mathbb{L}^4}^4\ud s\right)^\frac{1}{4}\rightarrow0.
  \end{eqnarray*}}

  Therefore, we infer that
  \[\lim_{n\rightarrow\infty}\mathbb{E}^{\prime}\int_0^T\ \! _{\mathbb{L}^\frac{3}{2}}\langle u_n^{\prime}(s)\times(u_n^{\prime}(s)\times\Delta u_n^{\prime}(s)),\psi(s)\rangle_{\mathbb{L}^3}\ud s=\mathbb{E}^{\prime}\int_0^T\ \! _{\mathbb{L}^\frac{3}{2}}\langle u^{\prime}(s)\times Y(s),\psi(s)\rangle_{\mathbb{L}^3}\ud s.\]
  This completes the proof of the Lemma \ref{lem:4.8phi}.
\end{proof}

The next result will be used, see Theorem \ref{thm:u'=1},  to show that the process $u^{\prime}$ satisfies the condition $|u^{\prime}(t,x)|_{\R^3}=1$ for all $t\in [0,T]$, $x\in D$ and $\mathbb{P}^{\prime}$-almost surely.

\begin{lem}\label{lem:4.7phi}
  For any bounded measurable function $\psi:D\longrightarrow\R$ we have
  \[\langle Y(s,\omega),\psi u^{\prime}(s,\omega)\rangle_{\mathbb{L}^2}=0,\]
  for almost every $(s,\omega)\in[0,T]\times\Omega^{\prime}$.
\end{lem}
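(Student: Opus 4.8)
The plan is to exploit the pointwise algebraic identity $\langle a\times b,a\rangle_{\R^3}=0$ at the level of the Galerkin approximations and then to pass to the limit using the weak convergence \eqref{eq:4.14phi} together with the strong convergence \eqref{eq:4.13phi}. Write $Y_n:=u_n^{\prime}\times\left(\Delta u_n^{\prime}-\pi_n\nabla \phi(u_n^{\prime})\right)$. By Corollary \ref{cor:4.4phi} the process $u_n^{\prime}$ takes values in $H_n\subset D(A)$, so for almost every $s\in[0,T]$ and $\mathbb{P}^{\prime}$-almost surely the map $x\mapsto u_n^{\prime}(s,x)$ is smooth and $\Delta u_n^{\prime}$, $\pi_n\nabla\phi(u_n^{\prime})$ are genuine $\mathbb{L}^2$ functions. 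Since $\langle u_n^{\prime}(s,x)\times b,u_n^{\prime}(s,x)\rangle_{\R^3}=0$ for every $b\in\R^3$, multiplying by the scalar $\psi(x)$ and integrating over $D$ gives
\[\langle Y_n(s),\psi\, u_n^{\prime}(s)\rangle_{\mathbb{L}^2}=0\]
for a.e.\ $s\in[0,T]$ and $\mathbb{P}^{\prime}$-a.s.

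To transfer this to the limit, it suffices to show that for every bounded measurable $g:[0,T]\times\Omega^{\prime}\to\R$,
\[\lim_{n\to\infty}\mathbb{E}^{\prime}\int_0^T g(s)\langle Y_n(s),\psi\, u_n^{\prime}(s)\rangle_{\mathbb{L}^2}\,\ud s=\mathbb{E}^{\prime}\int_0^T g(s)\langle Y(s),\psi\, u^{\prime}(s)\rangle_{\mathbb{L}^2}\,\ud s.\]
The left-hand side vanishes identically by the previous step, so the right-hand side is zero for all such $g$; by the arbitrariness of $g$ this forces $\langle Y(s,\omega),\psi\,u^{\prime}(s,\omega)\rangle_{\mathbb{L}^2}=0$ for almost every $(s,\omega)\in[0,T]\times\Omega^{\prime}$, which is the claim.

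For the limit itself I would split
\[\langle Y_n,\psi\, u_n^{\prime}\rangle_{\mathbb{L}^2}=\langle Y_n,\psi\, u^{\prime}\rangle_{\mathbb{L}^2}+\langle Y_n,\psi\,(u_n^{\prime}-u^{\prime})\rangle_{\mathbb{L}^2}.\]
For the first summand, $g\psi u^{\prime}$ is a fixed element of the dual of $L^{2r}(\Omega^{\prime};L^2(0,T;\mathbb{L}^2))$: indeed $u^{\prime}$ is bounded in $\mathbb{L}^2$ by \eqref{eq:4.10phi}, hence $g\psi u^{\prime}\in L^\infty(\Omega^{\prime};L^\infty(0,T;\mathbb{L}^2))$, and the weak convergence \eqref{eq:4.14phi} applies directly. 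For the second (error) summand I would use the Cauchy--Schwarz inequality,
\[\left|\mathbb{E}^{\prime}\int_0^T g\,\langle Y_n,\psi(u_n^{\prime}-u^{\prime})\rangle_{\mathbb{L}^2}\,\ud s\right|\leq\|g\|_\infty\|\psi\|_\infty\Big(\mathbb{E}^{\prime}\int_0^T\|Y_n\|_{\mathbb{L}^2}^2\,\ud s\Big)^{\frac12}\Big(\mathbb{E}^{\prime}\int_0^T\|u_n^{\prime}-u^{\prime}\|_{\mathbb{L}^2}^2\,\ud s\Big)^{\frac12},\]
where the first factor is bounded uniformly in $n$ by the a priori estimate \eqref{eq:4.7phi} and the second tends to zero by \eqref{eq:4.13phi} (which yields $u_n^{\prime}\to u^{\prime}$ in $L^2(\Omega^{\prime};L^2(0,T;\mathbb{L}^2))$ since $D$ is bounded and $\mathbb{L}^4\hookrightarrow\mathbb{L}^2$).

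The main obstacle is precisely that the natural test function paired against $Y_n$ is $\psi u_n^{\prime}$, which itself varies with $n$, so one cannot invoke the weak convergence \eqref{eq:4.14phi} directly. The resolution is the weak-times-strong splitting above: the $n$-independent part $\psi u^{\prime}$ is handled by weak convergence, while the $n$-dependent correction $\psi(u_n^{\prime}-u^{\prime})$ is absorbed by the uniform $L^2$ bound on $Y_n$ against the strong convergence of $u_n^{\prime}$. Everything else is elementary, resting only on the pointwise identity $\langle a\times b,a\rangle_{\R^3}=0$ and the boundedness of $\psi$.
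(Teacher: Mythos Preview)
Your proof is correct and follows essentially the same approach as the paper: both use the pointwise identity $\langle a\times b,a\rangle_{\R^3}=0$ at the Galerkin level, then pass to the limit via the same weak-times-strong splitting $\langle Y_n,\psi u_n^{\prime}\rangle=\langle Y_n,\psi u^{\prime}\rangle+\langle Y_n,\psi(u_n^{\prime}-u^{\prime})\rangle$, handling the first term by \eqref{eq:4.14phi} and the second by \eqref{eq:4.7phi} combined with \eqref{eq:4.13phi}. The only cosmetic difference is that the paper tests against indicators $1_B$ of progressively measurable sets rather than general bounded measurable $g$, which is equivalent.
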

\begin{proof}
Let $B\subset[0,T]\times\Omega^{\prime}$ be an arbitrary progressively measurable set.
  {
  \begin{eqnarray*}
    &&\left|\mathbb{E}^{\prime}\int_0^T1_B(s)\left\langle u_n^{\prime}(s)\times\left(\Delta u_n^{\prime}(s)-\pi_n\nabla \phi\big(u_n^{\prime}(s)\big)\right),\psi u_n^{\prime}(s)\right\rangle_{\mathbb{L}^2}\ud s-\mathbb{E}^{\prime}\int_0^T1_B(s)\langle Y(s),\psi u^{\prime}(s)\rangle_{\mathbb{L}^2}\ud s\right|\\
    &\leq&\left|\mathbb{E}^{\prime}\int_0^T1_B(s)\left\langle u_n^{\prime}(s)\times\left(\Delta u_n^{\prime}(s)-\pi_n\nabla \phi\big(u_n^{\prime}(s)\big)\right),\psi (u_n^{\prime}(s)-u'(s))\right\rangle_{\mathbb{L}^2}\ud s\right|\\
    &&+\left|\mathbb{E}^{\prime}\int_0^T1_B(s)\llangle u_n^{\prime}(s)\times\left(\Delta u_n^{\prime}(s)-\pi_n\nabla \phi\big(u_n^{\prime}(s)\big)\right)-Y(s),\psi u^{\prime}(s)\rrangle_{\mathbb{L}^2}\ud s\right|.
  \end{eqnarray*}}
  Next we will show that both terms in the right hand side of the above inequality will converge to $0$. \\
  For the first term, by the boundness of $\psi$, \eqref{eq:4.7phi} and \eqref{eq:4.13phi}, we have
  {
  \begin{eqnarray*}
  &&\left|\mathbb{E}^{\prime}\int_0^T1_B(s)\left\langle u_n^{\prime}(s)\times\left(\Delta u_n^{\prime}(s)-\pi_n\nabla \phi\big(u_n^{\prime}(s)\big)\right),\psi (u_n^{\prime}(s)-u'(s))\right\rangle_{\mathbb{L}^2}\ud s\right|\\
  &\leq& \mathbb{E}'\int_0^T\left\|u_n^{\prime}(s)\times\left(\Delta u_n^{\prime}(s)-\pi_n\nabla \phi\big(u_n^{\prime}(s)\big)\right)\psi\right\|_{\mathbb{L}^2}\left\|u_n'(s)-u'(s)\right\|_{\mathbb{L}^2}\ud s\\
  &\leq&\left\|u_n^{\prime}(s)\times\left(\Delta u_n^{\prime}(s)-\pi_n\nabla \phi\big(u_n^{\prime}(s)\big)\right)\psi\right\|_{L^2(\Omega';L^2(0,T;\mathbb{L}^2))}\left\|u_n'-u'\right\|_{L^2(\Omega';L^2(0,T;\mathbb{L}^2))}\longrightarrow 0.
  \end{eqnarray*}}
  For the second term, since $1_B\psi u'\in L^2(\Omega';L^2(0,T;\mathbb{L}^2))$, by \eqref{eq:4.13phi} and \eqref{eq:4.14phi}, we have
  {
  \begin{eqnarray*}
    &&\left|\mathbb{E}^{\prime}\int_0^T1_B(s)\llangle u_n^{\prime}(s)\times\left(\Delta u_n^{\prime}(s)-\pi_n\nabla \phi\big(u_n^{\prime}(s)\big)\right)-Y(s),\psi u^{\prime}(s)\rrangle_{\mathbb{L}^2}\ud s\right|\rightarrow0.
  \end{eqnarray*}}
  Therefore
  {
  \begin{eqnarray*}
    0&=&\lim_{n\rightarrow\infty}\mathbb{E}^{\prime}\int_0^T1_B(s)\left\langle u_n^{\prime}(s)\times\left(\Delta u_n^{\prime}(s)-\pi_n\nabla \phi\big(u_n^{\prime}(s)\big)\right),\psi u_n^{\prime}(s)\right\rangle_{\mathbb{L}^2}\ud s\\
    &=&\mathbb{E}^{\prime}\int_0^T1_B(s)\langle Y(s),\psi u^{\prime}(s)\rangle_{\mathbb{L}^2}\ud s,
  \end{eqnarray*}}
  where the first equality from the fact that $\langle a\times b, a\rangle=0$.
  By the arbitrariness of $B$, this concludes the proof of Lemma \ref{lem:4.7phi}.
\end{proof}

\section{Conclusion of the proof of the existence of a weak solution}
Our aim in this section is to prove that the process $u^{\prime}$ from Proposition \ref{prop:u_n'phi} is a weak solution of the equation \eqref{eq:7eq} according to the definition \ref{defn:sol}.\\
We define a sequence of ${\mathbb{L}^2}$-valued process $(M_n(t))_{t\in [0,T]}$ on the original probability space $(\Omega,\mathcal{F},\mathbb{P})$ by
{
  \begin{eqnarray}
M_n(t)&:=&u_n(t)-u_n(0)-\lmd_1\int_0^t\pi_n\left(u_n(s)\times\left(\Delta u_n(s)-\pi_n\nabla \phi\big(u_n(s)\big)\right)\right)\ud s\nonumber\\
&&+\lmd_2\int_0^t\pi_n\left(u_n(s)\times\left(u_n(s)\times\left(\Delta u_n(s)-\pi_n\nabla \phi\big(u_n(s)\big)\right)\right)\right)\ud s\label{eq:Mnt4}\\
&&-\frac{1}{2}\sum_{j=1}^N\int_0^t\pi_n\left((\pi_n (u_n(s)\times h_j))\times h_j\right)\ud s.\nonumber
 \end{eqnarray}}

Since $u_n$ is the solution of the Equation \eqref{eq:dun}, we have
{
  \begin{eqnarray*}
    u_n(t)&=&u_n(0)+\lmd_1\int_0^t\pi_n\left(u_n(s)\times\left(\Delta u_n(s)-\pi_n\nabla \phi\big(u_n(s)\big)\right)\right)\ud s\\
    &&-\lmd_2\int_0^t \pi_n\left(u_n(s)\times\left(u_n(s)\times\left(\Delta u_n(s)-\pi_n\nabla \phi\big(u_n(s)\big)\right)\right)\right)\ud s\\
    &&+\frac{1}{2}\sum_{j=1}^N\int_0^t\pi_n(\pi_n(u_n(s)\times h_j)\times h_j)\ud s+\sum_{j=1}^N\int_0^t\pi_n(u_n(s)\times h_j)\ud W_j(s).
  \end{eqnarray*}}
Hence we have
\begin{equation}\label{eq:MnItointe}
M_n(t)=\sum_{j=1}^N\int_0^t\pi_n(u_n(s)\times h_j)\ud W_j(s),\quad t\in[0,T].
\end{equation}

It will be $2$ steps to prove $u^{\prime}$ is a weak solution of the Equation \eqref{eq:7eq}:
\begin{trivlist}
\item[Step 1]:
 we are going to find some $M'(t)$ defined similar as in \eqref{eq:Mnt4}, but with $u^{\prime}$ instead of $u_n$.
\item[Step 2]:
  We will show the similar result as in \eqref{eq:MnItointe} but with $u^{\prime}$ instead of $u_n$ and $W_j^{\prime}$ instead of $W_j$.
 \end{trivlist}
\subsection{Step 1}
We define a sequence of ${\mathbb{L}^2}$-valued process $\big(M_n'(t)\big)_{t\in[0,T]}$ on the new probability space $(\Omega^{\prime},\mathcal{F}^{\prime},\mathbb{P}^{\prime})$ by a formula similar as \eqref{eq:Mnt4}.
{
  \begin{eqnarray}
M_n'(t)&:=&u_n^{\prime}(t)-u_n^{\prime}(0)-\lmd_1\int_0^t\pi_n\left(u_n^{\prime}(s)\times\left(\Delta u_n^{\prime}(s)-\pi_n\nabla \phi\big(u_n^{\prime}(s)\big)\right)\right)\ud s\nonumber\\
&&+\lmd_2\int_0^t\pi_n\left(u_n^{\prime}(s)\times\left(u_n^{\prime}(s)\times\left(\Delta u_n^{\prime}(s)-\pi_n\nabla \phi\big(u_n^{\prime}(s)\big)\right)\right)\right)\ud s\label{eq:Mn't4}\\
&&-\frac{1}{2}\sum_{j=1}^N\int_0^t\pi_n[(\pi_n (u_n^{\prime}(s)\times h_j))\times h_j]\ud s.\nonumber
 \end{eqnarray}}

It will be natural to ask if $\{M_n'\}$ has limit and if yes, what is the limit. The next result answers this question.
\begin{lem}\label{lem:limMn'4}
  For each $t\in[0,T]$ the sequence of random variables $M_n'(t)$ converges weakly as $n$ goes to infinity in $L^2(\Omega^{\prime};X^{-\beta})$ to the limit
  {
  \begin{eqnarray*}
M'(t)&:=&u^{\prime}(t)-u_0-\lmd_1\int_0^t\left(u^{\prime}(s)\times\left(\Delta u^{\prime}(s)-\nabla \phi\big(u^{\prime}(s)\big)\right)\right)\ud s\\
&&+\lmd_2\int_0^t\left(u^{\prime}(s)\times\left(u^{\prime}(s)\times\left(\Delta u^{\prime}(s)-\nabla \phi\big(u^{\prime}(s)\big)\right)\right)\right)\ud s\\
&&-\frac{1}{2}\sum_{j=1}^N\int_0^t(u^{\prime}(s)\times h_j)\times h\ud s.
 \end{eqnarray*}}
\end{lem}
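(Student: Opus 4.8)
The plan is to prove the weak convergence $M_n'(t)\rightharpoonup M'(t)$ in $L^2(\Omega';X^{-\beta})$ term by term, testing against an arbitrary $\zeta\in L^2(\Omega';X^\beta)=(L^2(\Omega';X^{-\beta}))^*$. By linearity of the duality pairing it suffices to treat separately the four groups of terms appearing in \eqref{eq:Mn't4}: the increment $u_n'(t)-u_n'(0)$, the $\lambda_1$-term, the $\lambda_2$-term, and the Stratonovich correction term. Throughout I would re-express each contribution as a pairing $\mathbb{E}'[{}_{X^{-\beta}}\langle\,\cdot\,,\zeta\rangle_{X^\beta}]$ and pass to the limit in $n$.

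First, the increment term I would handle by \emph{strong} convergence, which is more than enough. Since $u_n'\to u'$ in $C([0,T];X^{-\beta})$ $\mathbb{P}'$-a.s.\ (Proposition \ref{prop:u_n'phi}(ii)), we have $u_n'(t)\to u'(t)$ in $X^{-\beta}$ a.s.; together with the uniform bound $\|u_n'(t)\|_{X^{-\beta}}\le c\|u_n'(t)\|_{\mathbb{L}^2}\le c\|u_0\|_{\mathbb{L}^2}$ from \eqref{eq:4.5phi} and the embedding $\mathbb{L}^2\hookrightarrow X^{-\beta}$, dominated convergence gives $u_n'(t)\to u'(t)$ strongly in $L^2(\Omega';X^{-\beta})$, while $u_n'(0)=\pi_n u_0\to u_0$ deterministically. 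The correction term is also purely strong: using $u_n'\to u'$ in $L^2(\Omega';L^2(0,T;\mathbb{L}^2))$ (a consequence of \eqref{eq:4.13phi}), the boundedness of $w\mapsto(w\times h_j)\times h_j$ on $\mathbb{L}^2$ (since $h_j\in\mathbb{L}^\infty$), and the pointwise convergence $\pi_n w\to w$ with $\|\pi_n\|\le1$, one peels off the two projections one at a time to get $\pi_n[(\pi_n(u_n'\times h_j))\times h_j]\to(u'\times h_j)\times h_j$ strongly in $L^2(\Omega';L^2(0,T;\mathbb{L}^2))$, whence the time integrals converge strongly in $L^2(\Omega';\mathbb{L}^2)\hookrightarrow L^2(\Omega';X^{-\beta})$.

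The two nonlinear deterministic terms are where the weak-convergence results of Section 6 enter. For the $\lambda_2$-term the outer $\pi_n$ is \emph{already present} inside the weakly convergent sequence \eqref{eq:4.16phi}, so testing against $\mathbf{1}_{[0,t]}\zeta\in L^2(\Omega';L^2(0,T;X^\beta))$ (the dual of the space in \eqref{eq:4.16phi}) directly yields $\int_0^t\pi_n(u_n'\times(u_n'\times(\ldots)))\,ds\rightharpoonup\int_0^t Z_1(s)\,ds$, with $Z_1=Z=u'\times(u'\times(\Delta u'-\nabla\phi(u')))$ by Proposition \ref{prop:Z=Z1}, Lemma \ref{lem:4.8phi} and Notation \ref{notation:uxuxDeltau}. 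The $\lambda_1$-term needs one extra manoeuvre, and this I expect to be the main obstacle: the weak convergence \eqref{eq:4.14phi} is stated for $u_n'\times(\Delta u_n'-\pi_n\nabla\phi(u_n'))$ \emph{without} the outer projection, whereas \eqref{eq:Mn't4} carries an outer $\pi_n$. Using self-adjointness of $\pi_n$ I would rewrite the pairing as $\mathbb{E}'\int_0^t\langle u_n'(s)\times(\Delta u_n'(s)-\pi_n\nabla\phi(u_n'(s))),\pi_n\zeta\rangle_{\mathbb{L}^2}\,ds$, split $\pi_n\zeta=\zeta+(\pi_n-I)\zeta$, and bound the error by Cauchy--Schwarz: it is controlled by $t^{1/2}\,(\mathbb{E}'\|(\pi_n-I)\zeta\|_{\mathbb{L}^2}^2)^{1/2}$ times the uniform bound \eqref{eq:4.7phi}, and $(\pi_n-I)\zeta\to0$ in $L^2(\Omega';\mathbb{L}^2)$ by dominated convergence. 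The principal part then converges by \eqref{eq:4.14phi} (used with $r=1$) to $\mathbb{E}'\int_0^t\langle Y(s),\zeta\rangle_{\mathbb{L}^2}\,ds$, and $Y=u'\times(\Delta u'-\nabla\phi(u'))$ by Lemma \ref{lem:4.5phi} and the attending notation.

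Collecting the four limits and identifying each with the corresponding term of $M'(t)$ gives $\langle M_n'(t),\zeta\rangle\to\langle M'(t),\zeta\rangle$ for every $\zeta\in L^2(\Omega';X^\beta)$, which is exactly the claimed weak convergence. The only genuine subtlety is the interaction between the \emph{weak} convergence of the nonlinear term and the projections $\pi_n$ sitting both inside and outside it; every other contribution reduces to strong convergence combined with the uniform a priori bounds of Lemma \ref{lem:estun'}.
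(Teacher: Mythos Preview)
Your proposal is correct and follows essentially the same route as the paper: test against an arbitrary $U\in L^2(\Omega';X^\beta)$, treat each of the four groups of terms separately, and invoke the weak limits \eqref{eq:4.14phi} and \eqref{eq:4.16phi} together with the identifications $Y=u'\times(\Delta u'-\nabla\phi(u'))$ and $Z=Z_1=u'\times Y$ from Lemma~\ref{lem:4.5phi}, Lemma~\ref{lem:4.8phi} and Proposition~\ref{prop:Z=Z1}. Your treatment of the $\lambda_1$-term is in fact slightly more careful than the paper's: the paper simply writes ``By \eqref{eq:4.14phi}, $\lim_n\mathbb{E}'\int_0^t\langle\,\cdot\,,\pi_nU\rangle=\mathbb{E}'\int_0^t\langle Y,U\rangle$'', leaving the passage from the $n$-dependent test $\pi_nU$ to the fixed test $U$ implicit, whereas your splitting $\pi_n\zeta=\zeta+(\pi_n-I)\zeta$ combined with the uniform bound \eqref{eq:4.7phi} makes this step explicit. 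The remaining minor differences (you argue strong $L^2(\Omega';X^{-\beta})$ convergence of the increment via dominated convergence, the paper uses uniform integrability of the scalar pairings; you peel off the projections in the correction term, the paper moves them onto $U$) are cosmetic.
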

\begin{proof}
The dual space of $L^2(\Omega^{\prime};X^{-\beta})$ is $L^2(\Omega^{\prime};X^\beta)$. Let $t\in(0,T]$ and $U\in L^2(\Omega^{\prime};X^\beta)$. We have
{
  \begin{eqnarray*}
    &&\,\!_{L^2(\Omega^{\prime};X^{-\beta})}\langle M_n'(t),U\rangle_{L^2(\Omega^{\prime};X^\beta)}=\mathbb{E}^{\prime}\left[_{X^{-\beta}}\langle M_n'(t),U\rangle_{X^\beta}\right]\\
    &=&\mathbb{E}^{\prime}\Big[_{X^{-\beta}}\langle u_n^{\prime}(t),U\rangle_{X^\beta}-_{X^{-\beta}}\langle u_n(0),U\rangle_{X^\beta}\\
    &&-\lmd_1\int_0^t\left\langle u_n^{\prime}(s)\times\left(\Delta u_n^{\prime}(s)-\pi_n\nabla \phi\big(u_n^{\prime}(s)\big)\right),\pi_n U
    \right\rangle_{\mathbb{L}^2}\ud s\\
    &&+\lmd_2\int_0^t \,\!_{X^{-\beta}}\left\langle \left(u_n^{\prime}(s)\times\left(u_n^{\prime}(s)\times\left(\Delta u_n^{\prime}(s)-\pi_n\nabla \phi\big(u_n^{\prime}(s)\big)\right)\right)\right),U\right\rangle_{X^\beta}\ud s\\
    &&-\frac{1}{2}\sum_{j=1}^N\int_0^t\langle \pi_n(u_n^{\prime}(s)\times h_j)\times h_j,\pi_n U\rangle_{\mathbb{L}^2}\ud s\Big].
  \end{eqnarray*}}
  We know that $u_n^{\prime}\longrightarrow u^{\prime}$ in $C(0,T;X^{-\beta})$ $\mathbb{P}^{\prime}$-a.s., so
   \[\sup_{t\in[0,T]}\|u_n(t)-u(t)\|_{X^{-\beta}}\longrightarrow0,\quad \mathbb{P}^{\prime}-a.s.\]
   so $u_n^{\prime}(t)\longrightarrow u^{\prime}(t)$ in $X^{-\beta}$ $\mathbb{P}^{\prime}$-almost surely for any $t\in[0,T]$.
   And $\,\!_{X^{-\beta}}\langle \cdot,U\rangle_{X^\beta}$ is a continuous function on $X^{-\beta}$, hence
   \[\lim_{n\rightarrow\infty}\,\!_{X^{-\beta}}\langle u_n^{\prime}(t),U\rangle_{X^\beta}=\,\!_{X^{-\beta}}\langle u^{\prime}(t),U\rangle_{X^\beta},\quad \mathbb{P}^{\prime}-a.s.\]
   By \eqref{eq:4.5phi}, $\sup_{t\in[0,T]}\|u_n^{\prime}(t)\|_{{\mathbb{L}^2}}\leq\|u_0\|_{\mathbb{L}^2}$, since ${\mathbb{L}^2}\hookrightarrow X^{-\beta}$ continuously, we can find a constant $C$ such that
   {
  \begin{eqnarray*}
    &&\sup_n\mathbb{E}^{\prime}\left[\left|{}_{X^{-\beta}}\langle u_n^{\prime}(t),U\rangle_{X^\beta}\right|^2\right]\leq\sup_n\mathbb{E}^{\prime}\|U\|^2_{X^\beta}\mathbb{E}^{\prime}\|u_n^{\prime}(t)\|^2_{X^{-\beta}}\\
    &\leq&C\mathbb{E}^{\prime}\|U\|_{X^\beta}^2\mathbb{E}^{\prime}\sup_n\|u_n^{\prime}(t)\|^2_{\mathbb{L}^2}\leq C\mathbb{E}^{\prime}\|U\|_{X^\beta}^2\mathbb{E}^{\prime}\|u_0\|_{\mathbb{L}^2}^2<\infty.
  \end{eqnarray*}}
  Hence the sequence ${}_{X^{-\beta}}\langle u_n^{\prime}(t),U\rangle_{X^{\beta}}$ is uniformly integrable.
   So the almost surely convergence and uniform integrability implies that
    $$\lim_{n\rightarrow \infty}\mathbb{E}^{\prime}[\,\!_{X^{-\beta}}\langle u_n^{\prime}(t),U\rangle_{X^\beta}]=\mathbb{E}^{\prime}[\,\!_{X^{-\beta}}\langle u^{\prime}(t),U\rangle_{X^\beta}].$$
   By \eqref{eq:4.14phi},
   \[\lim_{n\rightarrow\infty}\mathbb{E}^{\prime}\int_0^t\left\langle u_n^{\prime}(s)\times\left(\Delta u_n^{\prime}(s)-\pi_n\nabla \phi\big(u_n^{\prime}(s)\big)\right),\pi_nU\right\rangle_{\mathbb{L}^2}\ud s=\mathbb{E}^{\prime}\int_0^t\langle Y(s), U\rangle_{\mathbb{L}^2}.\]
   By \eqref{eq:4.16phi}
   \[\lim_{n\rightarrow\infty}\mathbb{E}^{\prime}\int_0^t\,\!_{X^{-\beta}}\left\langle \pi_n\left(u_n^{\prime}(s)\times\left(u_n^{\prime}(s)\times\left(\Delta u_n^{\prime}(s)-\pi_n\nabla \phi\big(u_n^{\prime}(s)\big)\right)\right)\right),U\right\rangle_{X^\beta}\ud s=\mathbb{E}^{\prime}\int_0^t\,\!\langle Z(s),U\rangle_{X^\beta}\ud s.\]
   By the H$\ddot{\textrm{o}}$lder's inequality,
   \[\|u_n^{\prime}(t)-u^{\prime}(t)\|_{\mathbb{L}^2}^2\leq m(D)^\frac{1}{2}\|u_n^{\prime}(t)-u^{\prime}(t)\|_{\mathbb{L}^4}^2.\]
   Hence by \eqref{eq:4.13phi},
   {
  \begin{eqnarray*}
    &&\hspace{-2truecm}\lefteqn{\mathbb{E}^{\prime}\int_0^t\langle \pi_n((u_n^{\prime}(s)-u^{\prime}(s))\times h_j)\times h_j,\pi_nU\rangle_{\mathbb{L}^2}\ud s}\\
    &\leq&\|U\|_{L^2(\Omega^{\prime};L^2(0,T;\mathbb{L}^2))}\left(\mathbb{E}^{\prime}\int_0^t(\|(u_n^{\prime}(s)-u^{\prime}(s))\times h_j)\times h_j\|_{\mathbb{L}^2}^2\ud s\right)^\frac{1}{2}\\
    &\leq&\|h_j\|_{\mathbb{L}^\infty}^2\|U\|_{L^2(\Omega^{\prime};L^2(0,T;\mathbb{L}^2))}\left(\mathbb{E}^{\prime}\int_0^t\| u_n^{\prime}(s)-u^{\prime}(s)\|_{\mathbb{L}^2}^2\ud s\right)^\frac{1}{2}\\
    &\leq&\|h_j\|_{\mathbb{L}^\infty}^2\|U\|_{L^2(\Omega^{\prime};L^2(0,T;\mathbb{L}^2))}\left(\mathbb{E}^{\prime}\int_0^t\| u_n^{\prime}(s)-u^{\prime}(s)\|_{\mathbb{L}^4}^2\ud s\right)^\frac{1}{2}m(D)^\frac{1}{4}\\
    &\leq&\|h_j\|_{\mathbb{L}^\infty}^2\|U\|_{L^2(\Omega^{\prime};L^2(0,T;\mathbb{L}^2))}\left(\mathbb{E}^{\prime}\int_0^t\| u_n^{\prime}(s)-u^{\prime}(s)\|_{\mathbb{L}^4}^4\ud s\right)^\frac{1}{4}t^\frac{1}{4}m(D)^\frac{1}{4}\\
    &\longrightarrow&0.
  \end{eqnarray*}}
  Hence
  {
  \begin{eqnarray*}
    &&\hspace{-1truecm}\lefteqn{\lim_{n\rightarrow\infty}\,\!_{L^2(\Omega^{\prime};X^{-\beta})}\langle M_n'(t),U\rangle_{L^2(\Omega^{\prime};X^\beta)}
    }\\
    &=&\mathbb{E}^{\prime}\Big[\,\!_{X^{-\beta}}\langle u^{\prime}(t), U\rangle_{X^\beta}-\,\!_{X^{-\beta}}\langle u_0,U\rangle_{X^\beta}-\lmd_1\int_0^t\langle Y(s),U\rangle_{\mathbb{L}^2}\ud s\\
    &+&\lmd_2\int_0^t\,\!_{X^{-\beta}}\langle Z(s),U\rangle_{X^\beta}\ud s-\frac{1}{2}\sum_{j=1}^N\int_0^t\langle(u^{\prime}(s)\times h_j)\times h_j,U\rangle_{\mathbb{L}^2}\ud s\Big].
  \end{eqnarray*}}
  Since by Lemma \ref{lem:4.5phi} and Lemma \ref{lem:4.8phi}, we have $Y=u^{\prime}\times \Delta u^{\prime}$ and $Z=u^{\prime}\times(u^{\prime}\times\Delta u^{\prime})$. Therefore for any $U\in L^2(\Omega^{\prime};X^\beta)$,
  \[\lim_{n\rightarrow\infty}\,\!_{L^2(\Omega^{\prime};X^{-\beta})}\langle M_n'(t),U\rangle_{L^2(\Omega^{\prime};X^\beta)}=\,\!_{L^2(\Omega^{\prime};X^{-\beta})}\langle M'(t),U\rangle_{L^2(\Omega^{\prime};X^\beta)}.\]
  This concludes the proof of Lemma \ref{lem:limMn'4}.
\end{proof}

Before we can continue to prove $u^{\prime}$ is the weak solution of equation \eqref{eq:7eq}, we need to show that the $W^{\prime}$ and $W_{n}^{\prime}$ in Proposition \ref{prop:u_n'phi} are Brownian Motions. This  will be done in Lemmata \ref{lem: Wn BMphi} and \ref{lem:W'BMphi}, which can be proved by considering the characteristic functions. We will only show the proof of Lemma \ref{lem:W'BMphi}.

\begin{lem}\label{lem: Wn BMphi}
  Suppose the $W_n'$ defined in $(\Omega',\mathcal{F}',\mathbb{P}')$ has the same distribution as the Brownian Motion $W$ defined in $(\Omega,\mathcal{F},\mathbb{P})$ as in Proposition \ref{prop:u_n'phi}. Then $W_n'$ is also a Brownian Motion.
\end{lem}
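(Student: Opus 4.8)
The plan is to exploit the fact that being an $\R^N$-valued Wiener process is entirely a property of the law of the process on the path space $C([0,T];\R^N)$, and then to invoke the equality of laws supplied by Proposition \ref{prop:u_n'phi}(a). First I would project the identity $\mathcal{L}(u_n,W)=\mathcal{L}(u_n^{\prime},W_n^{\prime})$ onto its second coordinate to obtain that the law of $W_n^{\prime}$ on $C([0,T];\R^N)$ coincides with the law of $W$. Consequently, for every bounded measurable functional $\Psi:C([0,T];\R^N)\to\CC$ one has $\mathbb{E}^{\prime}[\Psi(W_n^{\prime})]=\mathbb{E}[\Psi(W)]$.

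Next, I would fix a partition $0=t_0<t_1<\cdots<t_k\leq T$ and vectors $\xi_1,\ldots,\xi_k\in\R^N$, and apply this to the bounded continuous functional
\[
\Psi(w)=\exp\Big(i\sum_{l=1}^k\langle \xi_l,\, w(t_l)-w(t_{l-1})\rangle_{\R^N}\Big),\qquad w\in C([0,T];\R^N).
\]
The equality of laws then yields
\[
\mathbb{E}^{\prime}\Big[\exp\Big(i\sum_{l=1}^k\langle \xi_l,\, W_n^{\prime}(t_l)-W_n^{\prime}(t_{l-1})\rangle_{\R^N}\Big)\Big]
=\mathbb{E}\Big[\exp\Big(i\sum_{l=1}^k\langle \xi_l,\, W(t_l)-W(t_{l-1})\rangle_{\R^N}\Big)\Big].
\]
Since $W$ is a standard $\R^N$-valued Wiener process, its increments are independent and $N\big(0,(t_l-t_{l-1})I_N\big)$-distributed, so the right-hand side factorizes as $\prod_{l=1}^k\exp\big(-\tfrac12(t_l-t_{l-1})|\xi_l|^2\big)$. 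Hence the joint characteristic function of the increments of $W_n^{\prime}$ has exactly the same Gaussian product form, for every partition and every choice of $\xi_1,\ldots,\xi_k$.

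From this identity I would read off the defining properties of a Wiener process: $W_n^{\prime}$ has independent increments, each increment $W_n^{\prime}(t)-W_n^{\prime}(s)$ is centred Gaussian with covariance $(t-s)I_N$, and $W_n^{\prime}(0)=0$ $\mathbb{P}^{\prime}$-a.s.\ (the latter again by equality of laws). Continuity of paths is automatic since $W_n^{\prime}$ takes values in $C([0,T];\R^N)$. Taking the filtration to be the one generated by $W_n^{\prime}$, the factorization shows that each increment is independent of the past, so $W_n^{\prime}$ is an $\R^N$-valued Wiener process.

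The only genuinely technical step, and the one I expect to require the most care, is upgrading the independence of an \emph{arbitrary finite} family of increments (which the characteristic-function identity delivers directly) to independence of a single increment from the \emph{entire} $\sigma$-algebra generated by the past of $W_n^{\prime}$. This is settled by a routine monotone-class (Dynkin) argument: the $\sigma$-algebra $\sigma\{W_n^{\prime}(r):r\leq s\}$ is generated by the $\pi$-system of finite-dimensional cylinder sets, on which independence has already been verified, and independence then propagates to the generated $\sigma$-algebra. Everything else is a direct transfer of distributional identities and introduces no new estimates.
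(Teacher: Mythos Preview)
Your proposal is correct and follows exactly the approach the paper indicates: the paper states that Lemmata \ref{lem: Wn BMphi} and \ref{lem:W'BMphi} ``can be proved by considering the characteristic functions'' and then omits the proof of \ref{lem: Wn BMphi}, giving only the analogous argument for $W'$. Your characteristic-function argument, transferring the Gaussian product form of the joint law of increments via the equality $\mathcal{L}(W_n')=\mathcal{L}(W)$ on $C([0,T];\R^N)$, is precisely what the paper has in mind (and is in fact slightly cleaner than the $W'$ case, since no passage to the limit is needed).
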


\begin{lem}\label{lem:W'BMphi}
  The process $(W^{\prime}(t))_{t\in[0,T]}$ is a real-valued Brownian Motion on $(\Omega^{\prime},\mathcal{F}^{\prime},\mathbb{P}^{\prime})$ and if $0\leq s<t\leq T$ then the increment $W^{\prime}(t)-W^{\prime}(s)$ is independent of the $\sigma$-algebra generated by $u^{\prime}(r)$ and $W^{\prime}(r)$ for $r\in [0,s]$.
\end{lem}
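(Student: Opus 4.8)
The plan is to prove the claim by the method of characteristic functions, transferring the Brownian motion property from the approximating processes $W_n^{\prime}$ (which have the same law as the genuine Wiener process $W$) to the limit $W^{\prime}$, using the almost sure convergences furnished by the Skorohod representation in Proposition \ref{prop:u_n'phi}. Concretely, it suffices to show that for every $0\le s<t\le T$, every $\xi\in\R^N$, and every bounded continuous functional $h$ of the restriction of the pair $(u^{\prime},W^{\prime})$ to $[0,s]$ (viewed as an element of $[L^4(0,s;\mathbb{L}^4)\cap C([0,s];X^{-\beta})]\times C([0,s];\R^N)$), one has
\[
\mathbb{E}^{\prime}\Big[e^{i\lb \xi,\,W^{\prime}(t)-W^{\prime}(s)\rb_{\R^N}}\,h\Big]=e^{-\frac12|\xi|^2(t-s)}\,\mathbb{E}^{\prime}[h].
\]
Taking $h\equiv 1$, this identity says that $W^{\prime}(t)-W^{\prime}(s)$ is centred Gaussian with covariance $(t-s)\,\mathrm{Id}$; the factorisation for general $h$ expresses independence of the increment from the $\sigma$-algebra generated by $\{u^{\prime}(r),W^{\prime}(r):r\in[0,s]\}$. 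Together these yield that $W^{\prime}$ is an $\R^N$-valued Brownian motion whose components are independent real-valued Brownian motions, with the asserted independence of increments.

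First I would establish the analogous identity at each finite level $n$. By Proposition \ref{prop:u_n'phi}(a) together with Corollary \ref{cor:4.4phi}, the law of $(u_n^{\prime},W_n^{\prime})$ coincides with that of $(u_n,W)$ on the relevant path space. On the original probability space $u_n$ is a solution of \eqref{eq:dun} and is therefore $\mathbb{F}$-adapted; since $W$ is an $\mathbb{F}$-Wiener process, the increment $W(t)-W(s)$ is independent of $\mathcal{F}_s$, hence of the smaller $\sigma$-algebra $\sigma(\{u_n(r),W(r):r\le s\})$, and is $N(0,(t-s)\,\mathrm{Id})$-distributed. Rewriting this in terms of characteristic functions and pushing forward by the equality of laws gives, for each $n$,
\[
\mathbb{E}^{\prime}\Big[e^{i\lb \xi,\,W_n^{\prime}(t)-W_n^{\prime}(s)\rb_{\R^N}}\,h_n\Big]=e^{-\frac12|\xi|^2(t-s)}\,\mathbb{E}^{\prime}[h_n],
\]
where $h_n$ is the same functional $h$ evaluated at the restriction of $(u_n^{\prime},W_n^{\prime})$ to $[0,s]$; this uses the fact, recorded in Lemma \ref{lem: Wn BMphi}, that each $W_n^{\prime}$ is a Brownian motion.

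The final step is to let $n\to\infty$. By Proposition \ref{prop:u_n'phi}(ii)--(iii), $u_n^{\prime}\to u^{\prime}$ in $L^4(0,T;\mathbb{L}^4)\cap C([0,T];X^{-\beta})$ and $W_n^{\prime}\to W^{\prime}$ in $C([0,T];\R^N)$, $\mathbb{P}^{\prime}$-almost surely. Since the restriction maps onto $[0,s]$ are continuous in each of these topologies and $h$ is continuous, $h_n\to h$ almost surely; likewise $W_n^{\prime}(t)-W_n^{\prime}(s)\to W^{\prime}(t)-W^{\prime}(s)$ almost surely, so $e^{i\lb\xi,\,W_n^{\prime}(t)-W_n^{\prime}(s)\rb}h_n\to e^{i\lb\xi,\,W^{\prime}(t)-W^{\prime}(s)\rb}h$ almost surely. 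As $h$ is bounded and the exponential has modulus one, the dominated convergence theorem lets me pass to the limit on both sides, yielding the desired identity for $(u^{\prime},W^{\prime})$.

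The main obstacle, and the point requiring care, is the limiting argument for the functional $h$: one must work with a class of test functionals that is simultaneously rich enough to determine the $\sigma$-algebra generated by the past of $(u^{\prime},W^{\prime})$ and continuous in the (rather weak) topology in which Skorohod convergence takes place. I would handle this by taking $h$ of the cylindrical form
\[
h=\tilde g\big(W^{\prime}(r_1),\dots,W^{\prime}(r_m)\big)\prod_{k=1}^m g_k\big({}_{X^{-\beta}}\lb u^{\prime}(r_k),\varphi_k\rb_{X^{\beta}}\big),
\]
with $0\le r_1<\dots<r_m\le s$, bounded continuous $g_k,\tilde g$, and $\varphi_k\in X^{\beta}$. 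Such functionals are continuous on $C([0,s];X^{-\beta})\times C([0,s];\R^N)$ because evaluation at a fixed time is continuous and $X^{\beta}=(X^{-\beta})^{*}$, and they generate the $\sigma$-algebra in question; hence the factorisation identity for all such $h$ upgrades to independence of $W^{\prime}(t)-W^{\prime}(s)$ from the full $\sigma$-algebra generated by $\{u^{\prime}(r),W^{\prime}(r):r\le s\}$, completing the proof.
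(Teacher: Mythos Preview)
Your proposal is correct and follows the same core strategy as the paper: use characteristic functions, transfer the Brownian property from $W_n'$ (which has the law of $W$) to $W'$ by almost sure convergence and dominated convergence. The paper's proof proceeds by computing $\mathbb{E}'\big[e^{i\sum_j t_j(W'(s_j)-W'(s_{j-1}))}\big]$ as a limit of the corresponding quantity for $W_n'$, obtaining the Gaussian form and the factorisation over increments, and then argues independence from $u'(r)$ by a similar characteristic-function limit.

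Where your argument differs is in the treatment of the independence from the past of $u'$. The paper tests independence only against the single scalar $\|u'(r)\|_{\mathbb{V}'}$, which strictly speaking does not suffice to conclude independence from the full $\sigma$-algebra generated by $u'(r)$. Your use of cylindrical functionals built from pairings ${}_{X^{-\beta}}\langle u'(r_k),\varphi_k\rangle_{X^\beta}$ with $\varphi_k\in X^\beta$ is the standard and correct way to handle this: such functionals are continuous in the $C([0,s];X^{-\beta})$ topology in which Skorohod convergence holds, and they generate the required $\sigma$-algebra, so the factorised identity for all such $h$ genuinely yields the claimed independence. In this respect your argument is more complete than the paper's, while remaining the same method in spirit.
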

\begin{proof}
We consider the characteristic functions of $W^{\prime}$.
  Let $k\in\mathbb{N}$ and $0=s_0<s_1<\cdots<s_k\leq T$. For $(t_1,\ldots,t_k)\in\R^k$, we have for each $n\in\mathbb{N}$:
  \[\mathbb{E}^{\prime}\left[e^{i\sum_{j=1}^kt_j(W^{\prime}_n(s_j)-W_n'(s_{j-1}))}\right]= e^{-\frac{1}{2}\sum_{j=1}^kt_j^2(s_j-s_{j-1})}.\]
  Notice that $\left|e^{i\sum_{j=1}^kt_j(W^{\prime}_n(s_j)-W_n'(s_{j-1}))}\right|\leq 1$, by the Lebesgue's dominated convergence theorem,
  \[\mathbb{E}^{\prime}\left[e^{i\sum_{j=1}^kt_j(W^{\prime}(s_j)-W^{\prime}(s_{j-1}))}\right]=\lim_{n\rightarrow \infty}\mathbb{E}^{\prime}\left[e^{i\sum_{j=1}^kt_j(W^{\prime}_n(s_j)-W_n'(s_{j-1}))}\right]= e^{-\frac{1}{2}\sum_{j=1}^kt_j^2(s_j-s_{j-1})}.\]
  Hence $W^{\prime}(t)$ has the same distribution with $W_n'(t)$ for $t\in[0,T]$. Since random variables are independent if and only if the characteristic function of the sum of them equals to the multiplication of their characteristic functions, and here we have
    \[\mathbb{E}^{\prime}\left[e^{i\sum_{j=1}^kt_j(W^{\prime}(s_j)-W^{\prime}(s_{j-1}))}\right]=\prod_{j=1}^k\mathbb{E}^{\prime}\left[e^{it_j(W^{\prime}(s_j)-W^{\prime}(s_{j-1}))}\right].\]
    Hence $W^{\prime}$ has independent increments.\\
    And
    \[W^{\prime}(0)=\lim_{n\rightarrow\infty}W^{\prime}_n(0)=0,\quad \mathbb{P}^{\prime}-a.s.,\]
    so $(W^{\prime}(t))_{t\in[0,T]}$ is a real-valued Brownian motion on $(\Omega^{\prime},\mathcal{F}^{\prime},\mathbb{P}^{\prime})$.\\
    The law of $(u_n,W)$ is same as $(u^{\prime}_n,W_n')$ and if $t>s\geq r$, $W(t)-W(s)$ is independent of $u_n(r)$, so as the same method as before we can see $W_n'(t)-W_n'(s)$ is independent of $u^{\prime}_n(r)$ for all $n$. By Proposition \ref{prop:u_n'phi}, $\lim_{n\rightarrow\infty}\|u_n^{\prime}(r)\|_{{\mathbb{V}}'}=\|u^{\prime}(r)\|_{{\mathbb{V}}'}$ and $\lim_{n\rightarrow\infty}(W_n'(t)-W_n'(s))=W^{\prime}(t)-W^{\prime}(s)$, hence by the Lebesgue's dominated convergence theorem we have
     {
  \begin{eqnarray*}
    &&\mathbb{E}^{\prime}\left(e^{it\left(\|u^{\prime}(r)\|_{{\mathbb{V}}'}+W^{\prime}(t)-W^{\prime}(s)\right)}\right)=\lim_{n\rightarrow\infty}\mathbb{E}^{\prime}\left(e^{it\left(\|u_n^{\prime}(r)\|_{{\mathbb{V}}'}+W_n'(t)-W_n'(s)\right)}\right)\\
    &=&\lim_{n\rightarrow\infty}\mathbb{E}^{\prime}\left(e^{it\left(\|u_n^{\prime}(r)\|_{{\mathbb{V}}'}\right)}\right)\mathbb{E}^{\prime}\left(e^{it(W_n'(t)-W_n'(s))}\right) =\mathbb{E}^{\prime}\left(e^{it\left(\|u^{\prime}(r)\|_{{\mathbb{V}}'}\right)}\right)\mathbb{E}^{\prime}\left(e^{it(W^{\prime}(t)-W^{\prime}(s))}\right).
  \end{eqnarray*}}
  So $W^{\prime}(t)-W^{\prime}(s)$ is independent of $u^{\prime}(r)$.
    Hence this completes the proof of Lemma \ref{lem:W'BMphi}.
    \end{proof}
    \begin{rem}
      We will denote $\mathbb{F}^{\prime}$ the filtration generated by $(u^{\prime},W^{\prime})$ and $\mathbb{F}_n'$ the filtration generated by $(u_n^{\prime},W_n')$. Then by Lemma \ref{lem:W'BMphi}, $u^{\prime}$ is progressively measurable with respect to $\mathbb{F}^{\prime}$ and by Lemma \ref{lem: Wn BMphi}, $u_n^{\prime}$ is progressively measurable with respect to $\mathbb{F}_n'$.
    \end{rem}
\subsection{Step 2}
Let us summarize what we have achieved so far. We have got our process $M'$ and have shown $W^{\prime}$ is a Wiener process. Next we will show a similar result as in equation \eqref{eq:MnItointe} to prove $u^{\prime}$ is a weak solution of the Equation \eqref{eq:7eq}. \\
But before that we still need some preparation.
The following result is needed to prove Lemma \ref{lem:4 ineqphi}.
  \begin{prop}\label{prop:uxhbd} Suppose that $\beta>\frac{1}{4}$ and  $h\in\mathbb{L}^\infty\cap\mathbb{W}^{1,3}$. Then there exists $c_h>0$, such that for every $u\in X^{\beta}$, $u\times h \in X^{-\beta}$ and
    \begin{equation}\label{eq:uxhbd}
    \|u\times h\|_{X^{-\beta}}\leq c_h\|u\|_{X^{-\beta}}<\infty.
    \end{equation}
  \end{prop}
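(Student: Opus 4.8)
The plan is to realise the map $u\mapsto u\times h$ as a bounded linear operator on $X^{-\beta}$ by interpolation together with duality, rather than estimating $\|u\times h\|_{X^{-\beta}}$ head-on. Write $T_h$ for the linear map $\varphi\mapsto \varphi\times h$. Since $X^{-\beta}=(X^{\beta})^\ast$, and since the scalar triple product identity gives $\langle \varphi\times h,\psi\rangle_{\mathbb{L}^2}=-\langle \varphi,\psi\times h\rangle_{\mathbb{L}^2}$, i.e. $T_h^\ast=-T_h$ with respect to the $\mathbb{L}^2$ pairing, it suffices to prove that $T_h$ is bounded on $X^{\beta}$. Indeed, the Banach-space adjoint $(T_h|_{X^\beta})'$ is then bounded on $(X^\beta)^\ast=X^{-\beta}$ with the same norm, and for $u\in\mathbb{L}^2$ one computes ${}_{X^{-\beta}}\langle (T_h|_{X^\beta})'u,\varphi\rangle_{X^\beta}=\langle u,\varphi\times h\rangle_{\mathbb{L}^2}=-\langle u\times h,\varphi\rangle_{\mathbb{L}^2}$, so that $(T_h|_{X^\beta})'u=-(u\times h)$ on the dense subspace $\mathbb{L}^2\supset X^\beta$. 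This yields exactly the bound \eqref{eq:uxhbd} for every $u\in X^\beta\subset\mathbb{L}^2$.

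First I would establish boundedness of $T_h$ at the two endpoints $X^0=\mathbb{L}^2$ and $X^{1/2}=\mathbb{V}$. The $\mathbb{L}^2$ bound is immediate, $\|\varphi\times h\|_{\mathbb{L}^2}\le \|h\|_{\mathbb{L}^\infty}\|\varphi\|_{\mathbb{L}^2}$. For the $\mathbb{V}$ bound I would expand $\nabla(\varphi\times h)=\nabla\varphi\times h+\varphi\times\nabla h$ and estimate the first summand by $\|h\|_{\mathbb{L}^\infty}\|\nabla\varphi\|_{\mathbb{L}^2}$ and the second, using Hölder with $\tfrac12=\tfrac16+\tfrac13$ and the Sobolev embedding $\mathbb{V}\hookrightarrow\mathbb{L}^6$, by $\|\varphi\|_{\mathbb{L}^6}\|\nabla h\|_{\mathbb{L}^3}\le C\|\varphi\|_{\mathbb{V}}\|\nabla h\|_{\mathbb{L}^3}$. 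This is precisely where the hypothesis $h\in\mathbb{W}^{1,3}$ (the critical Sobolev class in dimension three) enters, and it gives $\|T_h\varphi\|_{\mathbb{V}}\le c_h\|\varphi\|_{\mathbb{V}}$.

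Next I would interpolate. Since $A_1=I+A$ is positive and self-adjoint, complex interpolation of its fractional-power domains gives $X^{\beta}=[X^0,X^{1/2}]_{2\beta}$ for $\beta\in[0,\tfrac12]$, so boundedness of $T_h$ at the two endpoints transfers to boundedness of $T_h$ on $X^{\beta}$ for every $\beta\in[0,\tfrac12]$, with operator norm controlled by $c_h$. Combined with the duality step of the first paragraph, this proves \eqref{eq:uxhbd} throughout the range $\beta\in(\tfrac14,\tfrac12]$, which is the range of $\beta$ actually used in the sequel.

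The main obstacle is the tension between the low regularity of $h$ and the fractional order $\beta$: the clean interpolation argument reaches only $\beta\le\tfrac12$ because at the top endpoint $X^{1/2}=\mathbb{V}$ the single derivative $\nabla h\in\mathbb{L}^3$ is exactly absorbed by the $\mathbb{L}^6$–$\mathbb{L}^3$–$\mathbb{L}^2$ Hölder triple. Pushing to $X^{\beta}$ with $2\beta>1$ would require distributing more than one derivative onto $h$ (a fractional Leibniz / Kato--Ponce estimate involving $h\in\dot{\mathbb{W}}^{s,3/s}$), which the assumption $h\in\mathbb{W}^{1,3}$ does not readily provide; I would therefore confine the argument to $\beta\le\tfrac12$. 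The only other delicate points are purely bookkeeping: the sign in the triple-product adjoint identity, and verifying that the Banach-space adjoint of $T_h|_{X^\beta}$ genuinely restricts to $-T_h$ on $\mathbb{L}^2$.
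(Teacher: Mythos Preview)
Your proposal is correct and follows essentially the same approach as the paper: both show that $T_h=M_h$ is bounded on $\mathbb{L}^2$ and on $\mathbb{V}=\mathbb{H}^1$ via the same product-rule/H\"older/Sobolev estimate, interpolate to $X^\beta$, and then pass to $X^{-\beta}$ by the triple-product duality identity $\langle u\times h,z\rangle=-\langle u,z\times h\rangle$. Your remark that the argument as written only reaches $\beta\in(\tfrac14,\tfrac12]$ is a valid observation (the paper's proof has the same implicit restriction), and your interpolation parameter $2\beta$ is in fact the correct one, since $X^{1/2}=\mathbb{V}$.
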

  \begin{proof}
      Let us fix $\beta>\frac{1}{4}$,  $h\in \mathbb{L}^\infty\cap\mathbb{W}^{1,3}$. Then for every   $z\in\mathbb{H}^1$ we have
      {
  \begin{eqnarray*}
    \|z\times h\|_{\mathbb{H}^1}^2&=&\|\nabla(z\times h)\|^2_{\mathbb{L}^2}+\|z\times h\|^2_{\mathbb{L}^2}
    \leq2(\|\nabla z\times h\|^2_{\mathbb{L}^2}+\|z\times\nabla h\|_{\mathbb{L}^2}^2)+\|z\times h\|_{\mathbb{L}^2}^2
    \\
    &\leq&2(\|h\|_{\mathbb{L}^\infty}^2\|\nabla z\|_{\mathbb{L}^2}^2+\|\nabla h\|_{\mathbb{L}^3}^2\|z\|_{\mathbb{L}^6}^2)+\|h\|_{\mathbb{L}^\infty}^2\|z\|_{\mathbb{L}^2}^2
    \leq2(\|h\|_{\mathbb{L}^\infty}^2+c^2\|\nabla h\|_{\mathbb{L}^3}^2)\|z\|_{\mathbb{H}^1}^2,
  \end{eqnarray*}}
  where the constant $c$ is from $\|\cdot\|_{\mathbb{L}^6}\leq c\|\cdot\|_{\mathbb{H}^1}.$
  So the linear  map $
   M_h: \mathbb{H}^1\ni z\longmapsto z\times h\in \mathbb{H}^1
  $
  is  bounded. Obviously, $M_h : \mathbb{L}^2 \to \mathbb{L}^2$ is also linear and bounded. Hence by the interpolation theorem, as $X^\beta=[\mathbb{L}^2, \mathbb{H}^1]_{\beta}$, we infer that $M_h : X^\beta \to X^\beta$ is  linear and bounded as well.

Next, let us fix $u\in \mathbb{L}^2 \subset X^{-\beta}$ (since $\beta>\frac{1}{4}$) and $z \in X^\beta$. Since $X^{-\beta}$ is equal to the dual space of $X^\beta$   we have
\begin{eqnarray*}
\vert  \langle u\times h,z\rangle \vert  &=& \vert  \langle u,z\times h\rangle \vert
\\
&\leq & \|u\|_{X^{-\beta}}  \|z\times h \|_{X^{\beta}}=\|u\|_{X^{-\beta}}  \|M_h(z) \|_{X^{\beta}} \leq c_h \|u\|_{X^{-\beta}} \|z \|_{X^{\beta}}.
\end{eqnarray*}
Since $\mathbb{L}^2$ is a dense subspace  of $X^{-\beta}$, we infer that the above inequality holds for every $u\in   X^{-\beta}$.
In particular, for every $u\in   X^{-\beta}$, $u\times h \in X^{-\beta}$ and inequality \eqref{eq:uxhbd} holds. This completes the proof
of Proposition \ref{prop:uxhbd}.
\end{proof}

The proof of next Lemma is omitted because it is similar as part of the proof of Lemma 5.2 in Brze{\'z}niak, Goldys and Jegaraj \cite{ZB&TJ}.

    \begin{lem}\label{lem:4 ineqphi}
    For each $m\in\mathbb{N}$, we define the partition $\Big\{s_i^m:=\frac{iT}{m},\;i=0,\ldots,m\Big\}$ of $[0,T]$.
      Then for any $\eps>0$, there exists $m_0(\eps)\in\mathbb{N}$ such that for all $m\geq m_0(\eps)$, we have:
      \begin{trivlist}
        \item[(i)]
        \[\lim_{n\rightarrow\infty}\left(\mathbb{E}^{\prime}\left[\left\|\sum_{j=1}^N\int_0^t\left(\pi_n(u_n^{\prime}(s)\times h_j)-\sum_{i=0}^{m-1}\pi_n(u_n^{\prime}(s_i^m)\times h_j)1_{(s_i^m,s_{i+1}^m]}(s)\right)\ud W_{jn}^{\prime}(s)\right\|_{X^{-\beta}}^2\right]\right)^\frac{1}{2}<\frac{\eps}{2};\]
        \item[(ii)]
        {
  \begin{eqnarray*}
  &&\hspace{-1truecm}\lefteqn{\lim_{n\rightarrow\infty}\mathbb{E}^{\prime}\Bigg[\Bigg\|\sum_{i=0}^{m-1}\sum_{j=1}^N\pi_n(u_n^{\prime}(s_i^m)\times h_j)(W^{\prime}_{jn}(t\wedge s_{i+1}^m)-W^{\prime}_{jn}(t\wedge s_i^m)) }\\
  &&-\sum_{i=0}^{m-1}\sum_{j=1}^N\pi_n(u^{\prime}(s_i^m)\times h_j)(W_j^{\prime}(t\wedge s_{i+1}^m)-W_j^{\prime}(t\wedge s_i^m))\Bigg\|^2_{X^{-\beta}}\Bigg]=0;
        \end{eqnarray*}}
        \item[(iii)]
        \[\lim_{n\rightarrow\infty}\left(\mathbb{E}^{\prime}\left[\left\|\sum_{j=1}^N\int_0^t(\pi_n(u^{\prime}(s)\times h_j)-\sum_{i=0}^{m-1}\pi_n(u^{\prime}(s_i^m)\times h_j)1_{(s_i^m,s_{i+1}^m]}(s))\ud W_j^{\prime}(s)\right\|^2_{X^{-\beta}}\right]\right)^\frac{1}{2}<\frac{\eps}{2};\]
        \item[(iv)]
        \[\lim_{n\rightarrow\infty}\mathbb{E}^{\prime}\left[\left\|\sum_{j=1}^N\int_0^t(\pi_n(u^{\prime}(s)\times h_j)-(u^{\prime}(s)\times h_j))\ud W_j^{\prime}(s)\right\|_{X^{-\beta}}^2\right]=0.\]
      \end{trivlist}
    \end{lem}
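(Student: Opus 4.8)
The four estimates are the engine that lets us pass the It\^o integral term in $M_n'$ to the limit; together they show that $\sum_{j}\int_0^t \pi_n(u_n^{\prime}\times h_j)\,\ud W_{jn}^{\prime}$ converges in $L^2(\Omega^{\prime};X^{-\beta})$ to $\sum_{j}\int_0^t (u^{\prime}\times h_j)\,\ud W_j^{\prime}$ via a Riemann-sum (``freezing'') interpolation: (i) controls the discretization error for the $n$-th integral, (iii) the same for the limit object, (ii) the convergence of the discretized sums, and (iv) removes the projection $\pi_n$ in the limit. I would treat the three stochastic-integral estimates (i), (iii), (iv) by the It\^o isometry in the Hilbert space $X^{-\beta}$: since by Lemmata \ref{lem: Wn BMphi} and \ref{lem:W'BMphi} the processes $W_{jn}^{\prime}$ and $W_j^{\prime}$ are Brownian motions with independent components, the isometry turns each left-hand side into a deterministic time integral of $X^{-\beta}$-norms of the integrand, with the cross terms in $j$ dropping out. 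Two structural facts are used repeatedly: $\pi_n$ is an orthogonal projection on each fractional power space, hence a contraction on $X^{-\beta}$ with $\pi_n\to\mathrm{id}$ strongly there, and by Proposition \ref{prop:uxhbd} the map $v\mapsto v\times h_j$ is bounded on $X^{-\beta}$ with constant $c_{h_j}$.

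For (i) and (iii), write $\kappa_m(s):=s_i^m$ for $s\in(s_i^m,s_{i+1}^m]$. The It\^o isometry and the two facts above bound the square of the left-hand side of (i) by $\sum_j c_{h_j}^2\,\mathbb{E}^{\prime}\int_0^t \|u_n^{\prime}(s)-u_n^{\prime}(\kappa_m(s))\|_{X^{-\beta}}^2\,\ud s$, and similarly for (iii) with $u^{\prime}$ in place of $u_n^{\prime}$. Estimate (iii) then follows from dominated convergence: for a.e.\ $\omega$ the path $u^{\prime}(\cdot,\omega)$ is uniformly continuous on $[0,T]$ in $X^{-\beta}$, so the integrand tends to $0$ as $m\to\infty$, while it is dominated by $\bigl(2\sup_s\|u^{\prime}(s)\|_{X^{-\beta}}\bigr)^2\le (2c\|u_0\|_{\mathbb{L}^2})^2$ thanks to \eqref{eq:4.11phi}. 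Estimate (i) needs a modulus of continuity of $u_n^{\prime}$ in $X^{-\beta}$ that is uniform in $n$; this I would extract from the fractional-in-time bound \eqref{eq:4.2} (which transfers from $u_n$ to $u_n^{\prime}$ by the equality of laws in Corollary \ref{cor:4.4phi}, the $W^{\alpha,p}$-norm being a measurable functional on $C([0,T];H_n)$) via the embedding $W^{\alpha,p}(0,T;X^{-\beta})\hookrightarrow C^{\theta}([0,T];X^{-\beta})$ with $\theta=\alpha-\tfrac1p>0$ for a suitable choice of $\alpha\in(0,\tfrac12)$ and $p\ge 2$. This yields $\mathbb{E}^{\prime}\int_0^t\|u_n^{\prime}(s)-u_n^{\prime}(\kappa_m(s))\|_{X^{-\beta}}^2\,\ud s\le C\,T\,(T/m)^{2\theta}$ with $C$ independent of $n$, so choosing $m_0(\eps)$ with $\sum_j c_{h_j}^2\,CT(T/m_0)^{2\theta}<\eps^2/4$ gives (i).

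For (iv), the It\^o isometry reduces the left-hand side to $\sum_j\mathbb{E}^{\prime}\int_0^t\|\pi_n(u^{\prime}(s)\times h_j)-u^{\prime}(s)\times h_j\|_{X^{-\beta}}^2\,\ud s$, whose integrand tends to $0$ pointwise because $\pi_n\to\mathrm{id}$ strongly on $X^{-\beta}$ and $u^{\prime}(s)\times h_j\in X^{-\beta}$, and is dominated using \eqref{eq:4.11phi} and Proposition \ref{prop:uxhbd}; dominated convergence finishes it. Estimate (ii) is a finite sum for fixed $m$, so it suffices to prove $L^2(\Omega^{\prime};X^{-\beta})$-convergence of each summand. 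Almost sure convergence holds because $\pi_n(u_n^{\prime}(s_i^m)\times h_j)\to u^{\prime}(s_i^m)\times h_j$ in $X^{-\beta}$ (split off $\pi_n((u_n^{\prime}(s_i^m)-u^{\prime}(s_i^m))\times h_j)$, controlled by Proposition \ref{prop:uxhbd} and the $C([0,T];X^{-\beta})$-convergence of Proposition \ref{prop:u_n'phi}(ii), plus $\pi_n(u^{\prime}(s_i^m)\times h_j)\to u^{\prime}(s_i^m)\times h_j$ by strong convergence of $\pi_n$) and because $W_{jn}^{\prime}\to W_j^{\prime}$ uniformly by Proposition \ref{prop:u_n'phi}(iii). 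To upgrade a.s.\ convergence to $L^2$, I would invoke uniform integrability: by \eqref{eq:4.5phi} the factor $\pi_n(u_n^{\prime}(s_i^m)\times h_j)$ is bounded in $X^{-\beta}$ by $c_{h_j}c\|u_0\|_{\mathbb{L}^2}$ deterministically, while the Gaussian increments $W_{jn}^{\prime}(t\wedge s_{i+1}^m)-W_{jn}^{\prime}(t\wedge s_i^m)$ have uniformly bounded fourth moments, so the $X^{-\beta}$-valued partial sums have uniformly (in $n$) bounded fourth moments. The main obstacle is estimate (i): everything else reduces to dominated convergence against an explicit deterministic or $L^2$ dominating function, whereas (i) requires a quantitative time modulus of continuity for $u_n^{\prime}$ that is uniform in $n$. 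The cleanest route is the fractional Sobolev embedding applied to \eqref{eq:4.2}; an alternative is to decompose $u_n^{\prime}(s)-u_n^{\prime}(\kappa_m(s))$ through the equation \eqref{eq:dun2} and estimate the drift increments by \eqref{eq:4.7phi}--\eqref{eq:4.9phi} and the martingale increment by the It\^o isometry, but this is more laborious.
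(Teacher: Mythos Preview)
Your proposal is correct. The paper itself omits the proof of this lemma, stating only that it is ``similar as part of the proof of Lemma 5.2 in Brze\'zniak, Goldys and Jegaraj \cite{ZB&TJ}'', so there is no in-paper argument to compare against directly; your route via the It\^o isometry in $X^{-\beta}$, the contraction property of $\pi_n$ on $X^{-\beta}$, Proposition~\ref{prop:uxhbd}, dominated convergence for (iii) and (iv) against the bound \eqref{eq:4.11phi}, uniform integrability for (ii), and for (i) a uniform-in-$n$ H\"older modulus obtained from \eqref{eq:4.2} (transferred to $u_n'$ via Corollary~\ref{cor:4.4phi}) through the embedding $W^{\alpha,p}(0,T;X^{-\beta})\hookrightarrow C^{\alpha-1/p}([0,T];X^{-\beta})$, is precisely the standard argument used in that reference.

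One cosmetic remark on (ii): the second sum in the statement still carries $\pi_n$, so your decomposition only needs the two pieces
\[
\pi_n\big((u_n'(s_i^m)-u'(s_i^m))\times h_j\big)\,\big(W_{jn}'(\cdot)-W_{jn}'(\cdot)\big)
\quad\text{and}\quad
\pi_n\big(u'(s_i^m)\times h_j\big)\,\big(W_{jn}'(\cdot)-W_j'(\cdot)\big),
\]
and the convergence $\pi_n(u'(s_i^m)\times h_j)\to u'(s_i^m)\times h_j$ that you mention is not needed for (ii) itself (it is exactly the content of (iv) once the four pieces are reassembled).
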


Now we are ready to state the Theorem which means that $u^{\prime}$ is the weak solution of the equation \eqref{eq:7eq}.
\begin{thm}\label{thm:M'tint4}
   For each $t\in[0,T]$ we have $M'(t)=\sum_{j=1}^N\int_0^t(u^{\prime}(s)\times h_j)\ud W_j^{\prime}(s)$.
\end{thm}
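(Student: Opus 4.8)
The plan is to obtain the stated identity as the $n\to\infty$ limit of the corresponding level-$n$ identity and then to identify the two limits. First I would establish that, on the new probability space,
\[
M_n'(t)=\sum_{j=1}^N\int_0^t\pi_n\big(u_n^{\prime}(s)\times h_j\big)\ud W_{jn}^{\prime}(s),\qquad t\in[0,T].
\]
On the original space this is exactly \eqref{eq:MnItointe}. Both $M_n$ (through \eqref{eq:Mnt4}) and $M_n'$ (through \eqref{eq:Mn't4}) are the \emph{same} pathwise functional of the trajectory of the underlying process, and the It\^o integral against a Brownian motion of a progressively measurable integrand of the form $\pi_n(\,\cdot\times h_j)$ is itself a measurable functional of the pair (integrand path, Wiener path) that does not depend on the probability space, being the $L^2$-limit of the simple-process approximations built identically on either space. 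Since by Corollary \ref{cor:4.4phi} the laws of $(u_n,W)$ and $(u_n',W_n')$ on $C([0,T];H_n)\times C([0,T];\R^N)$ coincide, and by Lemma \ref{lem: Wn BMphi} $W_n'$ is a Brownian motion for the filtration $\mathbb{F}_n'$ generated by $(u_n',W_n')$, the nonnegative random variable $\|M_n(t)-\sum_j\int_0^t\pi_n(u_n\times h_j)\ud W_j\|^2_{{\mathbb{L}^2}}$, which vanishes $\mathbb{P}$-a.s., has the same law as its primed analogue; hence the latter vanishes $\mathbb{P}^{\prime}$-a.s., which is the claimed identity.

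Next I would pass to the limit in both sides of this identity in $L^2(\Omega^{\prime};X^{-\beta})$. For the left-hand side, Lemma \ref{lem:limMn'4} already gives $M_n'(t)\rightharpoonup M'(t)$ weakly in $L^2(\Omega^{\prime};X^{-\beta})$. For the right-hand side I would prove the \emph{strong} convergence
\[
\sum_{j=1}^N\int_0^t\pi_n\big(u_n^{\prime}(s)\times h_j\big)\ud W_{jn}^{\prime}(s)\longrightarrow \sum_{j=1}^N\int_0^t\big(u^{\prime}(s)\times h_j\big)\ud W_j^{\prime}(s)\quad\text{in } L^2(\Omega^{\prime};X^{-\beta}),
\]
which is precisely what the four estimates of Lemma \ref{lem:4 ineqphi} are designed to deliver. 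Fixing $\eps>0$ and $m\geq m_0(\eps)$, I interpolate between the two full stochastic integrals by the simple (partition) processes associated respectively with $u_n^{\prime},W_{jn}^{\prime}$ and with $u^{\prime},W_j^{\prime}$. Part (i) controls the replacement of the first integral by its simple approximation (limiting error $<\eps/2$); part (ii) lets me pass to the limit in $n$ for the simple processes; part (iii) controls the simple approximation of the integral built from $u^{\prime}$ (again $<\eps/2$); and part (iv) removes the projection $\pi_n$ from $\pi_n(u^{\prime}\times h_j)$. A single triangle inequality then yields $\limsup_{n}\|\cdot\|_{L^2(\Omega^{\prime};X^{-\beta})}<\eps$, and since $\eps$ was arbitrary the strong limit follows.

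Finally, since a strong limit is also a weak limit and weak limits in $L^2(\Omega^{\prime};X^{-\beta})$ are unique, comparing the two computations forces
\[
M'(t)=\sum_{j=1}^N\int_0^t\big(u^{\prime}(s)\times h_j\big)\ud W_j^{\prime}(s),
\]
which is the assertion of the theorem. The main obstacle is the first step: transferring the It\^o-integral identity \eqref{eq:MnItointe} from $(\Omega,\mathcal F,\mathbb P)$ to $(\Omega^{\prime},\mathcal F^{\prime},\mathbb P^{\prime})$. The delicate point is that the stochastic integral is a priori defined only up to modification and depends on the probability space, so one must argue carefully that $\sum_j\int_0^\cdot\pi_n(u_n\times h_j)\ud W_j$ is a genuine measurable functional of the trajectory pair, realised as the common $L^2$-limit of the same simple-process approximations on both spaces, and that this construction is compatible with the equality of laws from Corollary \ref{cor:4.4phi} together with $W_n'$ being a Brownian motion for $\mathbb{F}_n'$ (Lemma \ref{lem: Wn BMphi}). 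Once this identification is secured, the remainder is the limiting argument powered by Lemmata \ref{lem:limMn'4} and \ref{lem:4 ineqphi}.
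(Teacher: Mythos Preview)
Your proposal is correct and follows essentially the same two-step approach as the paper: first transfer the It\^o identity \eqref{eq:MnItointe} to the primed space via equality of laws and simple-process approximations, then combine the weak convergence of Lemma \ref{lem:limMn'4} with the strong convergence furnished by Lemma \ref{lem:4 ineqphi} and invoke uniqueness of weak limits. The paper carries out Step 1 slightly more concretely than you do, by explicitly writing the continuous map $(u_n,W)\mapsto M_n(t)-\sum_{i,j}\pi_n(u_n(s_i^m)\times h_j)\big(W_j(t\wedge s_{i+1}^m)-W_j(t\wedge s_i^m)\big)$ on $C([0,T];H_n)\times C([0,T];\R^N)$ and then checking the $L^2$ convergence of these Riemann sums to the It\^o integral on each space separately, but this is precisely the ``simple-process approximation'' mechanism you describe.
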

    \begin{proof}
     \begin{trivlist}
       \item{Step 1:}
     We will show that
     \begin{equation}\label{eq:Mn'ito}
       M'_n(t)=\sum_{j=1}^N\int_0^t\pi_n(u_n^{\prime}(s)\times h_j)\ud W^{\prime}_{jn}(s)
     \end{equation}
     $\mathbb{P}^{\prime}$ almost surely for each $t\in[0,T]$ and $n\in\mathbb{N}$.\\
      Let us fix that $t\in[0,T]$ and $n\in\mathbb{N}$. Let us also fix $m\in\mathbb{N}$ and define the partition $\Big\{s_i^m:=\frac{iT}{m},i=0,\ldots,m\Big\}$ of $[0,T]$. Let us recall that $(u_n^{\prime},W^{\prime}_n)$ and $(u_n,W)$ have the same laws on the separable Banach space $C([0,T];H_n)\times C([0,T];\R^N)$. Since the map
      {
  \begin{eqnarray*}
    \Psi: &&C([0,T];H_n)\times C([0,T];\R^N)\longrightarrow H_n\\
    &&(u_n,W)\longmapsto M_n(t)-\sum_{i=0}^{m-1}\sum_{j=1}^N\pi_n(u_n(s_i^m)\times h_j)(W_j(t\wedge s_{i+1}^m)-W_j(t\wedge s_i^m))
  \end{eqnarray*}}
      is continuous and so measurable.
      By involving the Kuratowski Theorem we infer that the ${\mathbb{L}^2}$-valued random variables:
      \[M_n(t)-\sum_{i=0}^{m-1}\sum_{j=1}^N\pi_n(u_n(s_i^m)\times h_j)(W_j(t\wedge s_{i+1}^m)-W_j(t\wedge s_i^m))\]
      and
      \[M_n'(t)-\sum_{j=0}^{m-1}\sum_{j=1}^N\pi_n(u_n^{\prime}(s_i^m)\times h_j)(W_{jn}^{\prime}(t\wedge s_{i+1}^m)-W_{jn}^{\prime}(t\wedge s_j^m))\]
      have the same laws. Let us denote $u_{n,m}:=\sum_{i=0}^{m-1}u_n(s_i^m)1_{[s_i^m,s_{i+1}^m)}$. By the It\^{o} isometry, we have
      {
  \begin{eqnarray}
    &&\left\|\sum_{i=0}^{m-1}\pi_n(u_n(s_i^m)\times h_j)(W_j(t\wedge s_{i+1}^m)-W_j(t\wedge s_i^m))-\int_0^t\pi_n(u_n(s)\times h_j)\ud W_j(s)\right\|^2_{L^2(\Omega;{\mathbb{L}^2})}\label{eq:7.6}\\
    &=&\mathbb{E}\left\|\int_0^t\left(\pi_n(u_{n,m}\times h_j)-\pi_n(u_n(s)\times h_j)\right)\ud W_j(s)\right\|_{\mathbb{L}^2}^2\leq\|h_j\|_{\mathbb{L}^\infty}^2\mathbb{E}\int_0^t\|u_{n,m}(s)-u_n(s)\|_{\mathbb{L}^2}^2\ud s.\nonumber
  \end{eqnarray}}
  Since $u_n\in C([0,T];H_n)$ $\mathbb{P}$-almost surely, we have
  \begin{equation}\label{eq:tilde{u}_n(s)-u_n(s)0}
    \lim_{m\rightarrow\infty}\int_0^t\|u_{n,m}(s)-u_n(s)\|_{\mathbb{L}^2}^2\ud s=0,\quad\mathbb{P}-a.s..
  \end{equation}
  Moreover by the equality \eqref{eq:3.10phi}, we infer that
  {
  \begin{eqnarray}
   &&\hspace{-3truecm}\lefteqn{ \sup_m\mathbb{E}\left|\int_0^t\|u_{n,m}(s)-u_n(s)\|_{\mathbb{L}^2}^2\ud s\right|^2\leq \sup_m\mathbb{E}\left|\int_0^t\left(2\|u_{n,m}(s)\|_{\mathbb{L}^2}^2+2\|u_n(s)\|^2_{\mathbb{L}^2}\right)\ud s\right|^2}
   \label{eq:7.8}\\
   &\leq&\mathbb{E}\left|4\|u_0\|_{\mathbb{L}^2}^2T\right|^2=16\|u_0\|_{\mathbb{L}^2}^4T^2<\infty.\nonumber
  \end{eqnarray}}
 By \eqref{eq:7.8}, we have  $\int_0^t\|u_{n,m}(s)-u_n(s)\|_{\mathbb{L}^2}^2\ud s$ is uniformly (with respect to $m$) integrable.
  Therefore by the integrability and \eqref{eq:tilde{u}_n(s)-u_n(s)0}, we have
  \[\lim_{m\rightarrow\infty}\mathbb{E}\int_0^t\|u_{n,m}(s)-u_n(s)\|_{\mathbb{L}^2}^2\ud s=0.\]
  Then by above equality and \eqref{eq:7.6}, we have
      \[\lim_{m\rightarrow\infty}\left\|\sum_{i=0}^{m-1}\pi_n(u_n(s_i^m)\times h_j)(W_j(t\wedge s_{i+1}^m)-W_j(t\wedge s_i^m))-\int_0^t\pi_n(u_n(s)\times h_j)\ud W_j(s)\right\|^2_{L^2(\Omega;{\mathbb{L}^2})}\!\!\!\!\!\!=0.\]
      Similarly, because $u_n^{\prime}$ satisfies the same conditions as $u_n$, we also get
      \[\lim_{m\rightarrow\infty}\left\|\sum_{i=0}^{m-1}\pi_n(u^{\prime}_n(s_i^m)\times h)(W_{jn}^{\prime}(t\wedge s_{i+1}^m)-W_{jn}^{\prime}(t\wedge s_i^m))-\int_0^t\pi_n(u^{\prime}_n(s)\times h_j)\ud W_{jn}^{\prime}(s)\right\|^2_{L^2(\Omega;{\mathbb{L}^2})}\!\!\!\!\!\!=0.\]
  Hence, since the $L^2$ convergence implies the weak convergence, we infer that the random variables $M_n(t)-\sum_{j=1}^N\int_0^t\pi_n(u_n(s)\times h_j)\ud W_j(s)$ and $M_n'(t)-\sum_{j=1}^N\int_0^t\pi_n(u^{\prime}_n(s)\times h_j)\ud W^{\prime}_{jn}(s)$ have same laws. But $M_n(t)-\sum_{j=1}^N\int_0^t\pi_n(u_n(s)\times h_j)\ud W_j(s)=0$ $\mathbb{P}$-almost surely, so \eqref{eq:Mn'ito} follows.
  \item{Step 2:} From Lemma \ref{lem:4 ineqphi} and the Step 1, we infer that $M_n'(t)$ converges in $L^2(\Omega^{\prime};X^{-\beta})$ to $\sum_{j=1}^N\int_0^t(u^{\prime}(s)\times h_j)\ud W^{\prime}_j(s)$ as $n\rightarrow\infty$.
\end{trivlist}
This completes the proof of Theorem \ref{thm:M'tint4}.
\end{proof}

Summarizing, it follows from Theorem \ref{thm:M'tint4} that for every  $t\in [0,T]$ the following equality is  satisfied  in $L^2(\Omega^{\prime};X^{-\beta})$:
  {
  \begin{eqnarray}
    u^{\prime}(t)=u_0&+&\lmd_1\int_0^t\left(u^{\prime}\times\left(\Delta u^{\prime}-\nabla \phi(u^{\prime})\right)\right)(s)\ud s
    \label{eq:5.1phi}\\
    &-&\lmd_2\int_0^tu^{\prime}(s)\times\left(u^{\prime}\times\left(\Delta u^{\prime}-\nabla \phi(u^{\prime})\right)\right)(s)\ud s\nonumber\\
    &+&\sum_{j=1}^N\int_0^t(u^{\prime}(s)\times h_j)\circ \ud W_j^{\prime}(s).\nonumber
  \end{eqnarray}}

Hence by Definition \ref{defn:sol}, $u^{\prime}$ is a weak solution of Equation \eqref{eq:7eq}.\\

\section{Regularity of a  weak solution}
Now we will start to show some regularity of $u^{\prime}$.
\begin{thm}\label{thm:u'=1}
  The process $u^{\prime}$ from Proposition \ref{prop:u_n'phi} satisfies:
  \begin{equation}\label{eq:2.12phi}
  |u^{\prime}(t,x)|_{\R^3}=1, \quad\textrm{for Lebesgue a.e. } (t,x)\in [0,T]\times D \textrm{ and }\mathbb{P}^{\prime}-a.s..
  \end{equation}
\end{thm}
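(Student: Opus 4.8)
The plan is to establish the pointwise identity $|u'(t,x)|_{\R^3}^2=|u_0(x)|^2$ for a.e.\ $(t,x)$ and $\mathbb{P}'$-a.s., which together with the standing normalization $|u_0(x)|=1$ for a.e.\ $x$ yields \eqref{eq:2.12phi}. The underlying mechanism is that every term on the right-hand side of the equation for $u'$ is, pointwise in $x$, orthogonal to $u'$, so that formally $\tfrac12\,\ud|u'|^2=\lb u',\circ\,\ud u'\rb_{\R^3}=0$. To make this rigorous \emph{and} to capture the pointwise (not merely the global) constraint, I would fix an arbitrary real-valued $g\in\mathbb{L}^\infty(D)$ and apply the It\^o formula to the quadratic functional $\Psi_g:\Ltwo\ni v\mapsto \lb v,gv\rb_{\Ltwo}=\int_D g(x)|v(x)|^2\,\ud x$ evaluated along the process $u'$.

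First I would pass from the Stratonovich equation \eqref{eq:5.1phi} to its It\^o form using the conversion recalled after \eqref{eq:dun2}; its drift is $\lmd_1\,(u'\times(\Delta u'-\nabla\phi(u')))-\lmd_2\,u'\times(u'\times(\Delta u'-\nabla\phi(u')))+\tfrac12\sum_{j=1}^N(u'\times h_j)\times h_j$ and its diffusion coefficients are $u'\times h_j$. Writing $Y=u'\times(\Delta u'-\nabla\phi(u'))$ and $Z=u'\times Y$ as in Notation \ref{notation:uxuxDeltau}, the It\^o formula applied to $\Psi_g$ gives
\begin{multline*}
\tfrac12\bigl(\Psi_g(u'(t))-\Psi_g(u_0)\bigr)
=\int_0^t\Bigl[\lmd_1\lb gu',Y\rb_{\Ltwo}-\lmd_2\lb gu',Z\rb
+\tfrac12\sum_{j=1}^N\lb gu',(u'\times h_j)\times h_j\rb_{\Ltwo}\\
+\tfrac12\sum_{j=1}^N\lb g(u'\times h_j),u'\times h_j\rb_{\Ltwo}\Bigr]\ud s
+\sum_{j=1}^N\int_0^t\lb gu',u'\times h_j\rb_{\Ltwo}\ud W_j'(s).
\end{multline*}

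The heart of the argument is then to check that every contribution vanishes. The term $\lb gu',Y\rb_{\Ltwo}$ is zero by Lemma \ref{lem:4.7phi} with $\psi=g$. The term $\lb gu',Z\rb=\int_D g\,\lb u',u'\times Y\rb_{\R^3}\,\ud x$ vanishes because $u'\times Y$ is pointwise orthogonal to $u'$; here $Z=u'\times Y\in L^2(0,T;\mathbb{L}^\frac32)$ is paired with $gu'\in\mathbb{L}^3$ (legitimate since $u'\in\mathbb{V}\hookrightarrow\mathbb{L}^6$). The stochastic integrand $\lb gu',u'\times h_j\rb_{\Ltwo}=\int_D g\,\lb u',u'\times h_j\rb_{\R^3}\,\ud x$ likewise vanishes pointwise, so the martingale part is null. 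Finally the It\^o--Stratonovich correction and the second-order It\^o term combine, for each $j$, into $\int_D g\,\bigl(\lb u',(u'\times h_j)\times h_j\rb_{\R^3}+|u'\times h_j|^2\bigr)\ud x$, whose integrand vanishes identically by the Lagrange identity $\lb u',(u'\times h_j)\times h_j\rb_{\R^3}=(u'\cdot h_j)^2-|h_j|^2|u'|^2=-|u'\times h_j|^2$. Hence $\Psi_g(u'(t))=\Psi_g(u_0)$, i.e.\ $\int_D g|u'(t)|^2=\int_D g|u_0|^2$; running $g$ through a countable family separating points of $\mathbb{L}^1(D)$ gives $|u'(t,x)|^2=|u_0(x)|^2$ for a.e.\ $(t,x)$, $\mathbb{P}'$-a.s., and the normalization of $u_0$ yields \eqref{eq:2.12phi}.

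The one genuinely delicate step, and the main obstacle, is the justification of the It\^o formula for $\Psi_g$ along $u'$: the limit $u'$ is only a weak solution, with the regularity recorded in \eqref{eq:4.12phi} and \eqref{eq:4.7phi}--\eqref{eq:4.9phi}, and the drift term $Z$ lives only in $\mathbb{L}^\frac32$. I would carry this out within the Gelfand triple $\mathbb{L}^3\hookrightarrow\Ltwo\hookrightarrow\mathbb{L}^\frac32$: the a priori bounds give $u'\in L^2(\Omega';L^\infty(0,T;\mathbb{V}))\subset L^2(\Omega';L^2(0,T;\mathbb{L}^3))$, the drift lies in $L^2(0,T;\mathbb{L}^\frac32)$, the diffusion in $\Ltwo$, and $\Psi_g'(u')=2gu'\in\mathbb{L}^3$ pairs against the drift, so the It\^o formula for such a triple applies; alternatively one derives the identity for a mollified approximation of $u'$ and passes to the limit via the weak-convergence facts already established. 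Note that the pointwise constraint cannot be read off from the Galerkin sequence $\{u_n\}$ directly, since the projections $\pi_n$ in \eqref{eq:dun} destroy the pointwise orthogonality and preserve only the global norm \eqref{eq:3.10phi}; this is precisely why the computation must be performed on the limit process $u'$.
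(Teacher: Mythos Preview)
Your proposal is correct and follows essentially the same route as the paper: both test the equation against a weighted quadratic functional $v\mapsto\int_D g|v|^2$, apply an It\^o formula in a Gelfand triple to the weak equation \eqref{eq:5.1phi}, invoke Lemma~\ref{lem:4.7phi} for the crucial vanishing of $\lb gu',Y\rb_{\Ltwo}$, and handle the remaining terms by pointwise orthogonality of cross products. The only difference is cosmetic: the paper takes $g=\xi\in C_0^\infty(D)$ and works in the Hilbert triple $X^\beta\hookrightarrow\Ltwo\hookrightarrow X^{-\beta}$ so that Pardoux's It\^o lemma (Lemma~\ref{lem:Pardouxphi}) applies directly---multiplication by a smooth $\xi$ preserves $X^\beta$, which is exactly condition (iv) there---whereas your choice $g\in\mathbb{L}^\infty$ with the Banach triple $\mathbb{L}^3\hookrightarrow\Ltwo\hookrightarrow\mathbb{L}^{3/2}$ needs either a Banach-space version of that lemma or the mollification argument you sketch.
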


To prove Theorem \ref{thm:u'=1}, we need the following Lemma:
\begin{lem}\cite{Pardoux}(Th. 1.2)\label{lem:Pardouxphi}
  Let $(\Omega,(\mathcal{F}_t),\mathbb{P})$ be a filtered probability space and let ${\mathbb{V}}$ and ${\mathbb{L}^2}$ be two separable Hilbert spaces, such that ${\mathbb{V}}\hookrightarrow {\mathbb{L}^2}$ continuously and densely. We identify ${\mathbb{L}^2}$ with it's dual space and have a Gelfand triple: ${\mathbb{V}}\hookrightarrow {\mathbb{L}^2}\cong {\mathbb{L}^2}'\hookrightarrow {\mathbb{V}}'$.
  We assume that
  \[u\in M^2(0,T;{\mathbb{V}}),\;u_0\in {\mathbb{L}^2},\;v\in M^2(0,T;{\mathbb{V}}'),\;z_j\in M^2(0,T;{\mathbb{L}^2}),\]
  for every $t\in[0,T]$,
  \[u(t)=u_0+\int_0^tv(s)\ud s+\sum_{j=1}^N\int_0^tz_j(s)\ud W_j(s),\quad\mathbb{P}-a.s..\]
  Let $\psi$ be a twice differentiable functional on ${\mathbb{L}^2}$, which satisfies:
  \begin{trivlist}
    \item[(i)] $\psi$, $\psi'$ and $\psi^\bis$ are locally bounded.
    \item[(ii)] $\psi$ and $\psi'$ are continuous on ${\mathbb{L}^2}$.
    \item[(iii)] Let $\mathscr{L}^1({\mathbb{L}^2})$ be the Banach space of all the trace class operators on ${\mathbb{L}^2}$. Then $\forall Q\in \mathscr{L}^1({\mathbb{L}^2})$, $Tr[Q\circ \psi^\bis]$ is a continuous functional on ${\mathbb{L}^2}$.
    \item[(iv)] If $u\in {\mathbb{V}}$, $\psi'(u)\in {\mathbb{V}}$; $u\mapsto\psi'(u)$ is continuous from ${\mathbb{V}}$ (with the strong topology) into ${\mathbb{V}}$ endowed with the weak topology.
    \item[(v)] $\exists k$ such that $\|\psi'(u)\|_{\mathbb{V}}\leq k(1+\|u\|_{\mathbb{V}})$, $\forall u\in {\mathbb{V}}$.
  \end{trivlist}
  Then for every $t\in[0,T]$,
  {
  \begin{eqnarray*}
    \psi(u(t))&=&\psi(u_0)+\int_0^t{}_{{\mathbb{V}}^*}\langle v(s),\psi'(u(s))\rangle_{\mathbb{V}} \ud s+\sum_{j=1}^N\int_0^t{}{}_{\mathbb{L}^2}\langle\psi'(u(s)),z_j(s)\rangle_{\mathbb{L}^2}\ud W_s\\
    &&+\frac{1}{2}\sum_{j=1}^N\int_0^t{}_{\mathbb{L}^2}\langle\psi^\bis(u(s))z_j(s),z_j(s)\rangle_{\mathbb{L}^2}\ud s,\quad\mathbb{P}-a.s..
  \end{eqnarray*}}
\end{lem}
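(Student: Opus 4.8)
The plan is to establish the formula by the standard regularization argument for It\^o formulae in the variational (Gelfand triple) setting: I would first smooth the process $u$ in the space variable so that the regularized process takes values in ${\mathbb{V}}$ and is a genuine ${\mathbb{L}^2}$-valued It\^o process, then apply the classical It\^o formula for $C^2$ functions on the Hilbert space ${\mathbb{L}^2}$ to the regularized process, and finally remove the regularization. The hypotheses (i)--(v) are exactly what is needed to pass to the limit in each of the four terms.

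\textbf{Regularization.} Let $S$ be the positive self-adjoint operator on ${\mathbb{L}^2}$ associated, via the representation theorem for the closed positive symmetric form $(u,w)\mapsto\lb u,w\rb_{\mathbb{V}}$, with the continuous dense embedding ${\mathbb{V}}\hookrightarrow{\mathbb{L}^2}$, so that ${\mathbb{V}}=D(S^{1/2})$ and $\lb u,w\rb_{\mathbb{V}}=\lb S^{1/2}u,S^{1/2}w\rb_{\mathbb{L}^2}$. For $\eps>0$ I put $J_\eps:=(I+\eps S)^{-1}$; each $J_\eps$ is self-adjoint on ${\mathbb{L}^2}$, is a contraction on both ${\mathbb{L}^2}$ and ${\mathbb{V}}$, maps ${\mathbb{L}^2}$ into ${\mathbb{V}}$ and (by duality) ${\mathbb{V}}^*$ into ${\mathbb{L}^2}$, and satisfies $J_\eps\to I$ strongly on ${\mathbb{V}}$, on ${\mathbb{L}^2}$ and on ${\mathbb{V}}^*$. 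Applying the bounded operator $J_\eps$ to the given identity and writing $u_\eps:=J_\eps u$, $v_\eps:=J_\eps v$, $z_{j,\eps}:=J_\eps z_j$, I obtain, as an equality of continuous ${\mathbb{L}^2}$-valued processes,
\[
u_\eps(t)=J_\eps u_0+\int_0^t v_\eps(s)\,\ud s+\sum_{j=1}^N\int_0^t z_{j,\eps}(s)\,\ud W_j(s),
\]
with $v_\eps,z_{j,\eps}\in M^2(0,T;{\mathbb{L}^2})$. Since $u_\eps$ is now an ${\mathbb{L}^2}$-valued It\^o process and $\psi\in C^2({\mathbb{L}^2})$, the finite-dimensional It\^o formula applies (localized by the stopping times $\tau_R:=\inf\{t:\|u_\eps(t)\|_{\mathbb{L}^2}>R\}$ to accommodate the merely \emph{local} boundedness in (i)), yielding
\begin{align*}
\psi(u_\eps(t))&=\psi(J_\eps u_0)+\int_0^t\lb\psi'(u_\eps(s)),v_\eps(s)\rb_{\mathbb{L}^2}\,\ud s+\sum_{j=1}^N\int_0^t\lb\psi'(u_\eps(s)),z_{j,\eps}(s)\rb_{\mathbb{L}^2}\,\ud W_j(s)\\
&\quad+\frac12\sum_{j=1}^N\int_0^t\lb\psi^\bis(u_\eps(s))z_{j,\eps}(s),z_{j,\eps}(s)\rb_{\mathbb{L}^2}\,\ud s.
\end{align*}

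\textbf{Passage to the limit.} Letting $\eps\to0$ along a subsequence giving almost everywhere convergence, I would treat the four terms as follows. The boundary values converge, $\psi(u_\eps(t))\to\psi(u(t))$ and $\psi(J_\eps u_0)\to\psi(u_0)$, by the strong convergence $J_\eps\to I$ on ${\mathbb{L}^2}$ and the continuity of $\psi$ in (ii). For the stochastic term, $\psi'(u_\eps)\to\psi'(u)$ in ${\mathbb{L}^2}$ by (ii) and $z_{j,\eps}\to z_j$ in $M^2(0,T;{\mathbb{L}^2})$, so It\^o's isometry together with the uniform integrability furnished by the local bound (i) and the $M^2$-bounds gives convergence in $L^2(\Omega)$. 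For the trace term, writing $\lb\psi^\bis(u_\eps)z_{j,\eps},z_{j,\eps}\rb_{\mathbb{L}^2}=\mathrm{Tr}[\,(z_{j,\eps}\otimes z_{j,\eps})\circ\psi^\bis(u_\eps)\,]$ with $z_{j,\eps}\otimes z_{j,\eps}$ trace class, the continuity hypothesis (iii) and the convergence $z_{j,\eps}\to z_j$ in ${\mathbb{L}^2}$ give pointwise convergence, and the local boundedness of $\psi^\bis$ justifies the limit under the integral by dominated convergence.

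\textbf{The main obstacle.} The hard part will be the drift term $\int_0^t\lb\psi'(u_\eps(s)),v_\eps(s)\rb_{\mathbb{L}^2}\,\ud s$, which must converge to $\int_0^t{}_{{\mathbb{V}}^*}\lb v(s),\psi'(u(s))\rb_{\mathbb{V}}\,\ud s$. Using the self-adjointness of $J_\eps$ one rewrites $\lb\psi'(u_\eps(s)),v_\eps(s)\rb_{\mathbb{L}^2}={}_{{\mathbb{V}}^*}\lb v(s),J_\eps\psi'(u_\eps(s))\rb_{\mathbb{V}}$, which is meaningful because $J_\eps\psi'(u_\eps(s))\in{\mathbb{V}}$ and, in the limit, because $\psi'(u(s))\in{\mathbb{V}}$ by (iv). Since $u_\eps\to u$ strongly in $M^2(0,T;{\mathbb{V}})$, hypothesis (iv) gives $\psi'(u_\eps(s))\to\psi'(u(s))$ \emph{weakly} in ${\mathbb{V}}$; splitting $J_\eps\psi'(u_\eps)=J_\eps\psi'(u)+J_\eps(\psi'(u_\eps)-\psi'(u))$ and combining the strong ${\mathbb{V}}$-convergence $J_\eps\psi'(u)\to\psi'(u)$ with the strong ${\mathbb{V}}^*$-convergence $J_\eps^*v\to v$ paired against the weakly null $\psi'(u_\eps)-\psi'(u)$, the integrand converges a.e. The linear growth bound (v), namely $\|\psi'(u_\eps(s))\|_{\mathbb{V}}\le k(1+\|u(s)\|_{\mathbb{V}})$ (using that $J_\eps$ is a ${\mathbb{V}}$-contraction), together with $v\in M^2(0,T;{\mathbb{V}}^*)$ and $u\in M^2(0,T;{\mathbb{V}})$, produces an $\ud s\otimes\ud\mathbb{P}$-integrable dominating function and lets me pass to the limit. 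This is precisely the point where (iv) and (v) are indispensable: because $v$ lives only in ${\mathbb{V}}^*$, the pairing is legitimate only when $\psi'(u)\in{\mathbb{V}}$, and the convergence must be read in the weak topology of ${\mathbb{V}}$, since $J_\eps$ does not commute with the nonlinear map $\psi'$ and one cannot simply write $\psi'(u_\eps)=J_\eps\psi'(u)$. Assembling the four limits then yields the asserted identity.
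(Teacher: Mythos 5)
The paper does not actually prove this lemma: it is quoted verbatim from Pardoux \cite{Pardoux} (Th.~1.2), so there is no in-paper argument to compare against, and your proposal has to be judged against the source. Your route --- resolvent (Yosida-type) smoothing $J_\eps=(I+\eps S)^{-1}$, the classical It\^o formula in ${\mathbb{L}^2}$ for the regularized semimartingale, then removal of the regularization --- is a genuinely different scheme from Pardoux's, who first establishes the case $\psi(u)=\|u\|_{{\mathbb{L}^2}}^2$ (the energy identity, giving in particular the ${\mathbb{L}^2}$-continuous modification of $u$) and then treats general $\psi$ by time discretization and Taylor expansion. Your treatment of the drift term is the heart of the matter and is correct: the rewriting $\langle\psi'(u_\eps),J_\eps v\rangle_{{\mathbb{L}^2}}={}_{{\mathbb{V}}^*}\langle v,J_\eps\psi'(u_\eps)\rangle_{{\mathbb{V}}}$ via self-adjointness, the splitting $J_\eps\psi'(u_\eps)=J_\eps\psi'(u)+J_\eps(\psi'(u_\eps)-\psi'(u))$, the use of (iv) for weak ${\mathbb{V}}$-convergence and of (v) for the $\ud s\otimes\ud\mathbb{P}$-dominating function, all identify precisely where each hypothesis enters, and this is exactly the mechanism also at work in Pardoux's limit passage. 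What your approach buys is a cleaner functional-analytic argument that avoids partitions and Lebesgue-point bookkeeping; what it loses is that some facts Pardoux gets for free along the way must be supplied separately, as follows.

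Three points in your sketch need repair, none fatal. First, your stopping times $\tau_R=\inf\{t:\|u_\eps(t)\|_{{\mathbb{L}^2}}>R\}$ depend on $\eps$, so you cannot pass to the limit $\eps\to 0$ at fixed $R$ on a common stochastic interval; to localize uniformly you need $\sup_{t\le T}\|u(t)\|_{{\mathbb{L}^2}}<\infty$ a.s., i.e.\ the ${\mathbb{L}^2}$-continuous version of $u$ --- which in this framework is itself a consequence of the It\^o formula for $\|u\|^2$. The fix is to order the argument as Pardoux does: run your regularization first for $\psi_0(u)=\|u\|_{{\mathbb{L}^2}}^2$, where the derivatives are globally controlled and no localization is needed, harvest the energy identity and continuity, and only then treat general $\psi$. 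Second, the ``classical It\^o formula'' in a Hilbert space is usually stated under uniform continuity of $\psi,\psi',\psi^{\prime\prime}$ on bounded sets (e.g.\ Da Prato--Zabczyk), which is strictly more than your (i)--(iii); deducing it under (i)--(iii) for the regularized process requires a further (finite-dimensional projection) approximation that should be acknowledged rather than absorbed into the word ``classical''. Third, in the trace term the operator $Q_\eps=z_{j,\eps}\otimes z_{j,\eps}$ varies with $\eps$, while (iii) gives continuity in $a$ of $\mathrm{Tr}[Q\circ\psi^{\prime\prime}(a)]$ only for \emph{fixed} $Q$; you need the splitting $\mathrm{Tr}[(Q_\eps-Q)\circ\psi^{\prime\prime}(u_\eps)]+\bigl(\mathrm{Tr}[Q\circ\psi^{\prime\prime}(u_\eps)]-\mathrm{Tr}[Q\circ\psi^{\prime\prime}(u)]\bigr)$, handling the first piece by the trace-norm estimate $\|a\otimes a-b\otimes b\|_{\mathscr{L}^1}\le\|a-b\|_{{\mathbb{L}^2}}(\|a\|_{{\mathbb{L}^2}}+\|b\|_{{\mathbb{L}^2}})$ together with local boundedness of $\psi^{\prime\prime}$, and the second by (iii). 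With these adjustments your outline is a sound, self-contained alternative proof of the lemma.
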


\begin{proof}[Proof of Theorem \ref{thm:u'=1}]
  Let $\xi\in C_0^\infty(D,\R)$. Then we consider a function
  \[\psi:{\mathbb{L}^2}\ni u\longmapsto \langle u,\xi u\rangle_{\mathbb{L}^2}\in \R.\]
  It's easy to see that $\psi$ is of $C^2$ class and $\psi'(u)=2\xi u$, $\psi^\bis(u)(v)=2\xi v$, $u,v\in {\mathbb{L}^2}$.
  Next we will check the assumptions of Lemma \ref{lem:Pardouxphi}. By previous work (see details below), $u^{\prime}$ satisfies:
  \[\mathbb{E}^{\prime}\int_0^T\|u^{\prime}(t)\|_{\mathbb{V}}^2\ud t<\infty,\quad\textrm{by \eqref{eq:4.12phi}},\]
  \[\mathbb{E}^{\prime}\int_0^T\|\left(u^{\prime}\times \left(\Delta u^{\prime}-\nabla \phi(u^{\prime})\right)\right)(t)\|_{X^{-\beta}}^2\ud t<\infty,\quad\textrm{by \eqref{eq:4.14phi}},\]
  \[\mathbb{E}^{\prime}\int_0^T\|u^{\prime}(t)\times\left(u^{\prime}\times \left(\Delta u^{\prime}-\nabla \phi(u^{\prime})\right)\right)(t)\|_{X^{-\beta}}^2\ud t<\infty,\quad\textrm{by \eqref{eq:4.16phi}},\]
  \[\mathbb{E}^{\prime}\int_0^T\|(u^{\prime}(s)\times h_j)\times h_j\|_{X^{-\beta}}^2\ud t<\infty,\quad\textrm{by \eqref{eq:4.10phi}},\]
  \[\mathbb{E}^{\prime}\int_0^T\|u^{\prime}(s)\times h_j\|_{\mathbb{L}^2}^2\ud t<\infty,\quad\textrm{by \eqref{eq:4.10phi}}.\]
  And $\psi$ satisfies:
  \begin{trivlist}
    \item[(i)] $\psi$, $\psi'$, $\psi^\bis$ are locally bounded.
    \item[(ii)] Since $\psi'$, $\psi^\bis$ exist, $\psi$, $\psi'$ are continuous on ${\mathbb{L}^2}$.
    \item[(iii)]$\forall Q\in \mathscr{L}^1({\mathbb{L}^2})$,
     \[Tr[Q\circ \psi''(a)]=\sum_{j=1}^\infty\langle Q\circ\psi''(a)e_j,e_j\rangle_{\mathbb{L}^2}=2\sum_{j=1}^\infty\langle Q(\eta e_j),e_j\rangle_{\mathbb{L}^2},\]
    which is a constant in $\R$, so
     the map ${\mathbb{L}^2}\ni a\longmapsto Tr[Q\circ \psi^\bis(a)]\in {\mathbb{L}^2}$ is a continuous functional on ${\mathbb{L}^2}$.
    \item[(iv)] If $u\in {\mathbb{V}}$, $\psi'(u)\in {\mathbb{V}}$; $u\mapsto\psi'(u)$ is continuous from ${\mathbb{V}}$ (with the strong topology) into ${\mathbb{V}}$ endowed with the weak topology.\\
        This is because: For any $v^*\in X^{-\beta}$, we have
        {
  \begin{eqnarray*}
    {}_{X^\beta}\langle \psi'(u+v)-\psi'(u), v^*\rangle_{X^{-\beta}}&=&{}_{X^\beta}\langle 2\phi v,v^*\rangle_{X^{-\beta}}\\
    &\leq&2|\xi|_{C(D,\R)}{}_{X^\beta}\langle v,v^*\rangle_{X^{-\beta}},
  \end{eqnarray*}}
  hence $\psi'$ is weakly continuous. Let us denote $\tau$ as the strong topology of ${\mathbb{V}}$ and $\tau_w$ the weak topology of ${\mathbb{V}}$. Take $B\in \tau_w$, by the weak continuity $(\psi')^{-1}(B)\in \tau_w$, but $\tau_w\subset \tau$. Hence $(\psi')^{-1}(B)\in \tau$, which implies (iv).
    \item[(v)] $\exists k=2\|\xi\|_{C_0(D)}$ such that
    $$\|\psi'(u)\|_{\mathbb{V}}=2\|\xi u\|_{\mathbb{V}}\leq k(1+\|u\|_{\mathbb{V}}),\quad \forall u\in {\mathbb{V}}.$$
  \end{trivlist}
  Hence by Lemma \ref{lem:Pardouxphi}, we have that for $t\in[0,T]$, $\mathbb{P}^{\prime}$ almost surely,
  {
  \begin{eqnarray*}
    &&\langle u^{\prime}(t),\xi u^{\prime}(t)\rangle_{\mathbb{L}^2}-\langle u_0,\xi u_0\rangle_{\mathbb{L}^2}\\
    &=&\sum_{j=1}^N\int_0^t{}_{X^{-\beta}}\langle \lmd_1\left(u^{\prime}\times \left(\Delta u^{\prime}-\nabla \phi(u^{\prime})\right)\right)(s)-\lmd_2 u^{\prime}(s)\times(u^{\prime}\times (\Delta u^{\prime}\\
    &&+\nabla \phi(u^{\prime})))(s)+\frac{1}{2}(u^{\prime}(s)\times h_j)\times h_j,2\xi u^{\prime}(s)\rangle_{X^\beta}\ud s\\
    &&+\sum_{j=1}^N\int_0^t\langle 2\xi u^{\prime}(s),u^{\prime}(s)\times h_j\rangle_{\mathbb{L}^2}\ud W^{\prime}(s)+\sum_{j=1}^N\int_0^t\langle \xi u^{\prime}(s)\times h_j,u^{\prime}(s)\times h_j\rangle_{\mathbb{L}^2}\ud s.
  \end{eqnarray*}}
  By Lemma \ref{lem:4.7phi},
  $${}_{X^{-\beta}}\langle \lmd_1\left(u^{\prime}\times \left(\Delta u^{\prime}-\nabla \phi(u^{\prime})\right)\right)(s),2\xi u^{\prime}(s)\rangle_{X^\beta}=0.$$
  And since
  \[{}_{X^{-\beta}}\langle\lmd_2 u^{\prime}(s)\times\left(u^{\prime}\times \left(\Delta u^{\prime}-\nabla \phi(u^{\prime})\right)\right)(s),2\xi u^{\prime}(s)\rangle_{X^\beta}=0,\]
  \[{}_{X^{-\beta}}\langle (u^{\prime}(s)\times h_j)\times h_j,\xi u^{\prime}(s)\rangle_{X^\beta}=-{}_{X^{-\beta}}\langle u^{\prime}(s)\times h_j,\xi u^{\prime}(s)\times h_j\rangle_{X^\beta},\]
  \[\langle 2\xi u^{\prime}(s),u^{\prime}(s)\times h_j\rangle_{\mathbb{L}^2}=0,\]
  we have
  \[\langle u^{\prime}(t),\xi u^{\prime}(t)\rangle_{\mathbb{L}^2}-\langle u_0,\xi u_0\rangle_{\mathbb{L}^2}=0,\quad \mathbb{P}^{\prime}-a.s.\]
  Since $\xi$ is arbitrary and $|u_0(x)|=1$ for almost every $x\in D$, we infer that $|u^{\prime}(t,x)|=1$ for almost every $x\in D$ as well. This completes the proof of Theorem \ref{thm:u'=1}.
\end{proof}

From  Theorem \ref{thm:u'=1} we can deduce the following result.
\begin{thm}\label{thm:u'teqH}
The process $u^{\prime}$ from Proposition \ref{prop:u_n'phi} satisfies: for every $t\in[0,T]$,
  {
  \begin{eqnarray}
    u^{\prime}(t)&=&u_0+\lmd_1\int_0^t\left(u^{\prime}\times\left(\Delta u^{\prime}-\nabla \phi(u^{\prime})\right)\right)(s)\ud s\label{eq:5.1phiH}\\
    &&-\lmd_2\int_0^tu^{\prime}(s)\times\left(u^{\prime}\times\left(\Delta u^{\prime}-\nabla \phi(u^{\prime})\right)\right)(s)\ud s\nonumber\\
    &&+\sum_{j=1}^N\int_0^t(u^{\prime}(s)\times h_j)\circ \ud W^{\prime}_j(s)\nonumber
  \end{eqnarray}}
  in $L^2(\Omega^{\prime};{\mathbb{L}^2})$.
\end{thm}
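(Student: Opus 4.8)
The plan is to upgrade the identity \eqref{eq:5.1phi}, which was already established in $L^2(\Omega';X^{-\beta})$ as the summary of Section~7 (via Theorem \ref{thm:M'tint4}), to the same identity in the smaller space $L^2(\Omega';\mathbb{L}^2)$. The only genuinely new ingredient is the saturation constraint $|u^{\prime}(t,x)|_{\R^3}=1$ supplied by Theorem \ref{thm:u'=1}, which guarantees that $u^{\prime}(t)\in\mathbb{L}^\infty$ for a.e.\ $(t,\omega)$ with $\|u^{\prime}(t)\|_{\mathbb{L}^\infty}=1$. This pointwise bound is precisely what lets us improve the a priori integrability of each term on the right-hand side of \eqref{eq:5.1phi} from $X^{-\beta}$ (respectively $\mathbb{L}^\frac{3}{2}$) up to $\mathbb{L}^2$.

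First I would proceed term by term. The data $u_0$ lies in $\mathbb{V}\subset\mathbb{L}^2$; the process $Y=u^{\prime}\times(\Delta u^{\prime}-\nabla\phi(u^{\prime}))$ already belongs to $L^2(\Omega';L^2(0,T;\mathbb{L}^2))$ by \eqref{eq:4.14phi}, so the first drift integral is $\mathbb{L}^2$-valued with the required square-integrability in $\omega$. The decisive point concerns the second drift term $u^{\prime}\times(u^{\prime}\times(\Delta u^{\prime}-\nabla\phi(u^{\prime})))=u^{\prime}\times Y$, which by Notation \ref{notation:uxuxDeltau} and \eqref{eq:4.15phi} was only known to lie in $L^2(0,T;\mathbb{L}^\frac{3}{2})$. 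Invoking $|u^{\prime}|=1$ pointwise, the elementary bound $\|u^{\prime}(s)\times Y(s)\|_{\mathbb{L}^2}\leq\|u^{\prime}(s)\|_{\mathbb{L}^\infty}\|Y(s)\|_{\mathbb{L}^2}=\|Y(s)\|_{\mathbb{L}^2}$ shows that $u^{\prime}\times Y$ inherits the $\mathbb{L}^2$-integrability of $Y$, so the second drift integral is $\mathbb{L}^2$-valued as well. Finally, for each $j$ the process $u^{\prime}\times h_j$ satisfies $\|u^{\prime}(s)\times h_j\|_{\mathbb{L}^2}\leq m(D)^\frac{1}{2}\|h_j\|_{\mathbb{L}^\infty}$, so by the It\^o isometry the stochastic integral $\int_0^t(u^{\prime}\times h_j)\ud W_j^{\prime}$, and likewise the Stratonovich correction $\frac12(u^{\prime}\times h_j)\times h_j$, are $\mathbb{L}^2$-valued and square-integrable over $\Omega'$.

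It then remains to transfer the equality. Since the canonical embedding $\mathbb{L}^2\hookrightarrow X^{-\beta}$ is continuous and injective (so that $L^2(\Omega';\mathbb{L}^2)\hookrightarrow L^2(\Omega';X^{-\beta})$ injectively), and since both sides of \eqref{eq:5.1phi} have just been shown to define elements of $L^2(\Omega';\mathbb{L}^2)$ — using \eqref{eq:4.12phi} for the left-hand side and \eqref{eq:4.14phi}, \eqref{eq:4.10phi} for the drift and noise terms — the identity valid in $X^{-\beta}$ automatically holds in $\mathbb{L}^2$. This yields \eqref{eq:5.1phiH} in $L^2(\Omega';\mathbb{L}^2)$ for every $t\in[0,T]$. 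I expect the main obstacle to be exactly the second drift term: it is the only place where the weaker $\mathbb{L}^\frac{3}{2}$ a priori estimate genuinely obstructs an $\mathbb{L}^2$-statement, and the constraint furnished by Theorem \ref{thm:u'=1} is indispensable there; the remaining terms and the final transfer are routine once the $\mathbb{L}^\infty$-bound on $u^{\prime}$ is in hand.
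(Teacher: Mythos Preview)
Your proposal is correct and follows essentially the same approach as the paper: both arguments hinge on using the constraint $|u'(t,x)|=1$ from Theorem~\ref{thm:u'=1} to bound $\|u'\times Y\|_{\mathbb{L}^2}\leq\|Y\|_{\mathbb{L}^2}$, thereby upgrading the second drift term from $\mathbb{L}^{3/2}$ to $\mathbb{L}^2$, and then concluding that all terms in \eqref{eq:5.1phi} actually lie in $L^2(\Omega';\mathbb{L}^2)$. Your write-up is in fact more explicit than the paper's (which omits discussion of the stochastic integral and the injectivity transfer), but the substance is the same.
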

\begin{proof}
By \eqref{eq:4.14phi} and Lemma \ref{lem:4.5phi},
\begin{equation}\label{eq:2.15phi}
\mathbb{E}^{\prime}\left(\int_0^T\left\|\left(u^{\prime}\times \left(\Delta u^{\prime}-\nabla \phi(u^{\prime})\right)\right)(t)\right\|_{{\mathbb{L}^2}}^2\ud t\right)^r<\infty,\quad r\geq 1.
\end{equation}
And then by \eqref{eq:2.12phi}, we see that
\begin{equation}\label{eq:5.2phi}
  \|u^{\prime}(t,\omega)\times\left(\left(u^{\prime}\times \left(\Delta u^{\prime}-\nabla \phi(u^{\prime})\right)\right)(t,\omega)\right)\|_{\mathbb{L}^2}\leq\left\|\left(u^{\prime}\times \left(\Delta u^{\prime}-\nabla \phi(u^{\prime})\right)\right)(t,\omega)\right\|_{\mathbb{L}^2}
\end{equation}
for almost every $(t,\omega)\in [0,T]\times\Omega^{\prime}$. And so
\[\mathbb{E}^{\prime}\int_0^T\left\|u^{\prime}(t)\times\left(u^{\prime}\times \left(\Delta u^{\prime}-\nabla \phi(u^{\prime})\right)\right)(t)\right\|_{\mathbb{L}^2}^2\ud t<\infty.\]
Therefore all the terms in the equation \eqref{eq:5.1phiH} are in the space $L^2(\Omega^{\prime};{\mathbb{L}^2})$. This completes the proof of Theorem \ref{thm:u'teqH}.
\end{proof}

\begin{thm}\label{thm:u'holder12c}
  The process $u^{\prime}$ defined in Proposition \ref{prop:u_n'phi} satisfies: for every $\alpha\in (0,\frac{1}{2})$,
  \begin{equation}\label{eq:2.17phi}
  u^{\prime}\in C^\alpha([0,T];{\mathbb{L}^2}),\qquad \mathbb{P}^{\prime}-a.s..
  \end{equation}
\end{thm}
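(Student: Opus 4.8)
The plan is to read the H\"older regularity of $u^{\prime}$ directly off the integral representation \eqref{eq:5.1phiH} established in Theorem \ref{thm:u'teqH}. First I would rewrite the Stratonovich integral appearing there in It\^o form, so that, modulo the constant $u_0$, the process $u^{\prime}$ becomes a sum of four deterministic Bochner integrals --- the two drift terms together with the It\^o--Stratonovich correction $\frac12\sum_{j=1}^N\int_0^t(u^{\prime}(s)\times h_j)\times h_j\ud s$ --- plus one It\^o stochastic integral $\sum_{j=1}^N\int_0^t(u^{\prime}(s)\times h_j)\ud W_j^{\prime}(s)$. I would then show that every deterministic summand has paths in $C^{1/2}([0,T];\mathbb{L}^2)$ and that the stochastic summand has paths in $C^\alpha([0,T];\mathbb{L}^2)$ for each $\alpha<\frac12$; summing these gives \eqref{eq:2.17phi}.

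For the deterministic integrals the key observation is that all their integrands lie in $L^2(0,T;\mathbb{L}^2)$ for $\mathbb{P}^{\prime}$-almost every $\omega$: this holds for $u^{\prime}\times(\Delta u^{\prime}-\nabla\phi(u^{\prime}))$ by \eqref{eq:2.15phi}, for the double cross product by the pointwise bound \eqref{eq:5.2phi} (a consequence of $|u^{\prime}|=1$ a.e.), and for the correction integrand since $\|(u^{\prime}(s)\times h_j)\times h_j\|_{\mathbb{L}^2}\leq\|h_j\|_{\mathbb{L}^\infty}^2\|u^{\prime}(s)\|_{\mathbb{L}^2}\leq\|h_j\|_{\mathbb{L}^\infty}^2\|u_0\|_{\mathbb{L}^2}$ by \eqref{eq:4.10phi}. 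For any $f\in L^2(0,T;\mathbb{L}^2)$ and $0\leq s<t\leq T$ the Cauchy--Schwarz inequality gives $\left\|\int_s^tf(r)\ud r\right\|_{\mathbb{L}^2}\leq(t-s)^{1/2}\|f\|_{L^2(0,T;\mathbb{L}^2)}$, so each deterministic summand belongs to $C^{1/2}([0,T];\mathbb{L}^2)$, $\mathbb{P}^{\prime}$-a.s., and hence a fortiori to $C^\alpha$.

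For the stochastic summand I would apply Lemma \ref{lem:LLG A.1} with $E=\mathbb{L}^2$ and $\xi_j=u^{\prime}\times h_j$. Since $\|u^{\prime}(s)\times h_j\|_{\mathbb{L}^2}\leq\|h_j\|_{\mathbb{L}^\infty}\|u_0\|_{\mathbb{L}^2}$ by \eqref{eq:4.10phi}, the right-hand side of that lemma is finite for every $p\geq2$, so the It\^o integral lies in $W^{a,p}(0,T;\mathbb{L}^2)$, $\mathbb{P}^{\prime}$-a.s., for all $a\in(0,\frac12)$ and $p\geq2$. Given $\alpha<\frac12$, I would fix $a\in(\alpha,\frac12)$ and then $p>1/(a-\alpha)$, so that $a-\frac1p>\alpha$; the Sobolev embedding $W^{a,p}(0,T;\mathbb{L}^2)\hookrightarrow C^{a-1/p}([0,T];\mathbb{L}^2)\hookrightarrow C^\alpha([0,T];\mathbb{L}^2)$ then yields the H\"older regularity of the stochastic term. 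Equivalently, one could combine the Burkholder--Davis--Gundy inequality with Kolmogorov's continuity criterion, the constraint $\alpha<\frac12$ arising by letting $p\to\infty$.

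The remaining, and most delicate, point is to transfer this regularity back to $u^{\prime}$ itself. The right-hand side of \eqref{eq:5.1phiH} defines a process $V$ whose paths are $\mathbb{P}^{\prime}$-a.s.\ in $C^\alpha([0,T];\mathbb{L}^2)$, while Theorem \ref{thm:u'teqH} only asserts $u^{\prime}(t)=V(t)$ in $L^2(\Omega^{\prime};\mathbb{L}^2)$, i.e.\ $\mathbb{P}^{\prime}$-a.s.\ for each fixed $t$. Since both $u^{\prime}$ and $V$ have $\mathbb{P}^{\prime}$-a.s.\ continuous paths in $X^{-\beta}$ (for $V$ because $\mathbb{L}^2\hookrightarrow X^{-\beta}$, for $u^{\prime}$ by Proposition \ref{prop:u_n'phi}), I would conclude that they agree $\mathbb{P}^{\prime}$-a.s.\ on a fixed countable dense set of times and then, by continuity in $X^{-\beta}$, as $X^{-\beta}$-valued processes; hence $u^{\prime}=V$ carries the stated $C^\alpha([0,T];\mathbb{L}^2)$ paths. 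I expect this version-identification step to be the part demanding the most care, since \eqref{eq:5.1phiH} holds only pointwise in $t$; the H\"older estimates themselves are routine once the integrands are located in the correct function spaces.
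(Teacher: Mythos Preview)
Your proposal is correct and reaches the same conclusion, but by a somewhat different route than the paper. The paper does not separate the deterministic and stochastic summands: it writes the increment $u^{\prime}(t)-u^{\prime}(s)$ via \eqref{eq:5.1phi}, applies Jensen's inequality to get a single moment bound $\mathbb{E}^{\prime}\|u^{\prime}(t)-u^{\prime}(s)\|_{\mathbb{L}^2}^{2q}\leq C(t-s)^q$ (using \eqref{eq:2.15phi}, \eqref{eq:5.2phi}, \eqref{eq:4.10phi} for the Lebesgue integrals and Burkholder--Davis--Gundy for the It\^o term), and then invokes the Kolmogorov criterion (Lemma~\ref{lem:Kol testphi}) with $\delta=2q$, $\eps=q-1$, letting $q\to\infty$ to reach every $\alpha<\tfrac12$. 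Your argument is more modular: you handle the deterministic integrals pathwise via Cauchy--Schwarz, obtaining $C^{1/2}$ regularity directly without passing through moments, and you treat the stochastic integral via Lemma~\ref{lem:LLG A.1} and a Sobolev embedding rather than Kolmogorov. Both are legitimate; the paper's route is a bit more uniform, while yours makes the source of each regularity contribution more transparent. Your explicit version-identification step (matching $u^{\prime}$ with the right-hand side $V$ via their common $C([0,T];X^{-\beta})$ continuity) is a point the paper leaves implicit: Kolmogorov only produces a \emph{version} with H\"older paths, and the same countable-dense-set argument you sketch is needed to conclude that this version coincides with the given $u^{\prime}$.
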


We need the following Lemma to prove Theorem \ref{thm:u'holder12c}.

\begin{lem}[Kolmogorov test]\cite{key-14}\label{lem:Kol testphi}
  Let $\{u(t)\}_{t\in[0,T]}$ be a stochastic process with values in a separable Banach space $X$, such that for some $C>0$, $\eps>0$, $\delta>1$ and all $t,s\in[0,T]$,
  \[\mathbb{E}\big\|u(t)-u(s)\big\|_X^\delta\leq C|t-s|^{1+\eps}.\]
  Then there exists a version of $u$ with $\mathbb{P}$ almost surely trajectories being H$\ddot{\textrm{o}}$lder continuous functions with an arbitrary exponent smaller than $\frac{\eps}{\delta}$.
\end{lem}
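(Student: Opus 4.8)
The plan is to prove the classical Kolmogorov--Chentsov continuity criterion by a dyadic chaining argument. Fix an exponent $\gamma\in(0,\eps/\delta)$; I shall construct a version of $u$ whose trajectories are $\gamma$-Hölder continuous, and since $\gamma$ can be taken arbitrarily close to $\eps/\delta$ from below, the stated conclusion will follow. First I would pass to the dyadic net. Set $D_n=\{kT2^{-n}:0\le k\le 2^n\}$ and $D=\bigcup_{n}D_n$. For a single consecutive pair at level $n$, Markov's inequality together with the hypothesis gives
\[
\mathbb{P}\Big(\|u((k+1)T2^{-n})-u(kT2^{-n})\|_X>2^{-\gamma n}\Big)\le 2^{\gamma\delta n}\,C\,(T2^{-n})^{1+\eps}.
\]
Summing over the $2^n$ pairs yields
\[
\mathbb{P}\Big(\max_{0\le k<2^n}\|u((k+1)T2^{-n})-u(kT2^{-n})\|_X>2^{-\gamma n}\Big)\le C\,T^{1+\eps}\,2^{-n(\eps-\gamma\delta)}.
\]
Since $\gamma<\eps/\delta$ the exponent $\eps-\gamma\delta$ is strictly positive, so these probabilities are summable in $n$. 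By the Borel--Cantelli lemma there is an almost surely finite random index $N(\omega)$ such that for every $n\ge N(\omega)$ all level-$n$ increments are bounded by $2^{-\gamma n}$.

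The core step is the chaining estimate on $D$. Given dyadic points $s,t\in D$ with $0<t-s\le T2^{-N(\omega)}$, I would choose $n\ge N(\omega)$ with $T2^{-(n+1)}<t-s\le T2^{-n}$ and expand $u(t)-u(s)$ as a telescoping sum through successively finer dyadic refinements of the interval $[s,t]$. Organised correctly, only a bounded number of level-$m$ increments contribute for each $m\ge n$, and each is controlled by $2^{-\gamma m}$ on the event from the previous paragraph. Summing the resulting geometric series bounds $\|u(t)-u(s)\|_X$ by a constant multiple of $2^{-\gamma n}$, hence by $K(\omega)|t-s|^\gamma$ with $K(\omega)$ depending only on $\gamma,\delta,\eps$ and $N(\omega)$. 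This shows that $u|_D$ is almost surely $\gamma$-Hölder, in particular uniformly continuous, on $D$.

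Finally I would pass from $D$ to the whole interval and verify the modification property. Because $X$ is a complete separable Banach space and $u|_D$ is almost surely uniformly continuous on the dense set $D\subset[0,T]$, it extends uniquely to a continuous process $\tilde u$ on $[0,T]$, and the $\gamma$-Hölder bound is inherited in the limit. To see that $\tilde u$ is a version of $u$, I would note that the moment hypothesis forces $u(s)\to u(t)$ in $L^\delta(\Omega;X)$, hence in probability, as $s\to t$; taking dyadic $s\to t$ and combining with $\tilde u(s)=u(s)\to\tilde u(t)$ almost surely gives $\tilde u(t)=u(t)$ almost surely for each fixed $t\in[0,T]$.

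I expect the chaining estimate to be the main obstacle. One must set up the telescoping expansion so that the number of increments contributing at each dyadic scale remains uniformly bounded, and check that the geometric summation produces a Hölder constant independent of the particular pair $(s,t)$, depending only on the fixed data $\gamma,\delta,\eps$ and on the Borel--Cantelli index $N(\omega)$. Once the $\gamma$-Hölder bound on $D$ is secured, the extension by density and the modification argument are routine.
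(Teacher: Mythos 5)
The paper offers no proof of this lemma at all---it is quoted verbatim with a citation to Da Prato and Zabczyk \cite{key-14}---so there is no internal argument to compare with; what you have written is the standard dyadic-chaining proof of the Kolmogorov--Chentsov criterion, and it is correct. Your Markov estimate and the summation over the $2^n$ level-$n$ pairs give exactly $C\,T^{1+\varepsilon}2^{-n(\varepsilon-\gamma\delta)}$, summable precisely because $\gamma<\varepsilon/\delta$, and Borel--Cantelli then yields the random index $N(\omega)$ as you say. Three small points deserve to be made explicit rather than deferred. First, in the chaining step the ``bounded number of level-$m$ increments'' is concretely \emph{at most two per level}, coming from the binary expansions of $s$ and $t$, which is what makes the geometric series $\sum_{m\ge n}2\cdot 2^{-\gamma m}\le \tfrac{2}{1-2^{-\gamma}}2^{-\gamma n}$ close. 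Second, your H\"older bound is established only for dyadic pairs with $|t-s|\le T2^{-N(\omega)}$; for larger gaps one splits $[s,t]$ into at most $2^{N(\omega)}$ subintervals of admissible length and absorbs the resulting factor (of order $2^{N(\omega)(1-\gamma)}$) into the random constant $K(\omega)$ --- harmless since $[0,T]$ is bounded, but it should be said. Third, your opening sentence produces a version for each fixed $\gamma$, while the lemma asserts a single version that is H\"older for \emph{every} exponent below $\varepsilon/\delta$; this is repaired by observing that any two continuous modifications of the same process are indistinguishable, so the one continuous version $\tilde u$ is almost surely $\gamma_k$-H\"older along a sequence $\gamma_k\uparrow\varepsilon/\delta$, hence H\"older of every smaller exponent. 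The final identification $\tilde u(t)=u(t)$ a.s.\ via $L^\delta$-continuity in probability and a.s.\ convergence along a dyadic subsequence is exactly right, and separability and completeness of $X$ are all that the extension-by-density step needs.
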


\begin{proof}[Proof of Theorem \ref{thm:u'holder12c}]
  By \eqref{eq:5.1phi}, we have
  {
  \begin{eqnarray*}
    &&u^{\prime}(t)-u^{\prime}(s)\\
    &=&\lmd_1\int_s^t\left(u^{\prime}\times \left(\Delta u^{\prime}-\nabla \phi(u^{\prime})\right)\right)(\tau)\ud \tau-\lmd_2\int_s^tu^{\prime}(\tau)\times\left(u^{\prime}\times \left(\Delta u^{\prime}-\nabla \phi(u^{\prime})\right)\right)(\tau)\ud \tau\\
    &&+\sum_{j=1}^N\int_s^t(u^{\prime}(\tau)\times h_j\circ\ud W^{\prime}_j(\tau)\\
    &=&\lmd_1\int_s^t\left(u^{\prime}\times \left(\Delta u^{\prime}-\nabla \phi(u^{\prime})\right)\right)(\tau)\ud \tau-\lmd_2\int_s^tu^{\prime}(\tau)\times\left(u^{\prime}\times \left(\Delta u^{\prime}-\nabla \phi(u^{\prime})\right)\right)(\tau)\ud \tau\\
    &&+\frac{1}{2}\sum_{j=1}^N\int_s^t\big(u^{\prime}(\tau)\times h_j\big)\times h_j\ud \tau+\sum_{j=1}^N\int_s^t u^{\prime}(\tau)\times h_j\ud W_j^{\prime}(\tau),\quad 0\leq s<t\leq T.
  \end{eqnarray*}}
  Hence by Jensen's inequality, for $q>1$,
  {
  \begin{eqnarray*}
    &&\mathbb{E}^{\prime}\left[\big\|u^{\prime}(t)-u^{\prime}(s)\big\|_{\mathbb{L}^2}^{2q}\right]\\
    &\leq&\mathbb{E}^{\prime}\Bigg\{|\lmd_1|\int_s^t\left\|\left(u^{\prime}\times \left(\Delta u^{\prime}-\nabla \phi(u^{\prime})\right)\right)(\tau)\right\|_{\mathbb{L}^2}\ud \tau+|\lmd_2|\int_s^t\left\|u^{\prime}(\tau)\times\left(u^{\prime}\times \left(\Delta u^{\prime}-\nabla \phi(u^{\prime})\right)\right)(\tau)\right\|_{\mathbb{L}^2}\ud\tau\\
    &&+\frac{1}{2}\sum_{j=1}^N\int_s^t\big\|u^{\prime}(\tau)\times h_j\times h_j\big\|_{\mathbb{L}^2}\ud \tau+\sum_{j=1}^N\left\|\int_s^tu^{\prime}(\tau)\times h_j\ud W_j^{\prime}(\tau)\right\|_{\mathbb{L}^2}\Bigg\}^{2q}\\
    &\leq&4^{2q-1}\mathbb{E}^{\prime}\Bigg\{|\lmd_1|^{2q}\left(\int_s^t\big\|\left(u^{\prime}\times \left(\Delta u^{\prime}-\nabla \phi(u^{\prime})\right)\right)(\tau)\big\|_{\mathbb{L}^2}\ud \tau\right)^{2q}\\
    &&+|\lmd_2|^{2q}\left(\int_s^t\big\|u^{\prime}(\tau)\times\left(u^{\prime}\times \left(\Delta u^{\prime}-\nabla \phi(u^{\prime})\right)\right)(\tau)\big\|_{\mathbb{L}^2}\ud\tau\right)^{2q}\\
    &&+\left(\sum_{j=1}^N\int_s^t\big\|u^{\prime}(\tau)\times h_j\times h_j\big\|_{\mathbb{L}^2}\ud \tau\right)^{2q}+\left\|\sum_{j=1}^N\int_s^tu^{\prime}(\tau)\times h_j\ud W_j^{\prime}(\tau)\right\|^{2q}_{\mathbb{L}^2}\Bigg\}.
  \end{eqnarray*}}
  By \eqref{eq:2.15phi}, there exists $C_1>0$, such that
  {
  \begin{eqnarray*}
  \mathbb{E}^{\prime}\left(\int_s^t\big\|\left(u^{\prime}\times \left(\Delta u^{\prime}-\nabla \phi(u^{\prime})\right)\right)(\tau)\big\|_{\mathbb{L}^2}\ud \tau\right)^{2q}&\leq&(t-s)^q\mathbb{E}^{\prime}\left(\int_s^t\big\|\left(u^{\prime}\times \left(\Delta u^{\prime}-\nabla \phi(u^{\prime})\right)\right)(\tau)\big\|_{\mathbb{L}^2}^2\ud \tau\right)^q\\
  &\leq&C_1^q(t-s)^q.
  \end{eqnarray*}}

    By \eqref{eq:5.2phi}
  {
  \begin{eqnarray*}
    &&\mathbb{E}^{\prime}\left(\int_s^t\big\|u^{\prime}(\tau)\times\left(u^{\prime}\times \left(\Delta u^{\prime}-\nabla \phi(u^{\prime})\right)\right)(\tau)\big\|_{\mathbb{L}^2}\ud\tau\right)^{2q}\leq\mathbb{E}^{\prime}\left(\int_s^t\big\|\left(u^{\prime}\times \left(\Delta u^{\prime}-\nabla \phi(u^{\prime})\right)\right)(\tau)\big\|_{\mathbb{L}^2}\ud \tau\right)^{2q}\\
    &\leq&(t-s)^q\mathbb{E}^{\prime}\left(\int_s^t\big\|\left(u^{\prime}\times \left(\Delta u^{\prime}-\nabla \phi(u^{\prime})\right)\right)(\tau)\big\|_{\mathbb{L}^2}^2\ud \tau\right)^q\leq C_1^q(t-s)^q.
  \end{eqnarray*}}

  And by \eqref{eq:4.10phi},
  {
  \begin{eqnarray*}
  \mathbb{E}^{\prime}\left(\sum_{j=1}^N\int_s^t\big\|u^{\prime}(\tau)\times h_j\times h_j\big\|_{\mathbb{L}^2}\ud \tau\right)^{2q}\leq\|u_0\|^{2q}_{\mathbb{L}^2} T^q\sum_{j=1}^N\|h_j\|_{\mathbb{L}^\infty}^{4q}(t-s)^q.
  \end{eqnarray*}}

By the Burkholder-Davis-Gundy Inequality,
  {
  \begin{eqnarray*}
    \mathbb{E}^{\prime}\left\|\sum_{j=1}^N\int_s^t u^{\prime}(\tau)\times h_j\ud W_j^{\prime}(\tau)\right\|_{\mathbb{L}^2}^{2q}&\leq&K_q\mathbb{E}^{\prime} \left(\sum_{j=1}^N\int_s^t\big\|u^{\prime}(\tau)\times h_j\big\|_{\mathbb{L}^2}^2\ud \tau\right)^q\leq K_q\|u_0\|_{\mathbb{L}^2}^{2q}\sum_{j=1}^N\|h_j\|_{\mathbb{L}^\infty}^{2q}(t-s)^q.
  \end{eqnarray*}}

  Therefore, let $C=4^{2q-1}\left((|\lmd_1|^{2q}+|\lmd_2|^{2q})C_1^q+\|u_0\|^{2q}_{\mathbb{L}^2} T^q\sum_{j=1}^N\|h_j\|_{\mathbb{L}^\infty}^{4q}+K_q\|u_0\|_{\mathbb{L}^2}^{2q}\sum_{j=1}^N\|h_j\|_{\mathbb{L}^\infty}^{2q}\right)$, we have
  \[\mathbb{E}^{\prime}\left[\big\|u^{\prime}(t)-u^{\prime}(s)\big\|_{\mathbb{L}^2}^{2q}\right]\leq C(t-s)^q,\quad q\geq 1.\]
  Then by Lemma \ref{lem:Kol testphi},
  \[u\in C^\alpha([0,T];{\mathbb{L}^2}), \quad \alpha\in \left(0,\frac{1}{2}\right).\]
  This completes the proof of Theorem \ref{thm:u'holder12c}.
\end{proof}
\vfill

\appendix

\section{Some explanation}
This Appendix aims to clarify the meaning of the process $\Lambda$ from Notation \ref{notation:uxDeltau} and Lemma \ref{lem:uxDeltauinH}. And the explanation present here goes back to Visintin \cite{Visintin}.
\begin{defn}\label{def-MtimesDeltaM}
Assume that $D\subset \R^d$, $d\leq 3$.
Suppose that $M\in H^1(D)$. We say that $M\times \Delta M$ exists in the $L^2(D)$ sense (and write $M\times \Delta M \in L^2(D)$) iff there exists $B\in L^2(D)$ such that for every $u\in W^{1,3}(D)$,
\begin{equation}\label{def:B1}
\lb B,u\rb_{\Ltwo}= \sum_{i=1}^3 \lb D_iM, M\times D_i u\rb_{\Ltwo},
\end{equation}
where $\lb \cdot,\cdot\rb=\lb \cdot,\cdot\rb_{\Ltwo}$.
\end{defn}
\begin{rem*}
  Since $H^1(D) \subset L^6(D)$ and $D_i u\in L^3(D)$, the integral on the RHS above is convergent.
\end{rem*}

\begin{rem*}
  If $M\in D(A)$, then $B=M\times \Delta M$ can be defined pointwise as an element of $L^2(D)$. Moreover by  Proposition \ref{prop:uxAuweak}, \eqref{def:B1} holds, so $M\times \Delta M$ in the sense of Definition \ref{def-MtimesDeltaM}. The next result shows that this can happen also for less regular $M$.
\end{rem*}

\begin{prop}
Suppose that $M_n\in H^1(D)$ so that $\Lambda_n:=M_n\times \Delta M_n \in L^2(D)$ and
\[
\vert \Lambda_n \vert_{\Ltwo} \leq C.
\]
Suppose that
\[
\vert M_n \vert_{H^1} \leq C.
\]
Suppose that
\[M_{n} \to M \mbox{ weakly in } H^1(D).\]
Then $M\times \Delta M\in L^2(D)$.
\end{prop}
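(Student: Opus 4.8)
The plan is to define $B$ as a weak $L^2$ limit of the sequence $\Lambda_n$ and then to verify that this $B$ satisfies the defining identity \eqref{def:B1} with $M$ in place of each $M_n$. Since $\vert \Lambda_n\vert_{\Ltwo}\le C$, the Banach--Alaoglu theorem provides a subsequence (not relabelled) and an element $B\in L^2(D)$ with $\Lambda_n\rightharpoonup B$ weakly in $L^2(D)$. From $\vert M_n\vert_{H^1}\le C$ and $M_n\to M$ weakly in $H^1(D)$ I extract, after a further subsequence and using the compact embedding $H^1(D)\hookrightarrow L^2(D)$ (Rellich--Kondrachov, valid since $D\subset\R^3$ is bounded with $C^2$ boundary), the strong convergence $M_n\to M$ in $L^2(D)$ together with the weak convergence $D_iM_n\rightharpoonup D_iM$ in $L^2(D)$ for $i=1,2,3$.

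First I would establish the identity for smooth test functions. Fix $u\in C^\infty(\bar D)$, so that $D_iu\in L^\infty(D)$. Since each $\Lambda_n$ satisfies \eqref{def:B1}, I have
\[
\lb \Lambda_n,u\rb=\sum_{i=1}^3\lb D_iM_n, M_n\times D_iu\rb .
\]
The left-hand side converges to $\lb B,u\rb$ by weak convergence. For the right-hand side I split each term as
\[
\lb D_iM_n, M_n\times D_iu\rb-\lb D_iM, M\times D_iu\rb
=\lb D_iM_n-D_iM,\, M\times D_iu\rb+\lb D_iM_n,\,(M_n-M)\times D_iu\rb .
\]
The first summand tends to $0$ because $M\times D_iu\in L^2(D)$ (as $M\in H^1\subset L^6$ and $D_iu\in L^\infty$) while $D_iM_n\rightharpoonup D_iM$ weakly in $L^2$. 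The second is bounded by $\|D_iM_n\|_{L^2}\,\|D_iu\|_{L^\infty}\,\|M_n-M\|_{L^2}$, which tends to $0$ by the uniform $H^1$ bound and the strong $L^2$ convergence of $M_n$. Hence $\lb B,u\rb=\sum_{i=1}^3\lb D_iM,M\times D_iu\rb$ for every $u\in C^\infty(\bar D)$.

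It remains to upgrade this identity from smooth $u$ to arbitrary $u\in W^{1,3}(D)$, which I do by density. Both sides are continuous linear functionals of $u$ in the $W^{1,3}$-norm: the left-hand side because $W^{1,3}(D)\hookrightarrow L^2(D)$ and $B\in L^2(D)$, and the right-hand side because, by Hölder's inequality with exponents satisfying $\tfrac12+\tfrac16+\tfrac13=1$ together with the embedding $H^1\subset L^6$,
\[
\Big|\sum_{i=1}^3\lb D_iM,M\times D_iu\rb\Big|\le \sum_{i=1}^3\|D_iM\|_{L^2}\,\|M\|_{L^6}\,\|D_iu\|_{L^3}\le c\,\|M\|_{H^1}^2\,\|\nabla u\|_{L^3}.
\]
Since $C^\infty(\bar D)$ is dense in $W^{1,3}(D)$, the identity \eqref{def:B1} for $B$ and $M$ extends to all $u\in W^{1,3}(D)$, which is exactly the assertion that $M\times\Delta M=B\in L^2(D)$ in the sense of Definition \ref{def-MtimesDeltaM}. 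The limit $B$ is moreover independent of the chosen subsequence, since any two candidates agree against the dense set $W^{1,3}(D)\subset L^2(D)$, so no diagonal argument is needed.

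The only delicate point is the borderline nature of the Sobolev exponents, and this is the step I expect to matter most. A naive attempt to pass to the limit directly for a general $u\in W^{1,3}(D)$ founders on the term $\lb D_iM_n,(M_n-M)\times D_iu\rb$: controlling it requires $\|(M_n-M)\times D_iu\|_{L^2}\to0$, and since $D_iu\in L^3$ \emph{exactly}, this would force strong convergence of $M_n$ in $L^6$, which the compact embedding does not provide (it yields strong convergence only in $L^q$ for $q<6$). Restricting first to smooth $u$, where $D_iu\in L^\infty$ and only the strong $L^2$-convergence of $M_n$ is used, and then extending by the continuity-plus-density argument above, is precisely what circumvents this obstruction.
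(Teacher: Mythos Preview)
Your proof is correct and follows essentially the same strategy as the paper: extract weak/strong subsequential limits, verify the defining identity \eqref{def:B1} on a convenient dense subspace via the same two-term splitting, and then extend to all of $W^{1,3}(D)$ by continuity and density. The only cosmetic difference is your choice of dense subspace---you use $C^\infty(\bar D)$ together with strong $L^2$ convergence of $M_n$, whereas the paper uses $W^{1,4}(D)$ together with strong $L^4$ convergence; both choices sidestep the critical $L^6$ issue you correctly identify.
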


\begin{proof}
By the assumptions there exists a subsequence $(n_j)$ and $\Lambda\in L^2(D)$ such that for any $q<6$ (in particular $q=4$)
\begin{eqnarray*}
&&\Lambda_{n_j} \to \Lambda \mbox{ weakly in } L^2(D)\\
&& M_{n_j} \to M \mbox{ strongly in } L^q(D)\\
&& \nabla M_{n_j} \to \nabla M \mbox{ weakly in } L^2(D)
\end{eqnarray*}
We will prove that $M\times \Delta M=\Lambda\in L^2$. Let us fix $u\in W^{1,4}(D)$.

First we will show that

\begin{equation}\label{eqn-Lambda=MtimesDeltaM}
\lb \Lambda ,u\rb= \sum_{i=1}^3 \lb D_iM, M\times D_i u\rb,
\end{equation}

Since
\[\lb \Lambda_n ,u\rb= \sum_{i=1}^3 \lb D_iM_n, M_n\times D_i u\rb,\]
we have

\begin{eqnarray*}
-\lb \Lambda_n ,u\rb&+&\sum_{i=1}^3 \lb D_iM, M\times D_i u\rb\\
&=&-\sum_{i=1}^3 \lb D_iM_n, M_n\times D_i u\rb+\sum_{i=1}^3 \lb D_iM, M\times D_i u\rb
\\
&=&\sum_{i=1}^3 \lb D_iM-D_iM_n, M\times D_i u\rb+ \sum_{i=1}^3 \lb D_iM_n, M\times D_i-M_n\times D_i u\rb\\
&=&I_n+II_n
\end{eqnarray*}
Since $M\times D_i u \in L^2$ and $ D_iM-D_iM_n \to 0$ weakly in $L^2$ we infer that $I_n \to 0$.
Moreover, by the H\"older inequality we have
\begin{eqnarray*}
\vert II_n\vert&\leq & \sum_{i=1}^3 \vert  D_iM_n\vert_{\Ltwo} \vert  M- M_n \vert_{L^4}  \vert D_i u \vert_{L^4} \to 0.
\end{eqnarray*}
Thus, $\lb\Lambda_n ,u\rb \to \sum_{i=1}^3 \lb D_iM, M\times D_i u\rb$. On the other hand, $\lb\Lambda_n ,u\rb \to \lb\Lambda ,u\rb $, what concludes the proof of equality \eqref{eqn-Lambda=MtimesDeltaM} for $u\in W^{1,4}(D)$.\\
Since both sides of equality \eqref{eqn-Lambda=MtimesDeltaM} are continuous with respect to  $ W^{1,3}(D)$ norm of $u$ and  the space $ W^{1,4}(D)$  is dense in $ W^{1,3}(D)$, the result follows.

\end{proof}

\end{document}